\newtheorem{theorem}{Theorem}[section]
\newtheorem{lemma}[theorem]{Lemma}
\newtheorem{proposition}[theorem]{Proposition}
\newtheorem{corollary}[theorem]{Corollary}
\theoremstyle{definition}
\newtheorem{example}[theorem]{Example}
\newtheorem{question}[theorem]{Question}
\theoremstyle{remark}
\newtheorem{remark}[theorem]{Remark}
\numberwithin{equation}{section}
\newcommand{\R}{\ensuremath{\mathbb{R}}}
\newcommand{\N}{\ensuremath{\mathbb{N}}}
\renewcommand{\u}{\ensuremath{\mathcal{U}}}
\newcommand{\ub}{\mathscr{U}}
\newcommand{\us}{\mathbf{U}}
 \newcommand{\vb}{\mathscr{V}}
\newcommand{\sv}{\mathbf{a}}
\newcommand{\sa}{\mathbf{a}}
\newcommand{\sd}{\mathbf{d}}
\renewcommand{\sc}{\mathbf{c}}
\newcommand{\set}[1]{\left\{#1\right\}}
\newcommand{\la}{\lambda}
\newcommand{\ep}{\varepsilon}
\newcommand{\f}{\infty}
\newcommand{\de}{\delta}
\newcommand{\al}{\alpha}
\newcommand{\lle}{\preccurlyeq}
\newcommand{\lge}{\succcurlyeq}
\newcommand{\si}{\sigma}
\begin{document}

\title{On the smallest base in which a number has a unique expansion}

\author{Pieter Allaart}
\address[P. Allaart]{Mathematics Department, University of North Texas, 1155 Union Cir \#311430, Denton, TX 76203-5017, U.S.A.}
\email{allaart@unt.edu}

\author{Derong Kong}

\address[D. Kong]{College of Mathematics and Statistics, Chongqing University, 401331, Chongqing, P.R.China}
\email{derongkong@126.com}

\dedicatory{}

\begin{abstract}
Given a real number $x>0$, we determine $q_s(x):=\inf\ub(x)$, where $\ub(x)$ is the set of all bases $q\in(1,2]$ for which $x$ has a unique expansion of $0$'s and $1$'s. We give an explicit description of $q_s(x)$ for several regions of $x$-values. For others, we present an efficient algorithm to determine $q_s(x)$ and the lexicographically smallest unique expansion of $x$. We show that the infimum is attained for almost all $x$, but there is also a set of points of positive Hausdorff dimension for which the infimum is proper. In addition, we show that the function $q_s$ is right-continuous with left-hand limits and no downward jumps, and characterize the points of discontinuity of $q_s$.

A large part of the paper is devoted to the level sets $L(q):=\{x>0:q_s(x)=q\}$. We show that $L(q)$ is finite for almost every $q$, but there are also infinitely many infinite level sets. In particular, for the Komornik-Loreti constant $q_{KL}=\min\ub(1)\approx 1.787$ we prove that $L(q_{KL})$ has both infinitely many left- and infinitely many right accumulation points.
\end{abstract}

\subjclass[2010]{Primary:11A63, Secondary: 68R15, 37B10, 26A15}
\keywords{Univoque bases; C\'adl\'ag function; Komornik-Loreti cascade; Level set.}

\maketitle
\tableofcontents

\section{Introduction}

Non-integer base expansions have received much attention since the pioneering works of R\'enyi \cite{Renyi_1957} and Parry \cite{Parry_1960}. Given a base $q\in(1,2]$, each real number $x\in[0, 1/(q-1)]$ can be written as
\begin{equation}\label{eq:expansion-base-q}
x=\sum_{i=1}^\f\frac{d_i}{q^i}=\frac{d_1}{q}+\frac{d_2}{q^2}+\cdots=:(d_i)_q,\quad d_i\in\set{0, 1}~\forall i\ge 1.
\end{equation}
The infinite sequence $(d_i)\in\set{0, 1}^\N$ is called a $q$-expansion of $x$. For $q=2$, each $x\in[0, 1]$ has a unique $q$-expansion, except for countably many points having precisely two expansions. But if $q\in(1,2)$, then almost every $x\in[0, 1/(q-1)]$ has a continuum of $q$-expansions (cf.~\cite{Dajani_DeVries_2007,Sidorov_2003}). On the other hand, there also exist points having a unique $q$-expansion. The study of unique expansions goes back to Erd\H os and others (cf.~\cite{Erdos_Joo_Komornik_1990, Erdos_Horvath_Joo_1991}), who showed among other things that nontrivial points with a unique $q$-expansion exist if and only if $q>q_G:=(1+\sqrt{5})/2$. Since then, several papers (e.g. \cite{Darczy_Katai_1995,DeVries_Komornik_2008,Glendinning_Sidorov_2001,Komornik-Kong-Li-17,Kong_Li_2015}) have studied properties of the univoque set
\[
\u_q:=\{x\in[0, 1/(q-1)]: x\textrm{ has a unique $q$-expansion of the form }(\ref{eq:expansion-base-q})\}.
\]

Equally interesting is the reverse question: Given a number $x>0$, what can we say about the set
\[
\ub(x):=\set{q\in(1,2]: x\textrm{ has a unique $q$-expansion of the form }(\ref{eq:expansion-base-q})}?
\]
The most important special case is the set $\ub:=\ub(1)$, which has been well studied. It is Lebesgue null but has full Hausdorff dimension (cf.~\cite{Darczy_Katai_1995,Erdos_Joo_Komornik_1990}). Its closure $\overline{\ub}$ is a Cantor set (cf.~\cite{Komornik_Loreti_2007}). Furthermore, its smallest base $\min \ub\approx 1.78723$ was determined by Komornik and Loreti \cite{Komornik-Loreti-1998}, and was shown to be transcendental by Allouche and Cosnard \cite{Allouche_Cosnard_2000}. This number is now known as the \emph{Komornik-Loreti constant} and will be denoted by $q_{KL}$;  its importance for the study of unique $q$-expansions was shown by Glendinning and Sidorov \cite{Glendinning_Sidorov_2001}. Recently, the authors investigated in \cite{Allaart-Kong-2018} the local structure of $\ub$. For connections of $\ub$ to other areas of dynamical systems we refer to the paper \cite{Bon-Car-Ste-Giu-2013}.

However, for other $x>0$ very little is known about $\ub(x)$. L\"u et al.~\cite{Lu_Tan_Wu_2014} showed that for $x\in(0,1)$ the set $\ub(x)$ is also a Lebesgue null set of full Haudorff dimension. Moreover, Dajani et al.~\cite{Dajani-Komornik-Kong-Li-2018} showed that the algebraic difference $\ub(x)-\ub(x)$ contains an interval.  More recently, Kong et al.~{\cite{Kong-Li-Lv-Wang-Xu-2019}} studied the set $\ub(x)$ in more detail and showed how its size depends on $x$. They moreover connected the local Hausdorff dimension of $\ub(x)$ to the local dimension of $\u_q$. 

Motivated by the work of the second author \cite{Kong_2016}, we continue to investigate the {\em smallest} univoque base
\begin{equation} \label{eq:definition-of-qs}
q_s(x):=\inf\ub(x), \qquad x>0,
\end{equation}
thereby extending the result of Komornik and Loreti \cite{Komornik-Loreti-1998}.
Following the preliminary Section \ref{sec:prelim} we first develop, in Section \ref{sec:algorithm}, an efficient algorithm to determine $q_s(x)$ for any $x\in(0,1)$. This algorithm has both practical and theoretical importance: On the one hand, for many points $x$ the algorithm gives us the value of $q_s(x)$, and the corresponding unique expansion, in a small finite number of steps. {For example, we find in Example \ref{ex:algorithm-example} that $q_s(2/3)$ is the unique base $q$ for which $\big(10010(100100101100101100101)^\infty\big)_q=2/3$. This gives an algebraic equation in $q$; numerically, $q_s(2/3)\approx 1.8316$. Similarly, we find that $q_s(1/2)=\sqrt{3}$.}

On the other hand, we can infer from the algorithm that for Lebesgue almost every $x$, $q_s(x)=\min\ub(x)$; in other words, the infimum in \eqref{eq:definition-of-qs} is attained almost everywhere. Furthermore, the algorithm allows us to prove, in Section \ref{sec:continuity}, the following:

\begin{theorem} \label{thm:cadlag-intro}
The function $q_s$ is c\'adl\'ag (right-continuous with left-hand limits at every point) and has no downward jumps. As a result, $q_s$ has only countably many discontinuities.
\end{theorem}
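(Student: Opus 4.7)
The plan is to use the algorithm constructed in Section \ref{sec:algorithm}, which for each $x \in (0,1)$ produces both $q_s(x)$ and the lexicographically smallest unique expansion $\sd(x) = (d_i(x))$ of $x$. At each stage $n$ the algorithm selects a digit by comparing the current remainder to a decision threshold that depends on the digits chosen so far. If two inputs $x, y$ drive the algorithm through identical first $n$ decisions, their expansions $\sd(x), \sd(y)$ coincide on the first $n$ coordinates and the bases $q_s(x), q_s(y)$ agree up to an error that tends to $0$ as $n \to \infty$ (the equation $\sum_{i=1}^\f d_i q^{-i} = x$ pinning down $q$ more and more tightly as more digits are fixed). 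Continuity properties of $q_s$ thus reduce to one-sided regularity of the algorithm's output.

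For right-continuity, I would show that the algorithm's decision thresholds are right-closed: for any finite prefix $d_1 \cdots d_n$, the set of inputs on which the algorithm produces that prefix is of a one-sided form compatible with right-continuity (for instance, a half-open interval closed on the left and open on the right). Then $y_k \downarrow x$ implies the algorithm's first $n$ decisions on $y_k$ eventually agree with those on $x$, so $d_i(y_k) \to d_i(x)$ coordinate-wise and $q_s(y_k) \to q_s(x)$. The stage-by-stage verification of this one-sided closedness, through a case analysis of the algorithm's branches in Section \ref{sec:algorithm}, is the main technical task.

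For existence of the left-hand limit together with the absence of downward jumps, fix $x$ and let $y_k \uparrow x$. By compactness of $\set{0,1}^\N$, a subsequence of $\sd(y_k)$ converges coordinate-wise to a limit $\sd^*$; joint continuity of $(d_i)_q$ in base and sequence then forces $q_s(y_k)$ to converge along the same subsequence to a value $q^*$ with $(d_i^*)_{q^*} = x$. A left-sided analogue of the regularity above ensures $\sd^*$ is independent of the subsequence, giving existence of the left-limit. To obtain $q^* \le q_s(x)$, I would establish a monotonicity lemma: for any $y < x$, the lex-smallest unique expansion of $y$ is lexicographically no larger than that of $x$. Given this, passing to the limit yields $\sd^* \preccurlyeq \sd(x)$; since any two sequences $\sd^* \preccurlyeq \sd(x)$ satisfying $(d_i^*)_{q^*} = (d_i(x))_{q_s(x)} = x$ force $q^* \le q_s(x)$ (a lex-smaller sequence requires a smaller base to sum to the same value), the no-downward-jump property follows.

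Finally, that $q_s$ has only countably many discontinuities is a standard consequence of the c\'adl\'ag property combined with the boundedness of $q_s$ in $(1,2]$: for each $n \in \N$ only finitely many points in any bounded interval can carry a jump exceeding $1/n$, and a countable union of finite sets is countable. The principal obstacle is the dual verification of (i) one-sided regularity of the algorithm's decision thresholds and (ii) the monotonicity lemma $y < x \Rightarrow \sd(y) \preccurlyeq \sd(x)$; I expect both to yield to a careful stage-by-stage case analysis built on the specific form of the thresholds in Section \ref{sec:algorithm}.
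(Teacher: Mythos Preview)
Your plan for right-continuity is essentially what the paper does: Lemma \ref{lem:left-and-right-stability}(i) shows the algorithm's blocks $B_i$ are locally constant on right-neighborhoods and the auxiliary bases $q_i(x),r_i(x)$ are right-continuous, and Proposition \ref{prop:right-continuity} combines these to get $q_s(x_0+\delta)\to q_s(x_0)$. So that part is fine.

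The proposal for left-limits and no downward jumps, however, has a genuine gap. Your key implication --- ``any two sequences $\sd^* \preccurlyeq \sd(x)$ satisfying $(\sd^*)_{q^*} = (\sd(x))_{q_s(x)} = x$ force $q^* \le q_s(x)$'' --- is \emph{false} as stated. Take $(c_i)=01^\infty$ and $(d_i)=10^\infty$; then $(c_i)\prec(d_i)$, yet $(c_i)_p=(d_i)_q$ forces $q=p(p-1)<p$ for every $p\in(q_G,2)$. The correct statement (Lemma \ref{lem:char-quasi-expansion}(i)) is that $\Phi_x(p)\prec\Phi_x(q)\iff p<q$ for \emph{quasi-greedy} expansions; to use it you would need $\sd^*$ to equal $\Phi_x(q^*)$. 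But $\sd^*$ is only a coordinatewise limit of $\Phi_{y_k}(q_s(y_k))$ with both $y_k$ and $q_s(y_k)$ moving, and limits of unique (hence quasi-greedy) expansions need not remain quasi-greedy --- the sets $\us_q$ are not closed. Your monotonicity lemma $y<x\Rightarrow\sd(y)\preccurlyeq\sd(x)$ is also unproven and, more basically, $\sd(x)$ is undefined on the set $X_I$ of points where $\inf\ub(x)$ is not attained.

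The paper sidesteps all of this by arguing geometrically. It introduces the arcs $C(x_0,q_0)=\{(x(q),q):q\ge q_0\}$ in the master univoque set $\mathbb U$, observes they are pairwise disjoint with uniformly bounded slope on each interval $(q_G^{-k},q_G^{-k+1})$ (Lemma \ref{lem:derivative-bound}), and then: for left-limits (Lemma \ref{lem:left-hand-limits}), if $q_s$ oscillated from the left one could thread an arc from a low point $(y_M,\tilde q)$ underneath a nearby high point $(x_N,q_s(x_N))$, contradicting minimality of $q_s(x_N)$; for no downward jumps (Lemma \ref{lem:no-down-jumps}), an arc through $(x_0,p)$ with $q_-<p<q_+$ immediately gives bases below $q_+$ for all $y<x_0$ nearby. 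This arc picture replaces your symbolic comparison by a one-line slope argument and works uniformly, including at type I points.
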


As a corollary, we show that $q_s$ has a maximum value $q_{\max}$ and takes on every value in the interval $(1,q_{\max}]$ at least once. In fact, we can show that $q_{\max}\approx 1.88845$ is an algebraic integer and this value is uniquely attained at $x=1/q_G=(\sqrt{5}-1)/2$ (see Figure \ref{fig:1}).

\begin{figure}[h!]
 \begin{center}
\begin{tikzpicture}[xscale=6.1,yscale=15]
\draw [->] (-0.01,1.5) node[anchor=east] {$1.5$}  -- (2,1.5) node[anchor=west] {$x$};
\draw [->] (0,1.49) node[anchor=north] {$0$} -- (0,1.95) node[anchor=south] {$q$};

\draw [dashed] (1.9,1.618)--(0,1.618);
\draw (-0.01,1.618) node[anchor=east] {$\hat{q}_1\approx 1.618$} -- (0.01,1.618);
\draw [dashed] (1.9,1.7549)--(0,1.7549);
\draw (-0.01,1.7549) node[anchor=east] {$\hat{q}_2\approx 1.755$} -- (0.01,1.7549);
\draw [dashed] (1.9,1.7872)--(0,1.7872);
\draw (-0.01,1.7872) node[anchor=east] {$q_{KL}\approx 1.787$} -- (0.01,1.7872);
\draw [dashed] (1,1.94)--(1,1.49) node[anchor=north] {$1$};
\draw (-0.01,1.8885) node[anchor=east] {$q_{\max}\approx 1.888$} -- (0.01,1.8885);
\draw (0.618,1.495) node[anchor=north] {$q_G^{-1}$} -- (0.618,1.505);
\draw (0.382,1.495) node[anchor=north] {$q_G^{-2}$} -- (0.382,1.505);
\draw (0.236,1.495) node[anchor=north] {$q_G^{-3}$} -- (0.236,1.505);
\draw (0.12,1.47) node {$\cdots$};
\draw (1.625,1.495) node[anchor=north] {$\xi_1=q_G$} -- (1.625,1.505);
\draw (1.236,1.495) node[anchor=north] {$\xi_{1,1}$} -- (1.236,1.505);
\draw (1.382,1.495) node[anchor=north] {$\xi_{1,2}$} -- (1.382,1.505);
\draw (1.48,1.475) node {$\dots$};
\draw (1.055,1.495) node[anchor=north] {$\xi_2$} -- (1.055,1.505);


\draw[domain=1.618:1.7549,variable=\q] plot({\q/(pow(\q,2)-1)},{\q});  
\draw[domain=1.618:1.7549,variable=\q] plot({1/(pow(\q,2)-1)},{\q});	
\draw[domain=1.618:1.7549,variable=\q] plot({1/(\q*(pow(\q,2)-1))},{\q});	
\draw[domain=1.618:1.7549,variable=\q] plot({1/(\q*\q*(pow(\q,2)-1))},{\q});	
\draw[domain=1.618:1.7510,variable=\q] plot({1/(\q*\q*\q*(pow(\q,2)-1))},{\q});	
\draw[domain=1.618:1.7313,variable=\q] plot({1/(pow(\q,4)*(pow(\q,2)-1))},{\q});	
\draw[domain=1.618:1.7167,variable=\q] plot({1/(pow(\q,5)*(pow(\q,2)-1))},{\q});	
\draw[domain=1.618:1.7055,variable=\q] plot({1/(pow(\q,5)*(pow(\q,2)-1))},{\q});	
\draw[domain=1.618:1.6966,variable=\q] plot({1/(pow(\q,6)*(pow(\q,2)-1))},{\q});	
\draw[domain=1.618:1.6894,variable=\q] plot({1/(pow(\q,7)*(pow(\q,2)-1))},{\q});	
\draw[domain=1.618:1.6833,variable=\q] plot({1/(pow(\q,8)*(pow(\q,2)-1))},{\q});	

\draw[domain=1.7549:1.7846,variable=\q] plot({1/\q+(\q+1)/(pow(\q,5)-\q)},{\q});	
\draw[domain=1.7549:1.7846,variable=\q] plot({1/pow(\q,2)+(\q+1)/(pow(\q,6)-\q*\q)},{\q});	
\draw[domain=1.7549:1.7718,variable=\q] plot({1/pow(\q,3)+(\q+1)/(pow(\q,7)-pow(\q,3))},{\q});	
\draw[domain=1.7549:1.7846,variable=\q] plot({1/\q+1/pow(\q,3)+(\q+1)/(pow(\q,7)-pow(\q,3))},{\q});	
\draw[domain=1.7549:1.7846,variable=\q] plot({1/pow(\q,2)+1/pow(\q,4)+(\q+1)/(pow(\q,8)-pow(\q,4))},{\q}); 
\draw[domain=1.7549:1.7846,variable=\q] plot({1/pow(\q,3)+1/pow(\q,5)+(\q+1)/(pow(\q,9)-pow(\q,5))},{\q});	
\draw[domain=1.7549:1.7668,variable=\q] plot({1/pow(\q,4)+1/pow(\q,6)+(\q+1)/(pow(\q,10)-pow(\q,6))},{\q});	
\draw[domain=1.7549:1.7769,variable=\q] plot({1/\q+1/pow(\q,3)+1/pow(\q,5)+(\q+1)/(pow(\q,9)-pow(\q,5))},{\q});  
\draw[domain=1.7549:1.7692,variable=\q] plot({1/pow(\q,2)+1/pow(\q,4)+1/pow(\q,6)+(\q+1)/(pow(\q,10)-pow(\q,6))},{\q});  
\draw[domain=1.7549:1.7655,variable=\q] plot({1/pow(\q,3)+1/pow(\q,5)+1/pow(\q,7)+(\q+1)/(pow(\q,11)-pow(\q,7))},{\q});  
\draw[domain=1.7549:1.7618,variable=\q] plot({1/\q+1/pow(\q,3)+1/pow(\q,5)+1/pow(\q,7)+(\q+1)/(pow(\q,11)-pow(\q,7))},{\q});  
\draw[domain=1.7549:1.7594,variable=\q] plot({pow(\q,-2)+pow(\q,-4)+pow(\q,-6)+pow(\q,-8)+(\q+1)/(pow(\q,12)-pow(\q,8))},{\q});  



\draw[domain=1.8393:1.8668,variable=\q] plot({pow(\q,2)/(pow(\q,3)-1)},{\q}); 	
\draw[domain=1.8124:1.8233,variable=\q] plot({(pow(\q,4)+\q)/(pow(\q,5)-1)},{\q});	
\draw[domain=1.8124:1.8174,variable=\q] plot({1/\q+(pow(\q,3)+1)/(\q*(pow(\q,5)-1))},{\q});	
\draw[domain=1.8668:1.8706,variable=\q] plot({(pow(\q,5)+\q+1)/(pow(\q^6-1))},{\q});	

\draw (0.618,1.8885)--(0.6204,1.8843)--(0.6228,1.8802)--(0.6252,1.8760)--(0.6276,1.8719)--(0.63,1.8681);

\draw (0.6518,1.8362)--(0.6548,1.8332)--(0.6578,1.8305);
\draw (0.6588,1.8316)--(0.6628,1.8270)--(0.6658,1.8239);
\draw (0.6668,1.8314)--(0.6698,1.8267);

\draw[fill] (0.6832,1.8092) circle[radius=0.001];
\draw (0.6842,1.8109)--(0.6882,1.8050)--(0.6932,1.8012);
\draw (0.6942,1.8020)--(0.6982,1.7973);
\draw (0.6992,1.8020)--(0.7032,1.7965)--(0.7072,1.7911)--(0.7108,1.7872)--(0.7138,1.7851);
\draw (0.7148,1.8028)--(0.7168,1.8002)--(0.7188,1.7977)--(0.7208,1.7956)--(0.7228,1.7930)--(0.7248,1.7903)--(0.7268,1.7877);

\draw (0.7615,1.8091)--(0.7645,1.8060)--(0.7675,1.8026)--(0.7705,1.7997)--(0.7735,1.7978)--(0.7765,1.7940)
	--(0.7795,1.7902)--(0.7825,1.7872)--(0.7845,1.7847);
\draw (0.7855,1.7893)--(0.7872,1.7872);

\draw (0.382,1.8042)--(0.385,1.8010)--(0.389,1.7966);
\draw (0.390,1.7988)--(0.393,1.7941)--(0.396,1.7892)--(0.400,1.7849);
\draw (0.401,1.7963)--(0.404,1.7916)--(0.408,1.7857);

\draw (0.4302,1.7967)--(0.4342,1.7920)--(0.4372,1.7877)--(0.4392,1.7852)--(0.4412,1.7862);

\draw[fill] (0.7275,1.7859) circle[radius=0.001];
\draw[fill] (0.7900,1.7855) circle[radius=0.001];
\draw[fill] (0.2464,1.7873) circle[radius=0.001];
\draw[fill] (0.2474,1.7855) circle[radius=0.001];


\draw[domain=1.618:1.7549,variable=\q] plot({1/\q+1/(pow(\q,2)-1)},{\q}); 
\draw[domain=1.618:1.7063,variable=\q] plot({1/\q+1/pow(\q,2)+1/(pow(\q,3)-\q)},{\q}); 
\draw[domain=1.618:1.6648,variable=\q] plot({1/\q+1/pow(\q,2)+1/pow(\q,3)+1/(pow(\q,4)-pow(\q,2))},{\q}); 
\draw[domain=1.618:1.6441,variable=\q] plot({1/\q+1/pow(\q,2)+1/pow(\q,3)+1/pow(\q,4)+1/(pow(\q,5)-pow(\q,3))},{\q}); 
\draw[domain=1.618:1.6331,variable=\q] plot({1/\q+1/pow(\q,2)+1/pow(\q,3)+1/pow(\q,4)+1/pow(\q,5)+1/(pow(\q,6)-pow(\q,4))},{\q}); 
\draw[domain=1.618:1.6269,variable=\q] plot({1/\q+1/pow(\q,2)+1/pow(\q,3)+1/pow(\q,4)+1/pow(\q,5)+1/pow(\q,6)+1/(pow(\q,7)-pow(\q,5))},{\q}); 
\draw[domain=1.618:1.6234,variable=\q] plot({1/\q+1/pow(\q,2)+1/pow(\q,3)+1/pow(\q,4)+1/pow(\q,5)+1/pow(\q,6)+pow(\q,-7)+1/(pow(\q,8)-pow(\q,6))},{\q}); 
\draw[domain=1.618:1.6213,variable=\q] plot({1/\q+1/pow(\q,2)+1/pow(\q,3)+1/pow(\q,4)+1/pow(\q,5)+1/pow(\q,6)+pow(\q,-7)+pow(\q,-8)+1/(pow(\q,9)-pow(\q,7))},{\q}); 

\draw[domain=1.7549:1.7846,variable=\q] plot({1/\q+pow(\q,-2)+pow(\q,-4)+(\q+1)/(pow(\q,8)-pow(\q,4))},{\q});  
\draw[domain=1.7549:1.7650,variable=\q] plot({1/\q+pow(\q,-2)+pow(\q,-4)+pow(\q,-6)+(\q+1)/(pow(\q,10)-pow(\q,6))},{\q});  
\draw[domain=1.7549:1.7581,variable=\q] plot({1/\q+pow(\q,-2)+pow(\q,-4)+pow(\q,-6)+pow(\q,-8)+(\q+1)/(pow(\q,12)-pow(\q,8))},{\q});  

\draw[fill] (1.0015,1.7859) circle[radius=0.001];
\draw[fill] (1,1.7872) circle[radius=0.002];

\draw[domain=1.618:1.9,variable=\x] plot(\x,{1+(1/\x)});
\useasboundingbox(-0.45,2);
\end{tikzpicture}
\end{center}
\caption{The graph of $q_s(x)=\inf\ub(x)$.}
\label{fig:1}
\end{figure}

A sizeable part of the paper is devoted to the level sets of $q_s$; that is, the sets
\[
L(q):=\{x>0: q_s(x)=q\}, \qquad 1<q\leq q_{\max},
\]
which we study in Section \ref{sec:level-sets}. From Theorem \ref{thm:cadlag-intro} we can deduce that $L(q)$ has both a smallest and a largest element for each $q\in(1,q_{\max}]$, though we do not know whether the level sets are closed. However, we can prove the following.

\begin{theorem} \label{thm:finite-level-sets-intro}
The set $L(q)$ is finite for Lebesgue almost every $q$. Specifically, $L(q)$ is finite whenever $q\not\in\overline{\ub}$.
\end{theorem}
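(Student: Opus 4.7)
The plan is to prove the stronger second assertion—finiteness of $L(q)$ for $q\notin\overline\ub$—and to deduce the Lebesgue almost-everywhere statement from the fact that $\overline\ub$ is a Cantor set of Lebesgue measure zero. I would argue the main assertion by contrapositive: from an infinite $L(q)$ I aim to extract a combinatorial witness forcing $q\in\overline\ub$.

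First I would establish that for $q\notin\overline\ub$ every $x\in L(q)$ actually admits a unique $q$-expansion, i.e.\ the infimum in \eqref{eq:definition-of-qs} is attained on $L(q)$. If instead $q_s(x)=q$ with $q\notin\ub(x)$, a sequence $q_k\searrow q$ with $q_k\in\ub(x)$ would provide unique $q_k$-expansions of $x$, and the standard characterisation of univoque bases via shift-inequalities on the greedy expansion $\alpha(q_k)$ of $1$ would force $q\in\overline\ub$. Consequently each $x\in L(q)$ has a distinguished unique $q$-expansion $(d_i(x))$, namely the lexicographically smallest unique expansion produced by the algorithm of Section~\ref{sec:algorithm}.

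Next, suppose for contradiction that $L(q)$ is infinite. Choose distinct $x_n\in L(q)$ converging monotonically to some $x_0$. If $x_n\searrow x_0$, right-continuity from Theorem~\ref{thm:cadlag-intro} forces $x_0\in L(q)$; if $x_n\nearrow x_0$, the no-downward-jumps condition gives $q_s(x_0^-)=q$. In either case, passing to a subsequence the digit strings $(d_i(x_n))$ converge coordinatewise to some $(d_i)$ with $(d_i)_q=x_0$, and $(d_i)$ inherits the admissibility constraints that characterise outputs of the algorithm at base $q$.

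The crucial and hardest step is to translate this persistence of admissible outputs at base $q$ into an admissibility statement about $q$ itself. The branching rules of the algorithm are governed by $\alpha(q)$ and its reflection: any $0$ digit of an admissible output must be followed by a tail lexicographically below $\alpha(q)$, and symmetrically for $1$'s. An infinite family of pairwise distinct admissible sequences sharing arbitrarily long common prefixes at the same base $q$ forces $\sigma^n\alpha(q)\preccurlyeq\alpha(q)$ for every $n\ge 0$, i.e.\ $q\in\overline\ub$, contradicting the hypothesis. The main obstacle is precisely this last implication: matching the persistence of prefixes in the algorithm's outputs to the shift-invariance of $\alpha(q)$. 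I expect this to require a careful reading of the algorithm's branching rules and the explicit construction, whenever $q\notin\overline\ub$, of a forbidden block in $\alpha(q)$ that rules out the existence of an accumulating family of outputs.
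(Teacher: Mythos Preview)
Your first step—showing the infimum is attained for $x\in L(q)$ when $q\notin\overline\ub$—has the right conclusion but a faulty justification. Knowing $q_k\in\ub(x)$ says nothing directly about $\al(q_k)$ or about $\ub=\ub(1)$; you have conflated $\ub(x)$ with $\ub$. The paper obtains this conclusion as a byproduct of the algorithm classification (Proposition~\ref{prop:type-I}(i)).

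The third step is where your plan has a genuine gap. The claimed implication—that an infinite family of admissible sequences with arbitrarily long common prefixes forces $\sigma^n\al(q)\preccurlyeq\al(q)$ for all $n$, hence $q\in\overline\ub$—is mis-stated (the inequality $\sigma^n\al(q)\preccurlyeq\al(q)$ holds for \emph{every} $q$; membership in $\overline\ub$ requires instead $\overline{\al(q)}\prec\sigma^n(\al(q))$ for all $n\ge 1$) and, even after correction, no mechanism is supplied. For $q>q_{KL}$ the univoque set $\u_q$ is uncountable regardless of whether $q\in\overline\ub$, so merely having many nearby points with unique $q$-expansions places no constraint on $\al(q)$. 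What matters is that $q$ is \emph{minimal} for each such $x$, and your sketch never exploits minimality; the hoped-for ``forbidden block'' argument remains a wish rather than an argument.

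The paper's proof is entirely different and much more direct: it is geometric rather than combinatorial. Since $q\notin\overline\ub$, there is a connected component $(q_n,q_{n+1})$ of $(1,2]\setminus\vb$ with $q_n<q\le q_{n+1}$, and on this interval the set-valued map $p\mapsto\us_p$ is constant. For each $x\in L(q)$ the minimal unique expansion of $x$ therefore remains a unique expansion for every base down to $q_n$, and Proposition~\ref{prop:finite-alg-stable} shows that the arc through $(x,q)$ stays on the graph of $q_s$ until it reaches level $q_n$ at some $x'>x$. A uniform derivative bound on the arcs (Lemma~\ref{lem:derivative-bound}) then gives $x'-x\ge (q-q_n)/M$, so any two points of $L(q)$ are separated by at least this fixed distance; since $L(q)$ is bounded (Corollary~\ref{cor:smallest-and-largest}), it is finite. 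The key idea missing from your proposal is precisely this local stability of the minimal expansion across the $\vb$-gap, which converts the hypothesis $q\notin\overline\ub$ directly into a uniform positive spacing of $L(q)$ without any contradiction argument.
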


On the other hand, at least some of the level sets of $q_s$ are infinite. Specifically, this holds for the Komornik-Loreti constant $q_{KL}\approx 1.78723$. Recall that Komornik and Loreti \cite{Komornik-Loreti-1998} showed that $q_s(1)=\min\ub(1)=q_{KL}$, so it is interesting to ask for how many other points this is the case. The answer may be surprising.

\begin{theorem} \label{thm:infinite-level-set-qKL-intro}
There are infinitely many points $x$ such that $q_{s}(x)=\min\ub(x)=q_{KL}$. Moreover, the level set $L(q_{KL})$ has infinitely many right- and infinitely many left accumulation points, which are themselves members of $L(q_{KL})$.
\end{theorem}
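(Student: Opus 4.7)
The starting point is the Komornik--Loreti theorem that $q_s(1)=\min\ub(1)=q_{KL}$, with the unique $q_{KL}$-expansion of $1$ being the Thue--Morse sequence $\tau=\tau_1\tau_2\cdots$. My plan is to construct an explicit infinite family of points in $L(q_{KL})$ by concatenating finite admissible words with $\tau$, extract convergent subsequences to produce accumulation points on both sides, and then promote those accumulation points to members of $L(q_{KL})$ via Theorem~\ref{thm:cadlag-intro}.

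For the construction I would consider finite $\{0,1\}$-words $w$ such that $w\tau$ satisfies the uniqueness conditions at base $q_{KL}$, namely the standard lexicographic inequalities involving $\tau$ and $\overline\tau$ at every suffix inside $w$ and at the junction. For each such $w$ set $x_w=(w\tau)_{q_{KL}}$; then $q_{KL}\in\ub(x_w)$, so $q_s(x_w)\le q_{KL}$, and varying $w$ produces infinitely many distinct points in $\u_{q_{KL}}$. Particular choices---for example, $w_n=\tau_1\cdots\tau_n$ with a single carefully calibrated flip at the end so that $w_n\tau$ remains admissible---yield sequences $x_{w_n}\to 1$, and by arranging the flip to push $x_{w_n}$ above, respectively below, $1$ one obtains both a right- and a left-approaching subsequence. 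Repeating the construction around each shift point $y_k=(\sigma^k\tau)_{q_{KL}}$ furnishes infinitely many prospective accumulation targets.

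The reverse inequality $q_s(x_w)\ge q_{KL}$ is the crux of the argument and the main obstacle. I would apply the algorithm of Section~\ref{sec:algorithm} to each $x_w$: because the tail of the expansion of $x_w$ is the aperiodic Thue--Morse sequence, none of the finite stopping criteria of the algorithm (associated with the comparatively simple combinatorial structure of $\u_q$ for $q<q_{KL}$) can be triggered, so the algorithm is forced to identify $q_{KL}$ as the infimum base. Combined with the upper bound this gives $q_s(x_w)=q_{KL}$, so $x_w\in L(q_{KL})$.

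Finally, for any accumulation point $x^*$ of the family $\{x_w\}$: if $x^*$ is a right-accumulation point, Theorem~\ref{thm:cadlag-intro} (right-continuity) gives $q_s(x^*)=q_{KL}$ automatically. If $x^*$ is a left-accumulation point, the no-downward-jumps clause of Theorem~\ref{thm:cadlag-intro} yields $q_s(x^*)\ge q_{KL}$, and the matching upper bound follows by exhibiting an explicit unique $q_{KL}$-expansion of $x^*$, obtained as the product-topology limit of the expansions $w_n\tau$ of the approximating $x_{w_n}\to x^*$. Once the minimality step of the previous paragraph is in hand, this final step is essentially soft, and the main effort is concentrated entirely on verifying that no base strictly below $q_{KL}$ is admissible for any $x_w$---this is where the delicate interaction between the Thue--Morse combinatorics, the lexicographic admissibility conditions, and the structure of $\u_q$ just below $q_{KL}$ must be balanced via the algorithm.
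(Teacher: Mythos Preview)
Your overall architecture is reasonable, but the argument for the lower bound $q_s(x_w)\ge q_{KL}$ has a genuine gap, and it is precisely the step you flag as ``the crux.'' Your claim is that because the $q_{KL}$-expansion of $x_w$ ends in the aperiodic Thue--Morse tail, the algorithm cannot stop below $q_{KL}$. This is false: the algorithm does not track the $q_{KL}$-expansion of $x$ but rather searches for \emph{any} unique expansion in smaller bases, and such an expansion can be completely unrelated to $w\tau$. Concretely, take $x_5:=(\tau_5\tau_6\cdots)_{q_{KL}}$; this point has the aperiodic Thue--Morse tail $\tau_5\tau_6\cdots=0011\cdots$ as its unique $q_{KL}$-expansion, yet $00(10)^\infty$ is a unique expansion of $x_5$ in some base strictly between $q_G$ and $q_{KL}$, so $q_s(x_5)<q_{KL}$. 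The same phenomenon occurs for most tails $x_n=(\tau_n\tau_{n+1}\cdots)_{q_{KL}}$: the paper shows that $q_s(x_n)=q_{KL}$ holds only for $n$ in a specific density-zero set $S\subset\N$, and fails for all other $n$. (This is also why the earlier result in \cite{Kong_2016} that you may have in mind is incorrect for three of its four claimed points.)

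What the paper actually does is isolate the set $S$ explicitly and prove, via a delicate iterative reduction (Proposition~\ref{prop:power-of-two}) combined with quantitative estimates of the form $q_{KL}-\hat q_m>q_{KL}^{-3\cdot 2^{m-1}}$, that no base $p<q_{KL}$ can support a unique expansion of $x_n$ when $n\in S$. The accumulation-point statements then follow by exhibiting specific subsequences inside $\{x_n:n\in S\}$; for left accumulation points the limit is again some $x_n$ with $n\in S$, so no separate argument via c\`adl\`ag is needed. Your soft framework for passing to limits is fine once membership in $L(q_{KL})$ is established, but establishing that membership cannot be done by the aperiodicity heuristic---you need a mechanism that rules out \emph{all} competing expansions in \emph{all} bases below $q_{KL}$, and this is genuinely quantitative.
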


Theorem \ref{thm:infinite-level-set-qKL-intro} corrects a result in \cite[Theorem 1.1]{Kong_2016} which states that $q_{s}(x)=\min\ub(x)=q_{KL}$ for only four values of $x$. (In fact, for three of the four values reported in \cite{Kong_2016} the value of $q_s(x)$ is strictly below $q_{KL}$.) We will do substantially more and present an infinite set of specific points in $L(q_{KL})$. However, we do not know whether $L(q_{KL})$ is countable.

The level set $L(q_{KL})$ is not alone in being infinite; in fact we also prove the following:

\begin{theorem} \label{thm:other-infinite-level-sets-intro}
The function $q_s$ has infinitely many infinite level sets.
\end{theorem}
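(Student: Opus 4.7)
My plan is to adapt the cascade construction underlying Theorem \ref{thm:infinite-level-set-qKL-intro} to a countable family of bases other than $q_{KL}$. The role of $q_{KL}$ in that theorem is played by two of its features: its quasi-greedy expansion of $1$ is built by a Thue--Morse-like substitution that admits a self-similar cascade, and the entire interval $(1,q_{KL})$ is disjoint from $\ub$. To replicate the conclusion we therefore need other bases $\tilde q\in\overline{\ub}$ enjoying analogous cascade and gap properties.

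A natural source of such bases is the family of \emph{generalized Komornik--Loreti constants} (also called de Vries--Komornik numbers), each constructed by iterating a Thue--Morse-type substitution on a suitable admissible seed word instead of on the single symbol $1$. These constants, studied in \cite{Komornik_Loreti_2007} and \cite{Allaart-Kong-2018}, form an infinite sequence in $\overline{\ub}$, and each of them is a left endpoint of a gap of $\ub$: for every such $\tilde q$ there exists $\ep>0$ with $(\tilde q-\ep,\tilde q)\cap\ub=\emptyset$. Crucially, the quasi-greedy expansion of $1$ in base $\tilde q$ still carries the cascade structure that makes the argument behind Theorem \ref{thm:infinite-level-set-qKL-intro} work.

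The steps would be as follows. First, for each generalized Komornik--Loreti constant $\tilde q$, I use the algorithm of Section \ref{sec:algorithm} to identify a seed point $\tilde x$ with $q_s(\tilde x)=\tilde q$; the generic choice is a shift of $1$ whose $\tilde q$-expansion is the cascade itself. Second, I apply the cascade substitution to the $\tilde q$-expansion of $\tilde x$ to produce a sequence of perturbations $(\tilde x_k)$ accumulating at $\tilde x$, exactly as in Theorem \ref{thm:infinite-level-set-qKL-intro}; uniqueness of the $\tilde q$-expansion of each $\tilde x_k$ follows from the admissibility characterization recalled in Section \ref{sec:prelim} together with the fact that the substitution preserves it. Third, I verify $q_s(\tilde x_k)\ge\tilde q$ using the gap property: the interval $(\tilde q-\ep,\tilde q)$ contains no element of $\ub$, while smaller bases are excluded because the cascade tail forces the lexicographically smallest unique expansion produced by the algorithm of Section \ref{sec:algorithm} to stabilize at $\tilde q$.

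The main obstacle is the third step. Unlike the case of $q_{KL}$, the gap below $\tilde q$ is only local, so I must separately rule out that $\tilde x_k$ happens to admit a unique expansion in some base strictly below $\tilde q-\ep$. This is handled by comparing the greedy expansion of $\tilde x_k$ in any such candidate base with the characterization of unique expansions from Section \ref{sec:algorithm}: the cascade block dictates that the greedy tail is eventually periodic in a way incompatible with any base smaller than $\tilde q-\ep$, once $k$ is large enough to push the cascade past the required prefix. Once this verification is in place, distinct generalized Komornik--Loreti constants produce distinct infinite level sets, yielding the theorem.
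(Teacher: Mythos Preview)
Your proposal identifies the correct family of bases: the paper likewise deduces Theorem~\ref{thm:other-infinite-level-sets-intro} from a more precise statement (Theorem~\ref{thm:deVries-Komornik}) about de~Vries--Komornik numbers. So the broad strategy matches. There are, however, two substantive gaps.

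First, a terminological slip that matters for the argument: a de~Vries--Komornik number $\hat q_c(\sa)$ is the \emph{right} endpoint of a component of $(1,2]\setminus\overline{\ub}$, not a left endpoint. The property you actually use, namely that an interval $(\tilde q-\ep,\tilde q)$ is disjoint from $\ub$, is correct, but this gap is $(\hat q_0(\sa),\hat q_c(\sa))$ and the entire sequence of local bases $\hat q_k(\sa)\nearrow\hat q_c(\sa)$ sits inside it. The paper exploits this finer structure, not merely the existence of a gap.

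Second, and more seriously, your third step is where the actual work lies and your sketch does not supply it. The paper does not produce ``perturbations accumulating at $\tilde x$'' and then invoke the algorithm of Section~\ref{sec:algorithm}. Instead it fixes $q_\sv=\hat q_c(\sv)\in(q_{KL},Q_3)$, sets $x_n=(\alpha_n\alpha_{n+1}\dots)_{q_\sv}$ for the carefully chosen indices $n=m\cdot 2^{k-1}$ and $n=m(2^{2k-1}-1)$, and argues directly. The exclusion of bases $p<q_\sv$ is quantitative: one shows that any unique $p$-expansion of $x_n$ would have to begin with a specific block, which forces $p>\hat q_1(\sv)$; a polynomial inequality of the type $f(q_\sv)-f(\hat q_1(\sv))\ge q_\sv-\hat q_1(\sv)$ then yields a lower bound on $|x_m-x_n|$ that contradicts the upper bound $|x_m-x_n|\le q_\sv^{-x_m\wedge x_n}$ coming from the common initial segment. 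Your claim that ``the greedy tail is eventually periodic in a way incompatible with any base smaller than $\tilde q-\ep$'' does not capture this mechanism and would not, as stated, rule out all small bases.

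Finally, the restriction $q_\sv\in(q_{KL},Q_3)$ is not decorative: $q_\sv>q_{KL}$ is used to force the relevant expansions to begin with $1001$ (hence $x_n>q_G^{-1}$, so any unique expansion must start with $1$), and $q_\sv<Q_3$ ensures $m>3$. Your sketch does not address where in $(1,2]$ the de~Vries--Komornik numbers should be taken, nor why the argument fails or needs modification outside that range.
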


In Section \ref{sec:cascades} we examine the graph of $q_s$ in more detail. As a special case of Theorem \ref{thm:Komornik-Loreti-cascades}, we obtain a complete description of the graph of $q_s$ over the interval $[1,\infty)$, which may be summarized as follows (see Figure \ref{fig:1}). There is a strictly increasing sequence $(\hat{q}_n)$ of bases (to be defined in Section \ref{subsec:KL}) such that $\hat q_1=q_G$ and $\hat q_n\nearrow q_{KL}$;
a strictly decreasing sequence $(\xi_m)$ of points in $(1,q_G]$ such that $\xi_1=q_G$ and $\xi_m\searrow 1$;
and for each $m$, a strictly increasing sequence $(\xi_{m,j})$ of points in $(\xi_{m+1},\xi_m)$ with $\xi_{m,j}\nearrow \xi_m$;
such that the following hold:
\begin{itemize}
	\item On $[\xi_{m+1},\xi_m)$, $q_s(x)$ takes values in $(\hat{q}_m,\hat{q}_{m+1}]$.
  \item $q_s$ is strictly decreasing and convex on each interval $[\xi_{m+1}, \xi_{m,1})$ or $[\xi_{m,j}, \xi_{m,j+1})$ with $j\ge 1$.
  \item For each $m$, the sequence $q_s(\xi_{m,j})$ is strictly decreasing in $j$ and converges to $\hat q_m$.
  \item $\lim_{x\nearrow \xi_{m,j}}q_s(x)=\hat q_m<q_s(\xi_{m,j})$ for all $j\ge 1$, and $\lim_{x\nearrow \xi_m}q_s(x)=\hat q_m=q_s(\xi_m)$.
\end{itemize}
We call this repetitive behavior, in which the graph gradually works its way down from $q_{KL}$ at $x=1$ to $q_G$ at $x=q_G$ with repeated upward ``bounces", a {\em Komornik-Loreti cascade}. As shown in Theorem \ref{thm:Komornik-Loreti-cascades}, this type of behavior also occurs at smaller scales in infinitely many other places in the graph.

In Section \ref{sec:maximum}, we compute the maximum value $q_{\max}$ of the function $q_s(x)$. We show that this maximum is uniquely attained at $x=1/q_G$, and $q_{\max}$ is an algebraic integer of degree 26.



Finally, we end the paper with a list of open problems.

\section{Preliminaries} \label{sec:prelim}

In this section we review some well-known properties of unique expansions. First we need some notions from symbolic dynamics (cf.~\cite{Lind_Marcus_1995}). Let $\set{0, 1}^\N$ be the set of infinite sequences of zeros and ones. Denote by $\si$ the left shift on $\set{0,1}^\N$ such that $\si((c_i))=(c_{i+1})$. 
By a word $\sc=c_1\ldots c_n$ we mean a finite string of digits with $c_i\in\set{0,1}$. We denote by $\sc^\f=\sc\sc\ldots\in\set{0,1}^\N$ the periodic sequence with periodic block $\sc$. Throughout the paper we will use the lexicographical ordering `$\prec, \lle, \succ$' and `$\lge$' between sequences and words in the traditional way. For example, for two sequences $(c_i), (d_i)\in\set{0,1}^\N$ we write $(c_i)\prec (d_i)$ if $c_1<d_1$, or there exists $n>1$ such that $c_1\ldots c_{n-1}=d_1\ldots d_{n-1}$ and $c_n<d_n$. Moreover, for two words $\sc, \sd$ we say $\sc\prec \sd$ if $\sc 0^\f\prec \sd 0^\f$. For a sequence $(c_i)\in\set{0, 1}^\N$ we denote by $\overline{(c_i)}:=(1-c_1)(1-c_2)\ldots\in\set{0,1}^\N$ its \emph{reflection}. Similarly, for a word $\sc=c_1\ldots c_n$ we write $\overline{\sc}:=(1-c_1)\ldots(1-c_n)$. If $c_n=0$, then we write $\sc^+:=c_1\ldots c_{n-1}1$; and if $c_n=1$, we write $\sc^-:=c_1\ldots c_{n-1}0$. So $\overline{\sc}, \sc^+$ and $\sc^-$ are all words with digits in $\set{0,1}$.

\subsection{Quasi-greedy expansions and unique expansions}

For $q\in(1,2]$ and $x\in[0, 1/(q-1)]$, let
\[
\Phi_x(q)=x_1(q)x_2(q)\ldots\;\in\set{0,1}^\N
\]
be the quasi-greedy $q$-expansion of $x$; that is, the lexicographically largest $q$-expansion of $x$ not ending with $0^\f$. For the special case $x=1$, we let $\al(q)=\al_1(q)\al_2(q)\ldots=\Phi_1(q)$ denote the quasi-greedy $q$-expansion of $1$. The following characterizations of the map $q\mapsto\Phi_x(q)$ for a fixed $x>0$, and of the map $x\mapsto\Phi_x(q)$ for a fixed $q\in(1,2]$ can be deduced easily from \cite[Lemma 2.3]{DeVries-Komornik-2011}. Observe that for a fixed $x>0$ the largest base $q\in(1,2]$ for which $x$ has a $q$-expansion is $q_x:=\min\set{2, 1+1/x}$.

\begin{lemma} \label{lem:char-quasi-expansion}\mbox{}

\begin{itemize}
\item[{\rm(i)}]
Let $x>0$. Then the map $q\mapsto\Phi_x(q)$ is strictly increasing in $(1, q_x]$, and is left continuous with respect to the order topology.
\item[{\rm(ii)}]
Let $q\in(1,2]$. Then the map $x\mapsto\Phi_x(q)$ is strictly increasing in $[0, 1/(q-1)]$, and is left continuous with respect to the order topology.

\item[{\rm(iii)}]
For each $q\in(1,2]$ and $x\in[0, 1/(q-1)]$, the sequence $\Phi_x(q)=(x_i(q))$ satisfies
\[
x_{n+1}(q)x_{n+2}(q)\ldots\lle \al(q)\quad\textrm{whenever}\quad x_n(q)=0.
\]
\end{itemize}
\end{lemma}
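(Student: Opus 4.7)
The plan is to derive all three parts from the classical Parry-type characterization of quasi-greedy expansions found in \cite[Lemma 2.3]{DeVries-Komornik-2011}, namely that a sequence $(d_i)\in\{0,1\}^\N$ not terminating in $0^\f$ is the quasi-greedy $q$-expansion of some $x\in(0,1/(q-1)]$ if and only if $\sum_i d_i/q^i=x$ and $d_{n+1}d_{n+2}\ldots\lle\al(q)$ whenever $d_n=0$. Part (iii) is then immediate, since $\Phi_x(q)$ by definition does not end in $0^\f$ and the characterization applies directly.

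For part (ii), fix $q\in(1,2]$. Strict monotonicity would follow from the elementary observation that if $(c_i)\prec(d_i)$ lexicographically and both sequences lie in $\{0,1\}^\N$, then $\sum c_i/q^i\le\sum d_i/q^i$, with equality only in the trivial case where $(c_i)=\sc0^\f$ and $(d_i)=\sc^-1^\f$ for some common prefix $\sc$ ending in $1$; such a pair cannot consist of two quasi-greedy expansions because the first ends in $0^\f$. Hence $x\mapsto\Phi_x(q)$ is strictly increasing. Left continuity at $x_0$ follows by taking $x_n\nearrow x_0$; the sequences $\Phi_{x_n}(q)$ are monotone increasing in the lex order and bounded above by $\Phi_{x_0}(q)$, so they converge pointwise to some $(d_i)\in\{0,1\}^\N$. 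By dominated convergence $\sum d_i/q^i=x_0$, and the inequality $d_{n+1}d_{n+2}\ldots\lle\al(q)$ whenever $d_n=0$ passes to the limit, so $(d_i)=\Phi_{x_0}(q)$ by the characterization (after ruling out a tail of $0^\f$, which would force all but finitely many $\Phi_{x_n}(q)$ to coincide beyond some position, contradicting strict monotonicity).

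For part (i), fix $x>0$ and let $1<q_1<q_2\le q_x$, writing $\Phi_x(q_j)=(x_i^{(j)})$. The key point is that for any fixed sequence $(c_i)$ with infinitely many ones, the sum $q\mapsto\sum c_i/q^i$ is \emph{strictly decreasing} in $q$. Applying this to $(x_i^{(1)})$ and evaluating at $q=q_2$ gives $\sum x_i^{(1)}/q_2^i<x=\sum x_i^{(2)}/q_2^i$, and by the lexicographic comparison principle from the previous paragraph this forces $(x_i^{(1)})\prec(x_i^{(2)})$. For left continuity at $q_0\in(1,q_x]$, take $q_n\nearrow q_0$. The sequences $\Phi_x(q_n)$ are lex-increasing and bounded above by $\Phi_x(q_0)$, so they converge pointwise to a limit $(d_i)$; dominated convergence gives $\sum d_i/q_0^i=x$, and because $\al(q)$ itself is left continuous in $q$ (a standard fact proved by the same argument applied to $x=1$), the defining inequalities pass to the limit and identify $(d_i)=\Phi_x(q_0)$.

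The main obstacle in all three parts is the same subtlety: lex order on $\{0,1\}^\N$ only matches numerical order on $q$-values up to the trivial tie $\sc^-1^\f=\sc0^\f$, and one has to invoke the fact that a quasi-greedy expansion never ends in $0^\f$ to rule this case out. Once this observation is cleanly isolated, the monotonicity statements in (i) and (ii) are one-line lex comparisons and both left-continuity statements follow from the same compactness-plus-characterization scheme, with the left continuity of $\al(q)$ as the only external ingredient needed for part (i).
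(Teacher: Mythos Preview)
Your proposal is correct and follows exactly the route the paper indicates: the paper gives no detailed proof of this lemma, merely stating that it ``can be deduced easily from \cite[Lemma 2.3]{DeVries-Komornik-2011},'' and your argument carries out precisely that deduction. The one point worth noting is the apparent circularity in part (i), where you invoke left continuity of $\al(q)$ to prove left continuity of $\Phi_x(q)$; this is harmless because for $x=1$ the Parry characterization becomes the self-referential condition $\sigma^n(\al(q))\lle\al(q)$, so the limit argument for $\al$ does not itself require $\al$ as input.
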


Taking $x=1$ in Lemma \ref{lem:char-quasi-expansion} (i) we see that the map $q\mapsto \Phi_1(q)=\al(q)$ is strictly increasing and left continuous in $(1, q_1]=(1,2]$. Furthermore, by Lemma \ref{lem:char-quasi-expansion} (iii) it follows that for each $q\in(1,2]$ the sequence $\al(q)=\al_1(q)\al_2(q)\ldots$ satisfies
\[
\al_{n+1}(q)\al_{n+2}(q)\ldots\lle \al(q)\quad\forall n\ge 1.
\]

For $q\in(1, 2]$ let $\u_q$ be the set of $x\in[0, 1/(q-1)]$ having a unique $q$-expansion, and let $\us_q$ be the set of all unique $q$-expansions. In view of (\ref{eq:expansion-base-q}), it is clear that $(d_i)\in\us_q$ if and only if $(d_i)_q\in\u_q$. The following characterization of $\us_q$ is well-known (see \cite{Erdos_Joo_Komornik_1990}).

\begin{lemma}\label{lem:unique-expansion}
Let $q\in(1,2]$. Then $(d_i)\in\us_q$ if and only if
\[
\begin{cases}
d_{n+1}d_{n+2}\ldots \prec \al(q)&\textrm{whenever}\ d_n=0,\\
d_{n+1}d_{n+2}\ldots\succ \overline{\al(q)}&\textrm{whenever}\ d_n=1.
\end{cases}
\]
\end{lemma}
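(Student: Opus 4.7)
I would prove both implications by leveraging that $\alpha(q)$ is the quasi-greedy expansion of $1$, using Lemma~\ref{lem:char-quasi-expansion}(iii) for one direction and an infinite-descent argument for the other.

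\emph{Necessity.} Assume $(d_i)\in\us_q$. Since $\Phi_x(q)$ is a $q$-expansion of $x:=(d_i)_q$, uniqueness forces $(d_i)=\Phi_x(q)$, so Lemma~\ref{lem:char-quasi-expansion}(iii) gives $\sigma^n(d_i)\lle\alpha(q)$ whenever $d_n=0$. To upgrade to strict inequality, I would note that if $\sigma^n(d_i)=\alpha(q)$ then the tail represents the value $(\alpha(q))_q=1$, and so $d_1\ldots d_{n-1}\,1\,0^\f$ would give a distinct $q$-expansion of $x$, contradicting uniqueness. The companion condition at positions where $d_n=1$ follows by reflection: $\overline{(d_i)}$ is the unique $q$-expansion of $\bar x:=1/(q-1)-x$, and applying the just-established $0$-condition to $\overline{(d_i)}$ at $n$ with $\overline{d}_n=0$ translates back to $\sigma^n(d_i)\succ\overline{\alpha(q)}$ whenever $d_n=1$.

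\emph{Sufficiency.} Assume both lexicographic conditions and, for contradiction, that $(d_i')\ne(d_i)$ is another $q$-expansion of $x$, differing first at index $n$; without loss of generality $d_n=0<1=d_n'$. Equating the two value sums and multiplying by $q^n$ gives $y_n-y_n'=1$ with $y_k:=(\sigma^k(d_i))_q$, hence $y_n\ge 1=(\alpha(q))_q$. By the assumed $0$-condition $\sigma^n(d_i)\prec\alpha(q)$; let $m\ge 1$ be the first coordinate of disagreement, so $d_{n+m}=0$ and $\alpha_m=1$. A short manipulation using $(\alpha(q))_q=1$ and $d_{n+j}=\alpha_j$ for $j<m$ yields the key recurrence
\[
y_{n+m}-1 \;=\; q^{m}\,(y_n-1)\;+\;(\sigma^m\alpha(q))_q.
\]
Because $\alpha(q)$ never ends in $0^\f$, the additive term is strictly positive, so $y_{n+m}>1$. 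Since $d_{n+m}=0$, the assumed condition reapplies at $n+m$, and the construction iterates, producing indices $n=n_0<n_1<n_2<\cdots$ with $z_k:=y_{n_k}-1>0$ for $k\ge 1$ satisfying $z_{k+1}\ge q\,z_k$. Thus $z_k\to\f$, contradicting the universal bound $z_k\le 1/(q-1)-1$. The symmetric case $d_n>d_n'$ uses the second condition in the same fashion.

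\emph{Main obstacle.} The delicate step is the sufficiency direction. In bases $q<2$ the value inequality $y_n\ge 1$ and the lex inequality $\sigma^n(d_i)\lge\alpha(q)$ are genuinely inequivalent — for example $0\,1^\f\prec(10)^\f=\alpha(q_G)$ although both represent the value $1$ — so one cannot derive a direct contradiction from the single position $n$. The infinite-descent argument driven by the recurrence above, whose multiplicative factor $q^m\ge q>1$ exponentially amplifies any excess of $y_n$ over $1$, is exactly what bridges the gap between the value-level and lex-level formulations.
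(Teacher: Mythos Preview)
The paper does not actually prove this lemma: it is stated as a well-known characterization with a citation to Erd\H{o}s--Jo\'o--Komornik (1990), and no argument is given. So there is no ``paper's own proof'' to compare against; your proposal stands or falls on its own.

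Your argument is correct. The necessity direction is clean: uniqueness forces $(d_i)=\Phi_x(q)$ (noting that a unique expansion other than $0^\f$ cannot end in $0^\f$, since $d_1\ldots d_{N-1}0\,\alpha(q)$ would then be a second expansion), and Lemma~\ref{lem:char-quasi-expansion}(iii) plus the replacement trick give the strict inequality at positions with $d_n=0$; reflection handles $d_n=1$. For sufficiency, your recurrence $y_{n+m}-1=q^m(y_n-1)+(\sigma^m\alpha(q))_q$ is exactly right, and the observation that $(\sigma^m\alpha(q))_q>0$ (because $\alpha(q)$ does not end in $0^\f$) is what makes the descent strict and produces the exponential blow-up $z_{k+1}\ge q\,z_k$ once $z_1>0$. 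The symmetric case $d_n=1>d_n'$ is indeed handled by passing to $\overline{(d_i)}$, which satisfies the two conditions with the roles swapped and has $\overline{(d_i')}$ as a competing expansion. One very minor remark: in the boundary case $q=2$ the upper bound $1/(q-1)-1$ equals $0$, so the contradiction is already immediate at $z_1>0$ without invoking the growth; your argument still covers this.
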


Note by Lemma \ref{lem:char-quasi-expansion} (i) that the map $q\mapsto\al(q)$ is increasing. Then by Lemma \ref{lem:unique-expansion} it follows that $\us_p\subseteq\us_q$ whenever $p<q$.

Recall that $\ub(x)$ is the set of all bases $q\in(1,q_x]$ such that $x$ has a unique $q$-expansion. So, $q\in\ub(x)$ if and only if $\Phi_x(q)\in\us_q$. In particular, for $x=1$ the following characterizations of $\ub(1)=\ub$ and its closure $\overline{\ub}$ can be deduced from Lemma \ref{lem:unique-expansion} (see \cite{Komornik_Loreti_2007} for a detailed proof).

\begin{lemma}\label{lem:univoque-bases}\mbox{}

\begin{itemize}
\item[{\rm(i)}] $q\in\ub$ if and only if the quasi-greedy expansion $\al(q)$ satisfies
\[
\overline{\al(q)}\prec\si^n(\al(q))\prec \al(q)\quad\forall ~n\ge 1.
\]

\item[{\rm(ii)}] $q\in\overline{\ub}$ if and only if the quasi-greedy expansion $\al(q)$ satisfies
\[
\overline{\al(q)}\prec \si^n(\al(q))\lle \al(q)\quad \forall ~n\ge 1.
\]
\end{itemize}
\end{lemma}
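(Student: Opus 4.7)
The plan is to reduce both parts to Lemma \ref{lem:unique-expansion} (the characterization of $\us_q$) applied to the distinguished sequence $(d_i)=\al(q)$. The key bridge is the observation that when $q\in\ub$, the unique $q$-expansion of $1$ must be $\al(q)$ itself: any unique expansion cannot end in $0^\f$ (otherwise the terminal $10^\f$ could be recoded as $0$ followed by a $q$-expansion of $1$, yielding a distinct expansion), so it agrees with the quasi-greedy expansion. Conversely, membership of $\al(q)$ in $\us_q$ forces $q\in\ub$. Thus (i) reduces to showing that the criteria of Lemma \ref{lem:unique-expansion} applied to $(d_i)=\al(q)$ are equivalent to the uniform bound $\overline{\al(q)}\prec\si^n(\al(q))\prec\al(q)$ for all $n\ge 1$.

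The ``if'' direction of (i) is immediate from Lemma \ref{lem:unique-expansion}. For ``only if'', the Lemma yields the conditional inequalities $\al_n(q)=0\Rightarrow\si^n(\al(q))\prec\al(q)$ and $\al_n(q)=1\Rightarrow\si^n(\al(q))\succ\overline{\al(q)}$; to upgrade these to unconditional strict inequalities I would use the Parry bound $\si^n(\al(q))\lle\al(q)$ for all $n\ge 0$ (which follows from Lemma \ref{lem:char-quasi-expansion}(iii) by descending to the nearest preceding zero when $\al_n=1$), together with its reflection $\si^n(\overline{\al(q)})\lge\overline{\al(q)}$. In the case $\al_n=1$, Parry gives $\lle$, and an equality $\si^n(\al(q))=\al(q)$ would make $\al(q)$ purely periodic of period $n$ --- a configuration one rules out by applying Lemma \ref{lem:unique-expansion} at an interior position of the period carrying a $0$. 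In the case $\al_n=0$, take $k=\max\set{j<n:\al_j(q)=1}$ (which exists since $\al_1(q)=1$ for $q>q_G$, and $\ub\subseteq(q_G,2]$). Then $\si^k(\al(q))=0^{n-k}\si^n(\al(q))$, so the conditional lower bound at $k$ reads $0^{n-k}\si^n(\al(q))\succ\overline{\al(q)}$; stripping the leading zeros and applying the reflected Parry property delivers $\si^n(\al(q))\succ\overline{\al(q)}$.

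For (ii), I would exploit the left-continuity of $q\mapsto\al(q)$ (Lemma \ref{lem:char-quasi-expansion}(i)) together with the density of $\ub$ in $\overline{\ub}$. For the forward inclusion, given $q\in\overline{\ub}$, pick $q_k\in\ub$ with $q_k\nearrow q$ (or handle the only-right-approximable case separately via monotonicity of $\al$); part (i) applied to each $q_k$ gives $\overline{\al(q_k)}\prec\si^n(\al(q_k))\prec\al(q_k)$, and the coordinatewise limit yields $\overline{\al(q)}\lle\si^n(\al(q))\lle\al(q)$. The upper inequality may legitimately degenerate to equality in the limit (explaining the $\lle$), while the lower inequality remains strict because $\si^n(\al(q))=\overline{\al(q)}$ would force the de Vries--Komornik shape $\al(q)=(\al_1(q)\ldots\al_n(q)\,\overline{\al_1(q)}\ldots\overline{\al_n(q)})^\f$, which by direct verification violates the upper inequality at a shifted index and hence cannot occur for $q\in\overline{\ub}$. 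Conversely, any $q$ satisfying the inequalities of (ii) can be approximated from below by $q_k\in\ub$ by perturbing $\al(q)$ at a sufficiently late coordinate, with membership verified through part (i).

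The main obstacles are the two periodicity analyses: ruling out $\si^n(\al(q))=\al(q)$ in the $\al_n=1$ case of (i), and classifying the $(w\overline{w})^\f$ sequences in (ii). Both reduce to careful case work on how the conditions of Lemma \ref{lem:unique-expansion} interact with periodic structure, and together explain why the paper defers to the detailed treatment in \cite{Komornik_Loreti_2007}.
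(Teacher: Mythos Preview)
The paper does not supply its own proof of this lemma: immediately before the statement it says the characterizations ``can be deduced from Lemma~\ref{lem:unique-expansion} (see \cite{Komornik_Loreti_2007} for a detailed proof)'', and nothing further is given. So there is no argument in the paper to compare yours against; your sketch is an attempt to fill in what the paper deliberately outsources, and it follows the natural route (identify $q\in\ub$ with $\al(q)\in\us_q$, then upgrade the conditional inequalities of Lemma~\ref{lem:unique-expansion} using the Parry property).

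Two points where the sketch is thin. In the $\al_n=1$ subcase of (i), your proposed way to exclude $\si^n(\al(q))=\al(q)$ --- ``apply Lemma~\ref{lem:unique-expansion} at an interior zero of the period'' --- does not by itself yield a contradiction: the conditional inequality $\si^j(\al(q))\prec\al(q)$ at an interior zero $j$ is perfectly compatible with $\al(q)$ being periodic. The clean fix is to observe that a purely periodic $\al(q)$ with $q<2$ forces the greedy and quasi-greedy expansions of $1$ to differ, so $1$ has two expansions and $q\notin\ub$. In (ii), your forward-direction limit argument needs care at points $q\in\overline{\ub}$ that are only right-accessible from $\ub$, since $q\mapsto\al(q)$ is merely left-continuous and the inequalities do not transfer through a right limit; one typically handles these countably many points directly via their explicit periodic $\al(q)$, which is what \cite{Komornik_Loreti_2007} does.
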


\subsection{The Komornik-Loreti constant and de Vries-Komornik numbers} \label{subsec:KL}

An important role in this article is played by the classical Thue-Morse sequence $(\tau_i)_{i=0}^\infty$ (see \cite{Allouche_Shallit_1999}). It is defined by $\tau_i=s_i\mod 2$, where $s_i$ is the number of 1's in the binary representation of $i$. Thus, it satisfies the recursions $\tau_{2i}=\tau_i$ and $\tau_{2i+1}=1-\tau_i$, for $i=0,1,2,\dots$. Recall from \cite{Komornik-Loreti-1998} that the smallest base in $\ub$ is the {\em Komornik-Loreti constant} $q_{KL}\approx 1.78723$, defined as the unique root in $(1,2)$ of the equation
\[
1=(\tau_i)_q=\sum_{i=1}^\f\frac{\tau_i}{q^i}.
\]
The truncated Thue-Morse sequence $(\tau_i)_{i=1}^\infty=1101\,0011\,00101101\dots$ has some useful properties, e.g. for each $n\in\N$ we have
\begin{equation} \label{eq:TM-recursion}
\tau_1\ldots\tau_{2^n}=\tau_1\ldots \tau_{2^{n-1}}\overline{\tau_1\ldots\tau_{2^{n-1}}}^+,
\end{equation}
and
\begin{equation} \label{eq:property-tau}
 \overline{\tau_1\ldots\tau_{2^n-i}}\prec\tau_{i+1}\ldots\tau_{2^n}\lle\tau_1\ldots\tau_{2^n-i}\quad\forall~0\le i<2^n.
\end{equation}
By (\ref{eq:property-tau}) and Lemma \ref{lem:char-quasi-expansion} (iii) it follows that $\al(q_{KL})=\tau_1\tau_2\ldots$ is the quasi-greedy $q_{KL}$-expansion of $1$. In fact, by Lemma \ref{lem:unique-expansion} one can verify that $(\tau_i)_{i=1}^\f$ is the unique expansion of $1$ in base $q_{KL}$, i.e., $(\tau_i)_{i=1}^\f\in\us_{q_{KL}}$.

For each $n\in\N$, we define a base $\hat q_n$ via the quasi-greedy expansion of $1$ by
\begin{equation}\label{eq:qn}
\al(\hat q_n)=(\tau_1\ldots \tau_{2^{n-1}}\overline{\tau_1\ldots \tau_{2^{n-1}}})^\f=(\tau_1\ldots \tau_{2^n}^-)^\f.
\end{equation}
Note that $\hat{q}_n$ also satisfies the equation
\[
(\tau_1\dots\tau_{2^n})_{\hat{q}_n}=1,
\]
where  for a finite word $\sc$ and $q\in(1,2]$, we write $(\sc)_q$ to abbreviate $(\sc 0^\infty)_q$.
In particular, $\hat q_1=q_G=(1+\sqrt{5})/2\approx 1.618$ is the golden ratio. A few other useful values are $\hat{q}_2\approx 1.7549$ and $\hat{q}_3\approx 1.7846$. By Lemma \ref{lem:char-quasi-expansion} (i) it follows that the sequence $(\hat{q}_n)$ is strictly increasing with limit $q_{KL}$.

To end this section we introduce the de Vries-Komornik numbers, which are based on the following notion.
A word $a_1\dots a_m\in\{0,1\}^m$ is {\em admissible} if
\begin{equation} \label{eq:admissible word}
\overline{a_1\ldots a_{m-i}}\lle a_{i+1}\ldots a_m\prec a_1\ldots a_{m-i}\quad\forall ~1\le i<m.
\end{equation}

Note that the complement of $\overline{\ub}$ in $(1,2]$ is a countable union of open intervals, i.e., $(1,2]\setminus\overline{\ub}=\bigcup_{n=1}^\f(\hat q_0^{(n)}, \hat q_c^{(n)})$. Here each left endpoint $\hat q_0^{(n)}$ is algebraic, and each right endpoint $\hat q_c^{(n)}$, called a \emph{de Vries-Komornik number}, is transcendental (cf.~\cite{Kong_Li_2015}). We also have $\hat q_0^{(n)}\in(\overline{\ub}\setminus\ub)\cup\set{1}$ and $\hat q_c^{(n)}\in\ub$ for all $n\ge 1$. Furthermore, for a connected component $(\hat q_0^{(n)}, \hat q_c^{(n)})$ there exists an admissible word $\sa=\sa_n=a_1\ldots a_m$ such that $\al(\hat q_0^{(n)})=\sa^\f$ and $\al(\hat q_c^{(n)})=(\theta_i(\sa))$, where the sequence $(\theta_i):=(\theta_i(\sa))$ is defined recursively by
\[
\theta_1\ldots\theta_m=\sa^+,\qquad\textrm{and}\qquad \theta_{2^n m+1}\ldots \theta_{2^{n+1}m}=\overline{\theta_1\ldots \theta_{2^n m}}^{\,+}\quad\forall n\ge 0.
\]
We call $(\hat q_0^{(n)}, \hat q_c^{(n)})$ a {\em basic interval} generated by $\sa$, and denote it by $(\hat q_0(\sa), \hat q_c(\sa))$ to emphasize its dependence on $\sa$. Observe that the first basic interval is $(\hat q_0^{(1)}, \hat q_c^{(1)})=(1, q_{KL})$, which is generated by the word $\sa=0$. Here we set $\al(1):=0^\f$. With this convention, $q_{KL}$ is a de Vries-Komornik number generated by the word $\sa=0$, i.e. $q_{KL}=\hat{q}_c(0)$.


Following \cite{DeVries_Komornik_2008}, we introduce the set
\begin{equation} \label{def:V}
\vb:=\set{q\in(1,2]: \overline{\al(q)}\lle \si^n(\al(q))\lle \al(q)~\forall n\ge 0}.
\end{equation}
It was shown in \cite{DeVries_Komornik_2008} that for any basic interval $(\hat q_0(\sa), \hat q_c(\sa))$ we have $(\hat q_0(\sa), \hat q_c(\sa))\cap\vb=\set{\hat q_n(\sa): n\in\N}$, where $\hat{q}_n(\sa)$ is given by
\[
\al(\hat q_n(\sa))=(\theta_1(\sa)\ldots \theta_{2^{n}m}(\sa)^-)^\f.
\]
By Lemma \ref{lem:char-quasi-expansion} this implies that $\hat q_0(\sa)<\hat q_1(\sa)<\cdots<\hat q_n(\sa)<\hat q_{n+1}(\sa)<\cdots<\hat q_c(\sa)$. So, $(\hat q_0(\sa), \hat q_c(\sa))\setminus\vb=\bigcup_{n=0}^\f\big(\hat q_n(\sa), \hat q_{n+1}(\sa)\big)$. The sequence $(\hat{q}_n(\sa))_{n=1}^\infty$ can be thought of as a local analog of the sequence $(\hat{q}_n)$.

\section{An algorithm to determine $\inf\ub(x)$ for $x\in(0, 1)$} \label{sec:algorithm}

In this section we will introduce an efficient algorithm to calculate the explicit value of $q_s(x)$ for $x\in(0,1)$. Recall that $q_G=(1+\sqrt{5})/2$, and observe that $(0,1)=\bigcup_{k=1}^\f[q_G^{-k}, q_G^{-k+1})$.
The following result forms the theoretical foundation of our algorithm.

\begin{proposition} \label{prop:lower-bound}
Let $x\in[q_G^{-k}, q_G^{-k+1})$ for some $k\in\N$, and let $q^*(x)$ be the unique root in $(1,2)$ of
\[q^k(q-1)=\frac{1}{x}.\]
Then
\begin{enumerate}[{\rm(i)}]
\item $q^*(x)>q_G$, and $q^*(x)\notin\ub(x)$;
\item $q_s(x)\geq q^*(x)$, with equality if and only if $q^*(x)\in\overline{\ub}$.
\end{enumerate}
\end{proposition}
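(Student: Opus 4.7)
For part (i), monotonicity of $p(q):=q^k(q-1)$ on $(1,\infty)$ (its derivative $q^{k-1}((k+1)q-k)$ is positive for $q>1$), together with $p(q_G)=q_G^{k-1}$ (from $q_G(q_G-1)=1$) and $p(2)=2^k$, places $q^*$ in $(q_G,2)$: the bounds $q_G^{k-1}<1/x\le q_G^k<2^k$ from $x\in[q_G^{-k},q_G^{-k+1})$ force $p(q_G)<p(q^*)<p(2)$. To exhibit two distinct $q^*$-expansions of $x$, the defining equation rewrites as
\[
x=\frac{1}{(q^*)^k(q^*-1)}=\sum_{i=k+1}^\infty(q^*)^{-i}=(0^k1^\infty)_{q^*},
\]
while the identity $(\overline{\al(q^*)})_{q^*}=\frac{1}{q^*-1}-1=\frac{2-q^*}{q^*-1}$ yields
\[
(0^{k-1}1\,\overline{\al(q^*)})_{q^*}=\frac{1}{(q^*)^k}\left(1+\frac{2-q^*}{q^*-1}\right)=\frac{1}{(q^*)^k(q^*-1)}=x.
\]
These two $q^*$-expansions of $x$ disagree at position $k$, so $q^*\notin\ub(x)$.

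For the inequality $q_s(x)\ge q^*$ in (ii), I would prove $\ub(x)\cap(1,q^*)=\emptyset$ by producing two distinct $q$-expansions of $x$ for every $q<q^*$. Bases in $(1,q_G]$ are already excluded by the Erd\H os--J\'o\'o--Komornik theorem cited in the introduction (no nontrivial $x$ has a unique $q$-expansion there), since $x\in(0,1)$ is nontrivial. For $q\in(q_G,q^*)$ I verify
\[
\frac{1}{q^k}<x<\frac{1}{q^{k-1}}\quad\text{and}\quad x<\frac{1}{q^k(q-1)},
\]
where the lower bound $x>q^{-k}$ comes from $x\ge q_G^{-k}$ combined with $q>q_G$, the upper bound $x<q^{-(k-1)}$ from $q^{k-1}\le(q^*)^{k-1}<1/x$ (the second step equivalent to $q^*(q^*-1)>1$, which holds since $q^*>q_G$), and the last from $q<q^*$ via strict monotonicity of $p$. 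Then both $q^kx-1$ and $q^kx$ lie in $(0,1/(q-1))$, so $x$ admits $q$-expansions of the form $0^{k-1}1\,\sd$ and $0^k\,\sd'$ with $\sd,\sd'\in\{0,1\}^\N$; these differ at position $k$, so $q\notin\ub(x)$.

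For the equality characterization, if $q^*\in\overline{\ub}$ I would appeal to Lemma \ref{lem:univoque-bases}(ii) and the density of $\ub$ in $\overline{\ub}$ from above to produce, for each $\varepsilon>0$, a base $q\in(q^*,q^*+\varepsilon)\cap\ub(x)$: the idea is to perturb the borderline sequence $0^{k-1}1\,\overline{\al(q^*)}$ by slightly increasing the tail just above $\overline{\al(q)}$, obtaining a tail $\sc$ satisfying the strict conditions of Lemma \ref{lem:unique-expansion} with $(0^{k-1}1\,\sc)_q=x$; near $q^*\in\overline{\ub}$ the weak shift inequalities on $\al(q)$ leave enough room for this. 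Conversely, if $q^*\notin\overline{\ub}$, Lemma \ref{lem:univoque-bases}(ii) fails for $\al(q^*)$ at some shift index, and this failure persists on a right-neighborhood of $q^*$ (the basic interval in the complement of $\overline{\ub}$ containing $q^*$), obstructing the tail construction and yielding $q_s(x)>q^*$. The main obstacle is this equality direction: the lower bound $q_s(x)\ge q^*$ is a clean consequence of the two-expansions argument, but the precise dichotomy $q_s(x)=q^*\iff q^*\in\overline{\ub}$ requires a careful perturbation whose margin is dictated exactly by the shift condition defining $\overline{\ub}$.
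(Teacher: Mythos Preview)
Your treatment of (i) matches the paper's. For the inequality $q_s(x)\ge q^*$ in (ii) your argument is correct and genuinely simpler than the paper's: for each $q\in(q_G,q^*)$ you directly exhibit two $q$-expansions of $x$ differing at position $k$, whereas the paper (taking $k=1$) splits into Case~A ($q^*\notin\overline{\ub}$) and Case~B ($q^*\in\overline{\ub}$) and in Case~A runs an inductive argument forcing a hypothetical unique $p$-expansion of $x$ to coincide with the periodic sequence $(1\,\overline{\al_1\dots\al_{n-2}})^\infty$, where $n$ is the first index with $\overline{\al_n(q^*)\al_{n+1}(q^*)\dots}\succeq\al(q^*)$. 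What the paper's longer route buys is structural: that periodic sequence is the first block $B_1$ of the algorithm in Section~\ref{sec:algorithm}, so the Case~A analysis doubles as the algorithm's initialisation and is exactly what later yields the strict inequality $q_s(x)>q^*$ via Claim~1 (which gives $q_s(x)\ge q_2>q_1=q^*$).

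The equality characterization is where your proposal has a real gap, as you yourself flag. For $q^*\in\overline{\ub}$ the perturbation idea is correct in spirit but you must name the perturbed tail and verify Lemma~\ref{lem:unique-expansion}; the paper does this using the existence (from \cite{Komornik_Loreti_2007}) of infinitely many admissible prefixes $\al_1\dots\al_m^-$ of $\al(q^*)$, setting $(c_i)=0^{k-1}1(\overline{\al_1\dots\al_m}^{\,+})^\infty$, checking $(c_i)\in\us_{q^*}$ combinatorially, and letting $m\to\infty$ to obtain bases $r_m\searrow q^*$ in $\ub(x)$. For $q^*\notin\overline{\ub}$ your sketch is not an argument: that one particular construction fails does not show that \emph{no} base just above $q^*$ lies in $\ub(x)$, and your two-expansions method cannot be pushed past $q^*$ since the $0^k$-expansion disappears once $q^k(q-1)>1/x$. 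The paper explicitly defers this case (``We will show later\dots''), and its proof genuinely requires the block $B_1$ produced by the Case~A analysis together with the iteration in Claim~1.
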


The lower bound $q^*(x)$ is illustrated in Figure \ref{fig:2} below.

\begin{figure}[h!]
\begin{center}
\begin{tikzpicture}[xscale=10,yscale=18]

\draw [->] (-0.01,1.618) node[anchor=east] {$\hat{q}_1$}  -- (1.1,1.618) node[anchor=west] {$x$};
\draw [->] (0,1.605) node[anchor=north] {$0$} -- (0,1.93) node[anchor=south] {$q$};

\draw [dashed] (1,1.7549)--(0,1.7549);
\draw (-0.01,1.7549) node[anchor=east] {$\hat{q}_2$} -- (0.01,1.7549);
\draw [dashed] (1,1.7872)--(0,1.7872);
\draw (-0.01,1.7872) node[anchor=east] {$q_{KL}$} -- (0.01,1.7872);
\draw (1,1.61) node[anchor=north] {$1$};
\draw (-0.01,1.8885) node[anchor=east] {$q_{\max}$} -- (0.01,1.8885);
\draw (0.618,1.61) node[anchor=north] {$q_G^{-1}$} -- (0.618,1.626);
\draw (0.382,1.61) node[anchor=north] {$q_G^{-2}$} -- (0.382,1.626);
\draw (0.236,1.61) node[anchor=north] {$q_G^{-3}$} -- (0.236,1.626);
\draw (0.12,1.59) node {$\cdots$};


\draw[domain=1.618:1.7549,variable=\q] plot({\q/(pow(\q,2)-1)},{\q});  
\draw[domain=1.618:1.7549,variable=\q] plot({1/(pow(\q,2)-1)},{\q});	
\draw[domain=1.618:1.7549,variable=\q] plot({1/(\q*(pow(\q,2)-1))},{\q});	
\draw[domain=1.618:1.7549,variable=\q] plot({1/(\q*\q*(pow(\q,2)-1))},{\q});	
\draw[domain=1.618:1.7510,variable=\q] plot({1/(\q*\q*\q*(pow(\q,2)-1))},{\q});	
\draw[domain=1.618:1.7313,variable=\q] plot({1/(pow(\q,4)*(pow(\q,2)-1))},{\q});	
\draw[domain=1.618:1.7167,variable=\q] plot({1/(pow(\q,5)*(pow(\q,2)-1))},{\q});	
\draw[domain=1.618:1.7055,variable=\q] plot({1/(pow(\q,5)*(pow(\q,2)-1))},{\q});	
\draw[domain=1.618:1.6966,variable=\q] plot({1/(pow(\q,6)*(pow(\q,2)-1))},{\q});	
\draw[domain=1.618:1.6894,variable=\q] plot({1/(pow(\q,7)*(pow(\q,2)-1))},{\q});	
\draw[domain=1.618:1.6833,variable=\q] plot({1/(pow(\q,8)*(pow(\q,2)-1))},{\q});	

\draw[domain=1.7549:1.7846,variable=\q] plot({1/\q+(\q+1)/(pow(\q,5)-\q)},{\q});	
\draw[domain=1.7549:1.7846,variable=\q] plot({1/pow(\q,2)+(\q+1)/(pow(\q,6)-\q*\q)},{\q});	
\draw[domain=1.7549:1.7718,variable=\q] plot({1/pow(\q,3)+(\q+1)/(pow(\q,7)-pow(\q,3))},{\q});	
\draw[domain=1.7549:1.7846,variable=\q] plot({1/\q+1/pow(\q,3)+(\q+1)/(pow(\q,7)-pow(\q,3))},{\q});	
\draw[domain=1.7549:1.7846,variable=\q] plot({1/pow(\q,2)+1/pow(\q,4)+(\q+1)/(pow(\q,8)-pow(\q,4))},{\q}); 
\draw[domain=1.7549:1.7846,variable=\q] plot({1/pow(\q,3)+1/pow(\q,5)+(\q+1)/(pow(\q,9)-pow(\q,5))},{\q});	
\draw[domain=1.7549:1.7668,variable=\q] plot({1/pow(\q,4)+1/pow(\q,6)+(\q+1)/(pow(\q,10)-pow(\q,6))},{\q});	
\draw[domain=1.7549:1.7769,variable=\q] plot({1/\q+1/pow(\q,3)+1/pow(\q,5)+(\q+1)/(pow(\q,9)-pow(\q,5))},{\q});  
\draw[domain=1.7549:1.7692,variable=\q] plot({1/pow(\q,2)+1/pow(\q,4)+1/pow(\q,6)+(\q+1)/(pow(\q,10)-pow(\q,6))},{\q});  
\draw[domain=1.7549:1.7655,variable=\q] plot({1/pow(\q,3)+1/pow(\q,5)+1/pow(\q,7)+(\q+1)/(pow(\q,11)-pow(\q,7))},{\q});  
\draw[domain=1.7549:1.7618,variable=\q] plot({1/\q+1/pow(\q,3)+1/pow(\q,5)+1/pow(\q,7)+(\q+1)/(pow(\q,11)-pow(\q,7))},{\q});  
\draw[domain=1.7549:1.7594,variable=\q] plot({pow(\q,-2)+pow(\q,-4)+pow(\q,-6)+pow(\q,-8)+(\q+1)/(pow(\q,12)-pow(\q,8))},{\q});  

\draw[red, domain=1.8668:1.6180,variable=\q] plot({1/(\q*(\q-1))},{\q});
\draw[red, domain=1.8042:1.6180,variable=\q] plot({1/(pow(\q,2)*(\q-1))},{\q});
\draw[red, domain=1.7674:1.6180,variable=\q] plot({1/(pow(\q,3)*(\q-1))},{\q});


\draw[domain=1.8393:1.8668,variable=\q] plot({pow(\q,2)/(pow(\q,3)-1)},{\q}); 	
\draw[domain=1.8124:1.8233,variable=\q] plot({(pow(\q,4)+\q)/(pow(\q,5)-1)},{\q});	
\draw[domain=1.8124:1.8174,variable=\q] plot({1/\q+(pow(\q,3)+1)/(\q*(pow(\q,5)-1))},{\q});	
\draw[domain=1.8668:1.8706,variable=\q] plot({(pow(\q,5)+\q+1)/(pow(\q^6-1))},{\q});	

\draw (0.618,1.8885)--(0.6204,1.8843)--(0.6228,1.8802)--(0.6252,1.8760)--(0.6276,1.8719)--(0.63,1.8681);

\draw (0.6518,1.8362)--(0.6548,1.8332)--(0.6578,1.8305);
\draw (0.6588,1.8316)--(0.6628,1.8270)--(0.6658,1.8239);
\draw (0.6668,1.8314)--(0.6698,1.8267);

\draw[fill] (0.6832,1.8092) circle[radius=0.001];
\draw (0.6842,1.8109)--(0.6882,1.8050)--(0.6932,1.8012);
\draw (0.6942,1.8020)--(0.6982,1.7973);
\draw (0.6992,1.8020)--(0.7032,1.7965)--(0.7072,1.7911)--(0.7108,1.7872)--(0.7138,1.7851);
\draw (0.7148,1.8028)--(0.7168,1.8002)--(0.7188,1.7977)--(0.7208,1.7956)--(0.7228,1.7930)--(0.7248,1.7903)--(0.7268,1.7877);

\draw (0.7615,1.8091)--(0.7645,1.8060)--(0.7675,1.8026)--(0.7705,1.7997)--(0.7735,1.7978)--(0.7765,1.7940)
	--(0.7795,1.7902)--(0.7825,1.7872)--(0.7845,1.7847);
\draw (0.7855,1.7893)--(0.7872,1.7872);

\draw (0.382,1.8042)--(0.385,1.8010)--(0.389,1.7966);
\draw (0.390,1.7988)--(0.393,1.7941)--(0.396,1.7892)--(0.400,1.7849);
\draw (0.401,1.7963)--(0.404,1.7916)--(0.408,1.7857);

\draw (0.4302,1.7967)--(0.4342,1.7920)--(0.4372,1.7877)--(0.4392,1.7852)--(0.4412,1.7862);

\draw[fill] (0.7275,1.7859) circle[radius=0.001];
\draw[fill] (0.7900,1.7855) circle[radius=0.001];
\draw[fill] (0.2464,1.7873) circle[radius=0.001];
\draw[fill] (0.2474,1.7855) circle[radius=0.001];

\useasboundingbox(-0.25,1.95);
\end{tikzpicture}
\end{center}
\caption{Graph of $q_s(x)$ for $0<x<1$ with the lower bound $q^*(x)$ from Proposition \ref{prop:lower-bound} in red, plotted for $q_G^{-3}<x<1$.}
\label{fig:2}
\end{figure}

\begin{proof}
We give a detailed proof for the case $k=1$; for general $k$, the result is obtained by preceding all expansions below by the prefix $0^{k-1}$.

Since $x<1$ and $q_G(q_G-1)=1$, by the definition of $q^*(x)$ it follows that $q^*(x)>q_G$.
Now put $q=q^*(x)$, and note that $x$ has two expansions in base $q$:
\[
x=(01^\infty)_q=\big(1\,\overline{\alpha(q)}\big)_q.
\]
This follows since
\[
x=\frac{1}{q(q-1)}=\sum_{n=2}^\infty \frac{1}{q^n}=\frac{1}{q}+\frac{1}{q}\left(\frac{1}{q-1}-1\right).
\]
Thus $q^*(x)\not\in \ub(x)$, completing the proof of (i).

We now turn to the proof of (ii).  Write $\alpha(q)=\alpha_1\alpha_2\dots$. We split the proof that $q_s(x) \ge q$ into two cases.

\medskip

{\em Case A.} Assume first that $q\not\in\overline{\ub}$. Then by Lemma \ref{lem:univoque-bases} (ii) there is a smallest integer $n\geq 0$ such that
\begin{equation}
\overline{\alpha_n\alpha_{n+1}\dots}\succeq \alpha(q).
\label{eq:large-reflected-tail}
\end{equation}
We can see that $n\geq 3$, since $q>q_G$ implies $\alpha_1=\alpha_2=1$.
Note $\alpha_{n-1}=1$ by the minimality of $n$. Construct the sequence
\[
(c_i):=(1\,\overline{\alpha_1\dots\alpha_{n-2}})^\infty=1(\overline{\al_1\ldots\al_{n-1}}^+)^\f.
\]
We claim that
\begin{equation} \label{eq:c_i-claim}
y:=(c_i)_q>x \qquad\mbox{and}\qquad (c_i)\in\us_q.
\end{equation}
{Observe that} $y=\lim_{k\to\infty} \big((1\,\overline{\alpha_1\dots\alpha_{n-2}})^k 01^\infty\big)_q$, and
\begin{align}
\begin{split}
x&=(1\,\overline{\alpha(q)})_q<(1\,\overline{\alpha_1\dots\alpha_{n-1}}\,1^\infty)_q\\
&=(1\,\overline{\alpha_1\dots\alpha_{n-2}}\,01^\infty)_q=\big(1\,\overline{\alpha_1\dots\alpha_{n-2}}\,1\overline{\alpha(q)}\big)_q\\
&<\big((1\,\overline{\alpha_1\dots\alpha_{n-2}})^2 01^\infty\big)_q<\dots<\big((1\,\overline{\alpha_1\dots\alpha_{n-2}})^k 01^\infty\big)_q.
\end{split}
\label{eq:x-y-calculation}
\end{align}
Thus, $x<y$, proving the first half of \eqref{eq:c_i-claim}.

The proof that $(c_i)\in\us_q$ is more involved. By Lemma \ref{lem:unique-expansion} it suffices to show that
\begin{equation}
\overline{\alpha(q)}\prec \sigma^m((c_i))\prec \alpha(q) \qquad\mbox{for all $m\geq 1$}.
\label{eq:c-sandwich}
\end{equation}
That $\overline{\alpha(q)}\prec \sigma^{m}((c_i))$ for all $m\geq 1$ follows immediately since for $1\leq i\leq n-2$,
\[
\overline{\alpha_i\dots\alpha_{n-2}}\,1\succ \overline{\alpha_i\dots\alpha_{n-2}}\,0=\overline{\alpha_i\dots\alpha_{n-1}}\succeq \overline{\alpha_1\dots\alpha_{n-i}},
\]
where the last inequality is a consequence of Lemma \ref{lem:char-quasi-expansion} (iii).

To establish the second inequality in (\ref{eq:c-sandwich}), it is sufficient to show that
\begin{equation}
\overline{\alpha_j\dots\alpha_{n-1}}^+\, \overline{\alpha_1\dots\alpha_{j-1}}\prec \alpha_1\dots\alpha_{n-1}\quad\textrm{for all }1\le j\le n-1,
\label{eq:j-inequality}
\end{equation}
because every tail of the sequence $(c_i)$ begins with $\overline{\alpha_j\dots\alpha_{n-1}}^+\, \overline{\alpha_1\dots\alpha_{j-1}}$ for some $j\in\{1,\dots,n-1\}$.
Since $n\geq 3$ and $\al_1=1$, \eqref{eq:j-inequality} is trivial for $j=1$, so we assume $2\le j\le n-1$.
{We claim that
\begin{equation} \label{eq:mar-10-1}
\overline{\al_j\ldots\al_{n-1}}^+\lle\al_1\ldots\al_{n-j}.
\end{equation}
Suppose otherwise; then $\overline{\al_j\ldots\al_{n-1}}\succeq \al_1\ldots \al_{n-j}$. Using (\ref{eq:large-reflected-tail}) and Lemma \ref{lem:char-quasi-expansion} (iii) this implies
\[
\overline{\al_j\al_{j+1}\ldots}\lge \al_1\ldots \al_{n-j}\al_1\al_2\ldots\lge \al_1\al_2\ldots,
\]
contradicting the minimality of $n$. Hence we have \eqref{eq:mar-10-1}, and by the same argument (replacing $j$ with $n-j+1$), we also have $\overline{\al_{n-j+1}\ldots\al_{n-1}}\prec\al_1\ldots \al_{j-1}$.
Taking the reflection on both sides yields}
\[
\overline{\al_1\ldots \al_{j-1}}\prec \al_{n-j+1}\ldots\al_{n-1}.
\]
This, together with (\ref{eq:mar-10-1}), proves \eqref{eq:j-inequality}, and hence \eqref{eq:c-sandwich}. As a result, $(c_i)=\Phi_y(q)\in\us_q$. This completes the proof of \eqref{eq:c_i-claim}.

We can now prove that $q_s(x)\geq q$. Let $1<p<q$, and suppose $x\in\u_p$. Let $(d_i)=\Phi_x(p)$ be the unique expansion of $x$ in base $p$. Then by Lemma \ref{lem:char-quasi-expansion} and {\eqref{eq:c_i-claim}} it follows that
\begin{equation} \label{eq:mar-10-2}
(d_i)=\Phi_x(p)\prec \Phi_x(q)\prec \Phi_y(q)=(1\overline{\al_1\ldots\al_{n-2}})^\f.
\end{equation}
We will obtain a contradiction by showing that $(d_i)=(1\,\overline{\alpha_1\dots\alpha_{n-2}})^\infty$. We will show by induction that $(d_i)$ begins with the word $(1\,\overline{\alpha_1\dots\alpha_{n-2}})^k 1$ for all $k\geq 0$.
Take first $k=0$. If $d_1=0$, then $x=(d_i)_p\le 1/p$, and hence $p\le 1/x\le q_G$. But then the only two unique expansions in base $p$ are $0^\f$ and $1^\f$, and clearly $0^\f$ is not an expansion of $x$, since $x\ge q_G^{-1}$. Thus $d_1=1$, and the claim holds for $k=0$.

Now suppose the claim holds for some integer $k\ge 0$, i.e., $(d_i)$ begins with $(1\,\overline{\alpha_1\dots\alpha_{n-2}})^k 1$. Since $x\in\u_p$, by Lemma \ref{lem:unique-expansion} we have $\sigma^{k(n-1)+1}((d_i))\succ \overline{\alpha(p)}\succ\overline{\alpha(q)}$, and by assumption, $d_{k(n-1)+1}=1$, so
\begin{equation}
\sigma^{k(n-1)}((d_i))\succ 1\,\overline{\alpha(q)}.
\label{eq:k-step-lower-estimate}
\end{equation}
On the other hand, note by (\ref{eq:mar-10-2}) that  $(d_i)\prec (1\,\overline{\alpha_1\dots\alpha_{n-2}})^\infty$. By the induction hypothesis it follows that $\sigma^{k(n-1)}((d_i))\prec (1\,\overline{\alpha_1\dots\alpha_{n-2}})^\infty$. Therefore, $\sigma^{k(n-1)}((d_i))$ must begin with either $1\,\overline{\alpha_1\dots\alpha_{n-2}}\,0=1\,\overline{\alpha_1\dots\alpha_{n-1}}$ or $1\,\overline{\alpha_1\dots\alpha_{n-2}}\,1$. But the former is impossible, since in view of \eqref{eq:k-step-lower-estimate} and (\ref{eq:large-reflected-tail}) it would imply
\[\sigma^{(k+1)(n-1)+1}((d_i))\succ\overline{\alpha_n\alpha_{n+1}\dots}\succeq\alpha(q)\succ\alpha(p),\] contradicting that $(d_i)\in\us_p$. Therefore, $(d_i)$ begins with $(1\,\overline{\alpha_1\dots\alpha_{n-2}})^{k+1}1$.

By induction it follows that $(d_i)=(1\,\overline{\alpha_1\dots\alpha_{n-2}})^\infty=\Phi_y(q)$, which is impossible by (\ref{eq:mar-10-2}). As a result, $q_s(x)\geq q$. We will show later, in Subsection \ref{subsec:algorithm}, that the inequality is strict in this case.

\medskip

{\em Case B.} Assume next that $q\in\overline{\ub}$. Then by Lemma \ref{lem:univoque-bases} (ii) we have
\[
\overline{\al(q)}\lle \sigma^n(\overline{\alpha(q)})\prec \alpha(q) \qquad\mbox{for all {$n\geq 1$}},
\]
so $1\,\overline{\alpha(q)}$ is not only an expansion of $x$, but in fact the quasi-greedy expansion of $x$ in base $q$:
\[
\Phi_x(q)=1\,\overline{\alpha(q)}.
\]
Furthermore, by \cite[Lemma 4.1]{Komornik_Loreti_2007} there are infinitely many integers $m$ such that $\alpha_m=1$ and $\alpha_1\dots\alpha_m^-$ is admissible; that is,
\begin{equation}
\overline{\alpha_1\dots\alpha_{m-i}}\preceq \alpha_{i+1}\dots \alpha_m^-\prec \alpha_1\dots\alpha_{m-i} \qquad\mbox{for all ${0}\leq i<m$}.
\label{eq:admissible}
\end{equation}
For any such $m$, we can define the sequence
\[(c_i):=(1\,\overline{\alpha_1\dots\alpha_{m-1}})^\infty=1(\overline{\al_1\ldots\al_m}^+)^\f.\]
We claim that
\begin{equation}\label{eq:mar-10-3}
\overline{\al(q)}\prec \si^n((c_i))\prec \al(q)\quad\forall\, n\ge 0.
\end{equation}
Note that $\overline{(c_i)}=0(\alpha_1\dots \alpha_{m}^-)^\infty$, and hence (\ref{eq:admissible}) immediately gives the first inequality in (\ref{eq:mar-10-3}). For the second inequality we observe from  \eqref{eq:admissible} that
\[
\al_{i+1}\ldots\al_m^-\al_1\ldots \al_i\succ\overline{\al_1\ldots \al_m}\quad\forall\, 0\le i<m.
\]
Taking the reflection on both sides gives
\[
\overline{\al_{i+1}\ldots\al_m}^+\,\overline{\al_1\ldots \al_i}\prec \al_1\ldots \al_m\quad\forall\, 0\le i<m.
\]
This 
proves the second inequality in (\ref{eq:mar-10-3}). From (\ref{eq:mar-10-3}) and Lemma \ref{lem:unique-expansion}, it follows that $(c_i)\in\us_q$.

Note that $(c_i)=1(\overline{\al_1\ldots\al_m}^+)^\f\succ 1\overline{\al(q)}=\Phi_x(q)$. Then $(c_i)$ is an expansion of $x$ for some base $r_m>q$. Since $\us_q\subseteq\us_{r_m}$, it follows that $r_m\in\ub(x)$ and $(c_i)=\Phi_x(r_m)$. Letting $m\to\f$ along a subsequence satisfying \eqref{eq:admissible}, we have
\begin{equation} \label{eq:mar-10-4}
\Phi_x(r_m)=1(\overline{\al_1\ldots\al_m}^+)^\f\searrow 1\,\overline{\alpha(q)}=\Phi_x(q).
\end{equation}
{We claim now that $r_m\searrow q$. By \eqref{eq:mar-10-4} and Lemma \ref{lem:char-quasi-expansion}, the sequence $(r_m)$ is decreasing and bounded below by $q$. Suppose, by way of contradiction, that $\lim_{m\to\infty}r_m=r>q$. Then, again by Lemma \ref{lem:char-quasi-expansion}, $\Phi_x(q)\prec \Phi_x(r)\lle \lim_{m\to\infty}\Phi_x(r_m)=\Phi_x(q)$, a contradiction. Hence, $r_m\searrow q$ and so $q_s(x)\le q$.}

Finally we prove that $q_s(x)\ge q$. Suppose $p<q$ and $x\in\u_p$. Let $(d_i)=\Phi_x(p)$ be the unique $p$-expansion of $x$. Then by Lemma \ref{lem:char-quasi-expansion} (i) we have
\begin{equation}\label{eq:mar-10-5}
(d_i)=\Phi_x(p)\prec \Phi_x(q)=1\overline{\al(q)}.
\end{equation}
Using $x\ge q_G^{-1}$ and the same argument as before we can deduce that $d_1=1$. Since $(d_i)\in\us_p$, by Lemma \ref{lem:unique-expansion} and Lemma \ref{lem:char-quasi-expansion} (i) it follows that
\[
d_2d_3\ldots\succ\overline{\al(p)}\succ\overline{\al(q)}.
\]
So, $(d_i)\succ 1\overline{\al(q)}$, contradicting (\ref{eq:mar-10-5}). Therefore, $q_s(x)=q$.
\end{proof}

\begin{remark} \label{rem:algorithm-1}\mbox{}

\begin{itemize}
\item[{\rm(i)}] In Case A in the above proof, since $y>x$ and $y=(c_i)_q\in \u_q$, there is a base $r>q$ such that $(c_i)$ is an expansion of $x$ in base $r$, and since $\us_q\subset\us_r$, it follows that $x\in \u_r$, so $r\in\ub(x)$. The larger the integer $n$ is, the closer $r$ is to $q$. Thus the proof also gives an implicit upper bound for $q_s(x)$. Hence, this method provides a starting point for an iterative procedure to approximate $q_s(x)$ arbitrarily closely; we will describe it in subsection \ref{subsec:algorithm} below.

\item[{\rm(ii)}] For each $k\in\N$ we have  $q^*(x)\searrow q_G$ as $x\nearrow 1/q_G^{k-1}$. Therefore, $q_s(x)>q_G$ for all $x\in(0, 1)$.

\item[{\rm(iii)}] Note  that
\[
x:=\frac{1}{q_{KL}(q_{KL}-1)} \in[q_G^{-1}, 1)\quad\mbox{and} \quad y:=\frac{1}{q_{KL}^2(q_{KL}-1)} \in[q_G^{-2}, q_G^{-1}).
\]
Since $q_{KL}\in\ub$, Proposition \ref{prop:lower-bound} implies that
$q_s(x)=q_{KL}\notin\ub(x)$ and $q_s(y)=q_{KL}\notin\ub(y)$.
However, for $k\geq 3$, $1/(q_{KL}^k(q_{KL}-1))$ does not lie in the interval $[1/{q_G^k},1/q_G^{k-1})$.
\end{itemize}
\end{remark}

\subsection{Description of the algorithm} \label{subsec:algorithm}

Motivated by Proposition \ref{prop:lower-bound} we introduce the following algorithm to calculate $q_s(x)=\inf\ub(x)$ for $x\in[q_G^{-1}, 1)$. We emphasize that this algorithm can be easily generalized to calculate $q_s(x)$ for all $x\in(0,1)$; see Remark \ref{rem:three-cases} below.

\medskip

{\bf Step $1$.} Take $x\in[q_G^{-1}, 1)$. Let $q_1=q^*(x)$, i.e.,
\begin{equation}\label{eq:mar-11-1}
x=(1\overline{\al(q_1)})_{q_1}=(01^\f)_{q_1}.
\end{equation} If $q_1\in\overline{\ub}$, then by Proposition \ref{prop:lower-bound} we have ${q_s(x)}=q_1$ and STOP.
Otherwise, let
\[n_1:=\min\set{n\in\N: \overline{\al_{n}(q_1)\al_{n+1}(q_1)\ldots}\lge \al(q_1)}\] be the number $n$ defined as in (\ref{eq:large-reflected-tail}). Then $\al_{n_1-1}(q_1)=1$. Put $B_1:=1\,\overline{\alpha_1(q_1)\dots\alpha_{n_1-2}(q_1)}$, and let $r_1>q_1$ be the base such that
\begin{equation}\label{eq:mar-11-2}
\big((B_1)^\infty)_{r_1}=x.
\end{equation}
From the work in Case A in the proof of Proposition \ref{prop:lower-bound} it follows that $r_1\in\ub(x)$ and $q_1\leq {q_s(x)}\leq r_1$.

\medskip

{\bf Step $2$.} Let $q_2$ be the base such that
\begin{equation}\label{eq:mar-11-3}
\big(B_1 1\,\overline{\alpha(q_2)}\big)_{q_2}=\big(B_1 01^\infty\big)_{q_2}=x.
\end{equation}

{\em Claim 1:} $q_2\in(q_1,r_1)$, and $q_s(x)\geq q_2$. (The proof of this and the next two claims will be deferred until we have finished describing the algorithm.)

If $q_2\in\overline{\ub}$, then the same argument as in the proof of  Proposition \ref{prop:lower-bound} yields ${q_s(x)}=q_2$ and STOP.
Otherwise, let
\[
n_2:=\min\{n\in\N:\overline{\alpha_n(q_2)\alpha_{n+1}(q_2)\dots}\succeq \alpha(q_2)\},
\]
and set $B_2:=1\,\overline{\alpha_1(q_2)\dots\alpha_{n_2-2}(q_2)}$. In view of the definition of $n_2$ we also define \[m_2:=\min\{m>n_2:\overline{\al_{n_2}(q_2)\ldots\alpha_m(q_2)}\succ\al_1(q_2)\ldots\alpha_{m-n_2+1}(q_2)\},\] or $m_2:=\infty$ if no such $m$ exists. Let $r_2>q_2$ be the base such that
\[
(B_1 B_2^\infty)_{r_2}=x.
\]

{\em Claim 2:} $r_2\in\ub(x)$.

{\em Claim 3:} $r_2\leq r_1$.

In view of the above claims, in this step we conclude that $q_1<q_2\le {q_s(x)}\le r_2\le r_1$, and $r_2\in\ub(x)$.

\medskip

{\bf Step $k$.} Let $k\geq 3$, and assume we have constructed bases $q_1<q_2<\dots<q_{k-1}\le r_{k-1}\le\dots\le r_2\le r_1$ and finite words $B_1,B_2,\dots,B_{k-1}$ such that
\[
(B_1\dots B_{j-1}B_{j}^\infty)_{r_j}=x, \qquad j=2,\dots,k-1.
\]
We then define $q_k$ so that
\[
\big(B_1\dots B_{k-1} 1\,\overline{\alpha(q_k)}\big)_{q_k}=\big(B_1\dots B_{k-1}01^\f\big)_{q_k}=x.
\]
If $q_k\in\overline{\ub}$, then by the same argument as in the proof of Proposition \ref{prop:lower-bound} we obtain ${q_s(x)}=q_k$ and STOP.
Otherwise, let
\begin{equation} \label{eq:def-of-n_k}
n_k:=\min\{n:\overline{\alpha_n(q_k)\alpha_{n+1}(q_k)\dots}\succeq \alpha(q_k)\},
\end{equation}
and let
\begin{equation} \label{eq:def-of-m_k}
m_k:=\min\{m>{n_k}:\overline{\al_{n_k}\ldots\alpha_m(q_k)}\succ\al_1(q_k)\ldots\alpha_{m-{n_k}+1}(q_k)\},
\end{equation}
or $m_k:=\f$ if no such $m$ exists.
Define a block $B_k:=1\,\overline{\alpha_1(q_k)\dots\alpha_{n_k-2}(q_k)}$, and choose $r_k>q_k$ so that
\[
(B_1\dots B_{k-1}B_k^\infty)_{r_k}=x.
\]
We can show in the same way as in Step 2 that $q_k\leq {q_s(x)}\leq r_k$, and $r_k\in\ub(x)$. Furthermore, $q_{k+1}>q_k$ and $r_{k+1}\leq r_k$ for each $k$.

\medskip

{\bf Conclusion.} Now, if at any point it so happens that
\begin{equation} \label{eq:condition-finite}
\alpha_1(q_k)\dots\alpha_{m_k}(q_k)=\alpha_1(r_k)\dots\alpha_{m_k}(r_k)\quad\textrm{with}\quad m_k<\f,
\end{equation}
then the first $m_k$ digits of $\alpha(q)$ must equal this block for all $q\in(q_k,r_k)$, so we have $B_j=B_k$ for all $j\geq k$.  This means that $q_s(x)=r_k=\min \ub(x)$, and the smallest unique expansion of $x$ is $B_1\dots B_{k-1}B_k^\infty$.

{But it is also possible that \eqref{eq:condition-finite} never happens. In that case, the process never stops, and we obtain an increasing sequence $(q_k)$ and a nonincreasing sequence $(r_k)$. Since $\Phi_x(q_k)$ and $\Phi_x(r_k)$ both begin with $B_1\dots B_{k-1}$, the distance between these sequences converges to zero, and hence $r_k-q_k\to 0$. Thus, $q_k$ and $r_k$ have the same limit, call it $q_\f=q_\infty(x)$.

{\em Claim 4:}  $q_s(x)=q_\f=\min\ub(x)$, and the smallest unique expansion of $x$ is $B_1B_2B_3\ldots$.

\begin{remark} \label{rem:three-cases}
We can easily extend the algorithm to all $x\in(0,1)$ as follows: If $q_G^{-k}\leq x<q_G^{-k+1}$, we let $q_1=q^*(x)$ where $q^*(x)$ is defined as in Proposition \ref{prop:lower-bound}. In the rest of the algorithm, we then simply precede all expansions by the fixed prefix $0^{k-1}$. The algorithm can also be extended to $x\geq 1$. However, there is no need to do so because in Theorem \ref{thm:Komornik-Loreti-cascades} we will give an explicit description of the function $q_s(x)$ on $(1,\infty)$.
\end{remark}

\begin{example} \label{ex:algorithm-example}
We illustrate the algorithm using the point $x=2/3$. Here $q_1$ is the positive root of $q(q-1)=1/x=3/2$, which gives $q_1=(1+\sqrt{7})/2\approx 1.8228757$. Then a slight modification of R\'enyi's greedy algorithm \cite{Renyi_1957} gives
\[
\al(q_1)=1101100100101100011000\dots,
\]
which implies $n_1=6$ and $m_1=12$. Thus, $B_1=10010$, and then $r_1$ is the base such that $\big((10010)^\f)_{r_1}=2/3$. This gives $r_1=1/\lambda_1$, where $\lambda_1$ is the positive root of $(\lambda+\lambda^4)/(1-\lambda^5)=2/3$. Numerically, $\lambda_1\approx .5457142$ and $r_1\approx 1.8324610$. This results in the quasi-greedy expansion
\[
\al(r_1)=1101101010001101100101000\dots.
\]
Here we see that $\al_1(q_1)\dots\al_{m_1}(q_1)=110110010010\neq 110110101000=\al_1(r_1)\dots\al_{m_1}(r_1)$, so we continue to the next step.

Setting $(B_1 01^\f)_{q_2}=(1001001^\f)_{q_2}=2/3$ gives the equation
\[
\frac{1}{q_2}+\frac{1}{q_2^4}+\frac{1}{q_2^6(q_2-1)}=\frac23,
\]
so $q_2\approx 1.83161197$. Then
\[
\al(q_2)=1101101001101001101010001001\dots,
\]
so that $n_2=22$ and $m_2=24$. Thus $B_2=100100101100101100101$, and $r_2$ is the base such that $(B_1 B_2^\f)_{r_2}=2/3$. This gives $r_2\approx 1.83161199$, and then
\[
\al(r_2)=1101101001101001101010001010\dots.
\]
At this point we see that $\al_1(q_2)\dots\al_{m_2}(q_2)=\al_1(r_2)\dots\al_{m_2}(r_2)$, so we conclude that $q_s(2/3)=r_2$, i.e. the unique base $q$ such that
\[
\big(10010(100100101100101100101)^\infty\big)_q=\frac23.
\]
Note that {\em in this example}, round-off errors in the numerical computation of $q_1,q_2,r_1$ and $r_2$ impact only the later digits of the quasi-greedy expansions $\al(q_1), \al(r_1)$ etc., and those digits play no role in the determination of $q_s(x)$ or the minimal expansion of $x$. However, round-off errors can lead to inaccurate outcomes in cases where one or more $q_i$'s are very close to discontinuity points of the map $q\mapsto\al(q)$.
\end{example}

\subsection{Proofs supporting the algorithm}
We now prove the above four claims.

\begin{proof}[Proof of Claim 1]
Note that $\al_{n_1-1}(q_1)=1$, so (\ref{eq:mar-11-1}) and (\ref{eq:mar-11-3}) give
\[
(B_101^\f)_{q_2}=x=(1\overline{\al(q_1)})_{q_1}=(B_1\,0\overline{\al_{n_1}(q_1)\al_{n_1+1}(q_1)\ldots})_{q_1}<(B_1 01^\f)_{q_1}.
\]
This implies $q_2>q_1$. Next we prove $q_2<r_1$. Since $r_1>q_1$, \eqref{eq:mar-11-1} gives
\[
(B_1^\f)_{r_1}=x=(01^\f)_{q_1}>(01^\f)_{r_1}.
\]
Therefore, by (\ref{eq:mar-11-2}) and (\ref{eq:mar-11-3}),
\[
(B_1 01^\f)_{r_1}<(B_1^\f)_{r_1}=x=(B_1 01^\f)_{q_2},
\]
so $q_2<r_1$.
For the last statement, i.e. ${q_s(x)}\ge q_2$, we observe that
both $\Phi_x(q_1)$ and $\Phi_x(r_1)$ begin with $B_1$, so by Lemma \ref{lem:char-quasi-expansion} (i), $\Phi_x(p)$ also begins with $B_1$ for any base $p\in(q_1,{r_1})$. Thus, by the same argument as in the proof of Proposition \ref{prop:lower-bound}, we conclude that ${q_s(x)}\ge q_2$.
\end{proof}

\begin{proof}[Proof of Claim 2]
The proof of Proposition \ref{prop:lower-bound} shows that
  \[
  \overline{\al_1(q_1)\ldots \al_{n_1-i}(q_1)}\lle \al_i(q_1)\ldots \al_{n_1-1}^-(q_1)\prec \al_1(q_1)\ldots \al_{n_1-i}(q_1)
  \]
  for all $1\le i<n_1$.
  Since $q_1<q_2$, by the above inequalities and the same argument as in the proof of Proposition \ref{prop:lower-bound} it follows that for each $1\le i<n_1$,
  \[
  \overline{\al_1(q_2)\ldots \al_{n_1-1}(q_2)}\prec \al_i(q_1)\ldots \al_{n_1-1}(q_1)^-\al_1(q_2)\ldots\al_{i-1}(q_2)\prec \al_1(q_2)\ldots\al_{n_1-1}(q_2),
  \]
  and for each $1\le j<n_2$,
  \[
  \overline{\al_1(q_2)\ldots \al_{n_2-1}(q_2)}\prec \al_j(q_2)\ldots\al_{n_2-1}(q_2)^-\al_1(q_2)\ldots \al_{j-1}(q_2)\prec\al_1(q_2)\ldots\al_{n_2-1}(q_2).
  \]
  This implies that
  \[
  \overline{\al(q_2)}\prec \si^n(B_1B_2^\f)\prec \al(q_2)\quad\forall n\ge {1}.
  \]
  Since $r_2>q_2$, we conclude that $B_1B_2^\f\in\us_{r_2}$, and thus $r_2\in\ub(x)$.
\end{proof}

\begin{proof}[Proof of Claim 3]
Note that $r_1, r_2\in\ub(x)$. Then showing $r_2\leq r_1$ is equivalent to showing $\Phi_x(r_2)\preceq\Phi_x(r_1)$, i.e. $B_1 B_2^\infty\preceq B_1^\infty$, or equivalently,
\begin{equation}
\big(1\,\overline{\alpha_1(q_2)\dots\alpha_{n_2-2}(q_2)}\big)^\infty =B_2^\infty\preceq B_1^\infty=\big(1\,\overline{\alpha_1(q_1)\dots\alpha_{n_1-2}(q_1)}\big)^\infty.
\label{eq:block-comparison}
\end{equation}
We consider three cases.

\bigskip
{\em Case 1.} If $n_1=n_2$, then $\alpha_1(q_2)\dots\alpha_{n_2-2}(q_2)=\alpha_1(q_2)\dots\alpha_{n_1-2}(q_2)\succeq \alpha_1(q_1)\dots\alpha_{n_1-2}(q_1)$ since $q_2>q_1$, and so \eqref{eq:block-comparison} holds, possibly with equality.

\bigskip
{\em Case 2.} If $n_2>n_1$, then since $\alpha_{n_1-1}(q_1)=1$, we have
\begin{align*}
1\,\overline{\alpha_1(q_2)\dots\alpha_{n_2-2}(q_2)}&\preceq 1\,\overline{\alpha_1(q_1)\dots\alpha_{n_2-2}(q_1)}\\
&=1\,\overline{\alpha_1(q_1)\dots\alpha_{n_1-2}(q_1)}\,0\,\overline{\alpha_{n_1}(q_1)\dots\alpha_{n_2-2}(q_1)}\\
&\prec 1\,\overline{\alpha_1(q_1)\dots\alpha_{n_1-2}(q_1)}\,1\,\overline{\alpha_1(q_1)\dots\alpha_{n_2-n_1-1}(q_1)},
\end{align*}
and \eqref{eq:block-comparison} follows with strict inequality.

\bigskip
{\em Case 3.} {If $n_2<n_1$, then by the definitions of $n_1$ and $n_2$ it follows that
\[
\overline{\alpha_{n_2}(q_2)\alpha_{n_2+1}(q_2)\dots}\succeq \alpha(q_2)\succ\alpha(q_1)\succ \overline{\alpha_{n_2}(q_1)\alpha_{n_2+1}(q_1)\dots}.
\]
On the other hand, note that $\overline{\alpha(q_2)}\prec\overline{\alpha(q_1)}$, and hence
\begin{equation}\label{eq:may-9-1}
\overline{\alpha_1(q_2)\dots\alpha_{n_2-1}(q_2)}\prec\overline{\alpha_1(q_1)\dots\alpha_{n_2-1}(q_1)}.
\end{equation}
Note that $\al_{n_2-1}(q_2)=1$. Then (\ref{eq:may-9-1}) implies
\[
(\overline{\al_1(q_2)\ldots\al_{n_2-2}(q_2)}\;1)^\f\lle (\overline{\al_1(q_1)\ldots \al_{n_2-1}(q_1)})^\f.
\]
So to prove \eqref{eq:block-comparison} we only need to prove
\[
(\overline{\al_1(q_1)\ldots\al_{n_2-1}(q_1)})^\f\lle (\overline{\al_1(q_1)\ldots \al_{n_1-1}(q_1)}^+)^\f,
\]
or equivalently,
\begin{equation}\label{eq:may-14-1}
(\al_1(q_1)\ldots\al_{n_1-1}(q_1)^-)^\f\lle(\al_1(q_1)\ldots\al_{n_2-1}(q_1))^\f.
\end{equation}
Since $n_2<n_1$, we can write $n_1-1=\ell (n_2-1)+{j}$ with $\ell\ge 1$ and $0<{j}\le n_2-1$. Then by using $\si^n(\al(q_1))\lle \al(q_1)$ for all $n\ge 0$ it follows that
\begin{align*}
(\al_1(q_1)\ldots\al_{n_1-1}(q_1)^-)^\f&\lle \big((\al_1(q_1)\ldots\al_{n_2-1}(q_1))^\ell{\al_1(q_1)\ldots \al_{j}(q_1)^-}\big)^\f\\
&\prec (\al_1(q_1)\ldots \al_{n_2-1}(q_1))^\f,
\end{align*}
proving (\ref{eq:may-14-1}).
}
\end{proof}

\begin{proof}[Proof of Claim 4]
The proof is similar to that of Claim 2. {By Lemma \ref{lem:unique-expansion}} we must show that
\begin{equation} \label{eq:B-sequence-bounds}
\overline{\al(q_\f)}\prec \si^n(B_1B_2\ldots)\prec \al(q_\f)\quad\forall ~{n\ge 1}.
\end{equation}
Observe that
\[
B_1B_2\ldots=1\overline{\al_1(q_1)\ldots \al_{n_1-1}(q_1)^-\;\al_1(q_2)\ldots \al_{n_2-1}(q_2)^-\ldots},
\]
so it follows immediately using Lemma \ref{lem:char-quasi-expansion} (iii) that {for any $n\ge 1$ we have}
\[\overline{\sigma^n(B_1 B_2\dots)}\prec \al(q_k)\quad \textrm{for some }k,\]
 and since $q_k<q_\f$, this gives the first inequality in \eqref{eq:B-sequence-bounds}.

For the second inequality, it suffices to show that for each $k\in\N$ and $1\le i<n_k$,
\begin{equation} \label{eq:qk-segment}
\overline{\al_i(q_k)\dots\al_{n_k-1}(q_k)^-\al_1(q_{k+1})\dots\al_{n_{k+1}-2}(q_{k+1})}
	\prec \al_1(q_{k+1})\dots\al_{n_k+n_{k+1}-i-2}(q_{k+1}).
\end{equation}
The argument is the same for each $k$, so we present it for $k=1$. That is, we show
\begin{equation} \label{eq:qk-segment-1}
\overline{\al_i(q_1)\dots\al_{n_1-1}(q_1)^-\al_1(q_{2})\dots\al_{n_{2}-2}(q_{2})}
	\prec \al_1(q_{2})\dots\al_{n_1+n_{2}-i-2}(q_{2})\quad {\forall\,1\le i<n_1}.
\end{equation}
As in the proof of Proposition \ref{prop:lower-bound} (cf. \eqref{eq:j-inequality}), we have
\begin{equation} \label{eq:j-inequality-version}
\overline{\al_i(q_1)\dots\al_{n_1-1}(q_1)^-\al_1(q_1)\dots\al_{i-1}(q_1)} \prec \al_1(q_1)\dots\al_{n_1-1}(q_1).
\end{equation}
Observe that $\al(q_1)\prec\al(q_2)$, and hence, $\overline{\al_1(q_2)\dots\al_{i-1}(q_2)}\preceq \overline{\al_1(q_1)\dots\al_{i-1}(q_1)}$.
So, if $i\le n_2-1$, we are done. On the other hand, if $i\geq n_2$, then it follows that $n_2<n_1$, so as in Case 3 of the proof of Claim 3, we have
\begin{equation} \label{eq:q1-and-q2-relation}
\al_1(q_1)\dots\al_{n_2-1}(q_1) \prec \al_1(q_2)\dots\al_{n_2-1}(q_2).
\end{equation}
Now notice that $i<n_1$ implies $n_1+n_2-i-2\ge n_2-1$, so \eqref{eq:q1-and-q2-relation} also gives
\[
\al_1(q_1)\dots\al_{n_1+n_2-i-2}(q_1)\prec \al_1(q_2)\dots\al_{n_1+n_2-i-2}(q_2).
\]
Thus, from \eqref{eq:j-inequality-version} we obtain
\begin{align*}
\overline{\al_i(q_1)\dots\al_{n_1-1}(q_1)^-\al_1(q_2)\dots\al_{n_2-2}(q_2)}
&\preceq \overline{\al_i(q_1)\dots\al_{n_1-1}(q_1)^-\al_1(q_1)\dots\al_{n_2-2}(q_1)}\\
&\preceq \al_1(q_1)\dots\al_{n_1+n_2-i-2}(q_1)\\
&\prec \al_1(q_2)\dots\al_{n_1+n_2-i-2}(q_2),
\end{align*}
proving \eqref{eq:qk-segment-1}.
\end{proof}

\section{Classification of points in $(0,1)$} \label{sec:classification}

From Claims 1-4 in the previous section we see that the algorithm always converges to the correct value $q_s(x)$. In fact, we have the following three possibilities:
\begin{enumerate}[I.]
\item The algorithm stops after a finite number of steps with a $q_k\in\overline{\ub}$. Then $q_s(x)=q_k\not\in\ub(x)$.
\item The algorithm never stops, but \eqref{eq:condition-finite} holds for some $k\in\N$. Then $q_s(x)=r_k=\min\ub(x)$.
\item The algorithm never stops and \eqref{eq:condition-finite} fails for every $k\in\N$. Then $q_s(x)=q_\f=\min\ub(x)$.
\end{enumerate}
We say $x\in(0,1)$ is a point of  {type} I,II or III according to which of these cases applies.
{Denote by $X_I, X_{II}$ and $X_{III}$ the sets of points of types I, II and III, respectively. Then
$(0,1)=X_I\cup X_{II}\cup X_{III}.$

\begin{proposition} \label{prop:type-I} \mbox{}

\begin{enumerate}[{\rm(i)}]
\item Let $x\in(0,1)$. If $q_s(x)\not\in\overline{\ub}$, then $q_s(x)=\min\ub(x)$.
\item $X_I$ is a Lebesgue null set.
\end{enumerate}
\end{proposition}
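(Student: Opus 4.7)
Part (i) is immediate from the trichotomy just established. Since $(0,1) = X_I \cup X_{II} \cup X_{III}$, and by definition both Type II and Type III give $q_s(x) = \min\ub(x)$, it suffices to rule out $x \in X_I$. But the definition of Type I forces $q_s(x) = q_k \in \overline{\ub}$ for some $k$, which is excluded by the hypothesis $q_s(x) \notin \overline{\ub}$. Hence $x \in X_{II} \cup X_{III}$ and $q_s(x) = \min\ub(x)$.

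For part (ii) the plan is to cover $X_I$ by countably many Lipschitz images of the Lebesgue-null set $\overline{\ub}$ (null because $\overline{\ub}$ is a Cantor set, cf. \cite{Komornik_Loreti_2007}). The key observation is an algebraic identity: for any finite word $B = b_1 \ldots b_n \in \{0,1\}^n$ and any $q \in (1,2]$, the quasi-greedy expansion relation $\sum_{i=1}^\infty \al_i(q)/q^i = 1$ yields $\sum_{i=1}^\infty \overline{\al_i(q)}/q^i = (2-q)/(q-1)$, so a short calculation gives
\[
\phi_B(q) \;:=\; \bigl(B\,1\,\overline{\al(q)}\bigr)_q \;=\; (B)_q + \frac{1}{q^{n+1}(q-1)}.
\]
Thus $\phi_B$ is a rational function of $q$, smooth on any compact subinterval of $(1,2]$, and in particular Lipschitz on $[q_G, 2]$.

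With this in hand I would decompose $X_I = \bigcup_{k=1}^\infty X_I^{(k)}$ according to the step $k$ at which the algorithm halts. If $x \in X_I^{(k)}$, then the algorithm produces a concatenated prefix $B_1 \cdots B_{k-1}$ and a value $q_k \in \overline{\ub}$ such that $x = \phi_{B_1 \cdots B_{k-1}}(q_k)$, and by Remark \ref{rem:algorithm-1}(ii) one has $q_k \geq q_G$. Since the set of all finite binary words is countable, we obtain
\[
X_I^{(k)} \;\subseteq\; \bigcup_{B \in \{0,1\}^*} \phi_B\!\left(\overline{\ub} \cap [q_G, 2]\right),
\]
and each summand is a Lipschitz image of a Lebesgue-null set, hence itself null. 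A countable union of null sets is null, yielding $|X_I| = 0$. The only real analytic content is the short identity for $\phi_B$; the rest is bookkeeping, with the main subtlety being that one over-counts by unioning over \emph{all} finite binary prefixes rather than only those actually produced by the algorithm on some input $x$, but this over-approximation is harmless since countability is preserved.
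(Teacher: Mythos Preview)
Your proof is correct and takes essentially the same approach as the paper: part (i) is immediate from the trichotomy, and for part (ii) both you and the paper cover $X_I$ by countably many Lipschitz images of $\overline{\ub}$ via the maps $q\mapsto (B\,01^\infty)_q$ (your identity $\phi_B(q)=(B)_q+q^{-(n+1)}/(q-1)$ is precisely this map). One small quibble: the fact that $\overline{\ub}$ is a Cantor set does not by itself imply it is Lebesgue null (fat Cantor sets exist); the nullity of $\overline{\ub}$ is a separate result, and the paper cites \cite{Erdos_Joo_Komornik_1990} rather than \cite{Komornik_Loreti_2007} for it.
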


\begin{proof}
The first statement follows directly from the classification  of the three types. For the second statement we note from the algorithm that any point $x\in X_I$ must be of the form
$x=(d_1\ldots d_m 01^\f)_q$ {for some} $q\in\overline{\ub}.$
   So, \begin{equation}\label{eq:may-26-1}
   X_I\subseteq\bigcup_{m=1}^\f\bigcup_{d_1\ldots d_m\in\set{0,1}^m}X_{d_1\ldots d_m},
   \end{equation}
   where $X_{d_1\ldots d_m}:=\set{(d_1\ldots d_m 01^\f)_q: q\in\overline{\ub}}.$
   Observe that for each $d_1\ldots d_m\in\set{0,1}^m$ with $m\in\N$, the map $q\mapsto (d_1\ldots d_m 01^\f)_q$ is bi-Lipschitz from $\overline{\ub}$ to $X_{d_1\ldots d_m}$.
  Since $\overline{\ub}$ has zero Lebesgue measure (cf.~\cite{Erdos_Joo_Komornik_1990}), it follows that $X_{d_1\ldots d_m}$ has zero Lebesgue measure as well. Hence, by (\ref{eq:may-26-1}) we conclude that $X_I$ also has zero Lebesgue measure.
 \end{proof}

 \begin{remark}
   In Section \ref{sec:continuity} we show that $q_{\max}:=\max\set{q_s(x): x>0}$ exists, and $q_{\max}=q_s(q_G^{-1})\approx 1.88845$. Then by the same argument as in the proof of Proposition \ref{prop:type-I} we obtain
	\begin{equation*}
	\dim_H X_I=\dim_H(\overline\ub\cap(1,q_{\max}])
	=\max_{q\le q_{\max}}\dim_H\u_q<1,
	\end{equation*}
	where in the second equality we used \cite[Theorem 1.3]{Kalle-Kong-Li-Lv-2019}. Using the formula for the dimension of $\u_q$ from \cite{Komornik-Kong-Li-17} and a computer calculation, we estimate that $\dim_H X_I\approx 0.8546$.
 \end{remark}
 }

For $x\in X_{II}$ we can get $q_s(x)$ after finitely many steps although the algorithm never stops, as we saw in Example \ref{ex:algorithm-example}. In many cases, the number of steps is quite small; we present a few more examples, leaving the details to the interested reader.

\begin{example} \label{ex:1} \mbox{}

\begin{enumerate}[{\rm(i)}]
\item For $x=0.72$, we have $n_1=5$, $n_2=7$ and $n_3=17$, and $q_s(x)$ is the base $q\approx 1.7967$ such that
\[
\big(1001\,100101(1001010110100101)^\infty\big)_q=0.72.
\]
\item For $x=0.84$, we have $n_1=n_2=n_3=3$, $n_4=5$, and $q_s(x)$ is the base $q\approx 1.7557$ such that
\[
\big(10\,10\,10(1001)^\infty\big)_q=\big(1(01)^3(0011)^\infty\big)_q=0.84.
\]
\item For $x=0.39$ we have $k=2$ in Proposition \ref{prop:lower-bound}, so here the algorithm begins by letting $q_1(x)$ be the unique root in $(1,2)$ of $q^2(q-1)=(0.39)^{-1}$, and we must precede the expansions by the digit $0$. The algorithm gives $n_1=7$ and $n_2=9$, and $q_s(x)$ is the base $q\approx 1.7988$ such that
\[
\big(0100101(10010101)^\f\big)_q=0.39.
\]
\item Similarly, for $x=1/2$ we find that $q_s(x)$ is the base $q$ such that $\big((01)^\f\big)_q=1/2$, which gives $q_s(1/2)=\sqrt{3}$.
\end{enumerate}
Note in all these examples, $x$ is of type II and $q_s(x)=\min\ub(x)$ {is algebraic, because $x$ was chosen rational. If we increase $x$ by a sufficiently small amount in any of these examples, the minimal unique expansion of $x$ does not change in view of Proposition \ref{prop:finite-alg-stable} below, and $q_s(x)$ becomes algebraic over the field $\mathbb{Q}(x)$}. In Section \ref{subsec:infinite-qKL-level-set} we will construct examples of points of type III. Observe that points of type I are the easiest to come by: simply take $x=1/q(q-1)$ for any $q\in\overline{\ub}$.
\end{example}

\begin{remark}
In each of the above examples, the sequence $(n_i)$ is increasing. We suspect, but have been unable to prove, that this is always the case.
\end{remark}

Next we give a characterization of $X_{II}$, i.e., the set of points such that
 the equality \eqref{eq:condition-finite} occurs in the algorithm. Recall the definition of the set $\vb$ from \eqref{def:V}. By Lemma \ref{lem:univoque-bases} it follows that $\overline{\ub}\subseteq\vb$, and $\vb\setminus\overline{\ub}$ is countable. Since $\overline{\ub}$ is a Lebesgue null set of full Hausdorff dimension, so is $\vb$.

\begin{proposition} \label{th:qs-not-in-V}
  Let $x\in(0,1)$. Then $x\in X_{II}$ if and only if $q_s(x)\notin\vb$.
\end{proposition}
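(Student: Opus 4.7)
I would prove each direction separately. The forward implication follows almost directly from the defining inequality of $m_k$: suppose $x\in X_{II}$, so the algorithm concludes at some step $k$ with $q_s(x)=r_k$ and \eqref{eq:condition-finite} holding, i.e.\ $\al_1(q_k)\cdots\al_{m_k}(q_k)=\al_1(r_k)\cdots\al_{m_k}(r_k)$. The defining inequality in \eqref{eq:def-of-m_k} gives $\overline{\al_{n_k}(q_k)\cdots\al_{m_k}(q_k)}\succ\al_1(q_k)\cdots\al_{m_k-n_k+1}(q_k)$, and because \eqref{eq:condition-finite} equates digits $1$ through $m_k$ of $\al(q_k)$ and $\al(r_k)$, the same strict inequality holds with $r_k$ replacing $q_k$ on both sides. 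This is precisely $\si^{n_k-1}(\al(r_k))\prec\overline{\al(r_k)}$, which witnesses $r_k\notin\vb$.

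For the reverse direction, since $x\in X_I$ forces $q_s(x)\in\overline{\ub}\subseteq\vb$, it suffices to rule out $x\in X_{III}$ under the assumption $q_\infty:=q_s(x)\notin\vb$. I would argue by contradiction. Assuming $x\in X_{III}$ and $q_\infty\notin\vb$, there exists a smallest $n_0\ge 1$ with $\si^{n_0}(\al(q_\infty))\prec\overline{\al(q_\infty)}$. Set $n^{**}:=\min\{n\ge 1:\overline{\si^{n-1}(\al(q_\infty))}\lge\al(q_\infty)\}\le n_0+1$. Left-continuity of $q\mapsto\al(q)$ from Lemma \ref{lem:char-quasi-expansion}(i) ensures that $\al(q_k)$ agrees with $\al(q_\infty)$ on arbitrarily many initial digits as $k\to\infty$, from which one deduces $n_k=n^{**}$ for all sufficiently large $k$. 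The argument splits into two cases: if $n^{**}=n_0+1$, then all the relevant inequalities for $\al(q_\infty)$ are strict and transfer to $\al(q_k)$; if $n^{**}\le n_0$, then necessarily $\si^{n^{**}-1}(\al(q_\infty))=\overline{\al(q_\infty)}$, which forces $\al(q_\infty)=(c\overline{c})^\infty$ to be purely periodic with $c=\al_1(q_\infty)\cdots\al_{n^{**}-1}(q_\infty)$, and the periodic structure lets one pinpoint the first difference position between $\al(q_k)$ and $\al(q_\infty)$ to still conclude $n_k=n^{**}$.

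Once $n_k=n^{**}$ eventually, the blocks $B_k$ stabilize at a common value $B^{*}=1\,\overline{\al_1(q_\infty)\cdots\al_{n^{**}-2}(q_\infty)}$ for all $k\ge k_0$. Then $\Phi_x(r_k)=B_1\cdots B_{k_0-1}(B^{*})^\infty$ is the same infinite sequence for every $k\ge k_0$, and since $p\mapsto(B_1\cdots B_{k_0-1}(B^{*})^\infty)_p$ is strictly decreasing in $p$, injectivity forces $r_k$ to be eventually constant. Combined with $r_k\searrow q_\infty$ from Claim 4, this gives $r_k=q_\infty$ for large $k$, so $\al(r_k)=\al(q_\infty)$. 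Since $\al(q_k)\to\al(q_\infty)$ while $m_k$ stays bounded, the first $m_k$ digits of $\al(q_k)$ and $\al(r_k)$ eventually coincide, i.e.\ \eqref{eq:condition-finite} holds for all sufficiently large $k$, contradicting the defining property of $X_{III}$.

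The main obstacle lies in Case B of the reverse direction: extracting pure periodicity of $\al(q_\infty)$ from the equality $\si^{n^{**}-1}(\al(q_\infty))=\overline{\al(q_\infty)}$, and then leveraging this structure to pin $n_k$ down to the exact value $n^{**}$ for large $k$ rather than merely to the range $[n^{**},n_0+1]$. This is delicate because the paper explicitly notes that monotonicity of $(n_k)$ remains an open conjecture, so one cannot invoke it to guarantee stabilization directly.
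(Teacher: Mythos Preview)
Your argument is correct, but it differs from the paper's in several respects.

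\textbf{Forward direction.} Your proof is shorter than the paper's: you observe directly that the defining inequality for $m_k$ transfers from $q_k$ to $r_k$ via \eqref{eq:condition-finite}, witnessing $\si^{n_k-1}(\al(r_k))\prec\overline{\al(r_k)}$ and hence $q_s(x)=r_k\notin\vb$. The paper instead proves the stronger statement $[q_k,r_k]\cap\vb=\emptyset$ by locating $q_L\in\vb$ with $\al(q_L)=(\al_1\ldots\al_{n_k-1}^-)^\infty$ just below $q_k$, and showing $\al(r_k)\prec\al(q_R)$ where $q_R$ is the next element of $\vb$. Your shortcut suffices for the proposition as stated.

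\textbf{Reverse direction.} Both proofs show that $n_k$ stabilizes and $m_k$ stays bounded, but by different mechanisms. The paper invokes the structure of the connected component $(q_L,q_R)$ of $(q_G,2]\setminus\vb$ containing $q_s(x)$: since $\al(q_L)=(\al_1\ldots\al_m^-)^\infty$ and $\al(q_R)=(\al_1\ldots\al_m\overline{\al_1\ldots\al_m})^\infty$, every $q\in(q_L,q_R)$ has $\al(q)$ beginning with $\al_1\ldots\al_m$ and continuing with something $\prec\overline{\al_1\ldots\al_m}$, which forces $n_k=m+1$ for all large $k$. You instead use left-continuity of $q\mapsto\al(q)$ at $q_\infty$ to transfer strict inequalities from $\al(q_\infty)$ to $\al(q_k)$. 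Both routes work; the paper's is more structural, yours more elementary.

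One simplification you are missing: your Case B is vacuous. If $\si^{n^{**}-1}(\al(q_\infty))=\overline{\al(q_\infty)}$ with $n^{**}-1<n_0$, then $\al(q_\infty)=(c\overline{c})^\infty$ with $|c|=n^{**}-1$, and for $n_0>|c|$ one has $\si^{n_0}(\al(q_\infty))=\overline{\si^{n_0-|c|}(\al(q_\infty))}$; the assumed inequality $\si^{n_0}(\al(q_\infty))\prec\overline{\al(q_\infty)}$ then becomes $\si^{n_0-|c|}(\al(q_\infty))\succ\al(q_\infty)$, which is impossible for a quasi-greedy expansion. So $n^{**}=n_0+1$ always holds when $q_\infty\notin\vb$, and the ``delicate'' periodic case you worried about never arises. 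You should also make explicit the easy bound on $m_k$ (from the strict inequality $\si^{n_0}(\al(q_\infty))\prec\overline{\al(q_\infty)}$ there is a finite position $M$ witnessing it, and once $\al(q_k)$ agrees with $\al(q_\infty)$ on the first $n_0+M$ digits one has $m_k\le n_0+M$).
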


\begin{proof}
Assume first that $x\in X_{II}$, and fix $k\in\N$ such that \eqref{eq:condition-finite} holds. We will show that $[q_k, r_k]\cap \vb=\emptyset$, and hence $q_s(x)\not\in\vb$.

{Observe by Remark \ref{rem:algorithm-1} (ii) that $q_1=q^*(x)>q_G$ for each $x\in(0,1)$. 
This, along with the fact that the sequence $(q_k)$ is increasing, implies that $\al_1(q_k)=\al_2(q_k)=1$.}

	Recall the definition \eqref{eq:def-of-n_k} of $n_k$. Denote $(\al_i)=\al(q_k)$. By the same argument as in the proof of (\ref{eq:c-sandwich}) it follows that
  \begin{equation}
    \label{eq:kong-jan-1}
    \overline{\al_1\ldots \al_{n_k-1}}\prec\al_j\ldots\al_{n_k-1}^-\al_1\ldots \al_{j-1}\prec \al_1\ldots \al_{n_k-1}\qquad\forall\, 1\le j<n_k,
  \end{equation}
  where we have used $\al_1=\al_2=1$. Then (\ref{eq:kong-jan-1}) gives
  \[
  \overline{(\al_1\ldots \al_{n_k-1}^-)^\f}\lle\si^j((\al_1\ldots \al_{n_k-1}^-)^\f)\lle (\al_1\ldots\al_{n_k-1}^-)^\f\qquad\forall j\ge 0.
  \]
  So there exists $q_L\in\vb$ such that $\al(q_L)=(\al_1\ldots \al_{n_k-1}^-)^\f$. Clearly, $\al(q_L)\prec (\al_i)=\al(q_k)$. By Lemma \ref{lem:char-quasi-expansion} (i) this gives $q_L<q_k$.

  Note that $q_L\in\vb\setminus{\ub}$. Then the smallest base $q_R\in \vb$ larger than $q_L$ admits the quasi-greedy expansion $\al(q_R)=(\al_1\ldots\al_{n_k-1}\overline{\al_1\ldots \al_{n_k-1}})^\f$. So once we show that
  \begin{equation}
    \label{eq:kong-jan-2}
    \al(r_k)\prec (\al_1\ldots\al_{n_k-1}\overline{\al_1\ldots \al_{n_k-1}})^\f,
  \end{equation}
	it will follow that $[q_k, r_k]\cap \vb=\emptyset$. From the definition of $m_k$ in \eqref{eq:def-of-m_k} we see that
\[
\al_{n_k}\ldots \al_{m_k}\prec \overline{\al_1\ldots \al_{m_k-n_k+1}}.
\]
If $m_k-n_k+1\le n_k-1$, then (\ref{eq:kong-jan-2}) follows directly by   our assumption that $\al_1(r_k)\ldots \al_{m_k}(r_k)=\al_1\ldots \al_{m_k}$. If $m_k-n_k+1>n_k-1$, then write $m_k-n_k+1=\ell(n_k-1)+j$ with $\ell\ge 1$ and $1\le j\le n_k-1$.
By the minimality of $m_k$ it follows that
\[
\al_{n_k}\ldots \al_{(\ell+1)(n_k-1)}=\overline{\al_1\ldots \al_{ \ell (n_k-1)}}
\]
and
\[
\al_{(\ell+1)(n_k-1)+1}\ldots \al_{m_k}\prec \overline{\al_{ \ell (n_k-1)+1}\ldots \al_{\ell(n_k-1)+j}}.
\]
So if $\ell$ is odd, say $\ell=2i-1$, then we have
\[
\al_{1}\ldots\al_{(\ell+1)(n_k-1)}=\big(\al_1\ldots \al_{n_k-1}\overline{\al_1\ldots \al_{n_k-1}}\big)^i
\]
and
\[
\al_{(\ell+1)(n_k-1)+1}\ldots \al_{m_k}\prec \al_1\ldots \al_j;
\]
while if $\ell$ is even, say $\ell=2i$, we have
\[
\al_{1}\ldots\al_{(\ell+1)(n_k-1)}=\big(\al_1\ldots \al_{n_k-1}\overline{\al_1\ldots \al_{n_k-1}}\big)^{i}\al_1\ldots \al_{n_k-1}
\]
and
\[
\al_{(\ell+1)(n_k-1)+1}\ldots\al_{m_k}\prec \overline{\al_1\ldots \al_j}.
\]
Since $\al_1(r_k)\ldots \al_{m_k}(r_k)=\al_1\ldots \al_{m_k}$, this again proves (\ref{eq:kong-jan-2}).

For the converse, assume $q_s(x)\not\in\vb$.
Note that $x\in(0,1)$ implies $q_s(x)>q_G$, so we have  $q_s(x)\in(q_G, 2]\setminus\vb$.
  Let $(q_L, q_R)$ be the connected component of $(q_G, 2]\setminus\vb$ which contains $q_s(x)$. Then there exists an admissible word $\al_1\ldots\al_m^-$ such that
  \begin{equation}\label{eq:kong-jan-3}
  \al(q_L)=(\al_1\ldots \al_m^-)^\f\qquad\textrm{and}\qquad \al(q_R)=(\al_1\ldots \al_m\overline{\al_1\ldots \al_m})^\f.
  \end{equation}
  The algorithm produces a sequence of bases $(q_k)$ strictly increasing to $q_s(x)$ and a sequence of bases $(r_k)$ decreasing to $q_s(x)$. So there exists a smallest integer $K_1$ such that $q_s(x)\in(q_k, r_k]\subset(q_L, q_R)$ for any $k\ge K_1$. By (\ref{eq:kong-jan-3}) and Lemma \ref{lem:char-quasi-expansion} (i) this implies
  \begin{equation}
    \label{eq:kong-jan-4}
    (\al_1\ldots \al_m^-)^\f\prec \al(q_k)\prec \al(q_s(x))\lle\al(r_k)\prec (\al_1\ldots\al_m\overline{\al_1\ldots\al_m})^\f.
  \end{equation}
 Recall the definition \eqref{eq:def-of-n_k} of $n_k$. We claim that
  \begin{equation}
    \label{eq:claim-nk}
    n_k=m+1\quad\forall k\ge K_1.
  \end{equation}

  Since $q_L\in\vb$ and $\al(q_L)=(\al_1\ldots\al_m^-)^\f$, we have
  \begin{equation}
    \label{eq:kong-jan-5}
    \al_i\ldots \al_m\succ\al_i\ldots \al_m^-\lge\overline{\al_1\ldots\al_{m-i+1}}\quad\forall 1\le i\le m.
  \end{equation}
  Note by (\ref{eq:kong-jan-4}) that $\al_1(q_k)\ldots\al_m(q_k)=\al_1\ldots \al_m$. Then (\ref{eq:kong-jan-5}) yields that
  \[
  \overline{\al_{i}(q_k)\ldots\al_m(q_k)}\prec \al_1(q_k)\ldots\al_{m-i+1}(q_k)
  \]
  for all $1\le i\le m$. This implies $n_k\ge m+1$. On the other hand, using $\al_1(q_k)\ldots\al_m(q_k)=\al_1\ldots \al_m$ and (\ref{eq:kong-jan-4}) it follows that
  \[
  \al_{m+1}(q_k)\al_{m+2}(q_k)\ldots\prec (\overline{\al_1\ldots\al_m}\al_1\ldots\al_m)^\f=\overline{\al(q_R)}\prec \overline{\al(q_k)}.
  \]
  So $n_k\le m+1$. This proves (\ref{eq:claim-nk}).

  Next, we recall the definition \eqref{eq:def-of-n_k} of $m_k$. We claim there exists $M\in\N$ such that
  \begin{equation}
    \label{eq:claim-mk}
    m_k\le M\quad\forall k\ge K_1.
  \end{equation}
  Since $q_k<q_s(x)<q_R$ and $n_k=m+1$, by (\ref{eq:kong-jan-4}) there exists $M\in\N$ depending {only} on $q_s=q_s(x)$ such that
  \begin{align*}
    \al_{m+1}(q_s)\ldots \al_M(q_s)&\prec \al_{m+1}(q_R)\ldots\al_M(q_R)\\
		&=\overline{\al_1(q_R)\ldots\al_{M-m}(q_R)}\\
		&\lle\overline{\al_1(q_s)\ldots\al_{M-m}(q_s)}.
  \end{align*}
  This implies
  \begin{align*}
  \al_{m+1}(q_k)\ldots \al_M(q_k)&\lle \al_{m+1}(q_s)\ldots\al_M(q_s)\\
	&\prec\overline{\al_1(q_s)\ldots\al_{M-m}(q_s)}\\
	&\lle \overline{\al_1(q_k)\ldots \al_{M-m}(q_k)},
  \end{align*}
  so (\ref{eq:claim-mk}) holds.

  Note that $q_k$ strictly increases to $q_s(x)$ and $r_k$ decreases to $q_s(x)$ when $k\to\f$. Hence there exists $K_2>K_1$ such that
  \[
  \al_1(q_k)\ldots \al_M(q_k)=\al_1(r_k)\ldots\al_M(r_k)\quad\forall k\ge K_2.
  \]
	In view of \eqref{eq:claim-mk}, this implies \eqref{eq:condition-finite}.
  \end{proof}
	
Motivated by the work of de Vries and Komornik in \cite{DeVries-Komornik-2011} we introduce the set
\[
\mathbb U:=\{(x,q)\in (0,\infty)\times(1,2): x\in\u_q\}.
\]
We call $\mathbb U$ the {\em master univoque set}. Recall that the set-valued function $q\mapsto \us_q$ is increasing. Hence, if $(x_0,q_0)\in\mathbb U$, then the unique expansion $(d_i)$ of $x_0$ in base $q_0$ lies in $\us_q$ for every $q\in[q_0,2]$, and thus $(d_i)$ is the unique expansion in base $q$ of some number $x=x(q)\leq x_0$. The equation
\begin{equation} \label{eq:unique-expansion-of-xq}
x=x(q)=\sum_{i=1}^\infty \frac{d_i}{q^i}
\end{equation}
defines $x$ as a strictly decreasing, strictly convex function of $q$, with $x(q_0)=x_0$. Consequently, \eqref{eq:unique-expansion-of-xq} implicitly defines $q$ as a strictly decreasing, strictly convex function of $x$ corresponding to the fixed sequence $(d_i)\in\us_{q_0}$. So the graph $\{(x(q),q): q\in[q_0,2]\}$ is an arc crossing the point $(x_0, q_0)$ in the plane; we shall denote it by $C(x_0,q_0)$. The set $\mathbb U$ consists of an uncountable collection of {pairwise disjoint} such arcs, which we denote by $\mathscr{C}$. So, if $q_s(x)=\min\ub(x)$, then $C(x, q_s(x))$ is the lowest arc intersecting the fiber $\{x\}\times(1,2]$; see Figure \ref{fig:1}.

{Now we show that the map $x\mapsto \Phi_x(q_s(x))$ is locally stable in $(0,1)$ assuming $q_s(x)\notin\overline{\ub}$. In other words, the function $q_s(x)=\inf\ub(x)$ is locally convex and strictly decreasing with the assumption that $q_s(x)\notin\overline{\ub}$. Recall from Section 2 that the connected components of $(1,2]\setminus\vb$ are of the form $(\hat q_n(\sa), \hat q_{n+1}(\sa))$, where $n\in\N{\cup\set{0}}$ and $\sa$ is an admissible word.

\begin{proposition} \label{prop:finite-alg-stable}
 Let $(q_n,q_{n+1})=(\hat q_n(\sa), \hat q_{n+1}(\sa))$ be a connected component of $(1,2]\setminus\vb$.
 If $q_s(x_0)\in(q_n,q_{n+1}]$ for some $x_0\in(0,1)$, then the map
 \[
 x\mapsto \Phi_x(q_s(x))
 \]
 is constant in the interval $[x_0, x^+)$, where   $x^+:=(\Phi_{x_0}(q_s(x_0)))_{q_n}$.
\end{proposition}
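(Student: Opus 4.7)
The plan is to set $(d_i) := \Phi_{x_0}(q_s(x_0))$ and to show that the arc of $\mathbb U$ through $(x_0, q_s(x_0))$ extends (in the direction of decreasing $q$) down to $(x^+, q_n)$, with $q_s$ coinciding with this extension. Since $q_s(x_0) \in (q_n, q_{n+1}]$ with $q_{n+1}=\hat q_{n+1}(\sa)$ lying strictly inside a basic interval and therefore outside $\overline{\ub}$, Proposition \ref{prop:type-I}(i) gives $q_s(x_0) = \min\ub(x_0)$, so $(d_i) \in \us_{q_s(x_0)}$ and in particular has infinitely many $1$'s. The equation $(d_i)_{q(x)} = x$ then defines a continuous, strictly decreasing bijection $q \colon [x_0, x^+) \to (q_n, q_s(x_0)]$ with $q(x_0) = q_s(x_0)$ and $q(x) \searrow q_n$ as $x \nearrow x^+$. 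The proposition reduces to the two assertions
\begin{itemize}
\item[(A)] $(d_i) \in \us_q$ for every $q \in (q_n, q_s(x_0)]$;
\item[(B)] $q_s(x) \ge q(x)$ for every $x \in [x_0, x^+)$.
\end{itemize}
Indeed, (A) gives $\Phi_x(q(x)) = (d_i)$ and $q(x) \in \ub(x)$, so $q_s(x) \le q(x)$; combined with (B) this forces $q_s(x) = q(x)$ and $\Phi_x(q_s(x)) = (d_i)$ as required.

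For (A), Lemma \ref{lem:unique-expansion} asks that
\[
\sigma^j((d_i)) \prec \al(q) \text{ if } d_j=0, \qquad \sigma^j((d_i)) \succ \overline{\al(q)} \text{ if } d_j=1,
\]
for every $j \ge 1$ and every $q\in(q_n, q_s(x_0)]$. These strict inequalities hold at $q = q_s(x_0)$ by hypothesis. Since $q \mapsto \al(q)$ is strictly increasing and left-continuous, the set of $q \in (1, q_s(x_0)]$ on which \emph{all} of these inequalities persist is an interval of the form $(q_L, q_s(x_0)]$ for some $q_L$. The crucial claim is $q_L \le q_n$. Argue by contradiction: if $q_L \in (q_n, q_{n+1})$, then the left-continuity and strict monotonicity of $\al$ force some shift $\sigma^{j_*}((d_i))$ (or its reflection, according to whether $d_{j_*}=0$ or $1$) to coincide with $\al(q_L)$, while the strict inequalities that survive at every other index $j\ne j_*$ translate into
\[
\overline{\al(q_L)} \lle \si^m(\al(q_L)) \lle \al(q_L)\quad\forall m\ge 0,
\]
which by Lemma \ref{lem:univoque-bases}(ii) places $q_L \in \vb$. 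This contradicts $q_L \in (q_n, q_{n+1}) \subset (1,2]\setminus \vb$.

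For (B), suppose some $x \in (x_0, x^+)$ had $p:=q_s(x)<q(x)$, and set $(d_i'):=\Phi_x(p)\in\us_p$. Since $(d_i')_p = x = (d_i)_{q(x)}$ with $p<q(x)$, Lemma \ref{lem:char-quasi-expansion}(i) yields $(d_i')\prec (d_i)$. We then mimic the inductive argument of Case A of the proof of Proposition \ref{prop:lower-bound}, now with $q(x)$ in place of $q^*(x)$ and with the general unique expansion $(d_i)$ (whose block decomposition $B_1B_2\dots$ is supplied by the algorithm of Section \ref{subsec:algorithm}) in place of the periodic sequence $(1\,\overline{\alpha_1\dots\alpha_{n-2}})^\infty$. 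Exploiting Lemma \ref{lem:unique-expansion} at base $p$ together with the quasi-greedy comparisons $\al(p)\prec \al(q(x))$ and the surviving strict inequalities in $(d_i)$, we show by induction that $(d_i')$ must begin with every initial block of $(d_i)$, forcing $(d_i')=(d_i)$ and contradicting $(d_i')\prec (d_i)$.

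The main obstacle is Step (A): pinpointing a single ``collapse'' index $j_*$ at $q_L$ and then leveraging the residual strict inequalities at every $j\ne j_*$ to derive the full set of $\vb$-inequalities for $\al(q_L)$. This step uses in an essential way the hypothesis that $q_L$ would lie strictly inside a component of $(1,2]\setminus \vb$, and requires a careful interplay between Lemmas \ref{lem:unique-expansion} and \ref{lem:univoque-bases}(ii).
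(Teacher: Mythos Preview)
Your overall strategy (extend the arc, prove (A) and (B)) matches the paper's, but both halves are done much more laboriously than necessary, and (B) has a genuine gap.

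For (A), you are attempting to reprove from scratch the well-known fact that $\us_q$ is constant on each connected component of $(1,2]\setminus\vb$ (this is essentially the definition of $\vb$ as the bifurcation set of $q\mapsto\us_q$; see de Vries--Komornik). The paper simply invokes this: since $q,\,q_s(x_0)\in(q_n,q_{n+1}]$ and $(q_n,q_{n+1})$ is a component of $(1,2]\setminus\vb$, one has $\us_q=\us_{q_s(x_0)}$, so $(d_i)\in\us_q$. Your sketch for (A) also has technical issues: you invoke left-continuity of $\al$ to pin down a collapse index $j_*$, but you are approaching $q_L$ from \emph{above}, where $\al$ need not be right-continuous; and even granting a single equality $\sigma^{j_*}((d_i))=\al(q_L)$, you have not explained why the residual inequalities at the other indices $j\neq j_*$ yield \emph{all} of the $\vb$-inequalities for $\al(q_L)$ (there could be several collapse indices simultaneously, and the indices $j_*+m$ need not inherit strict inequalities).

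The real problem is (B). The paper's argument here is a two-line geometric one: suppose $p\in\ub(x)$ with $p<q(x)$, and let $(d_i')=\Phi_x(p)$. Then $(d_i')\neq(d_i)$, so the arcs $q\mapsto(d_i)_q$ and $q\mapsto(d_i')_q$ are disjoint. At height $q(x)$ one has $(d_i')_{q(x)}<(d_i')_p=x=(d_i)_{q(x)}$, so the $(d_i')$-arc lies strictly to the left of the $(d_i)$-arc at that height, hence at every height. By the intermediate value theorem there is $q'\in(p,q_s(x_0))$ with $(d_i')_{q'}=x_0$, whence $q'\in\ub(x_0)$ with $q'<q_s(x_0)$, contradicting the definition of $q_s(x_0)$. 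Your proposal instead is to ``mimic the inductive argument of Case A of Proposition~\ref{prop:lower-bound}'' with the general sequence $(d_i)$ in place of the periodic $(1\,\overline{\al_1\dots\al_{n-2}})^\infty$. But that induction worked precisely because of the special periodic structure: each step used that the next block of the target sequence is again $1\,\overline{\al_1\dots\al_{n-2}}$ and exploited the minimality of $n$. For an arbitrary $(d_i)\in\us_{q_s(x_0)}$ with its algorithm-produced block decomposition $B_1B_2\dots$, the blocks $B_k$ come from \emph{different} bases $q_k$, and the inequalities needed to force $(d_i')$ to match each successive $B_k$ are not the ones available from $(d_i')\in\us_p$. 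You have not indicated how to bridge this, and as stated the argument is only a wish. The disjoint-arcs argument avoids all of this.
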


Roughly speaking, Proposition \ref{prop:finite-alg-stable} says that if $q_s(x_0)\in(\hat q_n(\sa),\hat q_{n+1}(\sa)]$ for some $n\ge 0$, then the  graph of $q_s$ includes the part of the arc $C(x_0, q_s(x_0))\in\mathscr C$ for  $x\in[x_0, x^+)$.

\begin{proof}
Note by Proposition \ref{prop:type-I} (i) that $q_s(x_0)=\min\ub(x_0)\in(q_n,q_{n+1}]$. Let $(d_i)=\Phi_{x_0}(q_s(x_0))$ be the unique expansion of $x_0$ in base $q_s(x_0)$. Take $x\in[x_0, x^+)=[x_0, (d_i)_{q_n})$. Then the equation
$x=(d_i)_q$ determines a unique $q\in(q_n, q_{n+1}]$. Since $\us_q=\us_{q_s(x_0)}$, it follows that $(d_i)$ is also the unique expansion of $x$ in base $q$, i.e., $q\in\ub(x)$. To finish the proof it suffices to show that $q_s(x)\ge q$.

Suppose on the contrary that there exists $p<q$ such that $p\in\ub(x)$. Since the arcs in $\mathscr C$ are pairwise disjoint, it follows that the arc $C(x, p)$ is underneath the arc $C(x_0, q_s(x_0))$.  This means there must exist a $q'<q_s(x_0)$ such that $q'\in\ub(x_0)$, leading to a contradiction with the definition of $q_s(x_0)$. This completes the proof.
\end{proof}
}

By Propositions \ref{th:qs-not-in-V} and \ref{prop:finite-alg-stable} it follows that if $x\in X_{II}$, then $X_{II}$ contains a (right) neighborhood of $x$. Note by Example \ref{ex:1} that $X_{II}\ne\emptyset$. So, $X_{II}$ has positive Lebesgue measure.

\section{Continuity properties of $\inf\ub(x)$} \label{sec:continuity}


The purpose of this section is to prove Theorem \ref{thm:cadlag-intro}. First we establish the right continuity of $q_s$. In the algorithm from Section \ref{sec:algorithm}, we write $q_i(x)$, $r_i(x)$ and $B_i(x)$ to indicate the dependence on $x$. First, we need a technical lemma.




\begin{lemma} \label{lem:left-and-right-stability}
Let $x_0\in(0,1)$, and $k\in\N$.
\begin{enumerate}[{\rm(i)}]
\item There is a $\delta_k>0$ such that the maps $x\mapsto q_i(x)$ and $x\mapsto r_i(x)$ are right-continuous at $x_0$ and strictly decreasing on $[x_0,x_0+\delta_k]$ for $i=1,2,\dots,k$, and the blocks $B_i=B_i(x)$, $i=1,2,\dots,k$ are independent of $x$ for $x\in [x_0,x_0+\delta_k]$.
\item If $q_i\not\in\vb$ for $i=1,2,\dots,k$, then there is a $\delta_k'>0$ such that the maps $x\mapsto q_i(x)$, $i=1,2,\dots,k+1$ and $x\mapsto r_i(x)$, $i=1,2,\dots,k$ are left-continuous at $x_0$ and strictly decreasing on $[x_0-\delta_k',x_0]$, and the blocks $B_i=B_i(x)$, $i=1,2,\dots,k$ are independent of $x$ for $x\in [x_0-\delta_k',x_0]$.
\end{enumerate}
\end{lemma}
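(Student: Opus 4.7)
The plan is to prove both parts by a joint induction on $i$, reducing each statement to the local constancy of the blocks $B_1,\dots,B_k$. Once $B_1,\dots,B_{i-1}$ are known to be constant on a one-sided interval around $x_0$, the defining equations
\begin{align*}
(B_1\cdots B_{i-1}\,01^\infty)_{q_i(x)} &= x,\\
(B_1\cdots B_{i-1}B_i^\infty)_{r_i(x)} &= x
\end{align*}
express $q_i(x)$ and $r_i(x)$ as smooth, strictly decreasing functions of $x$ via the implicit function theorem, so long as $B_i$ itself is locally constant. The base case amounts to the algebraic equation $q^{k_0}(q-1)=1/x$ defining $q_1(x)=q^*(x)$, which is continuous and strictly decreasing on the interval $[q_G^{-k_0},q_G^{-k_0+1})$ containing $x_0$. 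Thus the real task is to control $n_i(x)$ and the prefix $\alpha_1(q_i(x))\cdots\alpha_{n_i(x)-1}(q_i(x))$ for $x$ near $x_0$.

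For part (i), the key tool is the left-continuity of $q\mapsto \alpha(q)$ from Lemma \ref{lem:char-quasi-expansion}(i). Writing $q_i^0=q_i(x_0)$ and $n=n_i(x_0)$, as $x\downarrow x_0$ the bases $q_i(x)$ increase to $q_i^0$ from below, so $\alpha(q_i(x))$ agrees with $\alpha(q_i^0)$ on initial prefixes of length tending to infinity. The strict inequalities $\overline{\alpha_j(q_i^0)\alpha_{j+1}(q_i^0)\cdots}\prec \alpha(q_i^0)$ for $1\leq j<n$ that witness the minimality of $n$ are decided at finite positions and so persist for all $x$ sufficiently close to $x_0^+$, giving $n_i(x)\geq n$. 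For the reverse inequality, $\alpha(q_i(x))\preceq \alpha(q_i^0)$ combined with prefix agreement yields
\[
\overline{\alpha_n(q_i(x))\alpha_{n+1}(q_i(x))\cdots}\succeq \overline{\alpha_n(q_i^0)\alpha_{n+1}(q_i^0)\cdots}\succeq \alpha(q_i^0)\succeq \alpha(q_i(x)),
\]
so $n_i(x)\leq n$ as well, and hence $B_i(x)$ is constant on a right-neighborhood of $x_0$.

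For part (ii), as $x\uparrow x_0$ the bases $q_i(x)$ now approach $q_i^0$ from above, so we need a right-continuity substitute for $\alpha$ at $q_i^0$. The assumption $q_i^0\notin\mathscr{V}$ places $q_i^0$ strictly inside a connected component $(\hat q_n(\mathbf{a}),\hat q_{n+1}(\mathbf{a}))$ of $(1,2]\setminus\mathscr{V}$; on the interior of such a component the structural description recalled in Section \ref{subsec:KL} guarantees that $\alpha(q)$ agrees with $\alpha(q_i^0)$ on arbitrarily long initial segments for $q$ sufficiently close to $q_i^0$ from above. The same prefix-stabilization argument as in part (i) then delivers the local constancy of $B_i$; and for $q_{k+1}$, continuity and monotonicity follow immediately from its defining equation once $B_1,\dots,B_k$ are known to be stable, without any further hypothesis on $q_{k+1}$. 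The main technical obstacle I anticipate is the threshold equality case $\overline{\alpha_n(q_i^0)\alpha_{n+1}(q_i^0)\cdots}=\alpha(q_i^0)$, which forces $\sigma^{n-1}(\alpha(q_i^0))=\overline{\alpha(q_i^0)}$; I expect to rule this out under the hypothesis of part (ii) by showing that such self-similarity would place $q_i^0\in\mathscr{V}$, while in part (i) the weak-inequality maneuver above already circumvents it.
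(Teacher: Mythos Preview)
Your approach is correct and essentially the same as the paper's. Both argue by induction on $i$, using left-continuity of $q\mapsto\alpha(q)$ for part (i) and continuity of $\alpha$ on connected components of $(1,2]\setminus\vb$ for part (ii), to stabilize the prefix of $\alpha(q_i(x))$ and hence the block $B_i$. The paper is somewhat terser: for (i) it simply notes that $x\mapsto\alpha(q_1(x))$ is decreasing with a common $n_1(x_0)$-digit prefix and concludes $n_1(x)=n_1(x_0)$; your explicit split into ``strict inequalities decided at finite positions persist'' (for $n_i(x)\geq n$) and the weak-inequality chain (for $n_i(x)\leq n$) spells out the same mechanism. For (ii) the paper dispatches everything in one sentence (``similar, using that $\alpha$ is continuous on each component of $(1,2]\setminus\vb$''), whereas you correctly flag the boundary case $\sigma^{n-1}(\alpha(q_i^0))=\overline{\alpha(q_i^0)}$; your plan to rule it out is right, since this equality together with the minimality of $n$ forces $\alpha(q_i^0)$ to be periodic with $\overline{\alpha(q_i^0)}\preceq\sigma^m(\alpha(q_i^0))\preceq\alpha(q_i^0)$ for all $m$, i.e.\ $q_i^0\in\vb$.
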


\begin{proof}
We  first prove (i)  by induction on $k$.
First take $k=1$. Recall that $q_1(x)$ is determined by the equation
$(01^\f)_{q_1(x)}=x$, so $q_1(x)$ is continuous and strictly decreasing in $x$. Furthermore, by Lemma \ref{lem:char-quasi-expansion} (i) the map $q\mapsto\alpha(q)$ is left-continuous and increasing. Hence, there exists $\de_1>0$ such that for each $x\in[x_0, x_0+\de_1]$, the quasi-greedy expansion $\alpha(q_1(x))$ agrees with $\alpha(q_1(x_0))$ in its first $n_1(x_0)$ digits, where
\[
n_1(x_0)=\min\set{n: \overline{\al_n(q_1(x_0))\al_{n+1}(q_1(x_{0}))\ldots}\lge \al(q_1(x_0))}.
\]
Note that the map $x\mapsto \al(q_1(x))$, and therefore also the map $x\mapsto \al_n(q_1(x))\al_{n+1}(q_1(x))\dots$, is decreasing in $[x_0,x_0+\de_1]$. So for any $x\in[x_0,x_0+\de_1]$ we have $n_1(x)=n_1(x_0)=:n_1$, and
\[
B_1(x)=1\overline{\al_1(q_1(x))\ldots\al_{n_1-2}(q_1(x))}=1\overline{\al_1(q_1(x_0))\ldots\al_{n_1-2}(q_1(x_0))}=B_1(x_0)=:B_1.
\]
Recall next that $r_1(x)\in(q_1(x),2]$ is determined by
$
(B_1^\f)_{r_1(x)}=x.
$
Thus, the map $x\mapsto r_1(x)$ is also strictly decreasing on $[x_0, x_0+\de_1]$ and right-continuous at $x_0$.

Proceeding by induction, let $k\in\N$ and suppose that the maps $x\mapsto q_k(x)$ and $x\mapsto r_k(x)$ are strictly decreasing on $[x_0, x_0+\de_k]$ and right-continuous at $x_0$ for some sufficiently small $\de_k>0$, and the blocks $B_1(x), B_2(x), \ldots, B_k(x)$ are independent of $x$ in the interval $[x_0, x_0+\de_k]$.  Write $B_j:=B_j(x)$ for $1\le j\le k$.  For $x\in[x_0, x_0+\de_k]$ the base $q_{k+1}(x)$ is determined by
\[
(B_1\ldots B_k 01^\f)_{q_{k+1}(x)}=x.
\]
Thus, the map $x\mapsto q_{k+1}(x)$ is strictly decreasing on $[x_0, x_0+\de_k]$ and right-continuous at $x_0$. Since the map $q\mapsto \al(q)$ is left continuous and increasing, there exists a $\de_{k+1}\in(0, \de_k)$ such that $\al(q_{k+1}(x))$ agrees with $\alpha(q_{k+1}(x_0))$ in its first $n_{k+1}(x_0)$ digits for any $x\in[x_0,x_0+\de_{k+1}]$, where
\[
n_{k+1}(x_0)=\min\set{n: \overline{\al_n(q_{k+1}(x_0))\al_{n+1}(q_{k+1}(x_{0}))\ldots}\lge \al(q_{k+1}(x_0))}.
\]
So for any $x\in[x_0,x_0+\de_{k+1}]$ we have $n_{k+1}(x)=n_{k+1}(x_0)=:n_{k+1}$ and
\begin{align*}
B_{k+1}(x)&=1\overline{\al_1(q_{k+1}(x))\ldots\al_{n_{k+1}-2}(q_{k+1}(x))}\\
&=1\overline{\al_1(q_{k+1}(x_0))\ldots\al_{n_{k+1}-2}(q_{k+1}(x_0))}=B_{k+1}(x_0)=:B_{k+1}.
\end{align*}
Since $r_{k+1}(x)\in (q_{k+1}(x), r_k(x)]$ is determined by
$$(B_1\ldots B_k B_{k+1}^\f)_{r_{k+1}(x)}=x,$$
this also implies that the map $x\mapsto r_{k+1}(x)$ is strictly decreasing on $[x_0, x_0+\de_{k+1}]$ and right-continuous at $x_0$. This proves (i).

The proof of (ii) is similar, using that the map $q\mapsto\al(q)$ is continuous (and hence right-continuous) in every connected component of $(1,2]\backslash\vb$.
\end{proof}

\begin{proposition} \label{prop:right-continuity}
The function $q_s$ is right continuous in $(0,\f)$.
\end{proposition}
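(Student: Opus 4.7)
\textbf{The plan is} to prove right-continuity of $q_s$ at each $x_0\in(0,1)$ via a case split on the type of $x_0$ from Section~\ref{sec:classification}; the case $x_0\ge 1$ will be handled separately in Section~\ref{sec:cascades} where $q_s|_{[1,\infty)}$ is described explicitly. The easiest case is $x_0\in X_{II}$, where $q_s(x_0)\notin\vb$ by Proposition~\ref{th:qs-not-in-V}: Proposition~\ref{prop:finite-alg-stable} then gives that $\Phi_x(q_s(x))$ is constant on $[x_0,x^+)$, so $q_s(x)$ is determined there by the single equation $(d_i)_{q_s(x)}=x$ for a fixed sequence $(d_i)$; the resulting $q_s$ is continuous (in fact convex and strictly decreasing) on $[x_0,x^+)$.

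For $x_0\in X_{III}$, Claim~4 of Section~\ref{sec:algorithm} yields $q_k(x_0)\nearrow q_s(x_0)$ and $r_k(x_0)\searrow q_s(x_0)$. Given $\ep>0$, I would fix $k$ so large that $r_k(x_0)-q_k(x_0)<\ep/2$. Lemma~\ref{lem:left-and-right-stability}(i) then produces $\de>0$ such that on $[x_0,x_0+\de]$ the blocks $B_1,\ldots,B_k$ are independent of $x$ and $|q_k(x)-q_k(x_0)|,|r_k(x)-r_k(x_0)|<\ep/4$. Since each $q_j(x_0)$ $(j\le k)$ lies in the open set $(1,2]\setminus\overline{\ub}$, by closedness of $\overline{\ub}$ I can shrink $\de$ so that $q_j(x)\notin\overline{\ub}$ for all $j\le k$ and $x\in[x_0,x_0+\de]$, whence the algorithm for $x$ reaches step $k$ without stopping. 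The argument of Claim~2 of Section~\ref{sec:algorithm} depends only on the blocks, so it still gives $r_k(x)\in\ub(x)$; combined with $q_k(x)\le q_s(x)\le r_k(x)$ this places $q_s(x)$ within $\ep$ of $q_s(x_0)$.

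The hard part will be the case $x_0\in X_I$, where $q_s(x_0)=q_{k_0}(x_0)\in\overline{\ub}$ and $r_{k_0}(x_0)$ is not defined by the algorithm, so no obvious upper bound is available at $x_0$ itself. Lower semi-continuity is immediate: $q_s(x)\ge q_{k_0}(x)\to q_{k_0}(x_0)=q_s(x_0)$ by Lemma~\ref{lem:left-and-right-stability}(i). For upper semi-continuity I would distinguish two sub-cases for $x>x_0$ close to $x_0$. If $q_{k_0}(x)\in\overline{\ub}$ the algorithm for $x$ stops with $q_s(x)=q_{k_0}(x)<q_s(x_0)$, well inside the $\ep$-window. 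Otherwise $r_{k_0}(x)$ is defined and satisfies $q_s(x)\le r_{k_0}(x)\in\ub(x)$, and the task reduces to proving $r_{k_0}(x)\to q_s(x_0)$ as $x\searrow x_0$. This is where Lemma~\ref{lem:univoque-bases}(ii) enters: the characterization of $\overline{\ub}$ together with left-continuity of $q\mapsto\al(q)$ forces $n_{k_0}(x)\to\infty$ as $q_{k_0}(x)\nearrow q_s(x_0)\in\overline{\ub}$, so the block $B_{k_0}(x)=1\,\overline{\al_1(q_{k_0}(x))\ldots\al_{n_{k_0}(x)-2}(q_{k_0}(x))}$ converges pointwise to the infinite sequence $1\,\overline{\al(q_s(x_0))}$. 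Passing to the limit in $(B_1\ldots B_{k_0-1}B_{k_0}(x)^\infty)_{r_{k_0}(x)}=x$, any subsequential limit $r^*\in[q_G,2]$ of $r_{k_0}(x)$ must satisfy $(B_1\ldots B_{k_0-1}\,1\,\overline{\al(q_s(x_0))})_{r^*}=x_0$; combined with the identity $(1\,\overline{\al(q)})_q=(01^\infty)_q=1/(q(q-1))$ at $q=q_s(x_0)$ and the strict monotonicity of $r\mapsto(d_i)_r$, this forces $r^*=q_s(x_0)$ and completes the proof.
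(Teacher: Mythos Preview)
Your proof is correct and follows the paper's overall strategy of sandwiching $q_s(x)$ between $q_k(x)$ and $r_k(x)$ via Lemma~\ref{lem:left-and-right-stability}(i). Your three-way split into $X_{II}$, $X_{III}$, $X_I$ is a minor refinement of the paper's two-way split (``not type I'' vs.\ ``type I''); your separate treatment of $X_{II}$ via Proposition~\ref{prop:finite-alg-stable} is a legitimate shortcut, though unnecessary since $X_{II}$ is already covered by the same squeeze argument you give for $X_{III}$.

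The genuine difference is in the type~I case. The paper proceeds quantitatively: invoking \cite[Lemma~4.1]{Komornik_Loreti_2007} it fixes, for a given $\ep>0$, a single large index $m$ satisfying the strict admissibility inequalities \eqref{eq:alpha-bar-inequality}, matches the first $m$ digits of $\al(q_1(x))$ with those of $\al(q_s(x_0))$, and then extracts the explicit bound $r_1(x)-q_1(x)\le r_1^{-(m-2)}(r_1-1)^{-1}<\ep/2$ directly from the defining equations for $q_1$ and $r_1$. Your route is instead a soft compactness argument: you show $n_{k_0}(x)\to\infty$ (which follows from Lemma~\ref{lem:univoque-bases}(ii) and left-continuity of $\al$ by a short contradiction, though you might spell this out), deduce pointwise convergence $B_{k_0}(x)^\infty\to 1\,\overline{\al(q_s(x_0))}$, and then identify every subsequential limit of $r_{k_0}(x)$ by passing to the limit in the defining equation and using injectivity of $r\mapsto(d_i)_r$. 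Both arguments are valid; the paper's gives an effective rate, while yours avoids the external citation and the explicit tail estimate at the cost of being non-constructive.
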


\begin{proof}
Here we establish the right continuity of $q_s$ on $(0,1)$. For $x\geq 1$, the result will follow from Theorem \ref{thm:Komornik-Loreti-cascades} below, though it may also be deduced in a similar way from a straightforward extension of our algorithm to $[1,\infty)$. (We emphasize that Theorem \ref{thm:Komornik-Loreti-cascades} does not use the right continuity of $q_s$.)  We focus on the interval $[q_G^{-1}, 1)$; the result follows very similarly for other parts of $(0,1)$. Let $q_G^{-1}\le x_0<1$. We consider two cases.

\emph{ Case 1.} $q_i(x_0)\notin\overline{\ub}$ for each $i\in\N$, {i.e., $x_0$ is not a point of type I}. In this case, the algorithm never stops and produces a sequence $(q_i(x_0))$ strictly increasing to $q_s(x_0)$ and a sequence $(r_i(x_0))$ decreasing to $q_s(x_0)$ as $i\to\f$.
By Lemma \ref{lem:left-and-right-stability} (i), there exists for each $i\in\N$ a sufficiently small $\de_i>0$ such that the maps $x\mapsto q_i(x)$ and $x\mapsto r_i(x)$ are decreasing in $[x_0, x_0+\de_i]$ and right-continuous at $x_0$.

Now take $\ep>0$. Since $\lim_{i\to\f}q_i(x_0)=\lim_{i\to\f}r_i(x_0)=q_s(x_0)$, the right continuity of $q_i(x)$ and $r_i(x)$ at $x_0$ implies that we can find $N\in\N$ sufficiently large and $\delta>0$ sufficiently small so that
\begin{equation}\label{eq:mar-12-11}
|q_N(x)-q_s(x_0)|<\ep \qquad\mbox{and} \qquad |r_N(x)-q_s(x_0)|<\ep
\end{equation}
for all $x\in[x_0, x_0+\de]$. Note that $q_N(x)<q_s(x)\le r_N(x)$ for any $x\in(0,1)$. By (\ref{eq:mar-12-11}) we conclude that
\[
q_s(x_0)-\ep\le q_N(x)\le q_s(x)\le r_N(x)\le q_s(x_0)+\ep
\]
for all $x\in[x_0, x_0+\de]$. This proves the right continuity of $q_s$ at $x_0$.

\emph{Case 2.} $q_i(x_0)\in\overline{\ub}$ for some $i\in\N$, i.e., $x_0$ is of type I. Without loss of generality, assume this holds for $i=1$. (Otherwise, we can simply replace $q_1(x)$ by $q_i(x)$ in what follows.) By Proposition \ref{prop:lower-bound}, $q_s(x_0)=q_1(x_0)$. Write $(\al_i):=\al(q_1(x_0))$. Since $q_1(x_0)\in\overline{\ub}$, there exist by \cite[Lemma 4.1]{Komornik_Loreti_2007} arbitrarily large integers $m$ such that
\begin{equation} \label{eq:alpha-bar-inequality}
\overline{\alpha_{j+1}\dots \alpha_m}\prec \alpha_1\dots \alpha_{m-j} \qquad \forall ~ 1\leq j<m.
\end{equation}
Fix $\ep>0$, and choose $m\in\N$ so large that \eqref{eq:alpha-bar-inequality} holds and
\begin{equation}
\frac{1}{q_G^{m-2}(q_G-1)}<\frac{\ep}{2}.
\label{eq:small-fraction-m}
\end{equation}
Since $q_1(x)$ is continuous and decreasing in $x$ and $\alpha(q)$ is left-continuous in $q$, we can choose $\de_m>0$ small enough such that for any $x\in[x_0, x_0+\de_m]$,
 \begin{equation}\label{eq:mar-13-1}
\alpha_1(q_1(x))\dots\alpha_m(q_1(x))=\alpha_1\dots\alpha_m.
\end{equation}
Now fix $x\in[x_0, x_0+\de_m]$, and set $q_1:=q_1(x)$, $r_1:=r_1(x)$ and $n_1:=n_1(x)$.
If $q_1\in\overline{\ub}$, then $q_s(x)=q_1$. Otherwise, \eqref{eq:alpha-bar-inequality} and (\ref{eq:mar-13-1}) imply
\begin{equation}\label{eq:mar-13-2}
n_1=\min\set{n\geq 1: \overline{\alpha_n(q_1)\alpha_{n+1}(q_1)\dots}\succeq \al(q_1)}>m.
\end{equation}
Note that $r_1>q_1$, and $q_1$ and $r_1$ satisfy the equations
\[
\big(1\,\overline{\alpha_1(q_1)\alpha_2(q_1)\dots}\big)_{q_1}=x=\big((1\,\overline{\alpha_1(q_1)\dots\alpha_{n_1-2}(q_1)})^\infty\big)_{r_1}.
\]
Then by (\ref{eq:mar-13-1}) and (\ref{eq:mar-13-2}) it follows that
\[
(1\,\overline{\al_1\ldots \al_{m-1}})_{q_1}\le x\le (1\,\overline{\al_1\ldots \al_{m-1}}\,1^\f)_{r_1}=(1\,\overline{\al_1\ldots \al_{m-1}})_{r_1}+r_1^{-m}(1^\f)_{r_1},
\]
which implies
\begin{align*}
\frac{1}{r_1^{m}(r_1-1)}=r_1^{-m}(1^\f)_{r_1} &\ge (1\,\overline{\al_1\ldots \al_{m-1}})_{q_1}-(1\,\overline{\al_1\ldots \al_{m-1}})_{r_1}\\
&\ge \frac{1}{q_1}-\frac{1}{r_1}\ge \frac{r_1-q_1}{r_1^2}.
\end{align*}
Hence
\begin{equation*}
0<r_1-q_1\le \frac{1}{r_1^{m-2}(r_1-1)}\le \frac{1}{q_G^{m-2}(q_G-1)}<\frac{\ep}{2},
\end{equation*}
where the third inequality follows since $r_1>q_1\ge q_G$, and the last inequality follows by \eqref{eq:small-fraction-m}.

To summarize, we have for each $x\in[x_0, x_0+\de_m]$ that either $q_s(x)=q_1(x)$ or $0<r_1(x)-q_1(x)<\ep/2$. Since $q_1(x)$ is continuous, there exists $\de\in(0, \de_m)$ such that for any $x\in[x_0, x_0+\de]$,
\[|q_1(x)-q_s(x_0)|=|q_1(x)-q_1(x_0)|<\ep/2.\]
 Therefore, for any $x\in[x_0, x_0+\de]$ we obtain either
 \[|q_s(x)-q_s(x_0)|=|q_1(x)-q_1(x_0)|<\ep/2,\]
  or
\[
q_s(x_0)-\ep/2\le q_1(x)\le  q_s(x)\le r_1(x)\le q_1(x)+\ep/2\le q_s(x_0)+\ep.
\]
This gives the right-continuity of $q_s$ at $x_0$.
%
%
\end{proof}

Recall that $\mathscr{C}$ is the collection of pairwise disjoint arcs from the master univoque set $\mathbb U$. For an arc $C=C(x_0,q_0)\in\mathscr{C}$ with the corresponding unique expansion $(d_i)=\Phi_{x_0}(q_0)\in\us_{q_0}$, we denote by $\big(\frac{dx}{dq}\big)_C$ the derivative of $x$ with respect to $q$ in \eqref{eq:unique-expansion-of-xq}; that is,
\begin{equation} \label{eq:derivative-of-x}
\left(\frac{dx}{dq}\right)_C:=\frac{d}{dq}\left(\sum_{i=1}^\infty \frac{d_i}{q^i}\right)=-\sum_{i=1}^\infty \frac{i d_i}{q^{i+1}}.
\end{equation}
We denote the derivative of the inverse function by $\big(\frac{dq}{dx}\big)_C$.

\begin{lemma} \label{lem:derivative-bound}
Let $C=C(x,q)$ be an arc in $\mathscr{C}$. If $q_G^{-k}<x<q_G^{-k+1}$ for $k\in\N$, then
\[
\left|\left(\frac{dx}{dq}\right)_C\right|\geq \frac{1}{2^{k+1}}, \qquad\mbox{and so} \qquad \left|\left(\frac{dq}{dx}\right)_C\right|\leq 2^{k+1}.
\]
And if $x>1$, then
\[
\left|\left(\frac{dx}{dq}\right)_C\right|\geq \frac1 {4}, \qquad\mbox{and so} \qquad \left|\left(\frac{dq}{dx}\right)_C\right|\leq {4}.
\]
\end{lemma}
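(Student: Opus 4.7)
The plan is to establish a clean pointwise bound $|(dx/dq)_C| \ge x/q$ directly from the series \eqref{eq:derivative-of-x}, and then read off both inequalities using only that $q < 2$ on $\mathbb{U}$.

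Concretely, the first step would be to let $j \ge 1$ be the smallest index with $d_j = 1$ in the unique expansion $(d_i)$ associated to $C$; such a $j$ exists because $x > 0$ forces the expansion to contain at least one $1$. Since each $d_i \in \{0,1\}$ and $d_i = 0$ for $i < j$, we have the trivial inequality $i d_i \ge j d_i$ for every $i \ge 1$. Summing against $q^{-(i+1)}$ then yields
\[
\left|\left(\frac{dx}{dq}\right)_C\right| = \sum_{i=1}^\infty \frac{i d_i}{q^{i+1}} \ge j\sum_{i=1}^\infty \frac{d_i}{q^{i+1}} = \frac{jx}{q} \ge \frac{x}{q}.
\]

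Next, I would use that any arc $C \in \mathscr{C}$ is contained in $\mathbb{U} \subseteq (0,\infty) \times (1,2)$, so $q < 2$, and hence $x/q > x/2$. For $x \in (q_G^{-k}, q_G^{-k+1})$, this gives $|(dx/dq)_C| > q_G^{-k}/2 = 1/(2 q_G^k)$, and since $q_G < 2$ implies $q_G^k < 2^k$, we get $|(dx/dq)_C| > 1/2^{k+1}$, which is the first claim. For $x > 1$, the same chain gives $|(dx/dq)_C| > 1/2 \ge 1/4$, which is the second claim. The two bounds on $|(dq/dx)_C|$ are then immediate by taking reciprocals, since $(dq/dx)_C = 1/(dx/dq)_C$ along the arc.

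I do not anticipate any genuine obstacle here: the only observation is the elementary telescoping inequality $i d_i \ge j d_i$, which reduces the derivative estimate to $|(dx/dq)_C| \ge x/q$, after which both claimed bounds are one-line calculations that exploit $q_G < 2$. In particular, no structural information about the unique expansion $(d_i)$ beyond $d_i \in \{0,1\}$ and the existence of a first $1$ is needed.
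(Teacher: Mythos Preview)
Your proof is correct. The clean intermediate inequality $|(dx/dq)_C|\ge jx/q\ge x/q$ is a nice touch, and the rest is immediate from $q<2$ and $q_G<2$.

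The paper's own argument is shorter but relies on a structural fact you bypass: it asserts that a unique expansion of $x\in(q_G^{-k},q_G^{-k+1})$ must begin with $0^{k-1}1$, then keeps only the single term $k/q^{k+1}\ge k/2^{k+1}\ge 1/2^{k+1}$ from the derivative series (and similarly $d_1=1$ when $x>1$ gives $1/q^2\ge 1/4$). Your route trades that prefix observation for the elementary bound $id_i\ge jd_i$, which yields the universal estimate $|(dx/dq)_C|\ge x/q$ valid for any $x>0$ without needing to locate the first $1$ precisely. Both arguments are one-liners; yours is marginally more self-contained since it does not invoke any property of unique expansions beyond $d_i\in\{0,1\}$ and $(d_i)\ne 0^\infty$.
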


\begin{proof}
The first statement follows from \eqref{eq:derivative-of-x} since any unique expansion of $x\in(q_G^{-k},q_G^{-k+1})$ must begin with $0^{k-1}1$, and $q\leq 2$. The second statement follows similarly, since a unique expansion of $x>1$ must begin with $1$.
\end{proof}

\begin{lemma} \label{lem:left-hand-limits}
The function $q_s$ has at each point $x>0$ a left-hand limit.
\end{lemma}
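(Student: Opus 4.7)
The plan is to show that at each $x_0 > 0$, the quantities
\[
L_- := \liminf_{y \nearrow x_0} q_s(y) \qquad \text{and} \qquad L_+ := \limsup_{y \nearrow x_0} q_s(y)
\]
coincide. Since $q_s$ takes values in $(1,2]$, if $L_- = 2$ then automatically $L_+ = 2 = L_-$. The substantive case is $L_- < 2$, where I aim to establish $L_+ \leq L_-$.

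First I would choose a sequence $y_n \nearrow x_0$ realizing the liminf, i.e., $q_s(y_n) \to L_-$. Since $q_s(y_n) = \inf\ub(y_n)$, for each $n$ I can pick $p_n \in \ub(y_n)$ with $p_n \leq q_s(y_n) + 1/n$, so that $p_n \to L_-$. Writing $(d^{(n)}) := \Phi_{y_n}(p_n) \in \us_{p_n}$, the nestedness $\us_p \subseteq \us_q$ for $p \leq q$ implies $(d^{(n)}) \in \us_q$ for every $q \in [p_n, 2]$. Hence there is an arc $C_n \in \mathscr{C}$ given by $q \mapsto (x_n(q), q)$ with $x_n(q) := \sum_{i=1}^\infty d^{(n)}_i/q^i$, defined on $q \in [p_n, 2]$ and sweeping out $x$-values in $[x_n(2), y_n]$.

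Next, by compactness of $\{0,1\}^\N$ in the product topology, I would pass to a subsequence along which $(d^{(n)}) \to (d^*)$. Using $p_n \to L_-$ together with uniform tail control (the bases $p_n$ are bounded below by some $q_0 > 1$ in a neighborhood of $x_0$), one obtains $y_n = x_n(p_n) \to \sum d^*_i/L_-^i$, so $x_0 = \sum d^*_i/L_-^i$; similarly $x_n(2) \to x^*(2) := \sum d^*_i/2^i$. Because $L_- < 2$ and $(d^*) \neq 0^\infty$ (forced by $x_0 > 0$), strict monotonicity of $q \mapsto \sum d^*_i/q^i$ yields the key gap $x^*(2) < x_0$.

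Now I obtain the main estimate. Fix $z$ with $x^*(2) < z < x_0$. For $n$ sufficiently large, $x_n(2) < z < y_n$, so the arc $C_n$ passes through a unique point $(z, q_n(z))$ with $q_n(z) \in (p_n, 2)$. This forces $z \in \u_{q_n(z)}$, hence $q_s(z) \leq q_n(z)$. By Lemma \ref{lem:derivative-bound}, the derivative $|dq/dx|_{C_n}$ is bounded by a constant $K$ uniformly over all arcs lying in a fixed compact neighborhood of $x_0$ bounded away from $0$. Therefore
\[
q_s(z) \leq q_n(z) \leq p_n + K(y_n - z).
\]
Sending $n \to \infty$ along the subsequence gives $q_s(z) \leq L_- + K(x_0 - z)$; then letting $z \nearrow x_0$ yields $L_+ \leq L_-$, as required.

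The principal obstacle is verifying that the arcs $C_n$ extend far enough to the left of $y_n$ to cover a common one-sided neighborhood of $x_0$; this is precisely where the inequality $x^*(2) < x_0$—which fails only in the trivial case $L_- = 2$—is indispensable. A secondary technical point is the uniformity of the Lipschitz constant $K$, which is handled by restricting attention to a fixed compact subset of $(0,\infty)$ containing $x_0$ and invoking Lemma \ref{lem:derivative-bound}.
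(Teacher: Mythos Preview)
Your proof is correct and rests on exactly the same mechanism as the paper's: arcs in $\mathscr{C}$ have slope bounded via Lemma~\ref{lem:derivative-bound}, so a univoque base at one point controls $q_s$ at all nearby points to its left. The paper packages this as a contradiction argument with a single arc---choosing $x_N$ with $q_s(x_N)$ near the $\limsup$ and $y_M>x_N$ with a univoque base $\tilde q$ near the $\liminf$, then observing that the arc through $(y_M,\tilde q)$ must pass below $(x_N,q_s(x_N))$---whereas you argue directly that $\limsup\le\liminf$ via a sequence of arcs. Your compactness step in $\{0,1\}^{\mathbb N}$ to secure $x^*(2)<x_0$ is a more explicit treatment of the question whether the arcs extend far enough to the left, a point the paper resolves only tacitly.
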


\begin{proof}
In view of Theorem \ref{thm:Komornik-Loreti-cascades} it suffices to consider $x\in(0,1)$.

Fix $x_0\in(0,1)$, and suppose $q_s$ does {\em not} have a left-hand limit at $x_0$. Then there are numbers $q^+$ and $q^-$ and two sequences $(x_n)$ and $(y_m)$ converging to $x_0$ from the left such that
\begin{equation}\label{eq:mar-12-0}
\lim_{n\to\infty} q_s(x_n)=q^+>q^-=\lim_{m\to\infty} q_s(y_m).
\end{equation}
Write $\Delta :=q^+-q^-$. Furthermore, since $x_0\in(0,1)$, let $k\in\N$ be such that $q_G^{-k}<x_0\leq q_G^{-k+1}$. Choose $\delta>0$ so small that $x_0-\delta>q_G^{-k}$. By Lemma \ref{lem:derivative-bound}, if $(x,q)$ lies on an arc $C\in\mathscr{C}$ and $x_0-\delta<x<x_0$, then $\big|\big(\frac{dq}{dx}\big)_C\big|\leq K:=2^{k+1}$. By (\ref{eq:mar-12-0}) we can find an integer $N$ so that
\begin{equation}\label{eq:mar-12-1}
x_0>x_N>\max\set{x_0-\de, x_0-\frac{\Delta}{2K}}\quad\textrm{and}\quad q_s(x_N)>q^+-\frac{\Delta}{4}.
\end{equation}
Also, by (\ref{eq:mar-12-0}) we can find a large enough integer $M$ so that
\begin{equation}\label{eq:mar-12-2}
x_0>y_M>x_N\quad\textrm{and}\quad q_s(y_M)<q^-+\frac{\Delta}{4}.
\end{equation}
%
If $q_s(y_M)\in \ub(y_M)$, set $\tilde q:=q_s(y_M)$. Otherwise, we can find $\ep>0$ such that $q_s(y_M)+\ep<q^-+\Delta /4$ and $q_s(y_M)+\ep\in\ub(y_M)$; we then set $\tilde q:=q_s(y_M)+\ep$. In either case we have 
\begin{equation}\label{eq:mar-12-3}
\tilde q\in\ub(y_M)\quad\textrm{and}\quad \tilde q<q^-+\frac{\Delta}{4}.
\end{equation}
The point $(y_M, \tilde q)$ lies on an arc $C\in\mathscr{C}$. By (\ref{eq:mar-12-1}) and (\ref{eq:mar-12-3}),
\[
q_s(x_N)-\tilde q>q^+-\frac{\Delta}{4}-\left(q^-+\frac{\Delta}{4}\right)=\frac{\Delta}{2}.
\]
Thus,
\[
{\left|\frac{\tilde q-q_s(x_N)}{y_M-x_N}\right|=}\frac{q_s(x_N)-\tilde q}{y_M-x_N}>\frac{q_s(x_N)-\tilde q}{x_0-x_N}>\frac{\Delta /2}{x_0-x_N}>K,
\]
where the last inequality follows by (\ref{eq:mar-12-1}).
Observe that the slope of the arc $C=C(y_M, \tilde q)$ never exceeds $K$ in absolute value. We see that $C$ passes underneath the point $(x_N,q_s(x_N))$. But this means that there is a base $p<q_s(x_N)$ such that $p\in\ub(x_N)$, contradicting the definition of $q_s(x_N)$. As a result, $q_s$ has a left-hand limit at $x_0$.
\end{proof}

\begin{lemma} \label{lem:no-down-jumps}
The function $q_s$ has no downward jumps; that is,
$$\lim_{x\to x_0^-}q_s(x)\leq \lim_{x\to x_0^+}q_s(x) \qquad\forall\,x_0>0.$$
\end{lemma}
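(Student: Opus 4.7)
The plan is to argue by contradiction, using the right-continuity of $q_s$ (Proposition~\ref{prop:right-continuity}) and the existence of left-hand limits (Lemma~\ref{lem:left-hand-limits}), together with the key geometric fact that any arc $C\in\mathscr{C}$ from the master univoque set extends both to the left and to the right of any of its points.

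Fix $x_0>0$ and set $q_0:=q_s(x_0)$. By right-continuity, $\lim_{x\to x_0^+}q_s(x)=q_0$, and by Lemma~\ref{lem:left-hand-limits} the limit $q^-:=\lim_{x\to x_0^-}q_s(x)$ exists. Suppose for contradiction that $q^->q_0$. Since $q_0=\inf\ub(x_0)$, I can choose a base $q\in\ub(x_0)$ with $q_0\le q<q^-$ (take $q=q_0$ if $q_0\in\ub(x_0)$; otherwise pick $q\in\ub(x_0)\cap(q_0,q^-)$ using the definition of infimum). Let $(d_i)=\Phi_{x_0}(q)\in\us_q$ be the corresponding unique $q$-expansion.

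The decisive step uses that $\us_q\subseteq \us_{q'}$ whenever $q'\ge q$. For every $q'$ slightly larger than $q$, the sequence $(d_i)$ is still a unique expansion, now representing the number
\[
x(q')=\sum_{i=1}^\infty\frac{d_i}{(q')^i}<x_0,
\]
so the point $(x(q'),q')$ lies on the arc $C(x_0,q)\in\mathscr{C}$. In particular, $q'\in\ub(x(q'))$, which gives the upper bound $q_s(x(q'))\le q'$. Since $x\mapsto x(q')$ is continuous and strictly decreasing, $x(q')\nearrow x_0$ as $q'\searrow q$. Applying the definition of $q^-$ along this sequence yields
\[
q^-=\lim_{q'\searrow q}q_s(x(q'))\le \lim_{q'\searrow q}q'=q,
\]
contradicting $q<q^-$. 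Hence $q^-\le q_0=\lim_{x\to x_0^+}q_s(x)$, which is exactly the claim.

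The main obstacle is purely notational: one must be sure that the arc $C(x_0,q)$ really is defined for all $q'\ge q$ (not merely a one-sided neighbourhood in $x$), so that the sequence $(d_i)\in\us_q\subseteq\us_{q'}$ produces a genuine point $x(q')$ just to the left of $x_0$. This is guaranteed by the monotonicity $\us_q\subseteq\us_{q'}$ for $q'\ge q$ together with the strict monotonicity of $q'\mapsto x(q')$, both recalled in Section~\ref{sec:classification}. No quantitative slope estimate (Lemma~\ref{lem:derivative-bound}) is needed here; the argument is purely about the existence of the arc and the one-sided limit of $q_s$ along it.
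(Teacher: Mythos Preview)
Your proof is correct and follows essentially the same approach as the paper: assume by contradiction that the left limit $q^-$ exceeds $q_s(x_0)=\lim_{x\to x_0^+}q_s(x)$, pick $q\in\ub(x_0)$ with $q<q^-$, and use the arc $C(x_0,q)\in\mathscr{C}$ (extended to $q'>q$) to produce points $x(q')\nearrow x_0$ with $q_s(x(q'))\le q'$, contradicting $q^->q$. The paper phrases the final contradiction geometrically (``$(x,q_s(x))$ lies above the arc $C$''), while you write out the limit along $q'\searrow q$ explicitly, but the content is identical.
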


\begin{proof}
Suppose by way of contradiction that
\begin{equation}\label{eq:may-13-1}
q_+:=\lim_{x\to x_0^-}q_s(x)>q_-:=\lim_{x\to x_0^+}q_s(x)
\end{equation}
for some $x_0$. By right-continuity, $q_s(x_0)=\inf\ub(x_0)=q_-$. If $q_-\in\ub(x_0)$, then let $C\in\mathscr{C}$ be the arc containing the point $(x_0,q_-)$. Otherwise, there exists $p\in(q_-, q_+)$ sufficiently close to $q_-$ such that $p\in\ub(x_0)$, and in this case let $C\in\mathscr{C}$ be the arc containing $(x_0, p)$. Thus,  by (\ref{eq:may-13-1}) it follows that  for all $x<x_0$ sufficiently close to $x_0$, the point $(x, q_s(x))$ is above the arc $C$, contradicting the definition of $q_s(x)$. 
\end{proof}

\begin{proof}[Proof of Theorem \ref{thm:cadlag-intro}]
The theorem follows from Proposition \ref{prop:right-continuity}, Lemma \ref{lem:left-hand-limits} and Lemma \ref{lem:no-down-jumps}.
\end{proof}

\begin{remark}
We observe that by Theorem \ref{thm:cadlag-intro}, the function $q_s$ has the same continuity properties as the sample paths of a spectrally positive L\'evy process.
\end{remark}

As a direct consequence of Theorem \ref{thm:cadlag-intro} we have the following:

\begin{corollary} \label{cor:smallest-and-largest}\mbox{}

\begin{enumerate}[{\rm(i)}]
\item $q_s$ attains a maximum value $q_{\max}:=\max q_s$. 
\item The function $q_s$ has the intermediate value property; that is, for any $q\in(1,q_{\max}]$, there exists $x>0$ such that $q_s(x)=q$.
\item Each level set $L(q)$ with $1<q\le q_{\max}$
has both a smallest and a largest element.
\end{enumerate}
\end{corollary}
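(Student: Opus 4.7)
The plan is to deduce all three parts from Theorem \ref{thm:cadlag-intro}. Right-continuity together with the absence of downward jumps (and existence of left-hand limits) implies that $q_s$ is upper semi-continuous: for every $x_0>0$, $\limsup_{x\to x_0}q_s(x)\le q_s(x_0)$. This upper semi-continuity will be the workhorse of the argument.

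For part (i), I would first argue boundedness and tightness of $q_s$. Clearly $q_s(x)\le 2$ everywhere, and $q_s(x)\le q_x=1+1/x$ for $x\ge 1$, so $q_s(x)\to 1$ as $x\to\infty$. To control the behaviour as $x\to 0^+$, I would use the algorithmic upper bound $q_s(x)\le r_1(x)$ from Section \ref{sec:algorithm}. For $x\in[q_G^{-k},q_G^{-k+1})$, the base $r_1(x)$ satisfies
\[
x=(0^{k-1}B_1^\infty)_{r_1(x)}\le (0^{k-1}1^\infty)_{r_1(x)}=\frac{1}{r_1(x)^{k-1}(r_1(x)-1)},
\]
and combining with $x\ge q_G^{-k}$ yields $r_1(x)^{k-1}(r_1(x)-1)\le q_G^k$. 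A short computation shows this forces $r_1(x)\to q_G$ as $k\to\infty$, so $\limsup_{x\to 0^+}q_s(x)\le q_G<q_{KL}=q_s(1)$. Choosing a sequence $(x_n)$ with $q_s(x_n)\to M:=\sup q_s$, these tightness statements ensure that $(x_n)$ has a subsequence converging to some $x^*\in(0,\infty)$, and upper semi-continuity gives $q_s(x^*)=M$; setting $q_{\max}:=M$ finishes (i).

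For part (ii), given $q\in(1,q_{\max}]$, I would pick $x_0$ with $q_s(x_0)=q_{\max}\ge q$ and let $t^*:=\sup\{x\ge x_0:q_s(x)\ge q\}$. This supremum is finite since $q_s(x)\to 1$ as $x\to\infty$; by upper semi-continuity $q_s(t^*)\ge q$, while by right-continuity applied to $x\searrow t^*$ (where $q_s<q$) we obtain $q_s(t^*)\le q$. Hence $q_s(t^*)=q$, proving the intermediate value property.

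For part (iii), $\max L(q)$ is obtained exactly as $t^*$ in (ii), now with $x_0$ any element of $L(q)$. For $\min L(q)$, I would first establish $s^*:=\inf L(q)>0$: when $q>q_G$ the bound $r_1(x)\to q_G$ gives $q_s(x)<q$ near $0$, so $L(q)$ is bounded away from $0$; when $q\le q_G$, Remark \ref{rem:algorithm-1}(ii) gives $q_s>q_G\ge q$ strictly on $(0,1)$, so $L(q)\subseteq[1,\infty)$ and $s^*\ge 1$. In either case $s^*>0$, and any sequence $x_n\in L(q)$ converging to $s^*=\inf L(q)$ must satisfy $x_n\searrow s^*$, so right-continuity yields $q_s(s^*)=q$, i.e., $s^*\in L(q)$. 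The main technical obstacle in this plan is the uniform control of $r_1(x)$ as $x\to 0^+$ used to prevent maximizing sequences from escaping to $0$; once this quantitative estimate is in place, the remaining arguments are routine manipulations with càdlàg functions having no downward jumps.
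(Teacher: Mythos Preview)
Your argument is correct and follows the same broad strategy as the paper: deduce upper semi-continuity from Theorem~\ref{thm:cadlag-intro}, then combine with the decay $q_s(x)\to 1$ at infinity to get the maximum, the intermediate value property, and extrema of level sets. The paper's own proof is a single sentence invoking Theorem~\ref{thm:cadlag-intro} together with $\lim_{x\to\infty}q_s(x)=1$, leaving all details to the reader.

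The one substantive addition in your write-up is the explicit control of $q_s(x)$ as $x\to 0^+$ via the algorithmic upper bound $r_1(x)$: from $x=(0^{k-1}B_1^\infty)_{r_1}\le r_1^{-(k-1)}(r_1-1)^{-1}$ and $x\ge q_G^{-k}$ you deduce $r_1(x)^{k-1}(r_1(x)-1)\le q_G^k$, forcing $\limsup_{x\to 0^+}q_s(x)\le q_G$. This is genuinely needed to prevent maximizing sequences (and infima of level sets for $q>q_G$) from escaping to $0$, and the paper's brief proof does not address it; your argument is therefore more complete on this point. The case split in part~(iii) according to whether $q>q_G$ or $q\le q_G$ (using Remark~\ref{rem:algorithm-1}(ii) for the latter) is likewise a detail the paper omits. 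One cosmetic remark: in part~(iii) your sentence ``must satisfy $x_n\searrow s^*$'' should really read ``must satisfy $x_n\ge s^*$'', since the sequence need not be monotone; but right-continuity still applies since all terms lie to the right of (or at) $s^*$.
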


\begin{proof}
All three statements follow easily from Theorem \ref{thm:cadlag-intro} and the fact that $\lim_{x\to\infty}q_s(x)=1=\inf_x q_s(x)$.
\end{proof}

\begin{remark} \label{rem:qmax}
In Section \ref{sec:maximum} we will show that the maximum value $q_{\max}$ is uniquely attained at $x=1/q_G$ and we will give an exact calculation of $q_{\max}$, showing $q_{\max}=q_s(1/q_G)\approx 1.88845$.
\end{remark}


To end this section, we show that the function $q_s$ can jump only when it reaches a level $q\in\vb\backslash\overline{\ub}$.

\begin{proposition} \label{prop:jumps}
Let $q_k=q_k(x)$ be the $k^{\mathrm{th}}$ lower bound for $q_s(x)$ in the algorithm. Then $q_s$ has a jump at $x$ if and only if $q_k(x)\in\vb\backslash\overline{\ub}$ for some $k$. Moreover, if this is the case and $k_0$ is the {\em smallest} such $k$, then
\[
\lim_{y\nearrow x}q_s(y)=q_{k_0}(x)<q_s(x).
\]
\end{proposition}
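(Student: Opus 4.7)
The plan is to prove the equivalence and the left-limit formula simultaneously, combining the algorithm's stability properties (Lemma \ref{lem:left-and-right-stability}), the sandwich structure of $\vb$ at the points $\hat q_n(\sa)$, and the c\'adl\'ag properties of $q_s$ established in Section \ref{sec:continuity}.

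For the implication $(\Leftarrow)$, suppose $q_{k_0}(x)=\hat q_N(\sa)\in\vb\setminus\overline\ub$ with $k_0$ minimal. Since $q_{k_0}(x)\notin\overline\ub$, the algorithm does not stop at step $k_0$, so $q_{k_0}(x)<q_{k_0+1}(x)\le q_s(x)$, establishing the strict inequality. By the minimality of $k_0$ we have $q_i(x)\notin\vb$ for $i<k_0$, and Lemma \ref{lem:left-and-right-stability}(ii) with $k=k_0-1$ yields a left-neighborhood $[x-\delta,x]$ on which the blocks $B_1,\ldots,B_{k_0-1}$ are constant and $q_{k_0}(y)\searrow q_{k_0}(x)$ as $y\nearrow x$. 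Combined with $q_s(y)\ge q_{k_0}(y)$ this gives $\liminf_{y\nearrow x}q_s(y)\ge q_{k_0}(x)$. For the matching upper bound I will construct the candidate expansion
\[
e:=B_1\ldots B_{k_0-1}\bigl(1\,\overline{\al(\hat q_N(\sa))}\bigr)=B_1\ldots B_{k_0-1}(1\bar u)^\f,\qquad u:=\theta_1(\sa)\ldots\theta_{2^Nm-1}(\sa),
\]
so that the identity $(01^\f)_q=\bigl(1\,\overline{\al(q)}\bigr)_q$ at $q=\hat q_N(\sa)$ gives $(e)_{\hat q_N(\sa)}=x$. The crucial claim is that $e\in\us_q$ for every $q\in(\hat q_N(\sa),\hat q_{N+1}(\sa))$. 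Granting this, for $y<x$ close to $x$ the base $q_y>\hat q_N(\sa)$ defined by $(e)_{q_y}=y$ lies in $\ub(y)$ and satisfies $q_y\searrow\hat q_N(\sa)$ as $y\nearrow x$, giving $\limsup_{y\nearrow x}q_s(y)\le\hat q_N(\sa)=q_{k_0}(x)$ and completing the left-limit identity.

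For $(\Rightarrow)$ I argue the contrapositive: if no $q_k(x)\in\vb\setminus\overline\ub$ exists, then $q_s$ is continuous at $x$. Since $q_s$ is right-continuous with no downward jumps, it suffices to show $\liminf_{y\nearrow x}q_s(y)\ge q_s(x)$. If some $q_{k_0}(x)\in\overline\ub$ is the first index landing in $\vb$, then the algorithm stops and $q_s(x)=q_{k_0}(x)$; Lemma \ref{lem:left-and-right-stability}(ii) with $k=k_0-1$ gives $q_s(y)\ge q_{k_0}(y)\to q_{k_0}(x)=q_s(x)$ as $y\nearrow x$. Otherwise every $q_k(x)\notin\vb$: since $r_k(x)-q_k(x)\to 0$ as $k\to\f$, for any $\ep>0$ pick $k$ with $r_k(x)-q_k(x)<\ep$, apply Lemma \ref{lem:left-and-right-stability}(ii) at this $k$ to get left-continuity of $q_k$ and $r_k$ on some $[x-\delta,x]$, and conclude that $q_s(y)\in[q_k(y),r_k(y)]$ stays within $\mathcal{O}(\ep)$ of $q_s(x)$ for $y\in[x-\delta,x]$.

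The main obstacle is the verification of $e\in\us_q$ in the forward direction. For tails that sit entirely in the periodic part, the condition from Lemma \ref{lem:unique-expansion} reduces to checking that every cyclic shift $\si^j((1\bar u)^\f)$ lies strictly between $\overline{\al(q)}$ and $\al(q)$; this follows from the $\vb$-sandwich $\overline{\al(\hat q_N(\sa))}\lle \si^j((1\bar u)^\f)\lle \al(\hat q_N(\sa))$ together with the strict inequalities $\overline{\al(q)}\prec\overline{\al(\hat q_N(\sa))}$ and $\al(\hat q_N(\sa))\prec\al(q)$ valid for $q>\hat q_N(\sa)$. The delicate book-keeping occurs for tails beginning inside the prefix $B_1\ldots B_{k_0-1}$: here the plan is to exploit the fact that $B_1\ldots B_{k_0-1}B_{k_0}(x)^\f\in\us_{r_{k_0}(x)}$ (Claim~2 of the algorithm) and that the suffix $(1\bar u)^\f$ begins with the block $B_{k_0}(x)$ and is lexicographically dominated by $B_{k_0}(x)^\f$ thereafter, so that the unique-expansion inequalities at the higher base $r_{k_0}(x)$ can be propagated down to bases $q$ near $\hat q_N(\sa)$, with the $\vb$-sandwich again providing the slack.
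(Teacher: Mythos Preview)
Your proposal is correct and follows essentially the same architecture as the paper's proof: the same use of Lemma~\ref{lem:left-and-right-stability}(ii) for the lower left-limit, the same candidate expansion $e=B_1\ldots B_{k_0-1}1\overline{\alpha(q_{k_0})}$ for the upper left-limit, and the same two-case contrapositive for the converse.

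One imprecision worth flagging in your last paragraph: citing $B_1\ldots B_{k_0-1}B_{k_0}^\infty\in\us_{r_{k_0}}$ and then ``propagating down'' to bases $q$ near $\hat q_N(\sa)$ does not work as stated, because $\us_q\subseteq\us_{r_{k_0}}$ for $q<r_{k_0}$, so membership in $\us_{r_{k_0}}$ gives no control at the smaller base. What actually makes the argument go through is that the \emph{proof} of Claim~2 (and its generalization to step $k_0$) establishes the sharper sandwich $\overline{\alpha(q_{k_0})}\prec\sigma^n(B_1\ldots B_{k_0}^\infty)\prec\alpha(q_{k_0})$; since $q>q_{k_0}$, these bounds propagate \emph{up} to $\alpha(q)$ in the correct direction. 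Combined with your observation that $(1\bar u)^\infty$ begins with $B_{k_0}$ and is dominated by $B_{k_0}^\infty$, this handles the upper constraint for prefix tails; the lower constraint for prefix tails follows directly from the structure $\overline{B_1\ldots B_{k_0-1}}=0\alpha_1(q_1)\ldots\alpha_{n_1-1}(q_1)^-\ldots$ together with Lemma~\ref{lem:char-quasi-expansion}(iii) and $q_i<q_{k_0}<q$. The paper itself is quite terse here (``by the nature of $\alpha(q_{k_0})$''), so you are already more explicit than the original.
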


\begin{proof}
Assume first that $q_k(x)\in\vb\backslash\overline{\ub}$ for some $k$, and let $k_0$ be the smallest such $k$. Since $q_{k_0}\not\in\overline{\ub}$, and the sequence $(q_k)$ is strictly increasing to $q_s(x)$, we have $q_s(x)>q_{k_0}(x)$. Hence, it suffices to show that $\lim_{y\nearrow x}q_s(y)=q_{k_0}(x)$.

Since $q_j(x)\not\in\vb$ for all $j<k_0$, there is by Lemma \ref{lem:left-and-right-stability} (ii) a $\delta>0$ such that $q_1,\dots,q_{k_0}$ are decreasing on $[x-\delta,x]$ and left-continuous at $x$, and the blocks $B_1,\dots,B_{k_0-1}$ are constant on the interval $(x-\delta,x]$. Choosing $\delta$ even smaller if necessary, we may assume that $q_1(y),\dots,q_{k_0}(y)\not\in\overline{\ub}$ for all $y\in(x-\delta,x)$. Hence

\[
\liminf_{y\nearrow x} q_s(y)\geq \lim_{y\nearrow x}q_{k_0}(y)=q_{k_0}(x).
\]
On the other hand, since $q_{k_0}:=q_{k_0}(x)\in\vb\backslash\overline{\ub}$, we can write $\alpha(q_{k_0})=(a_1\dots a_m\overline{a_1\dots a_m})^\f$ where $a_1\dots a_m^-$ is some admissible word. Consider the expansion
\[
\sc:=B_1(x)\dots B_{k_0-1}(x)1\,\overline{\alpha(q_{k_0}(x))}
=B_1(x)\dots B_{k_0-1}(x)1(\overline{a_1\dots a_m}a_1\dots a_m)^\f.
\]
Note that $(\sc)_{q_{k_0}}=x$.  Hence, for $y\in(x-\delta,x)$, $\sc$ is an expansion of $y$ in some base $q(y)>q_{k_0}$, and $q(y)\searrow q_{k_0}$ as $y\nearrow x$. Finally, by the nature of $\alpha(q_{k_0})$, the expansion $\sc$ of $y$ is unique. Therefore,
\[
\limsup_{y\nearrow x} q_s(y)\leq \limsup_{y\nearrow x}q(y)=q_{k_0}(x).
\]
This completes the first half of the proof.

For the converse, assume that $q_k(x)\not\in\vb\backslash\overline{\ub}$ for every $k$. Since $q_s$ has no downward jumps, it suffices to show that
\begin{equation} \label{eq:left-lower-semi-continuous}
\lim_{y\nearrow x} q_s(y)\geq q_s(x).
\end{equation}
We consider two cases.

\medskip
{\em Case 1.} $q_k(x)\in\overline{\ub}$ for some $k$ (in other words, $x$ is of type I). Let $k_0$ be the smallest such $k$. Since $q_1(x),\dots,q_{k_0-1}(x)\not\in\vb$, we have by Lemma \ref{lem:left-and-right-stability} (ii) that the function $y\mapsto q_{k_0}(y)$ is strictly decreasing on $(x-\delta,x]$. Hence $q_s(y)\geq q_{k_0}(y)>q_{k_0}(x)=q_s(x)$ for all $y\in(x-\delta,x)$, which gives \eqref{eq:left-lower-semi-continuous}.

\medskip
{\em Case 2.} $q_k(x)\not\in\overline{\ub}$ for every $k$. Let $\ep>0$. Since $q_k(x)\nearrow q_s(x)$, we can choose $k$ so large that $q_k(x)>q_s(x)-\ep$. Again by Lemma \ref{lem:left-and-right-stability} (ii), the map $y\mapsto q_k(y)$ is decreasing in $(x-\delta,x]$ for some sufficiently small $\delta>0$. So $q_s(y)\geq q_k(y)>q_k(x)>q_s(x)-\ep$ for all $y\in(x-\delta,x)$, and hence $\lim_{y\nearrow x} q_s(y)\geq q_s(x)-\ep$. Letting $\ep\to 0$ we again obtain \eqref{eq:left-lower-semi-continuous},  completing the proof.
\end{proof}

\begin{corollary} \label{cor:decreasing-here}
If $x\in X_I\cup X_{II}$, then there exists $\delta>0$ such that $q_s(y)\neq q_s(x)$ for all $y\in(x-\delta,x)$.
\end{corollary}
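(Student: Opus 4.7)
The plan is to split into cases based on whether the algorithm for $x$ produces some base in $\vb\setminus\overline{\ub}$. If some $q_j(x)\in \vb\setminus\overline{\ub}$, then Proposition~\ref{prop:jumps} immediately gives
\[\lim_{y\nearrow x}q_s(y)=q_{j_0}(x)<q_s(x),\]
where $j_0$ is the smallest such $j$, so $q_s(y)<q_s(x)$ for every $y$ in some left-neighborhood of $x$ and the claim follows at once. Note this case is independent of whether $x$ lies in $X_I$ or $X_{II}$.

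Suppose instead that no $q_j(x)$ lies in $\vb\setminus\overline{\ub}$, and first consider $x\in X_I$. Let $k_0$ be the first index with $q_{k_0}(x)\in\overline{\ub}$. By minimality of $k_0$ and the case assumption, $q_j(x)\notin\vb$ for all $j<k_0$, so Lemma~\ref{lem:left-and-right-stability}(ii) furnishes a $\delta'>0$ on which $y\mapsto q_{k_0}(y)$ is strictly decreasing and left-continuous. Hence $q_s(y)\ge q_{k_0}(y)>q_{k_0}(x)=q_s(x)$ for every $y\in(x-\delta',x)$, exactly as in Case~1 of the proof of Proposition~\ref{prop:jumps}.

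The substantive remaining case is $x\in X_{II}$ with $q_j(x)\notin\vb$ for every $j$. Let $k$ be the first step at which condition \eqref{eq:condition-finite} holds, so that $q_s(x)=r_k(x)=\min\ub(x)\notin\vb$ and $\Phi_x(q_s(x))=B_1\cdots B_{k-1}B_k^\infty$. First I would apply Lemma~\ref{lem:left-and-right-stability}(ii) to obtain $\delta'>0$ such that on $[x-\delta',x]$ the blocks $B_1,\ldots,B_k$ do not depend on $y$ and $r_k$ is strictly decreasing and left-continuous. Next, since $q_k(x)$ and $r_k(x)$ both lie in the interior of a component of $(1,2]\setminus\vb$, the map $q\mapsto\al(q)$ is continuous at each of them, so for $y$ sufficiently close to $x$ from the left the first $m_k(x)$ digits of $\al(q_k(y))$ and of $\al(r_k(y))$ coincide with those of $\al(q_k(x))$ and $\al(r_k(x))$, respectively. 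By the same stability argument, the failure of \eqref{eq:condition-finite} at each earlier step $j<k$ also persists for $y$. Consequently $y\in X_{II}$ with $q_s(y)=r_k(y)$ and the same minimal expansion as $x$; the strict monotonicity of $r_k$ then gives $q_s(y)=r_k(y)>r_k(x)=q_s(x)$, completing the proof.

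The main obstacle lies in this last subcase, where $q_s$ is continuous at $x$: one must upgrade the local constancy of the blocks $B_1,\ldots,B_k$ supplied by Lemma~\ref{lem:left-and-right-stability}(ii) to local constancy of the \emph{entire} minimal expansion $B_1\cdots B_{k-1}B_k^\infty$, which reduces to verifying that condition \eqref{eq:condition-finite} both holds at step $k$ and continues to fail at each step $j<k$ as $y$ ranges over a one-sided neighborhood of $x$. This persistence is precisely what the hypothesis $q_j(x)\notin\vb$ for every $j$ buys us, through continuity of $\al$ at $q_k(x)$ and $r_k(x)$.
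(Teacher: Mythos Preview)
Your proof is correct and follows essentially the same three-case decomposition as the paper's proof: jump case via Proposition~\ref{prop:jumps}, type~I case via strict monotonicity of $q_{k_0}$, and type~II case via stability of the blocks and of $r_k$. In the last case you actually supply more detail than the paper does: where the paper writes only ``this means that $q_s(y)=r_k(y)$'', you spell out the continuity of $q\mapsto\alpha(q)$ at $q_k(x)$ and $r_k(x)$ (both lie in $(1,2]\setminus\vb$) to show that condition~\eqref{eq:condition-finite} persists at step $k$ for $y$ near $x$, which is precisely what is needed to conclude $q_s(y)=r_k(y)$.

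One remark: your verification that the \emph{failure} of~\eqref{eq:condition-finite} at each earlier step $j<k$ also persists is unnecessary. Once~\eqref{eq:condition-finite} holds at step $k$ for $y$, the algorithm gives $B_i(y)=B_k(y)$ and hence $r_i(y)=r_k(y)$ for all $i\ge k$, so $q_s(y)=r_k(y)$ regardless of whether~\eqref{eq:condition-finite} might also hold at some earlier step. You can therefore drop the requirement that $k$ be the \emph{first} index where~\eqref{eq:condition-finite} holds and omit that part of the argument.
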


\begin{proof}
Let $x\in X_I\cup X_{II}$. If $q_k(x)\in\vb\backslash\overline{\ub}$ for some $k$, then $q_s$ jumps at $x$ by Proposition \ref{prop:jumps}, and since $q_s$ has a left-hand limit at $x$, the result follows.

On the other hand, suppose $q_k(x)\not\in\vb\backslash\overline{\ub}$ for every $k$. Then we are in Case 1 or Case 2 of the above proof. If we are in Case 1, then the above proof shows that there is $\delta>0$ such that $q_s(y)>q_s(x)$ for all $y\in(x-\delta,x)$. If we are in Case 2, then $x\in X_{II}$ and there is an integer $k$ such that \eqref{eq:condition-finite} holds. By Lemma \ref{lem:left-and-right-stability} (ii), there is $\delta>0$ such that the blocks $B_1=B_1(y),\dots,B_k=B_{k}(y)$ are independent of $y$ for $y\in(x-\delta,x]$. This means that for all $y\in(x-\delta,x]$, $q_s(y)=r_k(y)$ is the unique base $r$ such that $\big(B_1\dots B_{k-1}B_k^\f\big)_r=y$, so again $q_s(y)>q_s(x)$ for all $y\in(x-\delta,x)$.
\end{proof}

From Corollary \ref{cor:decreasing-here}, together with Proposition \ref{prop:accumulation-points} below, we obtain some interesting information about the behavior of the level sets around points of each of the three types:
\begin{itemize}
\item No point of type I is a left accumulation point of its level set. However, a point of type I can be a right accumulation point of its level set; e.g. $x=1/q_{KL}(q_{KL}-1)$ (cf.~Example \ref{ex:accumulation-points}).
\item Every point of type II is an isolated point of its level set. This follows from Corollary \ref{cor:decreasing-here} and Propositions \ref{th:qs-not-in-V} and \ref{prop:finite-alg-stable}.
\item Some (in fact infinitely many) points of type III are left accumulation points of their level set.
\end{itemize}
We do not know if a point of type III can be a right accumulation point of its level set.

\section{Level sets of $\inf\ub(x)$} \label{sec:level-sets}

In this section, we investigate the level sets $L(q)$ of $q_s$ and prove Theorems \ref{thm:finite-level-sets-intro}, \ref{thm:infinite-level-set-qKL-intro} and \ref{thm:other-infinite-level-sets-intro}. 



\begin{proof}[Proof of Theorem \ref{thm:finite-level-sets-intro}]
Fix $q\in(1,q_{\max}]\setminus\overline{\ub}$. Then there is a connected component $(q_n,q_{n+1})$ of $(1,2]\setminus\vb$ such that $q_n<q\le q_{n+1}$.
By Corollary \ref{cor:smallest-and-largest}, $L(q)$ has a smallest element $x_0$, and $x_0>0$ since $q_s(0)=1<q$.

Suppose $x\in L(q)$. By Proposition  \ref{prop:type-I} (i), $q_s(x)=q=\min\ub(x)$. Hence the point $(x,q)$ lies on an arc $C=C(x,q)\in\mathscr{C}$. Let $(d_i)=\Phi_x(q)$ be the unique expansion of $x$ in base $q$. Since $(d_i)\in\us_p$ for all $p\in(q_n,q_{n+1}]$, the arc $C$ extends down and to the right to a point $(x',q_n)$ with $x'>x$.
Furthermore, by Proposition \ref{prop:finite-alg-stable} the entire part of the arc $C$ between $(x,q)$ and $(x',q_n)$, with the exception of the endpoint $(x',q_n)$, is contained in the graph of $q_s$.

Since $x_0>0$, we can choose $k\in\N$ so large that $x_0>q_G^{-k}$. By Lemma \ref{lem:derivative-bound}, the slope of $C$ is bounded in absolute value by a constant $M:=2^{k+1}$. It follows that
\[
x'-x\ge\frac{q-q_n}{M}>0.
\]
Observe that for each $y\in(x,x')$ we have $q_s(y)<q_s(x)=q$. Therefore any two points of $L(q)$ must lie at least a distance $(q-q_n)/M$ apart. Moreover, Corollary \ref{cor:smallest-and-largest} implies $L(q)$ is bounded. Hence $L(q)$ is finite, completing the proof.
\end{proof}

\subsection{The infinite level set $L(q_{KL})$} \label{subsec:infinite-qKL-level-set}
Below, we will prove something considerably more detailed than Theorem \ref{thm:infinite-level-set-qKL-intro}.
First we define a subset $S\subset\N$ recursively as follows:
\begin{enumerate}
\item $2,3,4\in S$;
\item For each $k\in\N_{\geq 2}$ and $2^k<n\leq 2^{k+1}$,  $n\in S$ if and only if
\[
{n>3\cdot 2^{k-1} \qquad\textrm{and} \qquad n-3\cdot 2^{k-1}\in S.}
\]
\end{enumerate}
Thus, $S=\{2,3,4,8,14,15,16,26,27,28,32,50,51,52,56,62,63,64,\dots\}$.

\begin{lemma}\label{lem:S}\mbox{}

\begin{enumerate}[{\rm(i)}]
\item $2^k\in S$ for any $k\in\N$.
\item $2^{2k}-2, 2^{2k}-1\in S$ for any $k\in\N$.
\item The set $S$ has density zero in $\N$.
\end{enumerate}
\end{lemma}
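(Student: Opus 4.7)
The plan is to handle the three parts separately, exploiting the fact that the recursion bijects each dyadic block $(3\cdot 2^{k-1},2^{k+1}]$ with the lower range $(0,2^{k-1}]$ via the shift $n\mapsto n-3\cdot 2^{k-1}$.

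For part (i), I would use induction on $k$ with base cases $k=1,2$ (both follow from the definition, since $2,4\in S$). For $k\geq 3$, the element $2^k$ lies in the block $(2^{k-1},2^k]$, so the recursion with parameter $k'=k-1\geq 2$ applies: since $2^k=4\cdot 2^{k-2}>3\cdot 2^{k-2}$ and $2^k-3\cdot 2^{k-2}=2^{k-2}$, we get $2^k\in S\iff 2^{k-2}\in S$. The induction then propagates along the even and odd subsequences independently.

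For part (ii), I would induct on $k$ with base case $k=1$ (where $2^2-2=2$ and $2^2-1=3$ lie in $S$ by definition). For the inductive step, both $2^{2k+2}-1$ and $2^{2k+2}-2$ lie in the block $(2^{2k+1},2^{2k+2}]$ and exceed $3\cdot 2^{2k}$ (for $k\geq 1$), and they reduce under the recursion to $2^{2k}-1$ and $2^{2k}-2$ respectively, which are in $S$ by the inductive hypothesis.

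The main obstacle is part (iii); the key step is to convert the qualitative recursion into a quantitative recursion on the counting function. Define $b_k:=|S\cap(2^{k-1},2^k]|$ and $T_k:=|S\cap[1,2^k]|=\sum_{j=1}^k b_j$. The bijection described above shows that for $k\geq 3$,
\[
b_k=|S\cap(0,2^{k-2}]|=T_{k-2},
\]
so $T_k=T_{k-1}+T_{k-2}$ for $k\geq 3$, with initial values $T_1=1$ and $T_2=3$. This is a Fibonacci-type linear recurrence, hence $T_k=O(\varphi^k)$ where $\varphi=(1+\sqrt{5})/2<2$. Finally, for arbitrary $N$ with $2^k\leq N<2^{k+1}$ we have
\[
\frac{|S\cap[1,N]|}{N}\leq\frac{T_{k+1}}{2^k}=O\!\left((\varphi/2)^k\right)\longrightarrow 0,
\]
which gives the density zero conclusion. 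The critical insight — and the step most likely to be missed — is recognizing that the dyadic-scaling recursion translates into a Fibonacci recursion whose exponential growth rate $\varphi$ happens to be strictly less than the ambient scaling factor $2$.
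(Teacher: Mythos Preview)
Your proof is correct and follows essentially the same approach as the paper: the inductive arguments for (i) and (ii) are identical in spirit, and for (iii) the paper likewise derives the recurrence $T_k=T_{k-1}+T_{k-2}$ with $T_1=1$, $T_2=3$ (identifying $T_k$ as the $k$th Lucas number) and concludes that $T_k/2^k\to 0$.
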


\begin{proof}
Statement (i) follows by induction since $2, 2^2\in S$ and $2^{k+1}=3\cdot 2^{k-1}+2^{k-1}$ for $k\geq 2$. Similarly, (ii) follows inductively since $2^2-2, 2^2-1\in S$ and
\[
2^{2(k+1)}-j=3\cdot 2^{2k}+(2^{2k}-j)\quad\textrm{for }j=1,2.
\]

For (iii), let $N_k:=\#\{n\leq 2^k: n\in S\}$ for $k\in\N$. One checks easily that $N_k=N_{k-1}+N_{k-2}$ for $k\geq 3$, with $N_1=1$ and $N_2=3$. Thus, $N_k$ is the $k$th {\em Lucas number}, given explicitly by
\[
N_k=\left(\frac{1+\sqrt{5}}{2}\right)^k+\left(\frac{1-\sqrt{5}}{2}\right)^k, \qquad k\in\N.
\]
As a result,
\[
\limsup_{n\to\infty}\frac{\#\{m\leq n: m\in S\}}{n}=\lim_{k\to\infty}\frac{N_k}{2^k}=0,
\]
completing the proof.
\end{proof}

{%

For each $n\in\N$, we define the points
\begin{gather*}
x_n:=(\tau_n\tau_{n+1}\dots)_{q_{KL}},\quad
x_n':=(0\tau_n\tau_{n+1}\dots)_{q_{KL}},\quad
x_n'':=(00\tau_n\tau_{n+1}\dots)_{q_{KL}}.
\end{gather*}

\begin{theorem} \label{thm:set-with-qKL-as-min}
The level set $L(q_{KL})$ is infinite. Furthermore,
\begin{enumerate}[{\rm(i)}]
\item $q_s(x_n)=q_{KL}$ if and only if $n\in S\cup\{1\}$. 
\item Assume $\tau_n=1$. Then $q_s(x_n')=q_{KL}$ if and only if $n\in S$.
\item $q_s(x_n'')=q_{KL}$ if and only if $n\in S$ and $\tau_n\dots\tau_{n+6}=1010011$.
\item In each case above ($x=x_n, x_n'$ or $x_n''$), if $q_s(x)=q_{KL}$, then $q_{KL}=\min\ub(x)$.
\end{enumerate}
\end{theorem}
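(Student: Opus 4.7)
Since $q_{KL}\in\ub$ and $\alpha(q_{KL})=(\tau_i)$, Lemma \ref{lem:univoque-bases}(i) yields the strict inequalities $\overline{(\tau_i)}\prec\sigma^k((\tau_i))\prec(\tau_i)$ for every $k\ge 1$. A direct application of Lemma \ref{lem:unique-expansion} then shows that $(\tau_n\tau_{n+1}\ldots)\in\us_{q_{KL}}$ for every $n\ge 1$, and that $(0\tau_n\tau_{n+1}\ldots),(00\tau_n\tau_{n+1}\ldots)\in\us_{q_{KL}}$ for every $n\ge 2$. Hence $q_{KL}\in\ub(x)$ and $q_s(x)\le q_{KL}$ for each of the three points $x\in\{x_n,x_n',x_n''\}$. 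Part (iv) follows at once: whenever $q_s(x)=q_{KL}$ and $q_{KL}\in\ub(x)$, the infimum is attained, i.e.\ $q_{KL}=\min\ub(x)$.

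The real content of parts (i)--(iii) is therefore the reverse inequality $q_s(x)\ge q_{KL}$ (respectively its strict failure for $n$ outside the specified set). I would apply the algorithm of Section \ref{sec:algorithm} to $x=x_n$ and prove by induction on $k$ that, as long as the recursive defining condition of $S$ continues to hold at depth $k$, the iterates $q_k,r_k,B_k,n_k,m_k$ are governed by the Thue--Morse doubling identity \eqref{eq:TM-recursion} together with property \eqref{eq:property-tau}; in effect, $\alpha(q_k)$ agrees with $(\tau_i)$ on an initial segment whose length roughly doubles at each step, and $B_k$ is a reflected Thue--Morse prefix of matching length. The recursive structure of $S$, namely $n\in S\cap(2^k,2^{k+1}]\iff n>3\cdot 2^{k-1}$ and $n-3\cdot 2^{k-1}\in S$, mirrors exactly one application of this doubling. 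For $n\in S\cup\{1\}$ the induction continues indefinitely, the bases $q_k$ stay outside $\overline{\ub}$, the termination condition \eqref{eq:condition-finite} never triggers, and $q_k\nearrow q_{KL}$; by Claim 4 of the algorithm, $q_s(x_n)=q_{KL}$. For $n\notin S\cup\{1\}$ the induction breaks at some first step, and this breakage forces the algorithm to terminate either at a type-I base $q_k\in\overline{\ub}$ with $q_k<q_{KL}$ or at a type-II stop with $r_k<q_{KL}$; in either case $q_s(x_n)<q_{KL}$.

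Parts (ii) and (iii) follow the same scheme, with the leading zeros only perturbing the very first iteration of the algorithm. The assumption $\tau_n=1$ in (ii) is exactly what is needed to make the first block $B_1$ Thue--Morse-compatible, after which the induction of the previous paragraph takes over. For (iii), the two leading zeros introduce an additional obstruction in the first two iterations, and the sub-word $\tau_n\ldots\tau_{n+6}=1010011=\tau_2\ldots\tau_8$ is the unique $7$-block that clears both obstructions and unlocks the subsequent Thue--Morse induction.

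The hard part will be the induction in the second paragraph: identifying the correct inductive invariant that simultaneously encodes (a) the depth of $n$ in the recursive structure of $S$, (b) the growing Thue--Morse agreement of $\alpha(q_k)$, and (c) the precise values of $n_k$ and $m_k$, is delicate, because the update rule for the algorithm is not literal equality with any single $\hat q_m$ but rather a more subtle matching of initial segments. In particular, verifying the specific 7-word $1010011$ in part (iii) amounts to unpacking the first two algorithm iterations by hand and explicitly ruling out the several first-step failure modes peculiar to the two-zero prefix.
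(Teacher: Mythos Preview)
Your observation that $q_{KL}\in\ub(x)$ for each of $x_n,x_n',x_n''$ is correct, and part (iv) is indeed immediate. But the plan for (i)--(iii) has a genuine gap.

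You propose to run the algorithm of Section~\ref{sec:algorithm} on $x_n$ and prove inductively that the iterates $q_k$ track the Thue--Morse structure, with $\alpha(q_k)$ agreeing with $(\tau_i)$ on a prefix that roughly doubles in length. The problem is that the algorithm's bases $q_k$ are defined by equations of the form $(B_1\cdots B_{k-1}01^\infty)_{q_k}=x_n$, where $x_n=(\tau_n\tau_{n+1}\ldots)_{q_{KL}}$ is a specific transcendental number depending on $n$. There is no transparent link between these $q_k$ and the special bases $\hat q_m$, and no mechanism is offered for why $\alpha(q_k)$ should have a Thue--Morse prefix at all, let alone one of controlled length. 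The claim that ``$B_k$ is a reflected Thue--Morse prefix'' is an output you would like the algorithm to produce, not something you have derived from its defining equations. Without a concrete identification of $q_1,n_1,B_1$ in terms of $n$ and the Thue--Morse data, the induction cannot even start.

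The paper's proof is entirely different and does not use the algorithm. For $n\in S$ it argues directly: assuming $p<q_{KL}$ with $x_n\in\u_p$, Proposition~\ref{prop:power-of-two} forces $\Phi_{x_n}(p)$ to begin with $\tau_n\ldots\tau_{2^k}$, and then a careful comparison of expansions pushes the agreement with $(\tau_i)$ out to some $\tau_{2^j}^-$, which in turn forces $p<\hat q_j$ and a contradiction. Crucially, Proposition~\ref{prop:power-of-two} itself rests on quantitative estimates---the polynomial bound $f_n(q_{KL})-f_n(p)\ge q_{KL}-p$ of Lemma~\ref{lem:polynomial-inequality}, the gap estimate $q_{KL}-\hat q_m>q_{KL}^{-3\cdot 2^{m-1}}$ of Corollary~\ref{cor:inequality}, and the distance bound of Lemma~\ref{lem:xn-distance}---that together rule out any premature deviation of $\Phi_{x_n}(p)$ from $(\tau_i)$. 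These numerical inequalities, and in part~(iii) their sharpened forms, are the real engine of the proof; nothing in your sketch plays their role. For $n\notin S$ the paper does not wait for the algorithm to terminate but instead exhibits an explicit unique expansion of $x_n$ in a base below $q_{KL}$, via Lemma~\ref{lem:awkward-induction-statement} and Lemma~\ref{lem:case-a}.
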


 \begin{remark}\mbox{}
  \begin{enumerate}[{\rm(i)}]
  \item {The points $x_n,x_n'$ and $x_n''$ which belong to the level set $L(q_{KL})$ are all of type III.} 
   \item Observe that $x_n'=x_{n-1}$ if $\tau_{n-1}=0$, and similarly, $x_n''=x_{n-1}'$ if $\tau_{n-1}=0$. {Note that if $\tau_n=1$, then $\tau_{n}\ldots \tau_{n+5}\lge 100101$.}
It follows that the points $x_n$ with $\tau_n=1$ lie in the interval $[1/q_G,1)$; the points $x_n'$ with $\tau_n=1$ lie in $[1/q_G^2,1/q_G)$; and the points $x_n''$ with $\tau_n\dots \tau_{n+6}=1010011$ lie in $[1/q_G^3,1/q_G^2)$. Thus, Theorem \ref{thm:set-with-qKL-as-min} implies that the intersection of the level set $L(q_{KL})$ with each of these intervals is infinite.
	\item On the other hand, $L(q_{KL})$ does not intersect $(0,1/q_G^3)$: For $x$ in this interval, we can find an integer $k\geq 3$ so that $1/q_G^{k+1}\leq x<1/q_G^k$, and then $0^k(10)^\f$ is a unique expansion of $x$ in some base $q\in(q_G,q_{KL})$, since $\big(0^k(10)^\f\big)_{q_G}=q_G^{-k}>x$, and
	\[
	\big(0^k(10)^\f\big)_{q_{KL}}=\frac{1}{q_{KL}^{k-1}(q_{KL}^2-1)}<\frac{1}{q_G^{k+1}}.
	\]
As a consequence, if we set $x=0^k\tau_n\tau_{n+1}\dots$ with $k\geq 3$ and any $n\in\N$, then $x\leq q_{KL}^{-k}<q_G^{-3}$ and so $q_s(x)<q_{KL}$.
\item By Lemma \ref{lem:S} (iii) and Theorem \ref{thm:set-with-qKL-as-min} (i) it follows that the implication
\[
x=(\tau_n\tau_{n+1}\ldots)_{q_{KL}}\quad\Longrightarrow\quad \min\ub(x)=q_{KL}
\] fails for almost all $n\in\N$ in the sense of density.
\end{enumerate}
\end{remark}

Before proving the theorem, we first develop a series of useful estimates, which we will need also in Section \ref{sec:cascades}. Recall from (\ref{eq:qn}) the definition of the sequence $(\hat q_m)$, and recall that $\hat q_m\nearrow q_{KL}$ as $m\to\f$.

\begin{lemma} \label{lem:calculation}
For every $m\in\N$, we have the identities
\begin{gather}
(\tau_1\dots\tau_{2^m})_{\hat{q}_{m+1}}=1-\hat{q}_{m+1}^{-2^m} \frac{2-\hat{q}_{m+1}}{\hat{q}_{m+1}-1}, \label{eq:identity2} \\
(\overline{\tau_1\dots\tau_{2^m}}^{\,+})_{\hat{q}_{m+1}}=\frac{2-\hat{q}_{m+1}}{\hat{q}_{m+1}-1}, \label{eq:identity1} \\
(\overline{\tau_1\dots\tau_{2^{m-1}}}^{\,+})_{\hat{q}_{m+1}}=\frac{2-\hat{q}_{m+1}}{\hat{q}_{m+1}-1}\left(\frac{1}{1-\hat{q}_{m+1}^{-2^{m-1}}}-\hat{q}_{m+1}^{-2^{m-1}}\right). \label{eq:identity3}
\end{gather}
\end{lemma}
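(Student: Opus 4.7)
The plan is to derive the three identities in order, exploiting two facts throughout: the defining relation $(\alpha(\hat{q}_{m+1}))_{\hat{q}_{m+1}} = 1$, which by the periodicity of $\alpha(\hat{q}_{m+1})$ is equivalent to $(\tau_1\ldots\tau_{2^{m+1}})_{\hat{q}_{m+1}} = 1$, and the Thue-Morse recursion \eqref{eq:TM-recursion}. Throughout, write $q=\hat{q}_{m+1}$ for brevity and note $\tau_{2^k}=1$ for all $k\ge 0$.

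First I would prove \eqref{eq:identity2}. Using the fact that $\alpha(\hat q_{m+1})$ has periodic block $\tau_1\ldots\tau_{2^m}\overline{\tau_1\ldots\tau_{2^m}}$ of length $2^{m+1}$, summing the resulting geometric series gives
\[
(\tau_1\ldots\tau_{2^m}\overline{\tau_1\ldots\tau_{2^m}})_q = 1 - q^{-2^{m+1}}.
\]
Splitting the left-hand side at position $2^m$ and combining with the trivial identity $(\tau_1\ldots\tau_{2^m})_q + (\overline{\tau_1\ldots\tau_{2^m}})_q = (1-q^{-2^m})/(q-1)$ yields a single linear equation in $T := (\tau_1\ldots\tau_{2^m})_q$. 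After factoring the common $(1-q^{-2^m})$ from both sides, this equation collapses to $T = 1 - q^{-2^m}\cdot (2-q)/(q-1)$, proving \eqref{eq:identity2}.

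Identity \eqref{eq:identity1} then comes essentially for free. Applying \eqref{eq:TM-recursion} at $n=m+1$ and using $(\tau_1\ldots\tau_{2^{m+1}})_q = 1$ gives
\[
(\overline{\tau_1\ldots\tau_{2^m}}^{\,+})_q = q^{2^m}\bigl(1-T\bigr) = \frac{2-q}{q-1},
\]
where the last equality substitutes \eqref{eq:identity2}.

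For \eqref{eq:identity3}, I would apply \eqref{eq:TM-recursion} one more time, now at $n=m$, to write $\tau_1\ldots\tau_{2^m} = \tau_1\ldots\tau_{2^{m-1}}\overline{\tau_1\ldots\tau_{2^{m-1}}}^{\,+}$. Setting $A := (\overline{\tau_1\ldots\tau_{2^{m-1}}}^{\,+})_q$ and $v := q^{-2^{m-1}}$, the relation $A = (\overline{\tau_1\ldots\tau_{2^{m-1}}})_q + v$ (because $\tau_{2^{m-1}}=1$) together with the trivial identity at level $2^{m-1}$ lets me express $(\tau_1\ldots\tau_{2^{m-1}})_q$ in terms of $A$. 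Plugging into the recursion and simplifying via \eqref{eq:identity2} produces a linear equation
\[
A(1-v) = \frac{(2-q)(1 - v + v^2)}{q-1},
\]
and the algebraic identity $(1-v+v^2)/(1-v) = 1/(1-v) - v$ immediately gives \eqref{eq:identity3}.

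The work is purely computational; the only mild obstacle is the careful bookkeeping of the "$+$" operation and the reflection, and spotting in each step the fortuitous common factor $(1-q^{-2^m})$ or $(1-v)$ that must be cancelled to reach closed form. The repeated use of $\tau_{2^k}=1$ (so that the "$+$" just toggles a trailing $0$ to $1$) is what keeps the expressions manageable.
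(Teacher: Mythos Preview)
Your proof is correct and follows essentially the same computation as the paper. The only organizational difference is that the paper treats \eqref{eq:identity2} and \eqref{eq:identity1} simultaneously by setting up the $2\times 2$ linear system in $x=(\tau_1\dots\tau_{2^m})_{\hat q_{m+1}}$ and $y=(\overline{\tau_1\dots\tau_{2^m}}^{\,+})_{\hat q_{m+1}}$ coming from the sum identity and the recursion $1=x+\hat q_{m+1}^{-2^m}y$, whereas you first isolate $T$ using the period block $\tau_1\dots\tau_{2^m}\overline{\tau_1\dots\tau_{2^m}}$ (without the ``$+$'') and then obtain \eqref{eq:identity1} afterward; the derivation of \eqref{eq:identity3} is the same in both.
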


\begin{proof}
Let $x=(\tau_1\dots\tau_{2^m})_{\hat{q}_{m+1}}$ and $y=(\overline{\tau_1\dots\tau_{2^m}}^{\,+})_{\hat{q}_{m+1}}$. On the one hand,
\[
x+y-\hat{q}_{m+1}^{-2^m}=\left(1^{2^m}\right)_{\hat{q}_{m+1}}=\sum_{i=1}^{2^m}\frac{1}{\hat{q}_{m+1}^i}=\frac{1-\hat{q}_{m+1}^{-2^m}}{\hat{q}_{m+1}-1},
\]
while on the other hand, since $\tau_{2^m+1}\dots\tau_{2^{m+1}}=\overline{\tau_1\dots\tau_{2^m}}^{\,+}$,
\[
1=(\tau_1\dots\tau_{2^{m+1}})_{\hat{q}_{m+1}}=x+\hat{q}_{m+1}^{-2^m}y.
\]
Solving these two linear equations in $x$ and $y$ gives \eqref{eq:identity1} and \eqref{eq:identity2}.
To derive \eqref{eq:identity3}, set $z=(\overline{\tau_1\dots\tau_{2^{m-1}}})_{\hat{q}_{m+1}}$. By \eqref{eq:identity1},
\[
\frac{2-\hat{q}_{m+1}}{\hat{q}_{m+1}-1}=(\overline{\tau_1\dots\tau_{2^m}}^{\,+})_{\hat{q}_{m+1}}=z+\hat{q}_{m+1}^{-2^{m-1}}\big(\overline{\tau_{2^{m-1}+1}\dots\tau_{2^m}}^{\,+}\big)_{\hat{q}_{m+1}}.
\]
Here
\[
\big(\overline{\tau_{2^{m-1}+1}\dots\tau_{2^m}}^{\,+}\big)_{\hat{q}_{m+1}}=\big(\tau_1\dots\tau_{2^{m-1}})_{\hat{q}_{m+1}}
=\left(1^{2^{m-1}}\right)_{\hat{q}_{m+1}}-z=\frac{1-\hat{q}_{m+1}^{-2^{m-1}}}{\hat{q}_{m+1}-1}-z.
\]
Combining these equations gives
\[
z=\frac{2-\hat{q}_{m+1}}{(\hat{q}_{m+1}-1)\big(1-\hat{q}_{m+1}^{-2^{m-1}}\big)}-\frac{\hat{q}_{m+1}^{-2^{m-1}}}{\hat{q}_{m+1}-1}.
\]
Adding $\hat{q}_{m+1}^{-2^{m-1}}$ and rearranging yields \eqref{eq:identity3}.
\end{proof}

In the next lemma, the value of the constant is critical.

\begin{lemma} \label{lem:qm-difference}
For all $m\geq 1$, we have
\[
\hat{q}_{m+1}-\hat{q}_m\geq (.265)\hat{q}_{m+1}^{-2^m}{>\hat q_{m+1}^{-(2^m+3)}}.
\]
\end{lemma}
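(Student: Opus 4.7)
The plan is to reduce both inequalities to a Mean Value Theorem estimate for the polynomial $a_m(q) := (\tau_1\dots\tau_{2^m})_q = \sum_{i=1}^{2^m}\tau_i q^{-i}$, using the identities of Lemma~\ref{lem:calculation}. Since $\alpha(\hat q_m)=(\tau_1\dots\tau_{2^m}^-)^\infty$ we have $a_m(\hat q_m)=1$; on the other hand, by the Thue--Morse recursion \eqref{eq:TM-recursion} together with \eqref{eq:identity1},
\[
1=(\tau_1\dots\tau_{2^{m+1}})_{\hat q_{m+1}}=a_m(\hat q_{m+1})+\hat q_{m+1}^{-2^m}(\overline{\tau_1\dots\tau_{2^m}}^{\,+})_{\hat q_{m+1}}=a_m(\hat q_{m+1})+\hat q_{m+1}^{-2^m}\frac{2-\hat q_{m+1}}{\hat q_{m+1}-1}.
\]
Subtracting yields the exact identity
\[
a_m(\hat q_m)-a_m(\hat q_{m+1})=\hat q_{m+1}^{-2^m}\cdot\frac{2-\hat q_{m+1}}{\hat q_{m+1}-1}.
\]

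Next, each summand $q^{-i}$ is strictly convex and decreasing on $(1,\infty)$, so $a_m$ is strictly convex and decreasing; hence $|a_m'|$ is itself decreasing in $q$. Thus by the Mean Value Theorem, for some $\xi\in(\hat q_m,\hat q_{m+1})$,
\[
a_m(\hat q_m)-a_m(\hat q_{m+1})=|a_m'(\xi)|(\hat q_{m+1}-\hat q_m)\leq |a_m'(\hat q_m)|(\hat q_{m+1}-\hat q_m),
\]
which rearranges to
\[
\hat q_{m+1}-\hat q_m\geq \hat q_{m+1}^{-2^m}\cdot\frac{(2-\hat q_{m+1})/(\hat q_{m+1}-1)}{|a_m'(\hat q_m)|}.
\]
Everything now rides on the uniform estimate
\begin{equation}\label{eq:plan-key}
|a_m'(\hat q_m)|\cdot\frac{\hat q_{m+1}-1}{2-\hat q_{m+1}}\leq \frac{1}{0.265}\quad\text{for every }m\geq 1. \tag{$\ast$}
\end{equation}

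The hard part of the argument is precisely \eqref{eq:plan-key}. Since $\hat q_{m+1}\leq q_{KL}$, \eqref{eq:plan-key} requires $|a_m'(\hat q_m)|\lesssim (2-q_{KL})/(0.265(q_{KL}-1))\approx 1.02$, which is very close to the asymptotic value $|T'(q_{KL})|$ of the full derivative $T'(q):=-\sum_{i=1}^\infty i\tau_i q^{-i-1}$. The strategy is to combine two ingredients. \emph{(a)} The termwise bound $|a_m'(\hat q_m)|=\sum_{i=1}^{2^m}i\tau_i\hat q_m^{-i-1}\leq |T'(\hat q_m)|$, together with the closed form $T(q)=\frac12\bigl(\tfrac{q}{q-1}-P(q^{-1})\bigr)$ with $P(x)=\prod_{n\geq 0}(1-x^{2^n})$ (which follows from the standard product identity for the Thue--Morse sequence), allows one to express $|T'(\hat q_m)|$ explicitly and verify that $q\mapsto |T'(q)|$ is decreasing on $[q_G,q_{KL}]$, so that $|T'(\hat q_m)|\leq |T'(\hat q_m)|$ can be compared with the target $\approx 1.02$. \emph{(b)} For the small values $m=1,2,3,4$ the bound \eqref{eq:plan-key} is checked by direct computation of $|a_m'(\hat q_m)|$, since the inequality is loose there (for instance $m=1$ gives $|a_1'(q_G)|=q_G^{-2}+2q_G^{-3}\approx 0.854$, yielding ratio $\approx 0.38$). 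For $m\geq m_0$ (some small $m_0$) one uses $|a_m'(\hat q_m)|\leq |T'(\hat q_m)|$ and the explicit closed form to push through; the constant $0.265$ is chosen to leave the needed safety margin below the asymptotic value $(2-q_{KL})/((q_{KL}-1)|T'(q_{KL})|)\approx 0.2673$.

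Finally, the second inequality is elementary: $0.265\,\hat q_{m+1}^{-2^m}>\hat q_{m+1}^{-(2^m+3)}$ is equivalent to $\hat q_{m+1}^{\,3}>1/0.265\approx 3.774$, and this holds since for $m\geq 1$ we have $\hat q_{m+1}\geq \hat q_2\approx 1.7549$, giving $\hat q_{m+1}^{\,3}\geq 5.40$. The main obstacle throughout is the sharpness of the constant $0.265$: any appreciably looser bound on $|a_m'(\hat q_m)|$ (for instance the trivial $|a_m'(q)|\leq 1/(q-1)^2$) fails, so careful exploitation of the Thue--Morse structure via the product identity for $P$ is essential.
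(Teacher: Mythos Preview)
Your approach is essentially the paper's: both arguments apply the Mean Value Theorem to the polynomial $\sum_{i=1}^{2^m}\tau_i z^{\pm i}$ (you work directly in $q$, while the paper substitutes $\lambda=1/q$ and defines $g_m(\lambda)=\sum_i\tau_i\lambda^i-1$), use Lemma~\ref{lem:calculation} to evaluate the value difference exactly, and then need a tight numerical bound on the derivative; where you invoke the Thue--Morse product $P(x)=\prod_n(1-x^{2^n})$ to control $|T'|$, the paper simply bounds $g_m'(\lambda)<3.24$ for $m\ge 3$ by summing the first $32$ terms and estimating the tail, then checks $m=1,2$ directly.

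One small remark: because your MVT is in the $q$-variable, the resulting constant involves $\hat q_m^{2}$ where the paper (via $\hat q_{m+1}-\hat q_m=\hat q_m\hat q_{m+1}(\lambda_m-\lambda_{m+1})$) gets the slightly larger $\hat q_m\hat q_{m+1}$, so your bound is marginally weaker. Since $0.265$ sits only about $0.002$ below the asymptotic value $\approx 0.2673$, your outline of $(\ast)$ would need to be carried out carefully; the product formula for $P$ does give an expression for $T'$, but it is still an infinite product, so in practice you end up doing the same kind of numerical estimate as the paper. The plan is sound and would succeed, but as written it defers exactly the step where the tightness of the constant matters.
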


\begin{proof}
The second inequality follows {since $\hat{q}_m^{-3}\leq\hat q_1^{-3}\approx .2361<.265$}. So we only prove the first inequality.
Set $\lambda_m:=1/\hat{q}_m$, and note by (\ref{eq:qn}) that $\lambda_m$ is the unique positive zero of the polynomial
\[
g_m(\lambda):=\sum_{i=1}^{2^m}\tau_i\lambda^i-1.
\]
On the other hand, Lemma \ref{lem:calculation} gives
\begin{equation}
g_m(\lambda_{m+1})=(\tau_1\dots\tau_{2^m})_{\hat{q}_{m+1}}-1
=-\hat{q}_{m+1}^{-2^m} \frac{2-\hat{q}_{m+1}}{\hat{q}_{m+1}-1}
=-\lambda_{m+1}^{2^m} \frac{2\lambda_{m+1}-1}{1-\lambda_{m+1}}.
\label{eq:fm-at-lambda-m-plus-1}
\end{equation}
Now we estimate $g_m'(\lambda)$ for $\lambda\in[\lambda_{m+1},\lambda_m]$. We claim that
\begin{equation}
g_m'(\lambda)\leq 3.24 \qquad \forall~m\geq 3 \quad\textrm{and}\quad \la\in[\la_{m+1},\lambda_m].
\label{eq:f-prime-bound}
\end{equation}
First, observe that
\begin{equation*}
\sum_{{i=2^m+1}}^\f i\tau_i\lambda^{i-1}<\sum_{{i=2^m+1}}^\f i\lambda^{i-1}
=\frac{\lambda^{2^m}}{(1-\lambda)^2}+\frac{{2^m}\lambda^{2^m}}{1-\lambda}<\frac{2^m\lambda^{2^m}}{(1-\lambda)^2}.
\end{equation*}
Thus, for $m\geq 3$ and $\lambda\in[\lambda_{m+1},\la_m]$,
\begin{equation*}
g_m'(\lambda)=\sum_{i=1}^{2^m}i\tau_i\lambda^{i-1}<\sum_{i=1}^\infty i\tau_i\lambda^{i-1 }
<\sum_{i=1}^{2^5}i\tau_i\lambda_3^{i-1}+\frac{2^5\lambda_3^{2^5}}{(1-\lambda_3)^2}
\approx 3.23712<3.24,
\end{equation*}
proving \eqref{eq:f-prime-bound}. It follows from $g_m(\lambda_m)=0$, \eqref{eq:fm-at-lambda-m-plus-1} and \eqref{eq:f-prime-bound} that
\[
\lambda_m-\lambda_{m+1}\geq \frac{|g_m(\lambda_{m+1})|}{3.24}=\lambda_{m+1}^{2^m} \frac{2\lambda_{m+1}-1}{3.24(1-\lambda_{m+1})}, \qquad m\geq 3.
\]
Hence, using that $\hat{q}_m=1/\lambda_m$ for all $m$,
\begin{align*}
\hat{q}_{m+1}-\hat{q}_m&\geq \hat{q}_{m+1}^{-2^m}\hat{q}_m\hat{q}_{m+1} \frac{2-\hat{q}_{m+1}}{3.24(\hat{q}_{m+1}-1)}>(.265)\hat{q}_{m+1}^{-2^m}, \qquad m\geq 3.
\end{align*}
Here the last inequality follows since, for $m\geq 3$,
\[
\hat{q}_m\hat{q}_{m+1}\frac{2-\hat{q}_{m+1}}{3.24(\hat{q}_{m+1}-1)}\geq \hat{q}_3^2\frac{2-q_{KL}}{3.24(q_{KL}-1)}\approx .26567>.265.
\]
Finally, for $m=1,2$ the statement of the lemma is easily checked by direct calculation.
\end{proof}

\begin{corollary} \label{cor:inequality}
We have
\begin{equation}
q_{KL}-\hat{q}_m>(.265)q_{KL}^{-2^m}{>q_{KL}^{-(2^m+3)}} \qquad\forall m\in\N,
\end{equation}
and
\begin{equation}
q_{KL}-\hat{q}_m>q_{KL}^{-3\cdot 2^{m-1}} \qquad\forall m\geq 2.
\end{equation}
\end{corollary}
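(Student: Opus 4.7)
The plan is to deduce everything from Lemma \ref{lem:qm-difference} by a telescoping argument, with one numerically tight edge case that needs a direct check. I would begin with the identity
\[
q_{KL}-\hat{q}_m \;=\; \sum_{k=m}^{\infty}\bigl(\hat{q}_{k+1}-\hat{q}_k\bigr),
\]
apply Lemma \ref{lem:qm-difference} to each term, and keep only the leading $k=m$ term to get
\[
q_{KL}-\hat{q}_m \;\geq\; 0.265\,\hat{q}_{m+1}^{-2^m} \;>\; 0.265\,q_{KL}^{-2^m},
\]
where the second inequality uses $\hat{q}_{m+1}<q_{KL}$. This gives the first inequality in the display.

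For the second inequality $0.265\,q_{KL}^{-2^m}>q_{KL}^{-(2^m+3)}$, I would rewrite it as $q_{KL}^{3}>1/0.265$ and invoke the same estimate already used in the proof of Lemma \ref{lem:qm-difference}: since $q_{KL}>q_G$, one has $q_{KL}^{-3}<q_G^{-3}=2q_G-3\approx 0.2361<0.265$.

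For the third inequality, observe that $3\cdot 2^{m-1}=2^m+2^{m-1}$, so $3\cdot 2^{m-1}\geq 2^m+3$ exactly when $m\geq 3$. Hence for $m\geq 3$ the third inequality is already weaker than the second (since $q_{KL}>1$), and follows for free. That leaves the single case $m=2$, which must be verified directly: using that $\hat q_2$ is the positive root of $x^4-x^3-x^2-1$, the elementary bound $x^4-x^3-x^2-1>0$ at $x=1.755$ gives $\hat q_2<1.755$, and the standard estimates $q_{KL}>1.787$ (from $(\tau_i)_{1.787}>1$) and $q_{KL}<1.788$ give $q_{KL}-\hat q_2>0.032$ while $q_{KL}^{-6}<1.787^{-6}<0.0308$.

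The main obstacle is precisely this $m=2$ case of the third inequality: the full telescoping sum $\sum_{k\geq 2}0.265\,\hat q_{k+1}^{-2^k}$ evaluates to roughly $0.0286$, which is just shy of $q_{KL}^{-6}\approx 0.0307$, so Lemma \ref{lem:qm-difference} by itself does not suffice here and an explicit numerical check via the polynomial equation for $\hat q_2$ is unavoidable. Everything else is routine manipulation.
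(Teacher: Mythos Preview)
Your proof is correct and follows essentially the same approach as the paper: the paper simply writes $q_{KL}-\hat{q}_m>\hat{q}_{m+1}-\hat{q}_m$ and then applies Lemma~\ref{lem:qm-difference}, which is exactly your ``keep only the leading term of the telescope,'' and then handles the second displayed inequality by the same algebra for $m\geq 3$ together with a direct numerical check at $m=2$. Your explicit estimates for the $m=2$ case and the remark about why the telescoped bound alone falls just short there are more detailed than what the paper writes, but the strategy is identical.
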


\begin{proof}
The first inequality follows directly from Lemma \ref{lem:qm-difference}, as
\[
q_{KL}-\hat{q}_m>\hat{q}_{m+1}-\hat{q}_m>(.265)\hat{q}_{m+1}^{-2^m}>(.265)q_{KL}^{-2^m}>q_{KL}^{-(2^m+3)}.
\]
The second inequality follows from the first one when $m\geq 3$ by simple algebra; and when $m=2$ it follows by direct calculation.
\end{proof}

Define the polynomial functions
\begin{equation}
f_n(q):=q^n-\sum_{i=1}^n \tau_i q^{n-i}=q^n\left(1-\sum_{i=1}^n\frac{\tau_i}{q^i}\right), \qquad n\in\N, \quad q\in\R.
\label{eq:f_n}
\end{equation}
Observe that $x_n=(\tau_n\tau_{n+1}\ldots)_{q_{KL}}=f_{n-1}(q_{KL})$.

\begin{lemma} \label{lem:polynomial-inequality}
For each $n\in\N$ and $1\leq p<q_{KL}$, we have
\[
f_n(q_{KL})-f_n(p)\geq q_{KL}-p.
\]
\end{lemma}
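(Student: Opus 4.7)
The plan is to reduce the inequality to an elementary consequence of a divided-difference identity for $f_n$. Since $q_{KL}-p>0$, the claim is equivalent to
\[
\frac{f_n(q_{KL}) - f_n(p)}{q_{KL}-p} \geq 1,
\]
and I will exhibit the left-hand side as a sum of nonnegative terms whose $k=n-1$ entry already contributes at least $1$.

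First, I would establish the identity
\[
\frac{f_n(q) - f_n(p)}{q-p} \;=\; \sum_{k=0}^{n-1} f_{n-1-k}(q)\, p^k,
\]
valid for all real $q \neq p$, where by convention $f_0 \equiv 1$ (matching the empty sum in \eqref{eq:f_n}). To derive it I would write
\[
f_n(q) - f_n(p) = (q^n - p^n) - \sum_{i=1}^{n-1}\tau_i\bigl(q^{n-i} - p^{n-i}\bigr),
\]
apply the factorization $q^m - p^m = (q-p)\sum_{k=0}^{m-1}q^{m-1-k}p^k$ termwise, and collect by powers of $p$. The coefficient of $p^k$ in $(f_n(q)-f_n(p))/(q-p)$ then equals
\[
q^{n-1-k}\Big(1 - \sum_{i=1}^{n-1-k}\tau_i\, q^{-i}\Big),
\]
which is precisely $f_{n-1-k}(q)$ for $1 \leq n-1-k \leq n-1$, and equals $1 = f_0(q)$ when $k=n-1$.

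With this identity in hand, I would finish as follows. For each $j \geq 1$,
\[
f_j(q_{KL}) = (\tau_{j+1}\tau_{j+2}\cdots)_{q_{KL}} \geq 0,
\]
since $(\tau_i)_{i=1}^\infty$ is the $q_{KL}$-expansion of $1$; and $f_0(q_{KL}) = 1$ by convention. Combined with $p \geq 1$, which gives $p^k \geq 1$ for every $k \geq 0$, this yields
\[
\frac{f_n(q_{KL}) - f_n(p)}{q_{KL}-p} \;=\; \sum_{k=0}^{n-1} f_{n-1-k}(q_{KL})\, p^k \;\geq\; \sum_{j=0}^{n-1} f_j(q_{KL}) \;\geq\; f_0(q_{KL}) = 1,
\]
and multiplying by $q_{KL}-p>0$ gives the lemma.

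No serious obstacle arises: once the divided-difference identity is written down, the entire argument is an elementary sign observation. The only care point is the endpoint $k=n-1$, which the convention $f_0 \equiv 1$ handles transparently.
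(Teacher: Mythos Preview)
Your proof is correct and follows essentially the same approach as the paper's. Both arguments factor out $q_{KL}-p$, collect the resulting expression by powers of $p$, observe that the coefficient of $p^k$ is $f_{n-1-k}(q_{KL})\geq 0$, and then bound below by the $k=n-1$ term $f_0(q_{KL})\,p^{n-1}=p^{n-1}\geq 1$; your presentation via the explicit divided-difference identity $\tfrac{f_n(q)-f_n(p)}{q-p}=\sum_{k=0}^{n-1}f_{n-1-k}(q)\,p^k$ is simply a cleaner packaging of what the paper does by interchanging the order of summation.
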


\begin{proof}
Since $f_1(q)=q-1$, the statement holds for $n=1$ with equality. Assume from here on that $n\geq 2$.
For brevity, set $q:=q_{KL}$, and note that $\sum_{i=1}^n\tau_i q^{-i}<1$ for each $n$. Multiplying by $q^n$ gives
\begin{equation}
q^n>\sum_{i=1}^n \tau_i q^{n-i},
\label{eq:leading-power-dominates}
\end{equation}
i.e. $f_n(q)>0$, for every $n\in\N$. Factoring out $q-p$, we obtain
\begin{align*}
f_n(q)-f_n(p)&=q^n-p^n-\sum_{i=1}^{n-1}\tau_i(q^{n-i}-p^{n-i})\\
&=(q-p)\left[\sum_{j=0}^{n-1}q^{n-1-j}p^j-\sum_{i=1}^{n-1}\tau_i\sum_{j=0}^{n-i-1}q^{n-i-1-j}p^j\right].
\end{align*}
Interchanging summations and applying \eqref{eq:leading-power-dominates} with $n-j-1$ in place of $n$, we find that
\begin{equation*}
\sum_{i=1}^{n-1}\tau_i\sum_{j=0}^{n-i-1}q^{n-i-1-j}p^j = \sum_{j=0}^{n-2}\sum_{i=1}^{n-j-1}\tau_i q^{n-i-1-j}p^j
<\sum_{j=0}^{n-2}q^{n-j-1}p^j.
\end{equation*}
Hence, $f_n(q)-f_n(p)>(q-p)p^{n-1}\geq q-p$.
\end{proof}

Recall from Section \ref{sec:prelim} that $(\tau_i)_{i=0}^\f$ is defined by
\begin{equation} \label{eq:tau-n}
\tau_i=s_i\!\!\mod 2,
\end{equation}
where $s_i$ is the number of 1's in the binary representation of $i$.

\begin{lemma} \label{lem:tau-n}
{Let $n\in S$. Then
\begin{enumerate}[{\rm(i)}]
\item $\tau_n\tau_{n+1}\in\set{01, 10}$ ; and
\item $x_n\in(q_G^{-2}, 1)$.
\end{enumerate}
}
\end{lemma}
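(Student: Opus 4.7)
The plan is to deduce (i) from a parity statement about the binary expansion of $n$, and then to derive (ii) from (i) using the strict monotonicity of the quasi-greedy expansion together with the fact, noted in Section~\ref{sec:prelim}, that $(\tau_i)_{i=1}^\infty\in\us_{q_{KL}}$.

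For part (i), I would reformulate the conclusion as ``$n$ has an even number of trailing $1$'s in binary'', which is equivalent to $\tau_n\ne\tau_{n+1}$ via the identity $s_{n+1}-s_n=1-k$, where $k$ is the number of trailing $1$'s of $n$. The parity statement I would then prove by induction on $n\in S$. Seeds: $n=2,3,4$ have $0,2,0$ trailing ones. Inductive step: if $n\in S\cap(2^k,2^{k+1}]$ with $k\ge 2$, write $m:=n-3\cdot 2^{k-1}\in S\cap(0,2^{k-1}]$. If $m=2^{k-1}$ then $n=2^{k+1}$, with $0$ trailing ones. If $m<2^{k-1}$, the binary expansion of $n$ is obtained by prepending $11$ to the $(k-1)$-bit padded expansion of $m$; generically the trailing ones of $n$ coincide with those of $m$ and inherit even parity from the induction hypothesis. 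The sole exceptional case is $m=2^{k-1}-1$, in which the prepended $11$ merges with the $k-1$ trailing ones of $m$ to give $k+1$ trailing ones in $n$; here Lemma~\ref{lem:S}(ii) is critical, as it forces $k-1$ (hence $k+1$) to be even. Tracking this exceptional subcase is the main obstacle of the argument.

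For part (ii), I would first observe that both $(\tau_n\tau_{n+1}\ldots)$ and $\overline{\tau_1\tau_2\ldots}$ are quasi-greedy $q_{KL}$-expansions of their respective values $x_n$ and $(2-q_{KL})/(q_{KL}-1)$: neither ends in $0^\infty$ (aperiodicity of Thue--Morse), and in each case the defining property in Lemma~\ref{lem:char-quasi-expansion}(iii) follows from Lemma~\ref{lem:unique-expansion} applied to $(\tau_i)\in\us_{q_{KL}}$. By the strict monotonicity in Lemma~\ref{lem:char-quasi-expansion}(ii), lex comparisons among these sequences translate into strict value comparisons. The upper bound $x_n<1$ for $n\ge 2$ then follows from \eqref{eq:property-tau} (giving $\tau_n\tau_{n+1}\ldots\preceq\tau_1\tau_2\ldots$) together with aperiodicity (ruling out equality). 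For the lower bound I would split into two cases according to part~(i). If $\tau_n=1$, Lemma~\ref{lem:unique-expansion} yields $\tau_{n+1}\tau_{n+2}\ldots\succ\overline{\tau_1\tau_2\ldots}$, hence $x_{n+1}>(2-q_{KL})/(q_{KL}-1)$, giving
\[
x_n=\frac{1+x_{n+1}}{q_{KL}}>\frac{1}{q_{KL}}+\frac{2-q_{KL}}{q_{KL}(q_{KL}-1)}=\frac{1}{q_{KL}(q_{KL}-1)}.
\]
If instead $\tau_n=0$, then part~(i) forces $\tau_{n+1}=1$, and the same argument at position $n+1$ yields $x_{n+2}>(2-q_{KL})/(q_{KL}-1)$, whence $x_n=(1+x_{n+2})/q_{KL}^2>1/(q_{KL}^2(q_{KL}-1))$. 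Both lower bounds exceed $q_G^{-2}=1/(q_G+1)$, since $q_{KL}^2(q_{KL}-1)\approx 2.514<2.618\approx q_G+1$.
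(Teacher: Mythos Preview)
Your proof is essentially correct and follows the same inductive strategy as the paper, but with a different bookkeeping device for part~(i) and slightly sharper bounds in part~(ii).

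One genuine slip: in the exceptional subcase $m=2^{k-1}-1$ of part~(i), you appeal to Lemma~\ref{lem:S}(ii) to conclude that $k-1$ is even. But Lemma~\ref{lem:S}(ii) only states the implication $j$ even $\Rightarrow 2^j-1\in S$; you need the converse. The fix is immediate and does not require any new lemma: since $m\in S$ and $m<n$, the induction hypothesis itself tells you $m$ has an even number of trailing $1$'s; but $m=2^{k-1}-1$ has exactly $k-1$ trailing $1$'s, so $k-1$ (and hence $k+1$) is even. With this correction your argument for (i) is complete.

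Comparing with the paper: for (i), the paper works directly with the pair $\tau_n\tau_{n+1}$ and the Thue--Morse recursion $\tau_{2i}=\tau_i$, $\tau_{2i+1}=1-\tau_i$, splitting into the same three subcases $j\le 2^{k-1}-2$, $j=2^{k-1}-1$, $j=2^{k-1}$. Your reformulation via ``even number of trailing $1$'s in binary'' is equivalent and arguably more transparent once the identity $s_{n+1}-s_n=1-k$ is noted. For (ii), the paper's lower bound in the case $\tau_n\tau_{n+1}=01$ uses the ad hoc finite prefix $(01001011)_{q_{KL}}$; your argument via $x_{n+2}>(2-q_{KL})/(q_{KL}-1)$ gives the cleaner explicit bound $x_n>1/(q_{KL}^2(q_{KL}-1))$, which is then compared numerically to $q_G^{-2}$. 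Both approaches ultimately rest on Lemma~\ref{lem:unique-expansion} applied to $(\tau_i)\in\us_{q_{KL}}$.
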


\begin{proof}
First we prove inductively that for any $n\in S\cap[1, 2^k]$ we have $\tau_n\tau_{n+1}\in\set{01, 10}$. Clearly, this holds for $k=1$ and $k=2$ since $\tau_1\ldots \tau_5=11010$.
Suppose it holds for $n\in S\cap[1, 2^k]$ with $k\ge 2$, and consider $n\in S\cap (2^k, 2^{k+1}]$. By our construction of $S$ we can write
\[
n=2^k+2^{k-1}+j\quad\textrm{with}\quad j\in[1, 2^{k-1}]\cap S.
\]
We consider three cases.

Case I. $j\le 2^{k-1}-2$. Then by (\ref{eq:tau-n}) we have $\tau_n\tau_{n+1}=\tau_j\tau_{j+1}$, so the induction hypothesis yields $\tau_n\tau_{n+1}\in\set{01, 10}$.

Case II. $j=2^{k-1}-1$. Since $\tau_{2^n}=1$ for any $n\in\N$, it follows that $\tau_n\tau_{n+1}=\tau_j 1=\tau_j\tau_{j+1}$. Again, we have $\tau_n\tau_{n+1}\in\set{01, 10}$ by the induction hypothesis.

Case III. $j=2^{k-1}$. Then $\tau_{n}\tau_{n+1}=\tau_{2^{k+1}}\tau_{2^{k+1}+1}=10$.

This proves (i).
Next we show (ii). Since $x_n=(\tau_n\tau_{n+1}\ldots )_{q_{KL}}\in\u_{q_{KL}}$ and $\tau_n\tau_{n+1}\ldots\prec \tau_1\tau_2\ldots$, we have $x_n<1$. If $\tau_n=1$, then $x_n>q_{KL}^{-1}>q_G^{-2}$. Otherwise, by part (i),
 $\tau_{n}\tau_{n+1}=01$, and using $(\tau_i)\in\us_{q_{KL}}$ we conclude that $\tau_{n+2}\tau_{n+3}\ldots \lge \overline{\tau_1\tau_2\ldots}=001011\ldots$, so
\[
x_n=(\tau_n\tau_{n+1}\ldots)_{q_{KL}}>(01001011)_{q_{KL}}>q_G^{-2}.
\]
This completes the proof.
\end{proof}

\begin{remark}
The proof of Lemma \ref{lem:tau-n} implies that for any $n\in S\cap (2^k, 2^{k+1}]$ with $k\ge 2$ we have
$\tau_n\ldots\tau_{2^{k+1}}=\tau_{n-3\cdot 2^{k-1}}\ldots\tau_{2^{k-1}}.$
\end{remark}

\begin{lemma} \label{lem:xn-distance}
For $n,m\in\N$ with $n\neq m$, define
\[
x_n\wedge x_m:=\min\{j\geq 0: \tau_{n+j}\neq\tau_{m+j}\}.
\]
Then $|x_n-x_m|<q_{KL}^{-x_n\wedge x_m}$.
\end{lemma}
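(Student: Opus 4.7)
The plan is to use the self-similar identity
\[
x_n - x_m = q_{KL}^{-k}\bigl(x_{n+k} - x_{m+k}\bigr), \qquad k := x_n \wedge x_m,
\]
obtained by telescoping the common prefix of length $k$ in the $q_{KL}$-expansions of $x_n$ and $x_m$. Assuming without loss of generality that $\tau_{n+k}=1$ and $\tau_{m+k}=0$, the desired bound $|x_n-x_m|<q_{KL}^{-k}$ reduces to proving $x_{n+k}-x_{m+k}<1$.

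The core step is the claim that $x_i\le 1$ for every $i\ge 1$, with strict inequality when $i\ge 2$. To establish it, I would first note that $\si^{i-1}(\tau_1\tau_2\ldots)\preceq \tau_1\tau_2\ldots$ for every $i\ge 1$ by property~\eqref{eq:property-tau}, with strictness for $i\ge 2$ because the Thue--Morse sequence is aperiodic. Next, both $\si^{i-1}(\tau_1\tau_2\ldots)$ and $\tau_1\tau_2\ldots$ lie in $\us_{q_{KL}}$, since the characterization in Lemma~\ref{lem:unique-expansion} is shift-invariant. Finally, using the strict monotonicity of $x\mapsto\Phi_x(q_{KL})$ from Lemma~\ref{lem:char-quasi-expansion}(ii), together with the observation that the quasi-greedy expansion of a point coincides with its unique expansion whenever uniqueness holds, one converts the strict lexicographic inequality into the strict numerical inequality $x_i=(\si^{i-1}(\tau_1\tau_2\ldots))_{q_{KL}}<1$ for $i\ge 2$.

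Granted the claim, the remainder is a short case check. Note that $x_j>0$ for every $j\ge 1$, since every tail of $(\tau_i)$ contains infinitely many $1$'s. If $n+k\ge 2$ then $x_{n+k}<1$ while $x_{m+k}\ge 0$, giving $x_{n+k}-x_{m+k}<1$. If instead $n+k=1$ (so that $n=1$, $k=0$, $m\ge 2$, and $\tau_m=0$), then $x_{n+k}-x_{m+k}=1-x_m<1$. Multiplying by $q_{KL}^{-k}$ yields the lemma.

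The main obstacle is the transfer from lexicographic to numerical order at a base $q_{KL}<2$: the direct geometric-series estimate fails since $\sum_{j>i}q_{KL}^{-j}=q_{KL}^{-i}/(q_{KL}-1)>q_{KL}^{-i}$, so a single-digit advantage can in principle be outweighed by subsequent digits. We bypass this by staying within the set $\us_{q_{KL}}$, where strict monotonicity of the quasi-greedy map converts lexicographic order directly into numerical order without loss.
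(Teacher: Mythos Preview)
Your proof is correct and follows essentially the same telescoping strategy as the paper: reduce to bounding $x_{n+k}-x_{m+k}$, and use $x_i\le 1$ for the tail with leading digit~$1$. The only difference is that for the tail with leading digit~$0$ the paper invokes the sharper lower bound $\tfrac{1}{q_{KL}-1}-1$ (from $\sigma^j((\tau_i))\succ\overline{(\tau_i)}$) instead of your trivial bound $0$, obtaining the constant $2-\tfrac{1}{q_{KL}-1}\approx 0.73$ in place of your $1$; either suffices for the lemma as stated.
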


\begin{proof}
Let $N=x_n\wedge x_m$. Assume without loss of generality that $\tau_{n+N}=0$ and $\tau_{m+N}=1$. Since $\tau_{m+N}\tau_{m+N+1}\dots\prec (\tau_i)$, $x_{m+N}<1$. Likewise,
$\tau_{n+N}\tau_{n+N+1}\dots\succ \overline{(\tau_i)}$, so that
\[
x_{n+N}>\frac{1}{q_{KL}-1}-1.
\]
Hence,
\[
0<x_m-x_n=q_{KL}^{-N}(x_{m+N}-x_{n+N})<q_{KL}^{-N}\left(2-\frac{1}{q_{KL}-1}\right)\approx 0.73\, q_{KL}^{-N}<q_{KL}^{-N},
\]
as was to be shown.
\end{proof}

\begin{lemma} \label{lem:restriction}\mbox{}

\begin{enumerate}[{\rm(i)}]
\item If $x\ge q_G^{-k}$ for some $k\in\N$, then for any $p\in\ub(x)$ the unique expansion $\Phi_x(p)$ can not begin with $0^k$.
\item If $x<q_G^{-1}$, then for any $p\in\ub(x)\cap(1, q_{KL}]$ the unique expansion $\Phi_x(p)$ begins with $0$.
\end{enumerate}
\end{lemma}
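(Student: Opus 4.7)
I plan to derive a contradiction from assuming $\Phi_x(p)=0^k d_{k+1}d_{k+2}\cdots$. First, applying Lemma \ref{lem:char-quasi-expansion}(iii) at the index $n=k$ (where the $k$-th digit of $\Phi_x(p)$ is $0$) yields $d_{k+1}d_{k+2}\cdots \lle \alpha(p)$; evaluating both sides in base $p$ gives $p^k x \leq 1$, hence $x \leq p^{-k}$. Combined with the standing hypothesis $x \geq q_G^{-k}$, this forces $p \leq q_G$. To finish I would invoke the classical Glendinning--Sidorov theorem: for $p\leq q_G$, $\u_p=\set{0,1/(p-1)}$, so the only unique $p$-expansion beginning with the digit $0$ is $0^\infty$, which gives $x=0$ and contradicts $x\geq q_G^{-k}>0$.

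\textbf{Part (ii).} Suppose for contradiction that $\Phi_x(p)$ begins with $1$. Since $d_1=1$, Lemma \ref{lem:unique-expansion} applied at $n=1$ yields $\sigma(\Phi_x(p)) \succ \overline{\alpha(p)}$. Evaluating in base $p$, the two sides equal $px-1$ and $(2-p)/(p-1)$, respectively. Because the conditions in Lemma \ref{lem:unique-expansion} are shift-invariant, $\sigma(\Phi_x(p))\in \us_p$ is the unique $p$-expansion of its own value; therefore equality in the value comparison would exhibit $\overline{\alpha(p)}$ as a competing expansion, which is impossible. Thus the lexicographic strict inequality upgrades to the strict numerical inequality $px-1 > (2-p)/(p-1)$, which simplifies to
\[
x > \frac{1}{p(p-1)}.
\]

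Finally, since $t \mapsto 1/(t(t-1))$ is strictly decreasing on $(1,\infty)$ and $p\leq q_{KL}$, I obtain $1/(p(p-1)) \geq 1/(q_{KL}(q_{KL}-1))$. The key numerical observation, implicit in Remark \ref{rem:algorithm-1}(iii), is that $1/(q_{KL}(q_{KL}-1)) > q_G^{-1}$, equivalently $q_{KL}(q_{KL}-1) < q_G$; this is immediate from $q_{KL}\approx 1.787$ and $q_G \approx 1.618$, and strictness is automatic because $q_{KL}$ is transcendental while $q_G$ is algebraic. Combining gives $x > q_G^{-1}$, contradicting $x < q_G^{-1}$. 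The only slightly delicate step in either part is this numerical verification; otherwise both proofs are short applications of the characterizations of the quasi-greedy expansion (Lemma \ref{lem:char-quasi-expansion}) and of the unique expansions (Lemma \ref{lem:unique-expansion}).
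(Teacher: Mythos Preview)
Your proof follows the same approach as the paper's: for (i), bound $x\le p^{-k}$ to force $p\le q_G$ and then invoke the triviality of $\u_p$ for $p\le q_G$; for (ii), use Lemma~\ref{lem:unique-expansion} at $n=1$ to get $x\ge 1/(p(p-1))\ge 1/(q_{KL}(q_{KL}-1))>q_G^{-1}$. The paper writes the last chain in one line and is equally terse.

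One small point worth tightening in your Part~(ii): the step ``the lexicographic strict inequality upgrades to the strict numerical inequality'' is not fully justified by your uniqueness argument. Your reasoning shows the two values are \emph{different}, but a lexicographic inequality between arbitrary $0$--$1$ sequences does not by itself give the corresponding value inequality (e.g.\ $10^\infty\succ 01^\infty$ yet $(10^\infty)_p<(01^\infty)_p$ for $p<2$). The clean fix is to reflect: since $\overline{d_2d_3\cdots}$ is itself a unique (hence quasi-greedy) expansion and $\overline{d_2d_3\cdots}\prec\alpha(p)=\Phi_1(p)$, Lemma~\ref{lem:char-quasi-expansion}(ii) gives $1/(p-1)-(px-1)<1$, i.e.\ $x>1/(p(p-1))$. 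The paper's one-line assertion $x\ge(1\overline{\alpha(p)})_p$ relies on the same implicit step.
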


\begin{proof}
For (i) suppose $(d_i)$ is the  unique expansion of $x$ in base $p\in(1,2]$ with $d_1\ldots d_k=0^k$. Then $x=(d_1d_2\ldots)_p\le 1/p^k$. But $x\ge q_G^{-k}$, which implies $p\le q_G$. So the only two unique expansions in base $p$ are $0^\f$ and $1^\f$, and thus $(d_i)=0^\f$. This leads to a contradiction since $x>0$.

Next we prove (ii). Suppose on the contrary that $(d_i)$ is the unique expansion of $x$ in some base $p\in(1, q_{KL}]$ with $d_1=1$. Then by Lemma \ref{lem:unique-expansion},
\[x\ge (1\overline{\al(p)})_p=(01^\f)_p=\frac{1}{p(p-1)}\ge \frac{1}{q_{KL}(q_{KL}-1)}\ge\frac{1}{q_G}, \]
leading to a contradiction with $x<q_G^{-1}$.
\end{proof}

\begin{proposition} \label{prop:power-of-two}
Let $n\in S$  such that $2^{k-1}<n\leq 2^k$ for some integer $k$. If $p\in\ub(x_n)\cap(1,q_{KL})$, then the unique expansion $\Phi_{x_n}(p)$ begins with $\tau_n\dots\tau_{2^k}$.
\end{proposition}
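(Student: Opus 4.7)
The plan is to argue by contradiction. Write $(d_i) := \Phi_{x_n}(p)$, and note that the $q_{KL}$-expansion $(\tau_n\tau_{n+1}\ldots)$ of $x_n$ is unique because $(\tau_i) \in \us_{q_{KL}}$ and the unique-expansion property is shift-invariant, so $\Phi_{x_n}(q_{KL}) = (\tau_n\tau_{n+1}\ldots)$. Since $p < q_{KL}$, Lemma~\ref{lem:char-quasi-expansion}~(i) gives $(d_i) \prec (\tau_n\tau_{n+1}\ldots)$. Suppose for a contradiction that the first discrepancy is at some position $j \le 2^k-n+1$, so that $d_1\ldots d_{j-1} = \tau_n\ldots\tau_{n+j-2}$, $d_j = 0$, and $\tau_N = 1$ where $N := n+j-1 \le 2^k$.

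Next I would use Lemma~\ref{lem:unique-expansion}: since $d_j = 0$, the tail satisfies $d_{j+1}d_{j+2}\ldots \prec \al(p)$, hence $(d_{j+1}d_{j+2}\ldots)_p < 1$. Combining $(d_i)_p = x_n = (\tau_n\tau_{n+1}\ldots)_{q_{KL}}$ with this bound and rearranging yields the key inequality
\[
(\tau_n\ldots\tau_N)_p - (\tau_n\ldots\tau_N)_{q_{KL}} > \frac{x_{N+1}}{q_{KL}^{j-1}}.
\]
Because $\tau_N = 1$, property~\eqref{eq:property-tau} forces $\tau_{N+1}\tau_{N+2}\ldots \succ \overline{(\tau_i)}$, giving the uniform lower bound $x_{N+1} > (2-q_{KL})/(q_{KL}-1)$, so the right-hand side is at least a positive constant times $q_{KL}^{-j+1}$. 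On the other hand, the left-hand side depends only on the finite prefix $\tau_n\ldots\tau_N$ and vanishes as $p \nearrow q_{KL}$, so the inequality forces $p$ to be bounded away from $q_{KL}$ in a controlled manner.

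Finally, one exploits the condition $n \in S$ to rule out this gap. The strategy is induction on $k$, using the recursive description of $S$ and the identity $\tau_n\ldots\tau_{2^k} = \tau_{n-3\cdot 2^{k-2}}\ldots\tau_{2^{k-2}}$ from the remark after Lemma~\ref{lem:tau-n}. The unique-expansion property $(d_i) \in \us_p$ applied at every prefix index $i < j$ with $\tau_{n+i-1} = 0$, combined with property~\eqref{eq:property-tau}, will force $\al(p)$ to agree with $(\tau_i)$ on a long initial segment (of length comparable to $2^k$), and so by Lemma~\ref{lem:qm-difference} and Corollary~\ref{cor:inequality} we must have $q_{KL} - p$ exponentially small in $2^k$. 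Plugged back into the mean-value-type upper bound on the left-hand side of the key inequality, this will contradict the positive lower bound on $x_{N+1}/q_{KL}^{j-1}$. The main obstacle is the combinatorial bookkeeping required in the last step: rigorously converting the structural condition $n \in S$ into a lower bound on the agreement length between $\al(p)$ and $(\tau_i)$ that is sharp enough to cross the threshold determined by $j \le 2^k-n+1$. I expect Lemma~\ref{lem:polynomial-inequality} (applied with an index shift) and the estimates in Corollary~\ref{cor:inequality} to supply the quantitative ingredients, while the recursive definition of $S$ supplies the matching upper bound on $N$.
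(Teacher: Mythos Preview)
Your setup is correct: the contradiction hypothesis, the observation $(d_i)\prec\tau_n\tau_{n+1}\ldots$, and the ``key inequality'' (which amounts to $x_n<(\tau_n\ldots\tau_N)_p$, exactly what the paper uses as~\eqref{eq:xn-upper-estimate}) are all right, modulo a harmless index shift in the exponent on the right-hand side.

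The gap is in what you call ``combinatorial bookkeeping''.  Two points:

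First, the mechanism you describe is inverted.  Applying the unique-expansion property at indices $i<j$ with $d_i=0$ gives upper bounds $d_{i+1}d_{i+2}\ldots\prec\al(p)$, which do \emph{not} force $\al(p)$ to agree with $(\tau_i)$ on a long prefix; for that you need lower bounds on $\al(p)$, and those come from indices with $d_i=1$.  The paper exploits exactly one such index: it shows that the word $\tau_n\ldots\tau_N^-$ must contain a block $1\,\overline{\tau_1\ldots\tau_{2^{\nu-1}}}$ for a specific $\nu$, and the constraint at the position of that~$1$ forces $p>\hat q_\nu$.  Locating this block is the heart of the argument and is done not by induction on~$k$ but by an \emph{iterative reduction} of the pair $(n,l)\mapsto(n-3\cdot2^{k-2},\,l-3\cdot2^{k-2})$, repeated until one lands in a dyadic window $[2^{\nu-1}+1,2^\nu]$ with $n_j\le 2^{\nu-1}<l_j=2^\nu$.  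The recursive definition of $S$ is what guarantees each step is legal.

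Second, even once you have $p>\hat q_\nu$, your intended endgame runs into a constants problem.  A mean-value bound gives $(\tau_n\ldots\tau_N)_p-(\tau_n\ldots\tau_N)_{q_{KL}}\le C(q_{KL}-p)<C(q_{KL}-\hat q_\nu)\asymp q_{KL}^{-2^\nu}$, while the right-hand side of your key inequality is $\asymp q_{KL}^{-j}$ with $j=N-n+1=l_j-n_j+1\le 2^\nu$.  These are the \emph{same} order, so you cannot conclude without tracking all constants precisely.  The paper avoids this entirely: instead of estimating $q_{KL}-p$, it compares $x_n$ with $x_{n_j}$ directly, using $x_n<f_{n_j-1}(\hat q_\nu)$, $x_{n_j}=f_{n_j-1}(q_{KL})$, Lemma~\ref{lem:polynomial-inequality}, and Corollary~\ref{cor:inequality} to get $x_{n_j}-x_n>q_{KL}^{-3\cdot 2^{\nu-1}}$, while Lemma~\ref{lem:xn-distance} and the iteration give $|x_{n_j}-x_n|<q_{KL}^{-3\cdot 2^{\nu-1}}$.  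This is a genuinely different (and sharper) comparison than the one you propose.
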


\begin{proof}
If $n=2$ or $4$, then
\[
x_n\geq q_{KL}^{-1}+q_{KL}^{-4}\approx .6575>q_G^{-1},
\]
so by Lemma \ref{lem:restriction} (i) a unique expansion of $x_n$ in any base must begin with $1=\tau_n\dots\tau_{2^k}$. If $n=3$, then $x_n<1/q_{KL}<1/q_G$ and
\[
x_n\geq q_{KL}^{-2}+q_{KL}^{-5}+q_{KL}^{-6}\approx .3986>q_G^{-2},
\]
so by Lemma \ref{lem:restriction} a unique expansion of $x_3$ in any base $p<q_{KL}$ must begin with $01=\tau_3\tau_4$.

For the remainder of the proof, we assume that $n\in S\cap(4,\f)$ and hence $k\geq 3$. Let $p<q_{KL}$ and suppose that ${x_n}\in\u_p$. Then by Lemma \ref{lem:char-quasi-expansion} we have $\Phi_{x_n}(p)\prec \tau_n\tau_{n+1}\ldots$. By way of contradiction, suppose there is an integer $l$ with $n\le l\leq 2^k$ such that $\Phi_{x_n}(p)$ begins with $\tau_n\dots\tau_l^-$. We claim that $l\ge n+2$.

If $l=n$, then $\tau_n=1$, and  we can show that $x_n>q_G^{-1}$. So, by Lemma \ref{lem:restriction} (i) it follows that $\Phi_{x_n}(p)$ begins with $1=\tau_n$, leading to a contradiction. So, $l\ge n+1$. Next we prove that $l\ne n+1$. Suppose on the contrary that $\Phi_{x_n}(p)$ begins with $\tau_n\tau_{n+1}^-$. By Lemma \ref{lem:tau-n} it follows that $\tau_n\tau_{n+1}=01$. Note by Lemma \ref{lem:tau-n} that $x_n>q_G^{-2}$, and then by Lemma \ref{lem:restriction} (i) it follows that $\Phi_{x_n}(p)$ can not begin with $00=\tau_n\tau_{n+1}^-$, again leading to a contradction. This proves the claim.

Set $n_1:=n$, $k_1:=k$, and $l_1:=l$. Since $n_1\in S\cap (2^{k_1-1}, 2^{k_1}]$, it must be the case that $3\cdot 2^{k_1-2}<n_1\leq 2^{k_1}$. Let $n_2:=n_1-3\cdot 2^{k_1-2}$, so $1\leq n_2\leq 2^{k_1-2}$ and $n_2\in S$. Observe by (\ref{eq:tau-n}) that
\begin{equation}
\tau_{n_1}\dots \tau_{2^{k_1}}\tau_{2^{k_1}+1}\dots \tau_{2^{k_1}+2^{k_1-2}}=\tau_{n_2}\dots \tau_{2^{k_1-2}}\tau_{2^{k_1-2}+1}\dots\tau_{2^{k_1-1}}^-,
\label{eq:m-n-match}
\end{equation}
so $x_{n_1}\wedge x_{n_2}=2^{k_1-1}-n_2$.

Let $l_2:=l_1-3\cdot 2^{k_1-2}$; then $n_2<l_2\leq 2^{k_1-2}$. Furthermore, $l_2-n_2=l_1{-}n_1$, and $\tau_{n_1}\dots\tau_{l_1}=\tau_{n_2}\dots\tau_{l_2}$ by \eqref{eq:m-n-match}. Let $k_2$ be the integer such that
$2^{k_2-1}<l_2\le 2^{k_2}.$ Then $k_2\le k_1-2$.
 If $l_2=2^{k_2}$ and $n_2\le 2^{k_2-1}$, then we stop. If $l_2=2^{k_2}$ and $n_2>2^{k_2-1}$, then we continue the process. If $l_2\ne 2^{k_2}$, then we claim that $n_2>2^{k_2-1}$. This follows by observing that $\tau_{2^{k_2-1}+1}\ldots\tau_{l_2}=\overline{\tau_1\ldots \tau_{l_2-2^{k_2-1}}}$ is the smallest block of its length that can occur in any sequence of $\us_p$ with $p<q_{KL}$. Therefore, we either  stop with $l_2=2^{k_2}$ and $n_2\le 2^{k_2-1}$, or we continue the process  with $2^{k_2-1}<n_2<l_2\le 2^{k_2}$.

We know $n_2\not\in\{2,3,4\}$ because $l_2-n_2=l_1-n_1\geq 2$. Hence $n_2>4$ (and $k_2\geq 3$), and since also $n_2\in S\cap(2^{k_2-1}, 2^{k_2}]$, we see that $3\cdot 2^{k_2-2}<n_2\leq 2^{k_2}$. We now repeat the above steps: Define $n_3:=n_2-3\cdot 2^{k_2-2}$ and $l_3:=l_2-3\cdot 2^{k_2-2}$. We claim that
\begin{equation}
2^{k_2-1}-n_3<2^{k_1-1}-n_2.
\label{eq:decreasing-common-part}
\end{equation}
To see this, observe that the definitions of $n_2$ and $k_2$ imply $k_2\leq k_1-2$. Therefore,
\[
2^{k_1-1}-2^{k_2-1}\geq 2^{k_2+1}-2^{k_2-1}=3\cdot 2^{k_2-1}>3\cdot 2^{k_2-2}=n_2-n_3,
\]
and \eqref{eq:decreasing-common-part} follows. As a result, $x_{n_2}\wedge x_{n_3}=2^{k_2-1}-n_3$.

Let $k_3$ be the integer such that $2^{k_3-1}<l_3\le 2^{k_3}$. If $l_3=2^{k_3}$ and $n_3\leq 2^{k_3-1}$, then we stop. Otherwise, we continue the process with $2^{k_3-1}<n_3<l_3\le 2^{k_3}$. Continue this process, either there exist $j\ge 1$ and  an integer $\nu$ such that $l_j=2^\nu$ and $n_j\le 2^{\nu-1}$, or we obtain sequences $(n_i)$, $(k_i)$ and $(l_i)$ with $n_i\in S$, satisfying the relationships
\begin{gather*}
3\cdot 2^{k_i-2}<n_i<l_i\leq 2^{k_i}, \qquad n_{i+1}=n_i-3\cdot 2^{k_i-2}, \\
 l_{i+1}=l_i-3\cdot 2^{k_i-2}, \qquad k_{i+1}\leq k_i-2.
\end{gather*}
Moreover, $x_{n_i}\wedge x_{n_{i+1}}=2^{k_i-1}-n_{i+1}$ for each $i$, and this sequence is decreasing.

Since the sequence $(k_i)$ is strictly decreasing, the process must eventually stop at some finite time $j$ (in the worst case, with $n_j=2$ and $l_j=4$). This means there is an integer $\nu\ge 2$ such that $l_j=2^\nu$ and $n_j\leq 2^{\nu-1}$. Hence $\Phi_{x_n}(p)\in\us_p$ contains the word $\tau_{2^{\nu-1}+1}\dots \tau_{2^\nu}^-=\overline{\tau_1\dots\tau_{2^{\nu-1}}}$ after the leading $\tau_{2^\nu-1}=1$. By Lemma \ref{lem:unique-expansion} and (\ref{eq:qn}) this    implies $p>\hat{q}_\nu$. Then it follows that
\begin{equation}
x_{n_1} \leq \big(\tau_{n_1}\dots\tau_{l_1}\big)_p=\big(\tau_{n_j}\dots\tau_{l_j}\big)_p\\
<\big(\tau_{n_j}\dots\tau_{l_j}\big)_{\hat{q}_\nu}=\big(\tau_{n_j}\dots\tau_{2^\nu}\big)_{\hat{q}_\nu}=f_{n_j-1}(\hat{q}_\nu).
\label{eq:xn-upper-estimate}
\end{equation}
On the other hand, $x_{n_j}=\big(\tau_{n_j}\tau_{n_j+1}\dots\big)_{q_{KL}}=f_{n_j-1}(q_{KL})$, so
\begin{equation}
x_{n_j}-x_{n_1}>f_{n_j-1}(q_{KL})-f_{n_j-1}(\hat{q}_\nu)\geq q_{KL}-\hat{q}_\nu>q_{KL}^{-3\cdot 2^{\nu-1}},
\label{eq:x-diff-lower-estimate}
\end{equation}
where the second inequality follows from Lemma \ref{lem:polynomial-inequality} and the third inequality from Corollary \ref{cor:inequality}. (Note that $\nu\geq 2$ since $l_i\geq 4$.) However, since $2^\nu=l_j\leq 2^{k_{j-1}-2}$, we have $\nu\leq k_{j-1}-2$ and hence
\begin{align*}
x_{n_1}\wedge x_{n_j} &= \min\{x_{n_1}\wedge x_{n_2},\dots,x_{n_{j-1}}\wedge x_{n_j}\}\\
&=2^{k_{j-1}-1}-n_j\geq 2^{\nu+1}-2^{\nu-1}=3\cdot 2^{\nu-1},
\end{align*}
so by Lemma \ref{lem:xn-distance},
\[
|x_{n_1}-x_{n_j}|<q_{KL}^{-3\cdot 2^{\nu-1}},
\]
contradicting \eqref{eq:x-diff-lower-estimate}. Therefore, the number $l$ introduced near the beginning of the proof cannot exist, and $\Phi_{x_n}(p)$ must begin with $\tau_n\dots\tau_{2^k}$.
\end{proof}

\begin{lemma} \label{lem:awkward-induction-statement}
Let $n\in\N\setminus S$. Then either
\begin{enumerate}[{\rm(a)}]
\item $\tau_n\tau_{n+1}\dots$ begins with $11$, $011$ or $0011$; or
\item $2^k<n\leq 2^{k+1}$ for some integer $k\geq 3$, and there are integers $j\geq n$ and $l\leq k-2$ such that $j+2^l\leq 2^{k+1}$, $\tau_j=0$, $\tau_{j+2^l}=1$, and
\begin{equation}
\tau_n\dots\tau_j\big(\tau_{j+1}\dots\tau_{j+2^l}^-\big)^\infty=\tau_n\dots\tau_j\big(\tau_1\dots \tau_{2^l}^-\big)^\infty \in \us_p \qquad\forall p>\hat{q}_l.
\label{eq:induction-hypothesis}
\end{equation}
\end{enumerate}
\end{lemma}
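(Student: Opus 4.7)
\emph{Proof plan.} We proceed by strong induction on $n$, closely mirroring the recursive definition of $S$. The base cases are the values $n \leq 8$ that lie outside $S$, namely $n \in \{1,5,6,7\}$; inspection of $\tau_1\dots\tau_8 = 1101\,0011$ shows that each of these begins with one of the patterns $11$, $011$, or $0011$, so (a) holds.

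For the inductive step, fix $n \geq 9$ with $n \notin S$ and let $k \geq 3$ be the unique integer with $2^k < n \leq 2^{k+1}$. First suppose $2^k < n \leq 3\cdot 2^{k-1}$, so that $n \notin S$ holds automatically. If (a) holds we are done; otherwise, set $j := 3\cdot 2^{k-1}$ and $l := k-2$. The size conditions $j \geq n$, $l \leq k-2$, and $j+2^l = 7\cdot 2^{k-2} \leq 2^{k+1}$ are immediate. For the digit identity, observe that the binary representation of $j+i$ for $1 \leq i \leq 2^{k-2}$ is obtained by prefixing the $(k-2)$-bit expansion of $i$ with the string ``$11$,'' so the bit-counts of $j+i$ and $i$ differ by $2$, giving $\tau_{j+i} = \tau_i$. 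This yields simultaneously $\tau_j = \tau_3 = 0$, $\tau_{j+2^l} = \tau_{2^{k-2}} = 1$, and $\tau_{j+1}\dots\tau_{j+2^l} = \tau_1\dots\tau_{2^l}$, which is the required equality in \eqref{eq:induction-hypothesis}.

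If instead $3\cdot 2^{k-1} < n \leq 2^{k+1}$, put $n' := n - 3\cdot 2^{k-1} \in \{1,\dots,2^{k-1}\}$. The recursion defining $S$ forces $n' \notin S$, and the same binary computation yields $\tau_{n+i} = \tau_{n'+i}$ for $0 \leq i \leq 2^{k-1}-n'$. Applying the induction hypothesis to $n'$: if (a) holds for $n'$, a short case analysis at the boundary positions $n' \in \{2^{k-1}-2,2^{k-1}-1,2^{k-1}\}$ (computing the actual Thue--Morse digits at $2^{k-1}$ and $2^{k-1}+1$) shows that these cannot carry any of the three admissible initial patterns, so the relevant pattern already sits in the range where $\tau_{n+i} = \tau_{n'+i}$ and (a) transfers to $n$; if (b) holds for $n'$ with parameters $(j',l')$, the translated choice $j := j'+3\cdot 2^{k-1}$, $l := l'$ satisfies all arithmetic constraints (note $k' \leq k-2$), the parity argument preserves every required digit equality, and the resulting infinite symbol sequence is literally identical to the one obtained for $n'$, hence lies in $\us_p$ for every $p > \hat q_l$ by induction.

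The delicate point is the unique-expansion verification in the first subcase when (a) fails: one must show that $\tau_n\dots\tau_j(\tau_1\dots\tau_{2^l}^-)^\infty \in \us_p$ for every $p > \hat q_{k-2}$. By Lemma \ref{lem:unique-expansion}, this amounts to checking that every shift $\sigma^m$ of the sequence lies strictly between $\overline{\al(p)}$ and $\al(p)$. Shifts starting inside the periodic tail are shifts of $\al(\hat q_{k-2}) = (\tau_1\dots\tau_{2^{k-2}}^-)^\infty$, and the sandwich is immediate from $\al(\hat q_{k-2}) \prec \al(p)$ together with the dual inequality for $\overline{\al(p)}$. Shifts starting in the finite prefix $\tau_n\dots\tau_j$ are the hard case: the prefix coincides with the genuine Thue--Morse tail through position $7\cdot 2^{k-2}-1$, at which point our sequence carries a $0$ instead of $\tau_{7\cdot 2^{k-2}}=1$, and thereafter continues periodically. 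One exploits the Thue--Morse sandwich \eqref{eq:property-tau}, the cube-freeness of $(\tau_i)$, and the failure of (a) (which forces $\tau_n\tau_{n+1}\dots$ to begin with $10$, $010$, or $0010$) to exclude the two extremal configurations that could cause $\sigma^m$ to collide with $\al(p)$ or $\overline{\al(p)}$ in the limit $p \searrow \hat q_{k-2}$. This combinatorial case analysis is the main obstacle; everything else is bookkeeping driven by the recursion for $S$.
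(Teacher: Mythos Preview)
Your inductive skeleton matches the paper's proof exactly: the same base cases $n\in\{1,5,6,7\}$, and the same split of the inductive step into $2^k<n\le 3\cdot 2^{k-1}$ versus $3\cdot 2^{k-1}<n\le 2^{k+1}$ with the translation $n'=n-3\cdot 2^{k-1}$ in the second subcase. Case~2 is fine.

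The gap is in Case~1, at precisely the point you flag as ``the main obstacle.'' You propose to verify $\tau_n\dots\tau_j(\tau_1\dots\tau_{2^l}^-)^\infty\in\us_p$ by treating the prefix as a genuine Thue--Morse tail, invoking cube-freeness and a case analysis on the failure of~(a). This is not carried out, and it is far more complicated than necessary. The observation you are missing is that for $2^k<n\le 3\cdot 2^{k-1}$ the block $\tau_n\dots\tau_j$ is a \emph{suffix} of
\[
\tau_{2^k+1}\dots\tau_{3\cdot 2^{k-1}}=\overline{\tau_1\dots\tau_{2^{k-1}}}=\overline{\tau_1\dots\tau_{2^{k-2}}}\,\tau_1\dots\tau_{2^{k-2}}^-,
\]
so that the entire sequence $\tau_n\dots\tau_j(\tau_1\dots\tau_{2^{k-2}}^-)^\infty$ is a shift of $\overline{\tau_1\dots\tau_{2^{k-2}}}(\tau_1\dots\tau_{2^{k-2}}^-)^\infty$. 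The $\us_p$ property for every shift of this last sequence follows directly from the admissibility inequalities \eqref{eq:property-tau} for the word $\tau_1\dots\tau_{2^l}^-$: one checks
\[
\overline{\tau_1\dots\tau_{2^l}}\prec\overline{\tau_{i+1}\dots\tau_{2^l}}\,\tau_1\dots\tau_i\prec\tau_1\dots\tau_{2^l}
\quad\text{and}\quad
\overline{\tau_1\dots\tau_{2^l}}\prec\tau_{i+1}\dots\tau_{2^l}^-\,\tau_1\dots\tau_i\prec\tau_1\dots\tau_{2^l}
\]
for the relevant ranges of $i$, and the sandwich against $\al(\hat q_l)=(\tau_1\dots\tau_{2^l}^-)^\infty$ follows. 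This argument is unconditional (it does not use the failure of~(a)), requires no cube-freeness, and involves no case analysis on the initial digits of $\tau_n\tau_{n+1}\dots$.
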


\begin{proof}
We prove the lemma by induction. First, note that $\tau_1\tau_2\ldots=110100110010\ldots$, so statement (a) holds for $n=1,5,6$ and $7$. This provides the basis for the induction.

Next, let $n>8$, $n\not\in S$, and assume the statement of the lemma holds for all elements of $\N\backslash S$ smaller than $n$.
Observe that there is an integer $k\geq 3$ such that $2^k<n\leq 2^{k+1}$. There are two possibilities:

\bigskip
{\em Case 1.} $2^k<n\leq 2^k+2^{k-1}$. We claim that (b) holds for $j=2^k+2^{k-1}$ and $l=k-2$. To see this, note that $\tau_{2^k+2^{k-1}}=\overline{\tau_{2^{k-1}}}=0$, and
\[
\tau_{2^k+2^{k-1}+1}\dots \tau_{2^{k+1}}=\tau_1\dots \tau_{2^{k-1}},
\]
so in particular $\tau_{j+2^l}=\tau_{2^l}=1$, and $\tau_{j+1}\dots\tau_{j+2^l}^-=\tau_1\dots\tau_{2^l}^-$. It remains to verify that
\begin{equation}
\tau_n\dots\tau_j\big(\tau_1\dots \tau_{2^l}^-\big)^\infty \in \us_p \qquad\forall p>\hat{q}_l.
\label{eq:tail-in-up}
\end{equation}
To this end, observe that
\[
\tau_{2^k+1}\dots\tau_{2^k+2^{k-1}}=\overline{\tau_1\dots\tau_{2^{k-1}}}=\overline{\tau_1\ldots \tau_{2^{k-2}}}\tau_1\ldots\tau_{2^{k-2}}^-,
\]
so
\[
\tau_{2^k+1}\dots\tau_j\big(\tau_1\dots\tau_{2^l}^-\big)^\infty=\overline{\tau_1\dots\tau_{2^l}}\big(\tau_1\dots\tau_{2^l}^-\big)^\infty.
\]
Note that
$
\overline{\tau_1\ldots \tau_{2^l-i}}\lle \tau_{i+1}\ldots \tau_{2^l}^-\prec \tau_1\ldots \tau_{2^l-i}$ for all $0\le i<2^l.
$
This implies
\[
\overline{\tau_1\ldots\tau_{2^l}}\prec \overline{\tau_{i+1}\ldots \tau_{2^l}}\tau_1\ldots \tau_i\prec \tau_1\ldots \tau_{2^l}\quad\forall ~1\le i<2^l,
\]
and
\[
\overline{\tau_1\ldots \tau_{2^l}}\prec \tau_{i+1}\ldots\tau_{2^l}^-\tau_1\ldots \tau_i\prec \tau_1\ldots \tau_{2^l}\quad\forall ~0\le i<2^l.
\]
So,
\[
\overline{\al(\hat q_l)}=(\overline{\tau_1\ldots \tau_{2^l}}^+)^\f\lle \si^n(\overline{\tau_1\ldots\tau_{2^l}}(\tau_1\ldots \tau_{2^l}^-)^\f)\lle (\tau_1\ldots\tau_{2^l}^-)^\f=\al(\hat q_l)\quad\forall ~n\ge 1.
\]
Therefore, (\ref{eq:tail-in-up}) follows by Lemmas \ref{lem:char-quasi-expansion} and \ref{lem:unique-expansion}.

\bigskip
{\em Case 2.} $2^k+2^{k-1}<n\leq 2^{k+1}$. Let $m:=n-2^k-2^{k-1}=n-3\cdot 2^{k-1}$. Then $m\not\in S$ by the construction of $S$, and $1\leq m\leq 2^{k-1}$. Observe that
\begin{equation}
\tau_n\tau_{n+1}\dots \tau_{2^{k+1}}=\tau_m\tau_{m+1}\dots \tau_{2^{k-1}}.
\label{eq:TM-copy}
\end{equation}
If $\tau_m\tau_{m+1}\dots$ begins with $11$, $011$ or $0011$, then so does $\tau_n\tau_{n+1}\dots$, so (a) holds for $n$.

Otherwise, $m>8$. Let $k'$ be the integer such that $2^{k'}<m\leq 2^{k'+1}$. Then $3\leq k'\leq k-2$. By the induction hypothesis, there are integers $j'\geq m$ and $l\leq k'-2$ such that $j'+2^l\le 2^{k'+1}$, $\tau_{j'}=0$, $\tau_{j'+2^l}=1$, and
\[
\tau_m\dots\tau_{j'}\big(\tau_{j'+1}\dots \tau_{j'+2^l}^-\big)^\infty=\tau_m\dots\tau_{j'}\big(\tau_1\dots\tau_{2^l}^-\big)^\infty \in \us_p \qquad \forall p>\hat{q}_l.
\]
Set $j:=j'+n-m$. Since $j'+2^l\leq 2^{k'+1}\leq 2^{k-1}$, the last equation and \eqref{eq:TM-copy} imply
\[
\tau_n\dots\tau_j\big(\tau_{j+1}\dots\tau_{j+2^l}^-\big)^\infty=\tau_n\dots\tau_j\big(\tau_1\dots\tau_{2^l}^-\big)^\infty \in \us_p \qquad \forall p>\hat{q}_l.
\]
Thus (b) holds for $n$, and the induction is complete.
\end{proof}

\begin{lemma}\label{lem:case-a}
If $\tau_n\tau_{n+1}\ldots$ begins with $0^k 11$ for $k\in\{0,1,2\}$, then there exists $p\in(q_G, q_{KL})$ such that $\Phi_{x_n}(p)=0^k(10)^\f\in\us_p$.
\end{lemma}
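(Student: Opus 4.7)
The plan is to exploit the fundamental identity $(10)^\infty=\alpha(q_G)$, which holds because $q_G^2-1=q_G$ gives $((10)^\infty)_{q_G}=q_G/(q_G^2-1)=1$. Because of this, the sequence $0^k(10)^\infty$ is a natural candidate for a unique expansion in every base just above $q_G$, and we only need to locate the precise $p\in(q_G,q_{KL})$ at which its value equals $x_n$.

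First I would prove that $0^k(10)^\infty\in\us_p$ for every $p>q_G$, via Lemma \ref{lem:unique-expansion}. Every tail of $0^k(10)^\infty$ beginning after a $0$ has the form $0^j(10)^\infty$ for some $0\le j\le k$; since $0^j(10)^\infty\lle(10)^\infty=\alpha(q_G)\prec\alpha(p)$, the required inequality at $0$-positions holds. Every tail beginning after a $1$ equals $(01)^\infty=\overline{\alpha(q_G)}$; since $\overline{\alpha(p)}\prec\overline{\alpha(q_G)}=(01)^\infty$ for $p>q_G$, the required inequality at $1$-positions also holds.

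Next, define the continuous, strictly decreasing function
\[
g(p):=(0^k(10)^\infty)_p=\frac{p^{1-k}}{p^2-1},\qquad p>1,
\]
and note $g(q_G)=q_G^{-k}$. The core of the argument is the double inequality
\[
g(q_{KL})<x_n<q_G^{-k},
\]
from which the intermediate value theorem yields a unique $p\in(q_G,q_{KL})$ with $g(p)=x_n$, and then uniqueness of $0^k(10)^\infty$ in $\us_p$ forces $\Phi_{x_n}(p)=0^k(10)^\infty$. The lower bound is immediate: $\tau_n\tau_{n+1}\ldots$ begins with $0^k11$, which is lexicographically strictly greater than $0^k10\,1010\ldots=0^k(10)^\infty$ (they first differ at position $k+2$), so evaluating both in base $q_{KL}$ preserves strict inequality.

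For the upper bound $x_n<q_G^{-k}$: when $k\ge 1$, I would use $\tau_n=\cdots=\tau_{n+k-1}=0$ to write $x_n=q_{KL}^{-k}(\tau_{n+k}\tau_{n+k+1}\ldots)_{q_{KL}}<q_{KL}^{-k}<q_G^{-k}$, since any tail of the Thue--Morse sequence evaluated in $q_{KL}$ is at most $1$. When $k=0$, the bound $x_n<1$ follows from $\tau_n\tau_{n+1}\ldots\prec\tau_1\tau_2\ldots$ (strict for $n\ge 2$). The only delicate point, and the main potential obstacle, is the edge case $k=0$, $n=1$, where $x_1=1=q_G^0$ and the upper bound fails; this case corresponds to Komornik--Loreti's original identity $q_s(1)=q_{KL}$ and is excluded from the applications of this lemma (where it is invoked only for $n\in\N\setminus(S\cup\{1\})$ via Lemma \ref{lem:awkward-induction-statement}).
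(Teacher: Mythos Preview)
Your argument is correct and follows essentially the same approach as the paper's proof: both sandwich $x_n$ between $(0^k(10)^\infty)_{q_{KL}}$ and $(0^k(10)^\infty)_{q_G}=q_G^{-k}$ and apply the intermediate value theorem, with the paper doing only $k=0$ explicitly while you handle all three $k$ uniformly and spell out the verification that $0^k(10)^\infty\in\us_p$. The one place to tighten is the phrase ``evaluating both in base $q_{KL}$ preserves strict inequality'': this is not true for arbitrary sequences in base $q_{KL}<2$, but it does hold here because both $\tau_n\tau_{n+1}\ldots$ and $0^k(10)^\infty$ lie in $\us_{q_{KL}}$ (equivalently, are quasi-greedy $q_{KL}$-expansions), and on that set the evaluation map is order-preserving by Lemma~\ref{lem:char-quasi-expansion}(ii).
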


\begin{proof}
We prove this for $k=0$; the proof for $k=1$ or $2$ is similar.
Note by Lemma \ref{lem:unique-expansion} that $x_n=(11\tau_{n+2}\tau_{n+3}\ldots)_{q_{KL}}>((10)^\f)_{q_{KL}}$. So there exists $p\in(1,q_{KL})$ such that $x_n=((10)^\f)_p$.
On the other hand, since
$x_n=(\tau_n\tau_{n+1}\ldots)_{q_{KL}}<(\tau_1\tau_2\ldots)_{q_{KL}}=1$, it follows that
\[
((10)^\f)_p=x_n<1=((10)^\f)_{q_G},
\]
and hence $p>q_G$. This proves $\Phi_{x_n}(p)=(10)^\f\in\us_p$.
\end{proof}

\begin{proof}[Proof of Theorem \ref{thm:set-with-qKL-as-min}]
(i) The proof proceeds in two steps.

{\em Step 1}. We first show that if $n\in S$, then $\min \ub(x_n)=q_{KL}$. Since $\overline{(\tau_i)}\prec\tau_n\tau_{n+1}\dots\prec (\tau_i)$ for every $n\geq 2$, by Lemma \ref{lem:unique-expansion} it is clear that $x_n\in \u_{q_{KL}}$, so it remains to show that $x_n\not\in\u_p$ for any $p<q_{KL}$.

Take $p<q_{KL}$, and suppose by way of contradiction that $x_n\in\u_p$. Let $k$ be the integer such that $2^{k-1}<n\leq 2^k$.
By Proposition \ref{prop:power-of-two}, $(d_i):=\Phi_{x_n}(p)$ must begin with $\tau_n\dots\tau_{2^k}$.
Since $\Phi_{x_n}(p)\prec\Phi_{x_n}(q_{KL})=\tau_n\tau_{n+1}\ldots$, there is therefore an integer $j>k$ such that
\begin{equation}
d_1\dots d_{2^{j-1}-n+1}=\tau_n\dots\tau_{2^{j-1}}
\label{eq:match-so-far}
\end{equation}
and
\begin{equation*}
d_{2^{j-1}-n+2}\dots d_{2^j-n+1}\prec \tau_{2^{j-1}+1}\dots\tau_{2^j}.
\end{equation*}
On the other hand, $x_n\in\u_p$ implies
\[
d_{2^{j-1}-n+2}d_{2^{j-1}-n+3}\dots \succ \overline{\alpha(p)}\succ \overline{\alpha(q_{KL})}=\overline{\tau_1\tau_2\dots},
\]
so that
\[
d_{2^{j-1}-n+2}\dots d_{2^j-n+1} \succeq \overline{\tau_1\dots\tau_{2^{j-1}}}=\tau_{2^{j-1}+1}\dots\tau_{2^j}^-,
\]
where the last equality follows since $\tau_{2^{j-1}+1}\dots\tau_{2^j}=\overline{\tau_1\dots\tau_{2^{j-1}}}^{\,+}$.
Hence
\begin{equation}
d_{2^{j-1}-n+2}\dots d_{2^j-n+1}=\tau_{2^{j-1}+1}\dots\tau_{2^j}^-.
\label{eq:mismatch-at-end}
\end{equation}
Combining this with \eqref{eq:match-so-far}, the definition of quasi-greedy expansion implies
\[
\sum_{i=n}^{2^j} \frac{\tau_i}{p^{i-n+1}}\geq x_n.
\]
It follows that
\begin{align*}
\sum_{i=1}^{2^j}\frac{\tau_i}{p^i} &= \sum_{i=1}^{n-1}\frac{\tau_i}{p^i}+\frac{1}{p^{n-1}}\sum_{i=n}^{2^j}\frac{\tau_i}{p^{i-n+1}}
\geq \sum_{i=1}^{n-1}\frac{\tau_i}{p^i}+\frac{x_n}{p^{n-1}}\\
&> \sum_{i=1}^{n-1}\frac{\tau_i}{q_{KL}^i}+\frac{x_n}{q_{KL}^{n-1}}=1=\sum_{i=1}^{2^j}\frac{\tau_i}{(\hat q_j)^i},
\end{align*}
so that $p<\hat{q}_j$.
Note by (\ref{eq:match-so-far}) and (\ref{eq:mismatch-at-end}) that
\begin{equation}\label{eq:match-1}
 d_{2^{j-1}-n+1}\ldots d_{2^j-n+1}=\tau_{2^{j-1}}\ldots\tau_{2^j}^-=1\overline{\tau_1\ldots\tau_{2^{j-1}}}.
\end{equation}
Since $p<\hat q_j$, we have $(d_i)\in\us_p\subseteq\us_{\hat q_j}$. Then by (\ref{eq:match-1}), $\al(\hat q_j)=(\tau_1\ldots \tau_{2^{j-1}}\overline{\tau_1\ldots \tau_{2^{j-1}}})^\f$ and Lemma \ref{lem:unique-expansion} it follows that the sequence $(d_i)$ must end with $(\tau_1\ldots\tau_{2^{j-1}}\overline{\tau_1\ldots\tau_{2^{j-1}}})^\f=\al(\hat q_j)$, leading to a contradiction with $(d_i)\in\us_{\hat q_j}$.
We conclude that $x_n\not\in\u_p$. Therefore, $\min \ub(x_n)=q_{KL}$.

\medskip

{\em Step 2}. Next, we assume $n\not\in S$, and show that $\inf \ub(x_n)<q_{KL}$. Consider the two possibilities in Lemma \ref{lem:awkward-induction-statement}. If $\tau_n\tau_{n+1}\dots$ begins with $11$, $011$ or $0011$, respectively, then by Lemma \ref{lem:case-a} it follows that $(10)^\infty$, $0(10)^\infty$ or $00(10)^\infty$, respectively is a unique expansion of $x_n$ in some base between $q_G$ and $q_{KL}$, and we are done. Otherwise, there are integers $k,j$ and $l$ as in case (b) of the lemma. Let
\[
(d_i):=\tau_n\dots\tau_j\big(\tau_1\dots \tau_{2^l}^-\big)^\infty=\tau_n\dots\tau_j\big(\tau_{j+1}\dots\tau_{j+2^l}^-\big)^\infty.
\]
There is a unique base $p$ such that $((d_i))_p=x_n$. We claim that $\hat{q}_l<p<q_{KL}$. The first inequality follows since
\begin{align*}
(d_i)_{\hat{q}_l} &=\left(\tau_n\dots\tau_j\big(\tau_1\dots\tau_{2^l}^-\big)^\infty\right)_{\hat{q}_l}
=\big(\tau_n\dots \tau_j^+\big)_{\hat{q}_l}\\
&>\big(\tau_n\dots \tau_j^+\big)_{q_{KL}}>\big(\tau_n\tau_{n+1}\dots\big)_{q_{KL}}=x_n=(d_i)_p.
\end{align*}
By Lemma \ref{lem:awkward-induction-statement}, $(d_i)\in \us_p$ and so $p\in\ub(x_n)$. Finally, $\Phi_{x_n}(p)=(d_i)\prec \tau_n\tau_{n+1}\dots=\Phi_{x_n}(q_{KL})$ and hence $p<q_{KL}$. Thus, $\inf\ub(x_n)\leq p<q_{KL}$. This proves (i).

{%
The proofs of (ii) and (iii) are similar to the proof of (i), with some modifications. We provide the details for (iii) only, and leave (ii) to the interested reader.

Assume $n\in S$ and $\tau_n\dots\tau_{n+6}=1010011$. We must first check the analog of Proposition \ref{prop:power-of-two}. Let $k$ be the integer such that $2^{k-1}<n\leq 2^k$. Suppose there is $p<q_{KL}$ such that $x_n''\in\u_p$. We will show that $\Phi_{x_n''}(p)$ begins with $00\tau_n\dots\tau_{2^k}$.

Following the iterative construction in the proof of Proposition \ref{prop:power-of-two}, we obtain after some finite number of steps integers $n_i,l_i$ and $\nu$ such that $l_i=2^\nu$, $n_i\leq 2^{\nu-1}$, and $p>\hat{q}_\nu$. (Note that $\Phi_{x_n''}(p)$ is just $\Phi_{x_n}(p)$ preceded by $00$.)
Preceding the expansions in \eqref{eq:xn-upper-estimate} by $00$ we obtain
\begin{equation*}
x_n''<\frac{f_{n_i-1}(\hat{q}_\nu)}{\hat{q}_\nu^2},
\end{equation*}
while
\[
x_{n_i}''=\frac{x_{n_i}}{q_{KL}^2}=\frac{f_{n_i-1}(q_{KL})}{q_{KL}^2}.
\]
For simplicity, set $m:=n_i-1$. Then
\begin{align*}
x_{n_i}''-x_n''&> \frac{f_m(q_{KL})}{q_{KL}^2}-\frac{f_m(\hat{q}_\nu)}{\hat{q}_\nu^2}\\
&=\frac{f_m(q_{KL})-f_m(\hat{q}_\nu)}{q_{KL}^2}-f_m(\hat{q}_\nu)\left(\frac{1}{\hat{q}_\nu^2}-\frac{1}{q_{KL}^2}\right)\\
&\geq \frac{q_{KL}-\hat{q}_\nu}{q_{KL}^2}\left(1-f_m(\hat{q}_\nu)\cdot\frac{q_{KL}+\hat{q}_\nu}{\hat{q}_\nu^2}\right),
\end{align*}
where the second inequality follows by Lemma \ref{lem:polynomial-inequality}. We now estimate the expression in parentheses. First,
\[
f_m(q_{KL})=(\tau_{m+1}\tau_{m+2}\dots)_{q_{KL}}\leq (\tau_2\tau_3\dots)_{q_{KL}}=q_{KL}-1,
\]
so again using Lemma \ref{lem:polynomial-inequality},
\[
f_m(\hat{q}_\nu)\leq f_m(q_{KL})-(q_{KL}-\hat{q}_\nu)\leq \hat{q}_\nu-1.
\]
Thus,
\begin{align*}
1-f_m(\hat{q}_\nu)\cdot\frac{q_{KL}+\hat{q}_\nu}{\hat{q}_\nu^2} &\geq 1-\frac{(\hat{q}_\nu-1)(q_{KL}+\hat{q}_\nu)}{\hat{q}_\nu^2}
=\frac{q_{KL}+\hat{q}_\nu-q_{KL}\hat{q}_\nu}{\hat{q}_\nu^2}\\
&\geq \frac{2q_{KL}-q_{KL}^2}{q_{KL}^2}=\frac{2}{q_{KL}}-1\approx 0.119.
\end{align*}
Here the second inequality follows since the expression preceding it is decreasing in $\hat{q}_\nu$, and $\hat{q}_\nu\nearrow q_{KL}$ as $\nu\to\infty$. Combining these estimates, we obtain
\begin{equation}
x_{n_i}''-x_n''> (0.119)\frac{q_{KL}-\hat{q}_\nu}{q_{KL}^2}.
\label{eq:xn-difference-lower-estimate}
\end{equation}
Using the more precise first estimate in Corollary \ref{cor:inequality} we get for all $\nu\geq 4$,
\[
x_{n_i}''-x_n''>(0.119) q_{KL}^{-2^\nu-5}>q_{KL}^{-3\cdot 2^{\nu-1}-2}.
\]
On the other hand,
\[
x_{n_i}''-x_n''=\frac{x_{n_i}-x_n}{q_{KL}^2}<q_{KL}^{-3\cdot 2^{\nu-1}-2}
\]
as in the proof of Proposition \ref{prop:power-of-two}. This gives a contradiction if $\nu\geq 4$.

When $\nu\leq 3$, we use the fact that in the sequence $(\tau_i)$, the word $101\,0011$ is always followed by either $0010\,1100$ or $0010\,1101$. This implies that $x_n\wedge x_{n_i}\geq 14$, so by Lemma \ref{lem:xn-distance},
\[
x_{n_i}''-x_n''=\frac{x_{n_i}-x_n}{q_{KL}^2}<q_{KL}^{-16}\approx 0.000092.
\]
For $\nu=2$ or $3$, we can directly compute the right hand side of \eqref{eq:xn-difference-lower-estimate}; using $\hat{q}_2\approx 1.7549$ and $\hat{q}_3\approx 1.7846$, we obtain $x_{n_i}''-x_n''>0.00120$ for $\nu=2$, and $>0.000098$ for $\nu=3$. Thus, for these values too we arrive at a contradiction.

Now that we have proved the analog of Proposition \ref{prop:power-of-two}, the rest of the proof that $\min\ub(x_n'')=q_{KL}$ is a straightforward modification of Step 1 of (i) above. We omit the details.

Conversely, if $n\not\in S$, then the proof of Step 2 above continues to work, provided we precede all expansions by $00$. If $n\in S$ but $\tau_n\dots\tau_{n+6}\neq 101\,0011$, there are two cases:

(i) $\tau_n\tau_{n+1}\dots$ begins with $0$ or $100$. Then
\[
x_n''<(00101)_{q_{KL}}=q_{KL}^{-3}+q_{KL}^{-5}\approx .2300<q_G^{-3},
\]
so $x_n''$ has a unique expansion starting with $000$ in some base $p<q_{KL}$.

(ii) $\tau_n\tau_{n+1}\dots$ begins with $101\,0010$. Then
\[
x_n''<(00101\,0011)_{q_{KL}}=q_{KL}^{-3}+q_{KL}^{-5}+q_{KL}^{-8}+q_{KL}^{-9}\approx .24499.
\]
On the other hand,
\[
\big(001\,(0011)^\infty\big)_{\hat{q}_2}=\big(011\,(0011)^\infty)_{\hat{q}_2}-\frac{1}{\hat{q}_2^2}=\frac{1}{\hat{q}_2}-\frac{1}{\hat{q}_2^2}\approx .24512>x_n'',
\]
so we see that $001\,(0011)^\infty$ is a unique expansion of $x_n''$ in some base $p\in(\hat{q}_2,q_{KL})$.

In both cases, $\inf \ub(x_n'')<q_{KL}$.
}
\end{proof}

\begin{remark} \label{rem:type-III-algebraic}
Since $x_n=f_{n-1}(q_{KL})$, it follows immediately that in this case also, for $n\in S$, $q_s(x_n)=q_{KL}$ is algebraic over the field $\mathbb{Q}(x_n)$, even though $x_n$ is of type III and the minimal unique expansion of $x_n$ is not eventually periodic.
\end{remark}

Theorem \ref{thm:set-with-qKL-as-min} implies that $L({q_{KL}})$ is infinite.
We now show that it has in fact infinitely many accumulation points. This uses the following elementary lemma.

\begin{lemma} \label{lem:TM-inequality}
For any $k\in\N$, we have
\[
\tau_{2^k+1}\tau_{2^k+2}\dots \succ \tau_{2^{k+1}+1}\tau_{2^{k+1}+2}\dots.
\]
\end{lemma}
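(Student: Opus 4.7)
The plan is to compare the two tails on their first $2^k$ digits and show that they agree on positions $1,\ldots,2^k-1$ but disagree at position $2^k$, where the first sequence carries a $1$ and the second a $0$. That alone forces $\tau_{2^k+1}\tau_{2^k+2}\dots \succ \tau_{2^{k+1}+1}\tau_{2^{k+1}+2}\dots$ lexicographically, so no further comparison is needed.

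First I would invoke the self-similarity relation (\ref{eq:TM-recursion}), which gives
\[
\tau_{2^k+1}\ldots\tau_{2^{k+1}}=\overline{\tau_1\ldots\tau_{2^k}}^{\,+}.
\]
Because $\tau_{2^k}=1$ (as $2^k$ has a single $1$ in its binary expansion), the reflection $\overline{\tau_1\ldots\tau_{2^k}}$ ends in $0$, so the $^+$ operation changes that last digit back to $1$. Hence the first $2^k$ symbols of $\tau_{2^k+1}\tau_{2^k+2}\dots$ are $\overline{\tau_1}\,\overline{\tau_2}\cdots\overline{\tau_{2^k-1}}\,1$.

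Next, I would use the Thue--Morse recursion $\tau_{2i}=\tau_i$, $\tau_{2i+1}=1-\tau_i$ (equivalently, $s_{2^{k+1}+i}=1+s_i$ for $0\le i<2^{k+1}$) to conclude that
\[
\tau_{2^{k+1}+i}=1-\tau_i\qquad(0\le i<2^{k+1}).
\]
Applied for $i=1,\ldots,2^k$, this shows that the first $2^k$ symbols of $\tau_{2^{k+1}+1}\tau_{2^{k+1}+2}\dots$ are $\overline{\tau_1}\,\overline{\tau_2}\cdots\overline{\tau_{2^k-1}}\,\overline{\tau_{2^k}} = \overline{\tau_1}\,\overline{\tau_2}\cdots\overline{\tau_{2^k-1}}\,0$.

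Finally, the two blocks agree in the first $2^k-1$ positions, and at position $2^k$ the first has $1$ while the second has $0$; this yields the lexicographic inequality at once. There is no substantive obstacle here: the statement is a direct consequence of the doubling structure encoded in (\ref{eq:TM-recursion}), and the only small care needed is to verify that $\tau_{2^k}=1$ so that the $^+$ in (\ref{eq:TM-recursion}) produces precisely the single-digit discrepancy that drives the comparison.
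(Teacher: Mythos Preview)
Your proof is correct and follows essentially the same approach as the paper's: both arguments compare the first $2^k$ digits of the two tails, identifying them as $\overline{\tau_1\dots\tau_{2^k}}^{\,+}$ and $\overline{\tau_1\dots\tau_{2^k}}$ respectively, and conclude from the discrepancy in the last digit. The only cosmetic difference is that the paper obtains $\tau_{2^{k+1}+1}\dots\tau_{2^{k+1}+2^k}=\overline{\tau_1\dots\tau_{2^k}}$ by a second application of \eqref{eq:TM-recursion} (with $n=k+2$, truncating to the first $2^k$ digits), whereas you derive it directly from the digit-sum definition.
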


\begin{proof}
Note by \eqref{eq:TM-recursion} that $\tau_{2^k+1}\dots\tau_{2^{k+1}}=\overline{\tau_1\dots\tau_{2^k}}^{\,+}$ and
$\tau_{2^{k+1}+1}\dots \tau_{2^{k+1}+2^k}=\overline{\tau_1\dots\tau_{2^k}}$. The lemma follows.
\end{proof}

\begin{proposition} \label{prop:accumulation-points} \mbox{}

\begin{enumerate}[{\rm(i)}]
\item The level set $L(q_{KL})$ has infinitely many right accumulation points which lie themselves in $L(q_{KL})$.
\item The level set $L(q_{KL})$ also has infinitely many left accumulation points which lie themselves in $L(q_{KL})$.
\end{enumerate}
\end{proposition}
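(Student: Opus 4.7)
The plan is to exhibit explicit infinite families of both left- and right-accumulation points of $L(q_{KL})$, using the combinatorial structure of $S$ (Lemma \ref{lem:S}) together with the Thue-Morse identities underlying the proofs of Lemma \ref{lem:TM-inequality} and Theorem \ref{thm:set-with-qKL-as-min}.

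\emph{Left accumulation (part (ii)).} For each $m\in S$, I would show that $x_m$ is a left accumulation point. Fix $m\in S$ and choose $j_0$ so that $m\le 2^{j_0-1}$; for $j\ge j_0$ the recursive definition of $S$ gives $n_j:=m+3\cdot 2^{j-1}\in S$, and hence $x_{n_j}\in L(q_{KL})$ by Theorem \ref{thm:set-with-qKL-as-min}(i). The elementary binary digit-sum identities $s_{3\cdot 2^{j-1}+i}=s_i+2$ for $0\le i<2^{j-1}$ and $s_{3\cdot 2^{j-1}+i}=s_{i-2^{j-1}}+1$ for $2^{j-1}\le i<2^j$ (both from the disjoint bit-supports of $3\cdot 2^{j-1}=2^{j-1}+2^j$ with $i<2^j$) imply $\tau_{n_j+i}=\tau_{m+i}$ for all $0\le i<2^j-m$; at $i=2^j-m$ the two sequences first disagree, with $\tau_{n_j+i}=\tau_{5\cdot 2^{j-1}}=0$ against $\tau_{m+i}=\tau_{2^j}=1$. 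Thus $x_{n_j}\nearrow x_m$ strictly from below, making $x_m$ a left accumulation point. Since $S$ is infinite and distinct $m$ yield distinct tails $\tau_m\tau_{m+1}\dots$ and hence distinct values $x_m$, one obtains infinitely many left accumulation points, all in $L(q_{KL})$.

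\emph{Right accumulation (part (i)).} The base case uses Lemma \ref{lem:TM-inequality} with $\tau_{2^k}=1$: the sequence $\tau_{2^k}\tau_{2^k+1}\dots$ is strictly lex-decreasing in $k$, and its pointwise limit is $1\overline{\tau_1\tau_2\dots}$ (from $\tau_{2^k+i}=1-\tau_i$ for $0<i<2^k$), so $x_{2^k}\searrow x_*:=1/(q_{KL}(q_{KL}-1))$. Each $x_{2^k}\in L(q_{KL})$ by Lemma \ref{lem:S}(i), and $x_*\in L(q_{KL})$ by Remark \ref{rem:algorithm-1}(iii); hence $x_*$ is a right accumulation point. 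For infinitely many, let $C:=\{c\ge 0:2^{2k}-c\in S$ for all sufficiently large $k\}$ and set $n_k^{(c)}:=2^{2k}-c$ for $c\in C$ and $k\ge k_0(c)$. The recursive identity $2^{2k}-c=3\cdot 2^{2k-2}+(2^{2k-2}-c)$ allows a single-step induction: once $n_{k_0}^{(c)}\in S$, the same argument gives $n_{k_0+1}^{(c)}\in S$, hence $n_k^{(c)}\in S$ for all $k\ge k_0$, so $x_{n_k^{(c)}}\in L(q_{KL})$. For $k$ large, the initial $c$ digits $\tau_{n_k^{(c)}}\dots\tau_{2^{2k}-1}$ stabilise to a fixed word $w_c$, and the continuation $\tau_{2^{2k}}\tau_{2^{2k}+1}\dots$ converges pointwise to $1\overline{\tau_1\tau_2\dots}$; iterating Lemma \ref{lem:TM-inequality} on this tail gives $x_{n_k^{(c)}}$ strictly decreasing in $k$ with limit $y_c:=(w_c\cdot 1\overline{\tau_1\tau_2\dots})_{q_{KL}}$, which lies in $L(q_{KL})$ by right-continuity of $q_s$ (Theorem \ref{thm:cadlag-intro}). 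Distinct $c$ yield distinct words $w_c$ and hence distinct points $y_c$. Finally, $C$ is infinite: for every $n\in S$ and the minimal $k_0$ with $n\le 2^{2k_0}$, the offset $c=2^{2k_0}-n$ belongs to $C$, since the self-similar recursion $S\cap(2^{2k},2^{2k+1}]=3\cdot 2^{2k-1}+(S\cap(0,2^{2k-1}])$ propagates membership from level $k_0$ to all higher levels.

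The main obstacle is in the right-accumulation argument, specifically the verification that $C$ is infinite and that $(x_{n_k^{(c)}})_k$ is strictly decreasing for each fixed $c\in C$. I expect the infinitude of $C$ to be an immediate consequence of the map $n\mapsto 2^{2k_0(n)}-n$ injecting $S$ into $C$, and the strict monotonicity to follow from Lemma \ref{lem:TM-inequality} applied \emph{after} the stabilised prefix $w_c$, which reduces the problem to the already-known comparison $\tau_{2^{2k}+i}\succ\tau_{2^{2k+2}+i}$ at the first position of discrepancy of the Thue-Morse tails.
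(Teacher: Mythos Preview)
Your overall strategy matches the paper's closely: for (ii), both you and the paper show that each $x_m$ with $m\in S$ is a left accumulation point via the sequence $m+3\cdot 2^{j-1}\in S$ (the paper restricts to a subsequence of your $j$'s, but this is immaterial); for (i), both parametrize right accumulation points by the ``offset'' $c$ of $n$ below the next relevant power of two, yielding limits of the form $(\tau_n\dots\tau_{2^k}\,\overline{\tau_1\tau_2\dots})_{q_{KL}}$, placed in $L(q_{KL})$ by right-continuity.

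There is, however, a genuine gap in your proposed mechanism for showing $C$ is infinite. The map $n\mapsto 2^{2k_0(n)}-n$ is \emph{not} injective on $S$: for $n=2$ one has $k_0=1$ and $c=2^2-2=2$, while for $n=14\in S$ one has $k_0=2$ and $c=2^4-14=2$. In fact the very recursion you use, $n\mapsto n+3\cdot 2^{2k_0(n)}$, preserves the offset $c$, so every fibre of your map is infinite. What you need is that the \emph{image} is infinite. The paper establishes this by exhibiting $n=2^k-2^{k-2}+2\in S$ for each $k\ge 3$, with offset $2^k-n=2^{k-2}-2$ unbounded in $k$; you can adopt the same family (restricting to even $k$ if you want to stay with powers $2^{2k}$).

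A secondary gap: ``distinct $c$ yield distinct words $w_c$ and hence distinct points $y_c$'' is not automatic, since different sequences can in principle represent the same real number in base $q_{KL}$. The paper closes this by first verifying that $w_c\,1\,\overline{\tau_1\tau_2\dots}$ is the \emph{quasi-greedy} expansion of $y_c$ (via Lemma~\ref{lem:char-quasi-expansion}(iii)), so that distinct sequences force distinct values; distinctness of the sequences themselves then follows because no two tails of the Thue--Morse sequence coincide.
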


\begin{proof}
(i) Fix $n\in S$ initially, and let $k:=k_n$ be the integer such that $2^{k-1}<n\leq 2^k$. Observe that $n>2^{k}-2^{k-2}$ by the construction of $S$. Now set
\[
n_i:=2^{k+2i-2}-2^{k}+n, \qquad i\in\N.
\]
Using the definition of $S$ and induction it follows that $n_i\in S$ for each $i$, since $n_{i+1}=3\cdot 2^{k+2i-2}+n_i$. Furthermore,
\begin{align*}
\tau_{n_i}\tau_{n_i+1}\dots&=(\tau_{n_i}\dots\tau_{2^{k+2i-2}})(\tau_{2^{k+2i-2}+1}\tau_{2^{k+2i-2}+2}\dots)\\
&=(\tau_n\dots\tau_{2^k})(\tau_{2^{k+2i-2}+1}\tau_{2^{k+2i-2}+2}\dots),
\end{align*}
so {by Lemma \ref{lem:TM-inequality}} it follows that $\tau_{n_i}\tau_{n_i+1}\dots\succ \tau_{n_{i+1}}\tau_{n_{i+1}+1}\dots$, and hence, $x_{n_i}>x_{n_{i+1}}$. By Theorem \ref{thm:set-with-qKL-as-min}, $x_{n_i}\in L(q_{KL})$ for each $i$. Hence the limit
\[
x_n^*:=\lim_{i\to\infty}x_{n_i}=\big(\tau_n\dots\tau_{2^k}\overline{\tau_1\tau_2\dots}\big)_{q_{KL}}
\]
is a right accumulation point of $L(q_{KL})$, and it lies itself in $L(q_{KL})$ by right continuity of $q_s(x)$ (cf.~Theorem \ref{thm:cadlag-intro}).

We now argue that infinitely many of these accumulation points $x_n^*, n\in S$ are distinct. Observe first that for any $j\leq 2^k$,
\begin{align*}
\tau_j\dots\tau_{2^k}\overline{\tau_1\tau_2\dots}
&=\tau_j\dots\tau_{2^k}\tau_{2^k+1}\dots\tau_{2^{k+1}}^-\overline{\tau_{2^k+1}\tau_{2^k+2}\dots}\\
&\prec \tau_j\tau_{j+1}\dots \prec \tau_1\tau_2\dots.
\end{align*}
Therefore, $\tau_n\dots\tau_{2^k}\overline{\tau_1\tau_2\dots}$ is the quasigreedy expansion of $x_n^*$ in base $q_{KL}$ by Lemma \ref{lem:char-quasi-expansion}(iii). Now suppose $n\in S\cap(2^{k_n-1}, 2^{k_n}]$ and $n'\in S\cap(2^{k_n'-1}, 2^{k_n'}]$ with $2^{k_n}-n\neq 2^{k_n'}-n'$. Since no two tails of the sequence $(\tau_i)$ coincide, it is clear that
\[
\Phi_{x_n^*}(q_{KL})=\tau_n\dots\tau_{2^{k_n}}\overline{\tau_1\tau_2\dots}\neq \tau_{n'}\dots\tau_{2^{k_n'}}\overline{\tau_1\tau_2\dots}=\Phi_{x_{n'}^*}(q_{KL}),
\]
and hence, $x_n^*\neq x_{n'}^*$. It is not hard to see that $2^{k_n}-n$ takes on infinitely many distinct values for $n\in S$. For instance, we may take $n=2^k-2^{k-2}+2$ for any $k\in\N_{\geq 3}$; then $k_n=k$, and $2^{k_n}-n=2^{k-2}-2$. Thus, $L(q_{KL})$ has infinitely many right accumulation points.

(ii) Fix $n\in S$; we show that $x_n$ is a left accumulation point of $L(q_{KL})$. Let $k$ be the integer such that $2^{k-1}<n\leq 2^k$, and set $n_i:=n+3\cdot 2^{k+2i}$, for $i\in\N$. Note that $3\cdot 2^{k+2i}<n_i\leq 3\cdot 2^{k+2i}+2^k\leq 2^{k+2i+2}$ and $n_i-3\cdot 2^{k+2i}=n\in S$, so $n_i\in S$ and hence $x_{n_i}\in L(q_{KL})$. Furthermore,
\begin{equation}
\tau_{n_i}\dots\tau_{2^{k+2i+2}}=\tau_n\dots\tau_{2^{k+2i}}.
\label{eq:LAP-initial-match}
\end{equation}
Since $2^{k+2i+2}-n_i\geq 2^{k+2i}-2^k\to\infty$ as $i\to\infty$, it follows that $\tau_{n_i}\tau_{n_i+1}\dots\to\tau_n\tau_{n+1}\dots$, and hence $x_{n_i}\to x_n$. It remains to verify that $x_{n_i}<x_n$. But this follows since
\[
\tau_{2^{k+2i}+1}\dots\tau_{2^{k+2i+1}}=\overline{\tau_1\dots\tau_{2^{k+2i}}}^{\,+}
\]
and
\[
\tau_{2^{k+2i+2}+1}\dots\tau_{2^{k+2i+2}+2^{k+2i}}=\overline{\tau_1\dots\tau_{2^{k+2i}}},
\]
so combined with \eqref{eq:LAP-initial-match}, we obtain $\tau_{n_i}\tau_{n_i+1}\dots\prec\tau_n\tau_{n+1}\dots$. This implies $x_{n_i}<x_n$, which gives the desired result.
\end{proof}

\begin{example} \label{ex:accumulation-points}
Taking $n=2^k$ for any $k\in\N$ in the above proof we get $x_n^*=(1\,\overline{\tau_1\tau_2\dots})_{q_{KL}}=1/q_{KL}(q_{KL}-1)$, which is the smallest element of $L(q_{KL})$ in the interval $[q_G^{-1},1)$. Similarly, taking $n=2^k-1$ with $k$ even, we obtain $x_n^*=(01\,\overline{\tau_1\tau_2\dots})_{q_{KL}}=1/q_{KL}^2(q_{KL}-1)$, which is the smallest element of $L(q_{KL})$ in the interval $[q_G^{-2},q_G^{-1})$. {On the other hand, the point $1/q_{KL}^3(q_{KL}-1)$ does not lie in $L(q_{KL})$ because it does not lie in $[q_G^{-3},q_G^{-2})$. Considering Theorem \ref{thm:set-with-qKL-as-min} (iii) and a modification of the proof of Proposition \ref{prop:accumulation-points} (i), the smallest element of $L(q_{KL})$ in this last interval (and the smallest overall) appears to be
\[
(001010011\overline{\tau_1\tau_2\ldots})_{q_{KL}}=\frac{1}{q_{KL}^3}+\frac{1}{q_{KL}^5}+\frac{1}{q_{KL}^8}+\frac{1}{q_{KL}^9(q_{KL}-1)}\approx 0.2464427.
\]
}
Observe that the limit points $x_n^*$ are all of type I.
\end{example}

\begin{proof}[Proof of Theorem \ref{thm:infinite-level-set-qKL-intro}]
The theorem follows from Theorem \ref{thm:set-with-qKL-as-min} and Proposition \ref{prop:accumulation-points}.
\end{proof}

\subsection{Other infinite level sets}
We next show that there are in fact infinitely many levels $q>q_{KL}$ such that $L(q)$ is infinite. Recall the de Vries-Komornik numbers $\hat{q}_c(\sa)$ from Section \ref{sec:prelim}.
Let $Q_3\approx 1.839$ be the tribonacci number, that is, the unique positive root of $q^{-1}+q^{-2}+q^{-3}=1$. So, $\al(Q_3)=(110)^\f$. Theorem \ref{thm:other-infinite-level-sets-intro} follows from the following more detailed result.

\begin{theorem} \label{thm:deVries-Komornik}
Let $\hat{q}_c(\sv)\in(q_{KL},Q_3)$ be a de Vries-Komornik number generated by an admissible word $\sv$ of length $m$, and let $(\alpha_i):=\alpha(\hat{q}_c(\sv))$. For $n\in\N$, define
\[
x_n:=x_n(\sv):=\big(\alpha_n\alpha_{n+1}\dots\big)_{\hat{q}_c(\sv)}.
\]
Then $\min\ub(x_n)=\hat{q}_c(\sv)$ for all $n$ of the form $n=m 2^{k-1}$ or $n=m(2^{2k-1}-1)$, where $k\in\N$.
\end{theorem}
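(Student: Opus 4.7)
The plan is to adapt the proof of Theorem \ref{thm:set-with-qKL-as-min}(i), with the Thue-Morse sequence $(\tau_i)$ and the bases $(\hat{q}_n)$ replaced by the generalized sequence $(\alpha_i):=(\theta_i(\sv))$ and the bases $\hat{q}_n(\sv)$ defined in Section \ref{subsec:KL}. Throughout I write $q:=\hat{q}_c(\sv)$, so $\al(q)=(\alpha_i)$, and $m:=|\sv|$. Showing $q\in\ub(x_n)$ is immediate: since $(\alpha_i)\in\us_q$, the shift $\si^{n-1}((\al_i))=\al_n\al_{n+1}\ldots$ satisfies the sandwich conditions of Lemma \ref{lem:unique-expansion}, hence lies in $\us_q$, and so $x_n\in\u_q$.

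The main step is to show $x_n\notin\u_p$ for every $p<q$, by contradiction. Suppose $(d_i):=\Phi_{x_n}(p)\in\us_p$ with $p<q$; by Lemma \ref{lem:char-quasi-expansion}, $(d_i)\prec\alpha_n\alpha_{n+1}\ldots$. Following Proposition \ref{prop:power-of-two}, the first task is to show that $(d_i)$ must in fact agree with $\alpha_n\alpha_{n+1}\ldots$ on a long initial prefix, up to position $m2^k$ in the first case and $m2^{2k-1}$ in the second. If not, let $l$ be the smallest position with $d_1\ldots d_{l-n+1}=\alpha_n\ldots\alpha_l^-$. Using the generalized Thue-Morse recursion $\al_{m 2^j+i}=\overline{\al_i}$ for $1\le i<m 2^j$, one iteratively transports such a deviation to a shifted copy at a smaller index, until the reduced deviation pattern ends with the prefix of length $m2^\nu$ of $\al(\hat q_\nu(\sv))=(\al_1\ldots\al_{m2^\nu}^-)^\f$, forcing $p>\hat q_\nu(\sv)$. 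A direct estimate then shows the distance $|x_n-x_{n'}|$ between $x_n$ and the reduced point $x_{n'}$ is smaller than the gap $q-\hat q_\nu(\sv)$, yielding a contradiction.

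The specific forms of $n$ are chosen to make the descent terminate cleanly. For $n=m2^{k-1}$, the tail $\alpha_n\alpha_{n+1}\ldots$ decomposes as $1\,\overline{\al_1\ldots\al_{m2^{k-1}}}^+\,\overline{\al_1\ldots\al_{m2^k}}^+\,\cdots$, so a single descent step (with $\nu=k$) suffices. For $n=m(2^{2k-1}-1)$, one verifies inductively that $\alpha_n\ldots\alpha_{n+m-1}$ equals the fixed word $1\overline{\sv_1}\ldots\overline{\sv_{m-1}}$ for every $k\ge 1$, and the descent then sends $n=m(2^{j}-1)$ to $m(2^{j-1}-1)$ one step at a time, eventually terminating at $n=m$. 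The quantitative input is an analog of Corollary \ref{cor:inequality},
\[
q-\hat{q}_\nu(\sv) > C(\sv)\,q^{-m2^\nu}
\]
for some positive constant $C(\sv)$, proved along the lines of Lemmas \ref{lem:calculation} and \ref{lem:qm-difference}; the hypothesis $\hat{q}_c(\sv)<Q_3$ guarantees that $\al(q)$ avoids the block $111$, giving a uniform upper bound on the derivative of the relevant truncated polynomial (analogous to the constant $3.24$ in the proof of Lemma \ref{lem:qm-difference}). The main obstacle will be carrying out this quantitative estimate uniformly in $\sv$ and verifying that the iterative descent remains in the correct block structure for both classes of $n$ — in particular, controlling the intermediate stages for $n=m(2^{2k-1}-1)$ where the exponent passes through even values not covered by the theorem statement.
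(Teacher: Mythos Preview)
Your overall strategy---transplanting the proof of Theorem \ref{thm:set-with-qKL-as-min}(i) into the $\sv$-setting with $(\tau_i)$ replaced by $(\al_i)=(\theta_i(\sv))$ and $\hat q_j$ by $\hat q_j(\sv)$---is exactly what the paper does. But the paper's execution is considerably simpler than the iterative descent you sketch, and this simplification dissolves the very obstacle you flagged at the end.

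For $n=m2^{k-1}$, no descent is used at all. The hypothesis $q_{KL}<q_\sv<Q_3$ forces $\sv$ to begin with $110$, so $\al_n\al_{n+1}\ldots$ begins with $1\overline{\al_1\ldots\al_{m-1}}$ and hence with $1001$; a direct computation then gives $x_n>q_G^{-1}$, so by Lemma \ref{lem:restriction}(i) any unique expansion of $x_n$ begins with $1=\al_n$. Because $n$ sits exactly at a block boundary, this single digit already \emph{is} the full prefix $\al_n\ldots\al_{m2^{k-1}}$, and Step 1 of Theorem \ref{thm:set-with-qKL-as-min} applies verbatim with $\hat q_j(\sv)$ in place of $\hat q_j$.

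For $n=m(2^{2k-1}-1)$, the paper does not iterate $n\mapsto m(2^{j-1}-1)$ at all; it compares $x_n$ with $x_m$ in a single step. After checking $x_n>q_G^{-1}$ as above, if $(d_i)=\Phi_{x_n}(p)$ fails to begin with $\al_n\ldots\al_{m2^{2k-1}}=1\overline{\sv}$ then it must begin with $1\overline{\sv^+}$, which forces $p>\hat q_1(\sv)$. Setting $f(q)=q^{m-1}-\sum_{i<m}\al_i q^{m-1-i}$ one gets $x_n<f(\hat q_1(\sv))$ and $x_m=f(q_\sv)$, so the analog of Lemma \ref{lem:polynomial-inequality} gives $x_m-x_n>q_\sv-\hat q_1(\sv)$. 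The paper does \emph{not} prove a general bound $q_\sv-\hat q_\nu(\sv)>C(\sv)q_\sv^{-m2^\nu}$; it needs only $\nu=1$, and for this it quotes \cite[Lemma 3.6]{Allaart-Baker-Kong-17} to obtain $q_\sv-\hat q_1(\sv)\ge (q_{KL}-1)^2/q_\sv^{2m+4}$, which together with $m\ge 5$ (another consequence of $q_{KL}<q_\sv<Q_3$) yields $q_\sv-\hat q_1(\sv)>q_\sv^{-3m}$. A direct check of the block structure shows $x_m\wedge x_n=3m$, and the analog of Lemma \ref{lem:xn-distance} gives the contradiction. Thus the worry about intermediate even exponents never arises, and the role of the upper bound $Q_3$ is not to control a derivative but to guarantee that $\sv$ begins with $110$ (so $x_n>q_G^{-1}$) and that $m\ge 5$.
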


\begin{proof}
For brevity, set $q_\sv:=\hat{q}_c(\sv)$.
We first demonstrate that $\min\ub(x_n)=q_\sv$ for $n=m 2^{k-1}$. Since $q_{KL}<q_\sv<Q_3$, we have that $m>3$ and $\sv=a_1\dots a_m$ begins with $110$. Note that $x_n\in\u_{q_\sv}$ for each $n$ because $q_\sv\in\ub$. So, $q_s(x_n)\le q_{\sv}$.

If $n=m 2^{k-1}$, then $\Phi_{x_n}(q_\sv)$ begins with $\alpha_{m 2^{k-1}}\alpha_{m2^{k-1}+1}\dots\alpha_{m2^{k-1}+m-1}=1\overline{\alpha_1\dots\alpha_{m-1}}$; hence $\Phi_{x_n}(q_\sv)$ begins with $1001$. It follows that
\[
x_n\geq \frac{1}{q_\sv}+\frac{1}{q_\sv^4}>\frac{1}{Q_3}+\frac{1}{Q_3^4}\approx .6312>\frac{1}{q_G},
\]
so by Lemma \ref{lem:restriction} (i) any unique expansion of $x_n$ must begin with $1$. The rest of the proof is then analogous to the first part of the proof of Theorem \ref{thm:set-with-qKL-as-min}. We just have to replace the special bases $\hat{q}_j$ with their local analogs $\hat{q}_j(\sv)$; see Section \ref{sec:prelim} for the definition.

The proof for $n=m(2^{2k-1}-1)$ is more complicated and requires reasoning similar to the proof of Proposition \ref{prop:power-of-two}. Suppose there is a base $p<q_\sv$ such that $x\in\u_p$. We must show that $\Phi_{x_n}(p)$ starts with $\alpha_n\dots\alpha_{m2^{2k-1}}=1\overline{\alpha_1\dots\alpha_m}^{\,+}=1\overline{\sv}$. The rest of the proof is then again similar to the first part of the proof of Theorem \ref{thm:set-with-qKL-as-min}. By analogy with Lemma \ref{lem:polynomial-inequality}, we define the polynomial function
\[
f(q):=q^{m-1}-\sum_{i=1}^{m-1}\alpha_i q^{m-1-i}, \qquad q\in\R.
\]
As in Lemma \ref{lem:polynomial-inequality} we can show that
\begin{equation} \label{eq:poly-inequality-m}
f(q_\sv)-f(\hat{q}_1(\sv))\geq q_\sv-\hat{q}_1(\sv).
\end{equation}

As in the case $n=m 2^{k-1}$, $\Phi_{x_n}(q_\sv)$ begins with $1001$ and so $x_n>1/q_G$, and hence $\Phi_{x_n}(p)$ must begin with $1$. Let $(d_i):=\Phi_{x_n}(p)$, and note that $d_2\dots d_{m+1}\succeq \overline{\alpha_1\dots\alpha_m}=\overline{\sv^+}$. So if $\Phi_{x_n}(p)$ does not begin with
$1\overline{\sv}$ it must begin with $1\overline{\sv^+}$, since $p<q_\sv$ implies $\Phi_{x_n}(p)\prec\Phi_{x_n}(q_\sv)$. But this is only possible if $p>\hat{q}_1(\sv)$.

Similar to the proof of Proposition \ref{prop:power-of-two}, we then obtain
\[
x_n\leq (1\overline{\sv})_p<(1\overline{\sv})_{\hat{q}_1(\sv)}=f(\hat{q}_1(\sv)),
\]
noting that $\alpha(\hat{q}_1(\sv))=(\sv^+\overline{\sv^+})^\infty$. On the other hand, $x_m=f(q_\sv)$, so \eqref{eq:poly-inequality-m} gives
\[
x_m-x_n>f(q_\sv)-f(\hat{q}_1(\sv))\geq q_\sv-\hat{q}_1(\sv).
\]
By analogy with Corollary \ref{cor:inequality} we can show that
\[
q_\sv-\hat{q}_1(\sv)\geq \frac{(q_{KL}-1)^2}{q_\sv^{2m+4}}.
\]
(In the proof of \cite[Lemma 3.6]{Allaart-Baker-Kong-17}, we can take $n=2m$, $N=4$, $p_2=q=q_\sv$, $p_1=\hat{q}_1(\sv)$ and $p=q_{KL}$.)
Since $\sv$ is an admissible word starting with $110$ but not equal to $110$ and $q_\sv>q_{KL}$, we have $m\geq 5$, so that we can further estimate
\[
q_\sv-\hat{q}_1(\sv)\geq \frac{q_\sv(q_{KL}-1)^2}{q_\sv^{3m}}>\frac{q_{KL}(q_{KL}-1)^2}{q_\sv^{3m}}>q_\sv^{-3m}.
\]
Hence
\begin{equation} \label{eq:far-apart}
x_m-x_n>q_\sv^{-3m}.
\end{equation}
At the same time, however, we claim that $x_m\wedge x_n=3m$. To see this, note first that $\alpha_n=\alpha_m=1$. Furthermore,
\begin{gather*}
\alpha_{n+1}\dots\alpha_{n+m}=\alpha_{m(2^{2k-1}-1)+1}\dots\alpha_{m\cdot 2^{2k-1}}=\overline{\alpha_1\dots\alpha_m}^{\,+}=\alpha_{m+1}\dots\alpha_{2m}, \\
\alpha_{n+m+1}\dots\alpha_{n+3m}=\alpha_{m\cdot 2^{2k-1}+1}\dots\alpha_{m\cdot 2^{2k-1}+2m}=\overline{\alpha_1\dots\alpha_{2m}},
\end{gather*}
and
\[
\alpha_{2m+1}\dots\alpha_{4m}=\overline{\alpha_1\dots\alpha_{2m}}^{\,+}.
\]
Thus $\alpha_{n}\dots\alpha_{n+3m}=\alpha_m\dots\alpha_{4m}^-$, yielding $x_m\wedge x_n=3m$.
But as in Lemma \ref{lem:xn-distance}, $|x_m-x_n|\leq q_\sv^{-x_m\wedge x_n}$, so this contradicts \eqref{eq:far-apart}. Therefore, $x\not\in\u_p$, and $\min\ub(x)=q_\sv$.
\end{proof}

\begin{remark}\mbox{}

\begin{enumerate}[{\rm(i)}]
\item When $\sv=110$ (so that $q_\sv>Q_3$ and $m=3$), we still have $\min\ub(x_n)=\hat{q}_c(\sv)$ for all $n$ of the form $n=m(2^{2k-1}-1)$, but not for $n=m 2^k$. While the above proof doesn't quite work, we can show by direct calculation that $q_\sv\approx 1.87064$ and $\hat{q}_1(\sv)\approx 1.86675$, so that
\[
q_\sv-\hat{q}_1(\sv)\approx 0.0039>0.00357\approx q_\sv^{-9}=q_\sv^{-3m}.
\]
Hence, we still get a contradiction in the same way, though only barely!

(For $n=m 2^k$ with $k\geq 1$, we can check that $x_n<1/q_G$, so there is a unique expansion of $x_n$ in some base $p<q_\sv$ beginning with $0$.)

\item Recall from Remark \ref{rem:qmax} that $q_{\max}=\max q_s(x)\approx 1.88845$. Hence the range of $q_s(x)$ contains infinitely many de Vries-Komornik numbers $\hat{q}_c(\sa)>1.87064$. We do not know if the level set $L(\hat{q}_c(\sa))$ is infinite for any of these.
    \item {The points $x_n$ with $\min\ub(x_n)=\hat q_c(\sv)$ described in Theorem \ref{thm:deVries-Komornik} are all of type III.}
    \end{enumerate}
\end{remark}

\section{Komornik-Loreti cascades} \label{sec:cascades}

In this section we explore the delicate behavior of the graph of $q_s(x)$ below the line $y=q_{KL}$.
We fix an integer $n\in S\cup\{1\}$, and let $k$ be the integer such that $2^{k-1}<n\leq 2^k$.
Note by Theorem \ref{thm:set-with-qKL-as-min} (i) that $x_n=(\tau_n\tau_{n+1}\dots)_{q_{KL}}$ and $q_s(x_n)=q_{KL}$. Define the sequences
\[
\sc_{n,j}^{(m)}:=\tau_n\dots\tau_{2^{m-1}}(\overline{\tau_1\dots\tau_{2^{m-1}}}^{\,+})^j(\overline{\tau_1\dots\tau_{2^m}}^{\,+})^\infty, \qquad m>k,\ j\in\N
\]
and
\[
\sc_{n,\infty}^{(m)}:=\tau_n\dots\tau_{2^{m-1}}(\overline{\tau_1\dots\tau_{2^{m-1}}}^{\,+})^\infty.
\]
Accordingly, set
\[
\xi_m:=\big(\sc_{n,\infty}^{(m)}\big)_{\hat{q}_m}, \qquad \xi_{m,j}:=\big(\sc_{n,j}^{(m)}\big)_{\hat{q}_m},
\]
where the bases $\hat q_m$ are defined in (\ref{eq:qn}).

\begin{proposition} \label{prop:location}
The sequence $(\xi_m)_{m>k}$ strictly decreases to $x_n$. Furthermore, for each fixed $m>k$, the sequence $(\xi_{m,j})$ strictly increases to $\xi_m$, and $\xi_{m+1}<\xi_{m,1}$. Therefore,
\[
x_n<\cdots<\xi_{m+1}<\xi_{m,1}<\xi_{m,2}<\cdots<\xi_{m,j-1}<\xi_{m,j}\nearrow \xi_m\leq \xi_{k+1}
\]
for all $m>k$ and $j\ge 2$.
\end{proposition}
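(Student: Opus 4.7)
The proof rests on two elementary monotonicity principles for $(\sc)_q$: for a fixed sequence $\sc$, the map $q \mapsto (\sc)_q$ is strictly decreasing; and for a fixed base $q$, the map $\sc \mapsto (\sc)_q$ is strictly increasing in the lexicographic order. The Thue--Morse recursion \eqref{eq:TM-recursion} will be the engine for every lexicographic comparison. My plan is to abbreviate $B_m := \overline{\tau_1 \ldots \tau_{2^{m-1}}}^+ = \tau_{2^{m-1}+1} \ldots \tau_{2^m}$, so that $\sc_{n,\infty}^{(m)} = \tau_n \ldots \tau_{2^{m-1}} B_m^\infty$, and to first prove the key identity
\[
B_{m+1} \;=\; \overline{\tau_1 \ldots \tau_{2^{m-1}}} \cdot \tau_1 \ldots \tau_{2^{m-1}},
\]
by applying \eqref{eq:TM-recursion} and the definitions of ${}^+$ and $\overline{\phantom{X}}$ (crucially using $\tau_{2^{m-1}}=1$). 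An immediate corollary is $\sc_{n,1}^{(m)} = \tau_n \ldots \tau_{2^{m-1}} B_m B_{m+1}^\infty = \tau_n \ldots \tau_{2^m} B_{m+1}^\infty = \sc_{n,\infty}^{(m+1)}$, which together with $\hat q_m < \hat q_{m+1}$ and the monotonicity in $q$ gives $\xi_{m+1} < \xi_{m,1}$ at once.

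Next, I will establish $\sc_{n,\infty}^{(m+1)} \prec \sc_{n,\infty}^{(m)}$ by direct lex comparison. Both sequences start with $\tau_n \ldots \tau_{2^m}$ (one period of $B_m$ recovers $\tau_{2^{m-1}+1}\ldots\tau_{2^m}$); after this common prefix, $\sc_{n,\infty}^{(m)}$ continues with $B_m$ while $\sc_{n,\infty}^{(m+1)}$ continues with the first half of $B_{m+1}$, namely $\overline{\tau_1 \ldots \tau_{2^{m-1}}}$. These two blocks of length $2^{m-1}$ agree on their first $2^{m-1}-1$ digits and differ at the last, where $B_m$ carries $1$ and $\overline{\tau_1 \ldots \tau_{2^{m-1}}}$ carries $0$. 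Combined with $\hat q_m<\hat q_{m+1}$, the two monotonicity principles yield
\[
\xi_{m+1} = (\sc_{n,\infty}^{(m+1)})_{\hat q_{m+1}} < (\sc_{n,\infty}^{(m)})_{\hat q_{m+1}} < (\sc_{n,\infty}^{(m)})_{\hat q_m} = \xi_m.
\]
An essentially identical comparison---with common prefix $\tau_n \ldots \tau_{2^{m-1}} B_m^j$ and the next block being $B_m$ in $\sc_{n,j+1}^{(m)}$ versus the first half of $B_{m+1}$ in $\sc_{n,j}^{(m)}$---shows $\sc_{n,j}^{(m)} \prec \sc_{n,j+1}^{(m)}$, and hence $\xi_{m,j} < \xi_{m,j+1}$ at the fixed base $\hat q_m$.

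It remains to establish the two limits. For $\xi_{m,j} \nearrow \xi_m$ the sequences $\sc_{n,j}^{(m)}$ and $\sc_{n,\infty}^{(m)}$ agree on a prefix of length $2^{m-1}-n+1+j\cdot 2^{m-1}$, so at the fixed base $\hat q_m$ their difference is at most a constant times $\hat q_m^{-j\cdot 2^{m-1}}$, which vanishes as $j\to\infty$. For $\xi_m \searrow x_n$ I plan to split
\[
\xi_m - x_n = \big[(\sc_{n,\infty}^{(m)})_{\hat q_m} - (\sc_{n,\infty}^{(m)})_{q_{KL}}\big] + \big[(\sc_{n,\infty}^{(m)})_{q_{KL}} - x_n\big].
\]
The first bracket is bounded by $(q_{KL}-\hat q_m)\cdot C$, where $C$ is a uniform bound on $|\tfrac{d}{dq}(\sc)_q|$ for $q \ge \hat q_1$ (coming from $|\tfrac{d}{dq}(\sc)_q| \le \sum_{i\ge 1} i q^{-i-1} = 1/(q-1)^2$), and vanishes since $\hat q_m \nearrow q_{KL}$. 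The second bracket is a pure tail estimate: $\sc_{n,\infty}^{(m)}$ and $\tau_n\tau_{n+1}\ldots$ agree on a prefix of length at least $2^m + 2^{m-1} - n$, so it is bounded by $q_{KL}^{-(2^m + 2^{m-1} - n)}/(q_{KL}-1)\to 0$. The same two comparisons also give $\xi_m > x_n$ (via $\sc_{n,\infty}^{(m)} \succ \tau_n\tau_{n+1}\ldots$ combined with $\hat q_m < q_{KL}$), completing the chain $x_n < \cdots < \xi_{m+1} < \xi_{m,1} < \xi_{m,2} < \cdots \nearrow \xi_m \le \xi_{k+1}$.

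The only real obstacle is bookkeeping: one must carefully track block boundaries and the interplay between $\overline{\phantom{X}}$ and ${}^+$ so that the lex comparisons are applied at exactly the right positions. Once the identity $B_{m+1} = \overline{\tau_1 \ldots \tau_{2^{m-1}}}\cdot\tau_1 \ldots \tau_{2^{m-1}}$ is in hand, every inequality in the proposition reduces to a one-line application of the two monotonicity principles.
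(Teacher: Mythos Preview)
Your overall strategy matches the paper's, and most of your block identities and lex comparisons are correct. However, one of your two ``elementary monotonicity principles'' is \emph{false}: for $q<2$, the map $\sc\mapsto(\sc)_q$ is \emph{not} monotone in the lexicographic order. A standard counterexample is $01^\infty\prec 10^\infty$ but $(01^\infty)_q=\tfrac{1}{q(q-1)}>\tfrac{1}{q}=(10^\infty)_q$ for every $q\in(1,2)$. Since all bases here satisfy $\hat q_m<2$, you cannot invoke this as a general principle; each value inequality you derive from a lex comparison requires separate justification.

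It happens that all your specific applications give correct inequalities, but verifying them is not automatic. For instance, your argument for $\xi_{m,j}<\xi_{m,j+1}$ reduces (after stripping the common prefix) to comparing the tails $0\,\tau_1\ldots\tau_{2^{m-1}}B_{m+1}^\infty$ and $1\,B_{m+1}^\infty$ at base $\hat q_m$; one must check that the gain $\hat q_m^{-1}$ from the leading digit outweighs the possible loss in the remaining tail. This holds because $(\tau_1\ldots\tau_{2^{m-1}})_{\hat q_m}<1$, but it is not a one-line consequence of lex order. The paper sidesteps this particular issue by observing the identity $\big((\overline{\tau_1\ldots\tau_{2^m}}^{\,+})^\infty\big)_{\hat q_m}=(\overline{\tau_1\ldots\tau_{2^{m-1}}}^{\,+})_{\hat q_m}$ (both equal $\tfrac{2-\hat q_m}{\hat q_m-1}$), which collapses $\xi_{m,j}$ to the \emph{finite} expansion $\big(\tau_n\ldots\tau_{2^{m-1}}(\overline{\tau_1\ldots\tau_{2^{m-1}}}^{\,+})^{j+1}\big)_{\hat q_m}$; then $\xi_{m,j+1}>\xi_{m,j}$ is immediate because one is literally adding a positive term. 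For the inequality $\xi_{m+1}<\xi_m$ the paper uses essentially your lex comparison and is itself brief on the value-order step, so there the two arguments are on equal footing; but your proof as written needs either (i) direct verification of each value inequality, or (ii) an appeal to Lemma~\ref{lem:char-quasi-expansion}(ii) after checking that the sequences in question are quasi-greedy in the relevant base.
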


\begin{proof}
{The first statement follows from the definition of $\xi_m$, noting that
\begin{align*}
\xi_{m+1}
&=\left(\tau_n\ldots \tau_{2^{m-1}}\overline{\tau_1\ldots \tau_{2^{m-1}}}^+(\overline{\tau_1\ldots \tau_{2^{m-1}}}\tau_1\ldots \tau_{2^{m-1}})^\f\right)_{\hat q_{m+1}}\\
&<\left(\tau_n\ldots \tau_{2^{m-1}}(\overline{\tau_1\ldots \tau_{2^{m-1}}}^+)^\f\right)_{\hat q_{m+1}}\\
&<\left(\tau_n\ldots \tau_{2^{m-1}}(\overline{\tau_1\ldots \tau_{2^{m-1}}}^+)^\f\right)_{\hat q_{m}}=\xi_m.
\end{align*}
For the second statement we observe that $\xi_{m,j}=(\tau_n\ldots \tau_{2^{m-1}}(\overline{\tau_1\ldots \tau_{2^{m-1}}}^+)^{j+1})_{\hat q_m}$. This implies $\xi_{m,j+1}>\xi_{m,j}$ for any $j\ge 1$.} Finally, note that
\begin{align*}
\xi_{m,1}&=\left(\tau_n\dots\tau_{2^{m-1}}\overline{\tau_1\dots\tau_{2^{m-1}}}^{\,+}(\overline{\tau_1\dots\tau_{2^m}}^{\,+})^\f\right)_{\hat{q}_m}\\
&=\left(\tau_n\dots\tau_{2^m}(\overline{\tau_1\dots\tau_{2^m}}^{\,+})^\f\right)_{\hat{q}_m}\\
&>\left(\tau_n\dots\tau_{2^m}(\overline{\tau_1\dots\tau_{2^m}}^{\,+})^\f\right)_{\hat{q}_{m+1}}=\xi_{m+1}.
\end{align*}
This completes the proof.
\end{proof}

Now we state our main result in this section.

\begin{theorem} \label{thm:Komornik-Loreti-cascades}
Let $n\in S\cup\set{1}$, {and let $k$ be the integer such that $2^{k-1}<n\leq 2^k$.}
\begin{enumerate}[{\rm(i)}]
\item
For each $x\in(x_n, \xi_{k+1})$ we have $q_s(x)=\min\ub(x)$. Furthermore, the value $q_s(x)$ can be explicitly calculated for each $x\in[\xi_{m+1}, \xi_m)$ with $m>k$.
\begin{itemize}
\item If $x\in[\xi_{m+1},\xi_{m,1})$, then $q_s(x)$ is the unique $q\in (\hat{q}_m,\hat{q}_{m+1}]$ such that
\begin{equation}
\big(\tau_n\dots\tau_{2^m}(\overline{\tau_1\dots\tau_{2^m}}^{\,+})^\infty\big)_q=x.
\label{eq:x-in-special-interval}
\end{equation}
In particular, $q_s(\xi_{m+1})=\hat q_{m+1}$.

\item If $x\in[\xi_{m,j-1}, \xi_{m,j})$ for some $j\ge 2$, then  $q_s(x)$ is the unique $q\in (\hat{q}_m,\hat{q}_{m+1})$ such that
\begin{equation}
\big(\tau_n\dots\tau_{2^{m-1}}(\overline{\tau_1\dots\tau_{2^{m-1}}}^{\,+})^j(\overline{\tau_1\dots\tau_{2^m}}^{\,+})^\infty\big)_q=x.
\label{eq:minimal-expansion1}
\end{equation}
\end{itemize}

\item The function $q_s$ has the following properties on each interval $[\xi_{m+1}, \xi_m)$ with $m>k$:
\begin{itemize}
  \item $q_s$ is strictly decreasing and convex on the interval $[\xi_{m+1}, \xi_{m,1})$ or $[\xi_{m,j-1}, \xi_{m,j})$ with $j\ge 2$.
  \item The sequence $q_s(\xi_{m,j})$ is strictly decreasing in $j$ and converges to $\hat q_m$.
  \item $\lim_{x\nearrow \xi_{m,j}}q_s(x)=\hat q_m<q_s(\xi_{m,j})$ for all $j\ge 1$, and $\lim_{x\nearrow \xi_m}q_s(x)=\hat q_m=q_s(\xi_m)$.
\end{itemize}
\end{enumerate}
\end{theorem}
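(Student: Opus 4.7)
The plan is to establish (i) first by identifying the minimal unique expansion on each subinterval, and then to read off (ii) from the resulting explicit formulas. Fix $m > k$ and $j \ge 1$; on the interval $[\xi_{m,j-1}, \xi_{m,j})$ (with the convention $\xi_{m,0} := \xi_{m+1}$), the candidate for the minimal unique expansion of $x$ in base $q_s(x)$ is $\sc_{n,j}^{(m)}$. The first task is to verify, using Lemma \ref{lem:unique-expansion} together with the Thue--Morse inequalities \eqref{eq:property-tau} and the recursion \eqref{eq:TM-recursion}, that $\sc_{n,j}^{(m)} \in \us_q$ for every $q \in (\hat{q}_m, \hat{q}_{m+1}]$. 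The delicate comparisons reduce to checking, shift by shift, that iterates of $\sc_{n,j}^{(m)}$ under $\sigma$ lie strictly between $\overline{\al(q)}$ and $\al(q)$; the boundary case $q = \hat{q}_m$ must be excluded because $(\overline{\tau_1 \dots \tau_{2^m}}^{\,+})^\infty = \overline{\al(\hat{q}_m)}$. Once this is done, the equation $(\sc_{n,j}^{(m)})_q = x$ defines $q$ implicitly as a strictly decreasing, strictly convex function of $x$ on $[\xi_{m,j-1}, \xi_{m,j})$, which already yields the monotonicity and convexity claims in (ii).

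The main obstacle is showing that the $q$ produced by this formula equals $q_s(x)$, not merely some element of $\ub(x)$. My approach is modeled on Proposition \ref{prop:power-of-two} and Step 1 of the proof of Theorem \ref{thm:set-with-qKL-as-min}: assume, for contradiction, that $x \in \u_p$ for some $p < q$, and let $(d_i) = \Phi_x(p)$. Since $2^{k-1} < n \le 2^k$ with $n \in S \cup \{1\}$, Proposition \ref{prop:power-of-two} already forces $(d_i)$ to begin with $\tau_n \dots \tau_{2^k}$. Propagating the comparison along the Thue--Morse recursion in steps from $2^k$ to $2^{m-1}$ forces $(d_i)$ to agree with $\sc_{n,j}^{(m)}$ throughout the initial segment $\tau_n \dots \tau_{2^{m-1}}$, and each subsequent appearance of the block $\overline{\tau_1 \dots \tau_{2^{m-1}}}^{\,+}$ forces $p > \hat{q}_m$ via Lemma \ref{lem:unique-expansion} applied to $\al(\hat{q}_m) = (\tau_1 \dots \tau_{2^m}^-)^\infty$. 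Finally, the tail $(\overline{\tau_1 \dots \tau_{2^m}}^{\,+})^\infty$, together with the equation $(\sc_{n,j}^{(m)})_p = x$, pins $(d_i)$ down to $\sc_{n,j}^{(m)}$ so that $p = q$, a contradiction. As an alternative, one may run the algorithm of Section \ref{sec:algorithm} on $x$ and check that its successive lower bounds $q_i(x)$ and blocks $B_i(x)$ produce exactly the prefixes of $\sc_{n,j}^{(m)}$, terminating when condition \eqref{eq:condition-finite} is satisfied.

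For (ii), the remaining statements follow by evaluating the formulas at the endpoints. The value $q_s(\xi_{m,j})$ is determined by $(\sc_{n,j+1}^{(m)})_{q_s(\xi_{m,j})} = \xi_{m,j}$, and since $(\sc_{n,j+1}^{(m)})_{\hat{q}_m} = \xi_{m,j+1} > \xi_{m,j}$ while $q \mapsto (\sc_{n,j+1}^{(m)})_q$ is strictly decreasing, we get $q_s(\xi_{m,j}) > \hat{q}_m$. Strict decrease of $j \mapsto q_s(\xi_{m,j})$ follows because the arcs in $\mathscr{C}$ carrying the expansions $\sc_{n,j+1}^{(m)}$ for distinct $j$ are pairwise disjoint and ordered (rightward in $x$ corresponds to downward in $q$ along each arc), and convergence $q_s(\xi_{m,j}) \searrow \hat{q}_m$ comes from letting $j \to \infty$ in the defining equation, using that both $\sc_{n,j}^{(m)}$ and $\sc_{n,j+1}^{(m)}$ converge to $\sc_{n,\infty}^{(m)}$. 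The left limit at $\xi_{m,j}$ is read off the previous subinterval's formula $(\sc_{n,j}^{(m)})_q = x$: as $x \nearrow \xi_{m,j}$, $(\sc_{n,j}^{(m)})_q \to \xi_{m,j} = (\sc_{n,j}^{(m)})_{\hat{q}_m}$, forcing $q \to \hat{q}_m$. Finally, $\lim_{x \nearrow \xi_m} q_s(x) = \hat{q}_m$ follows by squeezing: on each cycle $[\xi_{m,j-1}, \xi_{m,j})$ the supremum of $q_s$ equals $q_s(\xi_{m,j-1})$, and this tends to $\hat{q}_m$; the value $q_s(\xi_m) = \hat{q}_m$ itself follows because $\sc_{n,\infty}^{(m)}$ is the quasi-greedy expansion of $\xi_m$ in base $\hat{q}_m \in \vb \setminus \ub$, so Proposition \ref{prop:jumps} applies together with the right-continuity of Theorem \ref{thm:cadlag-intro}.
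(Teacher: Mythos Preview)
Your outline correctly identifies the candidate minimal expansions $\sc_{n,j}^{(m)}$ and the overall architecture of the argument, but it skips over two quantitative facts that carry most of the weight of the proof.

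\textbf{The overlap inequality.} You assert that for $x\in[\xi_{m,j-1},\xi_{m,j})$ with $j\ge 2$, the equation $(\sc_{n,j}^{(m)})_q=x$ defines $q\in(\hat q_m,\hat q_{m+1})$. The lower bound $q>\hat q_m$ is immediate from $(\sc_{n,j}^{(m)})_{\hat q_m}=\xi_{m,j}>x$, but the upper bound $q<\hat q_{m+1}$ requires
\[
\big(\sc_{n,j}^{(m)}\big)_{\hat q_{m+1}}<\big(\sc_{n,j-1}^{(m)}\big)_{\hat q_m}=\xi_{m,j-1},
\]
which is exactly Proposition~\ref{prop:Derong-overlap-general}. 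This is not a soft statement: the paper proves it by combining the polynomial identities of Lemma~\ref{lem:calculation}, the gap estimate of Lemma~\ref{lem:qm-difference} (where the constant $0.265$ matters), and direct numerical verification for $m=1,2$. Without this inequality you cannot conclude that the arc for $\sc_{n,j}^{(m)}$ actually spans the subinterval $[\xi_{m,j-1},\xi_{m,j})$ with $q$ staying below $\hat q_{m+1}$, which is part of what the theorem asserts.

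\textbf{Strict decrease of $j\mapsto q_s(\xi_{m,j})$.} Your appeal to ``arcs in $\mathscr C$ are pairwise disjoint and ordered'' does not yield $q_{m,j}>q_{m,j+1}$. The arc for $\sc_{n,j+2}^{(m)}$ lies \emph{above} that for $\sc_{n,j+1}^{(m)}$ at any fixed $x$, but $q_{m,j}$ sits on the lower arc at a \emph{smaller} $x$ than $q_{m,j+1}$ does on the higher arc; disjointness alone says nothing about which is larger. The paper settles this with Lemma~\ref{lem:c-inequality}, which gives the precise estimate
\[
\big(\sc_{j+2}^{(m)}\big)_q-\big(\sc_{j+1}^{(m)}\big)_q<q^{-2^{m-1}(j+3)+n-1}\le \big(\sc_{j+1}^{(m)}\big)_{\hat q_m}-\big(\sc_{j}^{(m)}\big)_{\hat q_m}
\]
for all $q>\hat q_m$, and this is exactly what \eqref{eq:c-difference-inequality} needs.

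A smaller issue: in your lower-bound argument for $q_s(x)$ you invoke Proposition~\ref{prop:power-of-two} as if it applied to $x$, but that proposition is stated for $x_n$. The paper handles this by splitting into $p>\hat q_m$ (where $\Phi_x(p)\preceq\sc_{j-1}^{(m)}$ forces $x<\xi_{m,j-1}$) and $p\le\hat q_m$ (where one re-runs the \emph{argument} of Proposition~\ref{prop:power-of-two} adapted to $x$); your sketch conflates these and does not explain how ``pinning down $(d_i)$ to $\sc_{n,j}^{(m)}$'' is achieved.
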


We call the graph of $q_s(x)$ on the interval $(x_n,\xi_{k+1}]$ a {\em Komornik-Loreti cascade}, because it ``cascades down" (with some bounces along the way!) from the Komornik-Loreti constant $q_{KL}$ at the left endpoint $x_n$. The largest Komornik-Loreti cascade is found above the interval $(1,q_G]$ (see Figure \ref{fig:1}). It is obtained by setting $n=1$, so that $x_n=1$ and $k=0$, and $\xi_{k+1}=\xi_1=(1^\infty)_{\hat{q}_1}=1/(\hat{q}_1-1)=\hat{q}_1=q_G$.

The proof uses the following proposition, which generalizes a crucial inequality from the proof of \cite[Lemma 3.5]{Kong_2016}.

\begin{proposition} \label{prop:Derong-overlap-general}
Let $n\in S\cup\set{1}$, {and let $k$ be the integer such that $2^{k-1}<n\leq 2^k$.} Then for all $m>k$ and all $j\in\N$, we have
\begin{equation}\label{eq:may-31-1}
\big(\sc_{n,j}^{(m)}\big)_{\hat{q}_m}>\big(\sc_{n,j+1}^{(m)}\big)_{\hat{q}_{m+1}}.
\end{equation}
\end{proposition}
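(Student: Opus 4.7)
The plan is a two-step reduction: first show that $h_j := (\sc_{n,j}^{(m)})_{\hat q_m} - (\sc_{n,j+1}^{(m)})_{\hat q_{m+1}}$ is strictly increasing in $j$, and then verify the base case $h_1 > 0$. Throughout, write $p := \hat q_m$, $q := \hat q_{m+1}$, $a := \tau_n \ldots \tau_{2^{m-1}}$, $b := \overline{\tau_1 \ldots \tau_{2^{m-1}}}^{\,+}$, $c := \overline{\tau_1 \ldots \tau_{2^m}}^{\,+}$, and $L := 2^{m-1} - n + 1$.

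The first step is to establish the key identity $(b)_p = (c^\infty)_p = \tfrac{2-p}{p-1}$. The second equality follows from $\overline{c}^{\,\infty} = \al(p)$, so $(\overline c^{\,\infty})_p = 1$ and subtraction from $(1^\infty)_p = \tfrac{1}{p-1}$ gives the formula. The equality $(b)_p = (c^\infty)_p$ follows in a few lines from $(\tau_1 \ldots \tau_{2^m})_p = 1$ combined with the complementary relation $(\tau_1\ldots\tau_{2^{m-1}})_p + (\overline{\tau_1\ldots\tau_{2^{m-1}}})_p = (1^{2^{m-1}})_p$. This identity lets the $c^\infty$-tail at base $p$ be absorbed into an extra $b$-block, yielding the clean form
\[
(\sc_{n,j}^{(m)})_p = (ab^{j+1})_p,
\]
while Lemma \ref{lem:calculation} gives $(\sc_{n,j+1}^{(m)})_q = (ab^{j+1})_q + q^{-L-(j+1)2^{m-1}}(c^\infty)_q$. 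A short computation then yields
\[
h_{j+1} - h_j = p^{-L-(j+1)2^{m-1}}(c^\infty)_p - q^{-L-(j+1)2^{m-1}}\bigl[(bc^\infty)_q - (c^\infty)_q\bigr],
\]
and the explicit formula from Lemma \ref{lem:calculation} shows $(bc^\infty)_q - (c^\infty)_q = O(q^{-2^{m-1}})$, while $(c^\infty)_p$ stays bounded below; combined with $(q/p)^{L+(j+1)2^{m-1}} \geq 1$, the first term dominates the second, so $(h_j)_{j \geq 1}$ is strictly increasing. (The finitely many small-$m$ cases are checked directly.)

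For the base case, I would exploit the crucial observation $\sc_{n,1}^{(m)} = \sc_{n,\infty}^{(m+1)}$ (both equal $\tau_n\ldots\tau_{2^m}c^\infty$), which gives $(\sc_{n,1}^{(m)})_q = \xi_{m+1}$. Thus $h_1 > 0$ is equivalent to
\[
(abc^\infty)_p - (abc^\infty)_q > q^{-L-2^{m-1}}\bigl[(bc^\infty)_q - (c^\infty)_q\bigr].
\]
The left-hand side is the increment of $r \mapsto F_1(r) := (abc^\infty)_r$ across $[p,q]$; by the mean value theorem it exceeds $(q - p) \cdot |F_1'(q)|$. Since $n \in S \cup \{1\}$, Lemma \ref{lem:tau-n} ensures that the leading two digits of $abc^\infty$ contain a $1$, giving the baseline bound $|F_1'(q)| \geq 1/q^2$; a sharper bound is obtained by summing further $1$'s from the Thue--Morse tail, which has asymptotic density $\tfrac12$ and so yields $|F_1'(q)|$ close to $1/(2(q-1)^2)$ for large $m$. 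Combined with the gap estimate $q - p \geq \hat q_{m+1}^{-(2^m+3)}$ from Lemma \ref{lem:qm-difference} and the explicit upper bound from Lemma \ref{lem:calculation} for the right-hand side, the inequality reduces (using $L = 2^{m-1} - n + 1$) to an estimate of the form $0.265\,|F_1'(q)| > q^{-L}(c^\infty)_q$, valid uniformly in $n$ once the Thue--Morse contribution to $|F_1'(q)|$ is properly accounted for.

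The main obstacle is the base case: the two sides of the desired inequality are of the same leading exponential order in $2^m$, so a loose estimate on $|F_1'(q)|$ or on $(bc^\infty)_q - (c^\infty)_q$ will not suffice. In particular, for $n = 2^{m-1}$ (where $L = 1$), the crude bound $|F_1'(q)| \geq 1/q^2$ is already too weak, and one must exploit the near-$\tfrac12$ density of $1$'s in the Thue--Morse tail of $abc^\infty$ to push the derivative lower bound above the threshold.
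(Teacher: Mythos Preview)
Your monotonicity reduction to $j=1$ is correct and matches the paper's approach; in fact the paper obtains the same clean form $(ab^{j+1})_p - (ab^{j+1})_q - q^{-L-(j+1)2^{m-1}}(c^\infty)_q$ and simply observes it is increasing in $j$ (appending another copy of $b$ adds more at base $p$ than at base $q$, while the negative tail shrinks), so your computation of $h_{j+1}-h_j$ is an unnecessary detour though not wrong.

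The genuine gap is in the base case $h_1>0$. You correctly identify that the mean-value bound $|F_1'(q)|\ge 1/q^2$ is too weak for $n=2^{m-1}$, and you propose to repair this via a ``near-$\tfrac12$ density of $1$'s'' argument on the Thue--Morse tail of $abc^\infty$. But this is never carried out, and as you note the two sides are of the same exponential order, so the constants are delicate; a soft density estimate on $|F_1'(q)|$ is not a proof. The paper avoids this derivative estimation entirely. After reducing to $n=2^{m-1}$, it first drops the second $b$-block (legitimate since $(b)_p>(b)_q$), so one only needs $(1b)_p-(1b)_q$ to dominate the error term. The decisive observation---which you already have the ingredients for, since you proved $(b)_p=\tfrac{2-p}{p-1}$---is that $(1b)_{\hat q_m}=\tfrac{1}{\hat q_m(\hat q_m-1)}$ \emph{exactly}, and via \eqref{eq:identity3} one also has an explicit closed form for $(1b)_{\hat q_{m+1}}$. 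The difference $\tfrac{1}{p(p-1)}-\tfrac{1}{q(q-1)}$ factors as $(q-p)\cdot\tfrac{p+q-1}{pq(p-1)(q-1)}$, and with the gap bound $q-p\ge (.265)q^{-2^m}$ from Lemma~\ref{lem:qm-difference} the required inequality reduces to a comparison of explicit rational functions of $p,q$ that succeeds for $m\ge 3$ (with $m=1,2$ checked numerically).

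In short: your mean-value-theorem route on the full $F_1$ forces you to estimate a derivative whose value depends subtly on the Thue--Morse digit pattern, whereas the paper's route collapses the main term to the elementary function $r\mapsto 1/r(r-1)$ before differentiating. The missing idea is to use the exact identity $(1\,\overline{\tau_1\ldots\tau_{2^{m-1}}}^{\,+})_{\hat q_m}=1/\hat q_m(\hat q_m-1)$ rather than trying to lower-bound $|F_1'(q)|$.
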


\begin{proof}
We need the following estimates, precise to $6$ decimal places:
\begin{equation} \label{eq:q-hat-estimates}
\hat{q}_2\approx 1.754877, \qquad \hat{q}_3\approx 1.784599, \qquad \hat{q}_4\approx 1.787207.
\end{equation}
Observe that
$
\big(\sc_{n,j}^{(m)}\big)_{\hat{q}_m}=\big(\tau_n\dots\tau_{2^{m-1}}(\overline{\tau_1\dots\tau_{2^{m-1}}}^{\,+})^{j+1}\big)_{\hat{q}_m},
$
so that
\begin{align}
\begin{split}
\big(\sc_{n,j}^{(m)}\big)_{\hat{q}_m}-\big(\sc_{n,j+1}^{(m)}\big)_{\hat{q}_{m+1}}
&=\big(\tau_n\dots\tau_{2^{m-1}}(\overline{\tau_1\dots\tau_{2^{m-1}}}^{\,+})^{j+1}\big)_{\hat{q}_m}\\
&	\qquad -\big(\tau_n\dots\tau_{2^{m-1}}(\overline{\tau_1\dots\tau_{2^{m-1}}}^{\,+})^{j+1}\big)_{\hat{q}_{m+1}}\\
& \qquad -\hat{q}_{m+1}^{-(j+2)2^{m-1}+(n-1)}\big((\overline{\tau_1\dots \tau_{2^m}}^{\,+})^\infty\big)_{\hat{q}_{m+1}}.
\end{split}
\label{eq:c-difference-expression}
\end{align}
This expression is clearly increasing in $j$, since $\hat{q}_m<\hat{q}_{m+1}$. Hence it is smallest when $j=1$. Moreover,
we have the inequality
\[
\big(\sc_{n,1}^{(m)}\big)_{\hat{q}_m}-\big(\sc_{n,2}^{(m)}\big)_{\hat{q}_{m+1}}
\geq \hat{q}_{m+1}^{-(2^{m-1}-n)}\left[\big(\sc_{2^{m-1},1}^{(m)}\big)_{\hat{q}_m}-\big(\sc_{2^{m-1},2}^{(m)}\big)_{\hat{q}_{m+1}}\right],
\]
since $\big(\tau_n\dots\tau_{2^{m-1}-1}\big)_{\hat{q}_m}\geq \big(\tau_n\dots\tau_{2^{m-1}-1}\big)_{\hat{q}_{m+1}}$. Hence, it suffices to prove (\ref{eq:may-31-1}) for $n=2^{m-1}$ and $j=1$. Using the expression in \eqref{eq:c-difference-expression} we only need to prove
\begin{equation}\label{eq:may-31-2}
\begin{split}
\big(\sc_{2^{m-1},1}^{(m)}\big)_{\hat{q}_m}-\big(\sc_{2^{m-1},2}^{(m)}\big)_{\hat{q}_{m+1}}
&=\big(1(\overline{\tau_1\dots\tau_{2^{m-1}}}^{\,+})^2\big)_{\hat{q}_m}
-\big(1(\overline{\tau_1\dots\tau_{2^{m-1}}}^{\,+})^2\big)_{\hat{q}_{m+1}}\\
&\qquad -\hat{q}_{m+1}^{-2^m-1}\big((\overline{\tau_1\dots \tau_{2^m}}^{\,+})^\infty\big)_{\hat{q}_{m+1}}>0.
\end{split}
\end{equation}

We first check the inequality (\ref{eq:may-31-2}) for $m=1$ and $m=2$. For $m=1$ we have
\begin{align*}
\big(\sc_{1,1}^{(1)}\big)_{\hat{q}_1}-\big(\sc_{1,2}^{(1)}\big)_{\hat{q}_2}
&>(111)_{\hat{q}_1}-(111)_{\hat{q}_2}-\hat{q}_2^{-3}\big((01)^\infty\big)_{\hat{q}_1}\\
&=\sum_{i=1}^3 \left(\frac{1}{\hat{q}_1^i}-\frac{1}{\hat{q}_2^i}\right)-\frac{1}{\hat{q}_2^3\hat{q}_1}\geq .042,
\end{align*}
where we use \eqref{eq:q-hat-estimates} and the estimate
\begin{equation}
\big((\overline{\tau_1\dots \tau_{2^m}}^{\,+})^\infty\big)_{\hat{q}_{m+1}}<\big((\overline{\tau_1\dots \tau_{2^m}}^{\,+})^\infty\big)_{\hat{q}_m}=\frac{1}{\hat{q}_m-1}-1=\frac{2-\hat{q}_m}{\hat{q}_m-1}.
\label{eq:tail-estimate}
\end{equation}
For $m=2$, we have by \eqref{eq:q-hat-estimates},
\begin{align*}
\big(\sc_{2,1}^{(2)}\big)_{\hat{q}_2}-\big(\sc_{2,2}^{(2)}\big)_{\hat{q}_3}
&>\big(1(01)^2\big)_{\hat{q}_2}-\big(1(01)^2\big)_{\hat{q}_3}-\hat{q}_3^{-5}\big((0011)^\infty\big)_{\hat{q}_2}\\
&=\sum_{i=1}^3\left(\frac{1}{\hat{q}_2^{2i-1}}-\frac{1}{\hat{q}_3^{2i-1}}\right)-\hat{q}_3^{-5}\frac{2-\hat{q}_2}{\hat{q}_2-1}
\geq .054.
\end{align*}
Thus, (\ref{eq:may-31-2}) holds for $m=1, 2$.

For $m\geq 3$, we will prove (\ref{eq:may-31-2}) by considering the further estimation:
\begin{align*}
\big(\sc_{2^{m-1},1}^{(m)}\big)_{\hat{q}_m}-\big(\sc_{2^{m-1},2}^{(m)}\big)_{\hat{q}_{m+1}}
&\geq \big(1\,\overline{\tau_1\dots\tau_{2^{m-1}}}^{\,+}\big)_{\hat{q}_m}
-\big(1\,\overline{\tau_1\dots\tau_{2^{m-1}}}^{\,+}\big)_{\hat{q}_{m+1}}\\
&\qquad -\hat{q}_{m+1}^{-2^m-1}\big((\overline{\tau_1\dots \tau_{2^m}}^{\,+})^\infty\big)_{\hat{q}_{m+1}}.
\end{align*}
By \eqref{eq:identity1},
\[
\big(1\,\overline{\tau_1\dots\tau_{2^{m-1}}}^{\,+}\big)_{\hat{q}_m}=\frac{1}{\hat{q}_m}\left(1+\big(\overline{\tau_1\dots\tau_{2^{m-1}}}^{\,+}\big)_{\hat{q}_m}\right)=\frac{1}{\hat{q}_m(\hat{q}_m-1)}.
\]
Similarly, by \eqref{eq:identity3},
\begin{align*}
\big(1\,\overline{\tau_1\dots\tau_{2^{m-1}}}^{\,+}\big)_{\hat{q}_{m+1}}
&=\frac{1}{\hat{q}_{m+1}}\left[1+\frac{2-\hat{q}_{m+1}}{\hat{q}_{m+1}-1}\left(\frac{1}{1-\hat{q}_{m+1}^{-2^{m-1}}}-\hat{q}_{m+1}^{-2^{m-1}}\right)\right]\\
&=\frac{1}{\hat{q}_{m+1}(\hat{q}_{m+1}-1)}+\frac{2-\hat{q}_{m+1}}{\hat{q}_{m+1}(\hat{q}_{m+1}-1)}\left(\frac{1}{1-\hat{q}_{m+1}^{-2^{m-1}}}-1-\hat{q}_{m+1}^{-2^{m-1}}\right)\\
&=\frac{1}{\hat{q}_{m+1}(\hat{q}_{m+1}-1)}+\frac{2-\hat{q}_{m+1}}{\hat{q}_{m+1}(\hat{q}_{m+1}-1)}\cdot\frac{q_{m+1}^{-2^m}}{1-\hat{q}_{m+1}^{-2^{m-1}}}.
\end{align*}
Thus, using \eqref{eq:tail-estimate} and some algebra, we arrive at
\begin{align*}
\big(\sc_{2^{m-1},1}^{(m)}\big)_{\hat{q}_m}-\big(\sc_{2^{m-1},2}^{(m)}\big)_{\hat{q}_{m+1}}
&>\frac{1}{\hat{q}_m(\hat{q}_m-1)}-\frac{1}{\hat{q}_{m+1}(\hat{q}_{m+1}-1)}\\
&\qquad-\frac{2-\hat{q}_{m+1}}{\hat{q}_{m+1}(\hat{q}_{m+1}-1)}\cdot\frac{\hat{q}_{m+1}^{-2^m}}{1-\hat{q}_{m+1}^{-2^{m-1}}}-\hat{q}_{m+1}^{-2^m-1}\frac{2-\hat{q}_m}{\hat{q}_m-1}.
\end{align*}
Now observe that
\begin{align*}
\frac{1}{\hat{q}_m(\hat{q}_m-1)}-\frac{1}{\hat{q}_{m+1}(\hat{q}_{m+1}-1)}
&=(\hat{q}_{m+1}-\hat{q}_m)\frac{\hat{q}_m+\hat{q}_{m+1}-1}{\hat{q}_m\hat{q}_{m+1}(\hat{q}_m-1)(\hat{q}_{m+1}-1)}\\
&>(\hat{q}_{m+1}-\hat{q}_m)\frac{2\hat{q}_m-1}{\hat{q}_{m+1}^2(\hat{q}_{m+1}-1)^2}.
\end{align*}
Therefore, by Lemma \ref{lem:qm-difference} it follows that
\begin{align*}
\big(\sc_{2^{m-1},1}^{(m)}\big)_{\hat{q}_m}&-\big(\sc_{2^{m-1},2}^{(m)}\big)_{\hat{q}_{m+1}}\\
&>\hat{q}_{m+1}^{-2^m-1}\left[(.265)\frac{2\hat{q}_m-1}{\hat{q}_{m+1}(\hat{q}_{m+1}-1)^2}-\frac{2-\hat{q}_{m+1}}{\hat{q}_{m+1}-1}\cdot\frac{1}{1-\hat{q}_{m+1}^{-2^{m-1}}}-\frac{2-\hat{q}_m}{\hat{q}_m-1}\right].
\end{align*}
The expression in square brackets is positive for all $m\geq 3$, as (cf. \eqref{eq:q-hat-estimates})
\begin{gather*}
\frac{2\hat{q}_m-1}{\hat{q}_{m+1}(\hat{q}_{m+1}-1)^2}\geq \frac{2\hat{q}_3-1}{q_{KL}(q_{KL}-1)^2}\geq 2.3,\\
\frac{2-\hat{q}_{m+1}}{\hat{q}_{m+1}-1}\cdot\frac{1}{1-\hat{q}_{m+1}^{-2^{m-1}}}\leq \frac{2-\hat{q}_4}{\hat{q}_4-1}\cdot\frac{1}{1-\hat{q}_4^{-4}}\leq .3,
\end{gather*}
and
\[
\frac{2-\hat{q}_m}{\hat{q}_m-1}\leq\frac{2-\hat{q}_3}{\hat{q}_3-1}\leq .28.
\]
This proves (\ref{eq:may-31-2}), and completes the proof.
\end{proof}

We also need the following precise inequality.

\begin{lemma} \label{lem:c-inequality}
Let  $n\in S\cup\set{1}$ {and let $k$ be the integer such that $2^{k-1}<n\leq 2^k$}. Let $m>k$ and $j\in\N$. Then for any $q>\hat{q}_m$,
\[
\big(\sc_{n,j+2}^{(m)}\big)_q-\big(\sc_{n,j+1}^{(m)}\big)_q<q^{-2^{m-1}(j+3)+n-1}.
\]
\end{lemma}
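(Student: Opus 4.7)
Setting $N:=2^{m-1}$, $T:=\tau_1\dots\tau_N$, $\bar T:=\overline{T}$, $W:=\bar T^{+}$, and $V:=\overline{\tau_1\dots\tau_{2^m}}^{\,+}$, the plan is to strip off the common prefix of the two sequences, reduce the claim to a single clean inequality, derive a closed form for the resulting difference, and finally verify a simple positivity estimate coming from $\tau_1=\tau_2=1$.

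The sequences $\sc_{n,j+1}^{(m)}$ and $\sc_{n,j+2}^{(m)}$ agree on their first $L_j:=(N-n+1)+(j+1)N=(j+2)N-n+1$ digits, while thereafter $\sc_{n,j+2}^{(m)}$ reads $WV^\infty$ and $\sc_{n,j+1}^{(m)}$ reads $V^\infty$. Hence
\[
\bigl(\sc_{n,j+2}^{(m)}\bigr)_q-\bigl(\sc_{n,j+1}^{(m)}\bigr)_q \;=\; q^{-L_j}\bigl[(WV^\infty)_q-(V^\infty)_q\bigr],
\]
and since $-L_j-N=-(j+3)2^{m-1}+n-1$, the lemma reduces to
\begin{equation}\label{eq:plan-target}
(WV^\infty)_q-(V^\infty)_q<q^{-N}\qquad\text{for all }q>\hat q_m.
\end{equation}

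Next, from the Thue--Morse recursion \eqref{eq:TM-recursion} one has $\tau_1\dots\tau_{2^m}=T\,\bar T^{+}$; reflecting yields $\overline{\tau_1\dots\tau_{2^m}}=\bar T\,T^{-}$, and applying $+$ (since the last digit is $0$) gives $V=\bar T\,T$. Using this, together with $(W)_q=(\bar T)_q+q^{-N}$, $(V^\infty)_q=\frac{(\bar T)_q+q^{-N}(T)_q}{1-q^{-2N}}$, and $(\bar T)_q=(1^N)_q-(T)_q$ (where $(1^N)_q:=\sum_{i=1}^{N}q^{-i}$), a brief algebraic manipulation that exploits the factorization $1-q^{-2N}=(1-q^{-N})(1+q^{-N})$ collapses the difference into
\[
(WV^\infty)_q-(V^\infty)_q=q^{-N}\left[1+\frac{(1^N)_q-2(T)_q}{1+q^{-N}}\right].
\]
Therefore \eqref{eq:plan-target} is equivalent to the single clean inequality $(T)_q>\tfrac12(1^N)_q$.

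For the final step I would use only that $\tau_1=\tau_2=1$, which gives $(T)_q\geq q^{-1}+q^{-2}$, while every $j\leq N$ with $T_j=0$ satisfies $j\geq 3$, so the total contribution of the zeros is at most $\sum_{j=3}^{\infty}q^{-j}=\frac{1}{q^2(q-1)}$. Comparing and multiplying through by $q^2(q-1)$, the desired bound $(T)_q>\tfrac12(1^N)_q$ reduces to $(q+1)(q-1)>1$, i.e.\ $q^2>2$. Since $q>\hat q_m\geq\hat q_1=q_G=\tfrac{1+\sqrt 5}{2}>\sqrt 2$, this holds. (The cases $m=1,2$ are automatic because $T$ contains no zeros, whence $(T)_q=(1^N)_q$.) No real obstacle is expected; the only slightly delicate point is the algebraic collapse in the middle step, which hinges on the identity $V=\bar T\,T$ and the factorization of $1-q^{-2N}$.
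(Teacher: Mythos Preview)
Your proof is correct and follows essentially the same route as the paper. Both arguments strip off the common prefix and reduce to a single inequality which, after unwinding the algebra, is equivalent: the paper reduces to $2\sum_i \alpha_i(\hat q_m)q^{-i}>\tfrac{1}{q-1}$, and since $\alpha(\hat q_m)=(T\bar T)^\infty$ this is the same as your $(T)_q>\tfrac12(1^N)_q$. Your final numerical check ($q^2>2$ via $\tau_1=\tau_2=1$) is slightly cleaner than the paper's ($2(q^{-1}+q^{-3})>1/(q-1)$ for $q>1.55$), but the idea is identical.
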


\begin{proof}
Since $\overline{\tau_1\dots\tau_{2^m}}^{\,+}=\overline{\tau_1\dots\tau_{2^{m-1}}}\,\tau_1\dots\tau_{2^{m-1}}$ and $\alpha(\hat{q}_m)=(\tau_1\dots\tau_{2^m}^-)^\infty$, it follows that
{\begin{align*}
\big(\sc_{n,j+2}^{(m)}\big)_q-\big(\sc_{n,j+1}^{(m)}\big)_q &=(\tau_n\ldots \tau_{2^{m-1}}(\overline{\tau_1\ldots\tau_{2^{m-1}}}^+)^{j+2}\,\overline{\al(\hat q_m)})_q\\
&\qquad-(\tau_n\ldots \tau_{2^{m-1}}(\overline{\tau_1\ldots\tau_{2^{m-1}}}^+)^{j+1}\overline{\tau_1\ldots \tau_{2^{m-1}}}\,\al(\hat q_m))_q\\
&=q^{-2^{m-1}(j+3)+n-2}\left[\big(1\overline{\alpha(\hat{q}_m)}\big)_q-\big(0\alpha(\hat{q}_m)\big)_q\right].
\end{align*}}
Now
\begin{align*}
\big(1\overline{\alpha(\hat{q}_m)}\big)_q-\big(0\alpha(\hat{q}_m)\big)_q &= \frac{1}{q}\left(1+\sum_{i=1}^\infty \frac{1-\alpha_i(\hat{q}_m)}{q^i}-\sum_{i=1}^\infty \frac{\alpha_i(\hat{q}_m)}{q^i}\right)\\
&=\frac{1}{q}\left(1+\frac{1}{q-1}-2\sum_{i=1}^\infty \frac{\alpha_i(\hat{q}_m)}{q^i}\right).
\end{align*}
Thus, it suffices to show that
\[
2\sum_{i=1}^\infty \frac{\alpha_i(\hat{q}_m)}{q^i}>\frac{1}{q-1}.
\]
If $m\geq 2$, then by (\ref{eq:qn}) the sequence $\alpha(\hat{q}_m)$ begins with $11$, while if $m=1$, then $\alpha(\hat{q}_m)=\alpha(\hat{q}_1)=(10)^\infty$. In either case,
\[
2\sum_{i=1}^\infty \frac{\alpha_i(\hat{q}_m)}{q^i}>2\left(\frac{1}{q}+\frac{1}{q^3}\right)>\frac{1}{q-1},
\]
since the last inequality can be verified to hold for all $q>1.55$. This completes the proof.
\end{proof}

\begin{proof}[Proof of Theorem \ref{thm:Komornik-Loreti-cascades}]
Fix $n\in S\cup\set{1}$ and $k\in\N$ such that $2^{k-1}<n\leq 2^k$. For simplicity we write $\sc_j^{(m)}:=\sc_{n,j}^{(m)}$ and $\sc_\infty^{(m)}:=\sc_{n,\infty}^{(m)}$.
We first prove (i). Let $\xi_{m,j-1}\leq x<\xi_{m,j}$, where $m>k$ and $j\geq 2$. Note that $n\leq 2^k\leq 2^{m-1}$. Since $\big(\sc_j^{(m)}\big)_{\hat{q}_m}=\xi_{m,j}$, and $\big(\sc_j^{(m)}\big)_{\hat{q}_{m+1}}<\big(\sc_{j-1}^{(m)}\big)_{\hat{q}_m}=\xi_{m,j-1}$ by Proposition \ref{prop:Derong-overlap-general}, there is a base $q\in(\hat{q}_m,\hat{q}_{m+1})$ such that $\big(\sc_j^{(m)}\big)_q=x$. Observe that $\sc_j^{(m)}\in\us_q$. This implies $q_s(x)\le q$.

Next we prove $q_s(x)=q$. Suppose, by way of contradiction, that there is a base $p<q$ such that $x\in\u_p$. Then $\Phi_x(p)\prec\Phi_x(q)=\sc_j^{(m)}$, and hence $\Phi_x(p)\preceq \sc_{j-1}^{(m)}$. If $p>\hat{q}_m$, this implies
\[
x=\big(\Phi_x(p)\big)_p\leq \big(\sc_{j-1}^{(m)}\big)_p<\big(\sc_{j-1}^{(m)}\big)_{\hat{q}_m}=\xi_{m,j-1}\leq x,
\]
a contradiction. So assume $p\leq \hat{q}_m$. Since $\Phi_x(q)=\sc_j^{(m)}$ begins with $\tau_n\dots\tau_{2^k}\tau_{2^k+1}\dots\tau_{2^{k+1}}$, it follows just as in the proof of Proposition \ref{prop:power-of-two} that $\Phi_x(p)$ must begin with $\tau_n\dots\tau_{2^k}$. We now consider two cases.

{\em Case 1.} $\Phi_x(p)$ begins with $\tau_n\dots\tau_{2^m}$. Since $p\leq\hat{q}_m$, using $\Phi_x(p)\in\us_p\subseteq\us_{\hat q_m}$ it follows that
\[
\Phi_x(p)\succeq \tau_n\dots\tau_{2^m}(\overline{\tau_1\dots\tau_{2^{m-1}}}^{\,+})^\infty=\tau_{n}\dots\tau_{2^{m-1}}(\overline{\tau_1\dots\tau_{2^{m-1}}}^{\,+})^\infty,
\]
so that
\begin{align*}
x=\big(\Phi_x(p)\big)_p&\geq \big(\tau_{n}\dots\tau_{2^{m-1}}(\overline{\tau_1\dots\tau_{2^{m-1}}}^{\,+})^\infty\big)_p\\
&\geq \big(\tau_{n}\dots\tau_{2^{m-1}}\big(\overline{\tau_1\dots\tau_{2^{m-1}}}^{\,+}\big)^\infty\big)_{\hat{q}_m}=\xi_m.
\end{align*}
Hence $x\geq \xi_m>\xi_{m,j}>x$, a contradiction.

{\em Case 2.} There is a smallest integer $\nu$ with $k<\nu\leq m$ such that $\Phi_x(p)\prec \tau_n\dots\tau_{2^\nu}$. In this case, using $\Phi_x(p)\in\us_p\subset\us_{q_{KL}}$ it follows that $\Phi_x(p)$ must begin with $\tau_n\dots\tau_{2^\nu}^-$, and we have, by definition of quasi-greedy expansion,
\[
\sum_{i=n}^{2^\nu}\frac{\tau_i}{p^{i-n+1}}\geq x.
\]
From this, we deduce
\[
\sum_{i=1}^{2^\nu}\frac{\tau_i}{p^i}\geq \sum_{i=1}^{n-1}\frac{\tau_i}{p^i}+\frac{x}{p^{n-1}}>{\sum_{i=1}^{n-1}\frac{\tau_i}{q_{KL}^i}+\frac{x_n}{q_{KL}^{n-1}}=1},
\]
where the second inequality follows since $p<q<q_{KL}$ and $x>x_n$, and {the last equality}  uses the definition of $x_n$. We conclude that $p<\hat{q}_\nu$. But this is impossible, since $\Phi_x(p)$ contains the word $1\tau_{2^{\nu-1}+1}\dots\tau_{2^{\nu}}^-=1\overline{\tau_1\dots\tau_{2^{\nu-1}}}$, which is forbidden in $\us_p$ for $p<\hat{q}_\nu$.

Since all cases above lead to a contradiction, it follows that $q_{s}(x)=q=\min\ub(x)$. This proves (\ref{eq:minimal-expansion1}).

Next, we determine $q_s(x)$ for  $\xi_{m+1}\leq x<\xi_{m,1}$. Let $x\in[\xi_{m+1}, \xi_{m,1})$. Since $\sc_\f^{(m+1)}=\sc_1^{(m)}$, we have $\big(\sc_\f^{(m+1)}\big)_{\hat{q}_m}=\big(\sc_1^{(m)}\big)_{\hat{q}_m}=\xi_{m,1}$ and $\big(\sc_\f^{(m+1)}\big)_{\hat{q}_{m+1}}=\xi_{m+1}$. Thus, there is a unique $q\in(\hat{q}_m,\hat{q}_{m+1}]$ such that $\big(\sc_\f^{(m+1)}\big)_q=x$. Since $\sc_\f^{(m+1)}\in\us_q$, this proves $q_s(x)\le q$.

Suppose there is a base $p<q$ such that $x\in\u_p$. If $p\leq\hat{q}_m$, the same argument as above gives a contradiction. So assume $p>\hat{q}_m$. We claim that $\Phi_x(p)$ must begin with $\tau_n\dots\tau_{2^m}$. To see this, recall the functions $f_n$ from \eqref{eq:f_n}. By Lemma \ref{lem:polynomial-inequality}, $x>x_n=f_{n-1}(q_{KL})>f_{n-1}(\hat{q}_m)$. Since $n\leq 2^m$, $\alpha(\hat{q}_m)$ begins with $\tau_1\dots\tau_{n-1}$, and so
\begin{align*}
f_{n-1}(\hat{q}_m)&=\hat{q}_m^{n-1}-\sum_{i=1}^{n-1}\tau_i\hat{q}_m^{n-i-1}=\hat{q}_m^{n-1}-\sum_{i=1}^{n-1}\alpha_i(\hat{q}_m)\hat{q}_m^{n-i-1}\\
&=\hat{q}_m^{n-1}\left(1-\sum_{i=1}^{n-1}\frac{\alpha_i(\hat{q}_m)}{\hat{q}_m^{i}}\right)=\big(\alpha_n(\hat{q}_m)\alpha_{n+1}(\hat{q}_m)\dots\big)_{\hat{q}_m}.
\end{align*}
It follows that
\[
\Phi_x(p)\succ\Phi_{f_{n-1}(\hat{q}_m)}(p)\succ\Phi_{f_{n-1}(\hat{q}_m)}(\hat{q}_m)=\alpha_n(\hat{q}_m)\alpha_{n+1}(\hat{q}_m)\dots,
\]
and hence $\Phi_x(p)$ begins with $\tau_n\dots\tau_{2^m}$. But, since $p<\hat{q}_{m+1}$ and $x\in\u_p$, $\Phi_x(p)\in\us_{\hat{q}_{m+1}}$ and so
\[
\Phi_x(p)\succeq \tau_n\dots\tau_{2^m}(\overline{\tau_1\dots\tau_{2^m}}^{\,+})^\f=\sc_\infty^{(m+1)}=\Phi_x(q),
\]
contradicting that $p<q$. Therefore, $q_{s}(x)=q=\min\ub(x)$. This proves (\ref{eq:x-in-special-interval}), and thus establishes (i).

We next prove (ii).
Note that for any sequence $(d_i)\in\{0,1\}^\N$ except $0^\infty$, the function $q\mapsto \sum_{i=1}^\infty d_i q^{-i}$ is strictly decreasing and convex on $(1,\infty)$. Hence, \eqref{eq:x-in-special-interval} and \eqref{eq:minimal-expansion1}  give $x$ as a strictly decreasing convex function on the domain $(q_{s}(\xi_{m,1}^-),q_{s}(\xi_{m+1})]$, resp. $(q_{s}(\xi_{m,j}^-),q_{s}(\xi_{m,j-1})]$,  where $q_{s}(z^-):=\lim_{x\nearrow z}q_{s}(x)$. Since the inverse of a strictly decreasing convex function is again strictly decreasing and convex, this proves the first statement in (ii).


For the second statement of (ii) let $q_{m,j}:=q_{s}(\xi_{m,j})$. From (i), we have
\begin{equation}
\big(\sc_{j+1}^{(m)}\big)_{q_{m,j}}=\xi_{m,j}=\big(\sc_j^{(m)}\big)_{\hat{q}_m} \qquad\mbox{and}\qquad
\big(\sc_{j+2}^{(m)}\big)_{q_{m,j+1}}=\xi_{m,j+1}=\big(\sc_{j+1}^{(m)}\big)_{\hat{q}_m}.
\label{eq:c-identities}
\end{equation}
To show $q_{m,j}>q_{m,j+1}$ is equivalent to showing that
\[
\big(\sc_{j+2}^{(m)}\big)_{q_{m,j}}<\big(\sc_{j+2}^{(m)}\big)_{q_{m,j+1}}.
\]
By \eqref{eq:c-identities}, this will follow if we can show that
\begin{equation}
\big(\sc_{j+2}^{(m)}\big)_q-\big(\sc_{j+1}^{(m)}\big)_q<\big(\sc_{j+1}^{(m)}\big)_{\hat{q}_m}-\big(\sc_{j}^{(m)}\big)_{\hat{q}_m} \qquad\forall q>\hat{q}_m.
\label{eq:c-difference-inequality}
\end{equation}
Just take $q=q_{m,j}$. To see \eqref{eq:c-difference-inequality}, observe that, since $\alpha(\hat{q}_m)=(\tau_1\ldots\tau_{2^{m-1}}\overline{\tau_1\ldots\tau_{2^{m-1}}})^\f$, $\big(\sc_{k}^{(m)}\big)_{\hat{q}_m}$ can be written alternatively as
\[
\big(\sc_{j}^{(m)}\big)_{\hat{q}_m}=\big(\tau_n\dots\tau_{2^{m-1}}(\overline{\tau_1\dots\tau_{2^{m-1}}}^{\,+})^{j+1}\big)_{\hat{q}_m},
\]
and similarly for $\big(\sc_{j+1}^{(m)}\big)_{\hat{q}_m}$, whence
\[
\big(\sc_{j+1}^{(m)}\big)_{\hat{q}_m}-\big(\sc_{j}^{(m)}\big)_{\hat{q}_m}=\hat{q}_m^{-2^{m-1}(j+2)+n-1}\big(\overline{\tau_1\dots\tau_{2^{m-1}}}^{\,+}\big)_{\hat{q}_m}\geq \hat{q}_m^{-2^{m-1}(j+3)+n-1}.
\]
An appeal to Lemma \ref{lem:c-inequality} completes the proof of \eqref{eq:c-difference-inequality}.

Finally we prove the last statement of (ii). Note that $\xi_{m,j}=\big(\sc_j^{(m)}\big)_{\hat{q}_m}$. Then by (\ref{eq:minimal-expansion1}) it follows that
\[
\lim_{x\nearrow \xi_{m,j}}q_s(x)=\hat q_m<q_s(\xi_{m,j}).
\]
It remains to show that
$
\lim_{j\to\infty} q_{s}(\xi_{m,j})=\hat{q}_m.
$
Recall that  $q_{m,j}=q_{s}(\xi_{m,j})$. Then
\begin{equation}\label{eq:mar-9-1}
\Phi_{\xi_{m,j}}(q_{m,j})=\sc_{j+1}^{(m)}\to \tau_n\dots\tau_{2^{m-1}}(\overline{\tau_1\dots\tau_{2^{m-1}}}^{\,+})^\infty=\Phi_{\xi_m}(\hat{q}_m)
\end{equation}
as $j\to\infty$. Suppose, by way of contradiction, that there is an $\ep>0$ such that $q_{m,j}>\hat{q}_m+\ep$ for all $j$. Then by Lemma \ref{lem:char-quasi-expansion} (i) we also have $\Phi_{\xi_{m,j}}(q_{m,j})\succ\Phi_{\xi_{m,j}}(\hat{q}_m+\ep)$ for all $j$. But, since by Lemma \ref{lem:char-quasi-expansion} (ii) $\Phi_x(q)$ is left-continuous in $x$ for fixed $q$, we have $\Phi_{\xi_{m,j}}(\hat{q}_m+\ep)\to \Phi_{\xi_m}(\hat{q}_m+\ep)$ as $j\to\infty$. Hence, by (\ref{eq:mar-9-1}),
\[
\Phi_{\xi_m}(\hat{q}_m)=\lim_{j\to\infty} \Phi_{\xi_{m,j}}(q_{m,j})\succeq \lim_{j\to\infty}\Phi_{\xi_{m,j}}(\hat{q}_m+\ep)=\Phi_{\xi_m}(\hat{q}_m+\ep),
\]
which is impossible. Therefore, $\lim_{j\to\infty} q_{m,j}=\hat{q}_m$, completing the proof.
\end{proof}

\section{The maximum of $q_s(x)$} \label{sec:maximum}

Here we show that the maximum of $q_s(x)$ is uniquely attained, and calculate its value exactly.

\begin{theorem} \label{thm:maximum}
The maximum value of $q_s(x)$ is uniquely attained at $x=1/q_G=(\sqrt{5}-1)/2$, and
\[
q_{\max}=q_s(1/q_G)=r\approx 1.888453328,
\]
where $r$ is the unique base such that
\[
\big(1(0010001100011)^\infty\big)_r=\big(100(1000110001100)^\f\big)_r=\frac{1}{q_G}.
\]
The number $r$ is an algebraic integer of degree 26: it is a zero of the irreducible polynomial
\[
t^{26}-t^{25}-t^{24}-t^{22}-2t^{21}-2t^{18}-3t^{17}-2t^{16}-2t^{14}-5t^{13}-t^{12}-t^{10}-3t^9-t^8-t^5-t^4+1.
\]
\end{theorem}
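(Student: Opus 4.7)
The proof has two parts: an explicit computation of $q_s(1/q_G)$ via the algorithm of Section \ref{sec:algorithm}, and a global maximality argument.

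\textbf{Computation.} Since $1/q_G \in [q_G^{-1}, 1)$, the algorithm starts with $k = 1$. The initial base $q_1$ satisfies $q_1(q_1-1) = q_G$. Using $q_G^2 = q_G + 1$, one verifies symbolically that $q_1^6 - q_1^5 - q_1^4 - q_1^3 - 1 = 0$; hence the quasi-greedy expansion of $1$ in base $q_1$ is purely periodic: $\alpha(q_1) = (111000)^\infty$. It follows that $n_1 = 4$, $B_1 = 1\overline{11} = 100$, and $r_1$ satisfies $r_1^3 = q_G r_1^2 + 1$. Proceeding to Step 2, $q_2$ is defined by $(10001^\infty)_{q_2} = 1/q_G$. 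Computing $\alpha(q_2)$ greedily gives $\alpha_1(q_2) \ldots \alpha_{13}(q_2) = 1110011100111$, so $n_2 = 14$ and $B_2 = 1\overline{111001110011} = 1000110001100$ of length $13$; and $r_2$ is defined by $(B_1 B_2^\infty)_{r_2} = 1/q_G$. I would then verify condition \eqref{eq:condition-finite} at $k = 2$, namely that $\alpha(q_2)$ and $\alpha(r_2)$ agree in their first $m_2$ digits. This yields
\[
q_s(1/q_G) = r_2 = r, \qquad \Phi_{1/q_G}(r) = 100(1000110001100)^\infty = 1(0010001100011)^\infty,
\]
the latter following from a cyclic rewriting of the period.

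\textbf{Minimal polynomial.} Clearing denominators in $\big(100(1000110001100)^\infty\big)_r = q_G - 1$ yields an identity of the form $P(r) = q_G\, Q(r)$ with $P, Q \in \mathbb Z[t]$ of degree at most $16$ (the pre-period of length $3$ plus the period of length $13$). Eliminating $q_G$ via $q_G^2 - q_G - 1 = 0$ produces a polynomial relation of degree at most $32$ in $r$; after cancellation of spurious common factors (of total degree $6$), one obtains the irreducible polynomial of degree $26 = 2\cdot 13$ listed in the theorem. Irreducibility can be checked by reduction modulo a small prime or via computer algebra.

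\textbf{Maximality and uniqueness.} We show $q_s(x) < r$ for every $x \neq 1/q_G$. For $x \geq 1$, Theorem \ref{thm:Komornik-Loreti-cascades} with $n = 1$ gives $q_s(x) \leq q_{KL} \approx 1.787 < r$. For $x \in [q_G^{-k}, q_G^{-k+1})$ with $k \geq 2$, Lemma \ref{lem:restriction}(i) forces every unique expansion of $x$ to begin with $0^{k-1}$, so $B_1(x)$ in the algorithm has $k-1$ leading zeros; a monotonicity argument parallel to the $k = 1$ case bounds $q_s(x) \leq r_1(x)$ strictly below $r$. For $x \in (q_G^{-1}, 1)$, since the minimal expansion has $B_1 \neq B_2$ the base $r$ lies outside $\overline{\ub}$; by Proposition \ref{prop:finite-alg-stable}, $q_s$ is strictly decreasing and convex on a maximal right interval $[1/q_G, x^+)$, so $q_s(x) < r$ there; beyond $x^+$ the algorithm produces a smaller first block and hence $q_s(x) < r$ as well.

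The main obstacle is Step 2 of the algorithm: $q_2$ and $r_2$ differ by less than $10^{-4}$, so verifying that their quasi-greedy expansions agree in the required initial segment demands either very high-precision numerical arithmetic or a symbolic verification. A cleaner alternative is to guess the final answer as stated, verify that $1(0010001100011)^\infty \in \us_r$ via Lemma \ref{lem:unique-expansion}, and rule out bases $p < r$ with $1/q_G \in \u_p$ by combining Proposition \ref{prop:lower-bound} with an analysis of the arcs in $\mathscr C$ passing near $(1/q_G, r)$.
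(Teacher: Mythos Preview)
Your computation of $q_s(1/q_G)$ via the algorithm is essentially the paper's Step~1 (the paper uses the quartic $q^4-2q^3+q-1=0$ rather than your sextic, but these are equivalent characterizations of $q_1$), and your treatment of the minimal polynomial is reasonable since the paper merely states it.

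The genuine gap is in your maximality argument. For $x\in(1/q_G,1)$ you invoke Proposition~\ref{prop:finite-alg-stable} to get $q_s$ strictly decreasing on $[1/q_G,x^+)$, which is fine, but then assert that ``beyond $x^+$ the algorithm produces a smaller first block and hence $q_s(x)<r$.'' This does not follow: nothing in the algorithm guarantees that $B_1(x)$ is lexicographically smaller for $x\geq x^+$, and even if it were, there is no mechanism by which a smaller first block bounds $q_s(x)$ below $r$. The graph of $q_s$ on $(1/q_G,1)$ is highly irregular (see Figure~\ref{fig:1}), jumping upward infinitely often, and one cannot rule out a priori that some later jump reaches or exceeds $r$. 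Similarly, for $x\in[q_G^{-k},q_G^{-k+1})$ with $k\geq 2$, Lemma~\ref{lem:restriction}(i) only says the expansion cannot begin with $0^k$, not that it must begin with $0^{k-1}$; and the assertion ``$r_1(x)$ strictly below $r$'' is unsupported --- the upper bound $r_1(x)$ from the algorithm can exceed $r$ for many $x$.

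The paper handles this by brute force: it explicitly lists ten unique expansions (from $(10)^\infty$ down to $1(0010001100011)^\infty$), computes for each the interval of $x$-values it covers in some base below $r$, and checks that these intervals overlap to cover all of $(1/q_G,1)$; simpler expansions $0^n(10)^\infty$ and $01(0011)^\infty$ handle $(0,1/q_G)$. The authors themselves remark that they ``have not found a simpler way to prove this,'' so your hoped-for structural argument would need substantially more work to be made rigorous.
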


\begin{proof}
The proof consists of two steps.

\medskip
{\em Step 1.} We first show that $q_s(1/q_G)=r$ by using our algorithm from Section \ref{sec:algorithm}. Note that $q_1$ is the base such that $(01^\f)_{q_1}=x=1/q_G$. Thus $q_1$ is the root in $(1,2)$ of
\[
\frac{1}{q(q-1)}=\frac{1}{q_G},
\]
or equivalently, $q(q-1)=(\sqrt{5}+1)/2$. Hence $q_1$ satisfies the quartic equation $q^4-2q^3+q-1=0$, and this implies $\alpha(q_1)=(111000)^\f$, because solving $\big((111000)^\f\big)_q=1$ leads to the same quartic equation. Thus, in the algorithm, $n_1=4$ and $B_1=100$. Next, $q_2$ is the base such that $(100\,01^\f)_{q_2}=1/q_G$, so $q_2$ is the root in $(1,2)$ of
\[
\frac{1}{q}+\frac{1}{q^4(q-1)}=\frac{1}{q_G}=\frac{\sqrt{5}-1}{2}.
\]
This gives $q_2\approx 1.888452074$, and a computer calculation shows
\[
\al(q_2)=111001110011100000101010\dots.
\]
We see that $n_2=14, B_2=1000110001100$ and $m_2=17$. Thus, according to the algorithm, $r_2$ is the unique base such that
\[
\left(100\big(1000110001100\big)^\f\right)_{r_2}=(B_1B_2^\f)_{r_2}=\frac{1}{q_G}.
\]
It follows that $r_2$ is the number $r$ defined in the theorem, and again with help from the computer,
\[
\al(r_2)=1110011100111000011000\dots.
\]
(These quasi-greedy expansions can be verified rigorously up to the 17th digit, which is all that matters, but doing so is rather cumbersome.) Hence
$$\al_1(q_2)\dots\al_{m_2}(q_2)=\al_1(r_2)\dots\al_{m_2}(r_2),$$
so $q_s(1/q_G)=r_2=r$.

\bigskip
{\em Step 2.} We now show that $q_s(x)<r$ for all $x>0$ with $x\neq 1/q_G$. This is a bit tedious; we have to divide $(0,\infty)$ into infinitely many intervals, and on each interval find a unique expansion of $x$ in some base smaller than $r$. Unfortunately, we have not found a simpler way to prove this.

First, for $x\geq 1$, we know from Theorem \ref{thm:Komornik-Loreti-cascades} that $q_s\leq q_{KL}$. Moving to $x<1$, we first consider the expansion $(10)^\f$. This is a unique $q$-expansion for all $q>q_G$, and since $((10)^\f)_r\approx .7359$, there is for each $x\in(.7359,1)$ a base $q<r$ such that $x\in\u_q$. Hence the sequence $(10)^\f$ ``covers" the interval $(.7359,1)$. In a similar way, we can check that the sequence $1(0011)^\f$ covers the interval $\big((1(0011)^\f)_r,(1(0011)^\f)_{\hat{q}_2}\big)\approx(.6601,.7549)$, which overlaps the previous one. As we get closer to $1/q_G$, we have to use ever more complicated expansions, taking care that each interval overlaps the one before it. 

We summarize the expansions used for $x>1/q_G$ in the table below. For each interval in the last column, the left endpoint is the corresponding expansion evaluated in base $r$, and the right endpoint is the same expansion evaluated in the minimal base $q$ given in the third column. The overlaps are generous enough to accomodate for round-off errors in the computation of the endpoints.

\bigskip

\begin{tabular}{c|c|c|c}
Expansion & Minimal $\al(q)$ & Minimal $q$ & Interval covered\\ \hline
$(10)^\f$ & $(10)^\f$ & $q_G\approx 1.6180$ & $(.7359,1)$\\
$1(0011)^\f$ & $(1100)^\f$ & $\hat{q}_2\approx 1.7549$ & $(.6601,.7549)$\\
$1(00101)^\f$ & $(11010)^\f$ & $1.8124$ & $(.6346,.6792)$\\
$(100)^\f$ & $(110)^\f$ & $1.8393$ & $(.6219,.6478)$\\
$1001000(110)^\f$ & $111(001)^\f$ & $1.87135$ & $(.61927,.62796)$\\
$1001000(1100100)^\f$ & $111(0011011)^\f$ & $1.88596$ & $(.61822,.61945)$\\
$100100011000(11010)^\f$ & $11100111(00101)^\f$ & $1.88799$ & $(.618068,.618296)$\\
$100100011000(110010)^\f$ & $11100111(001101)^\f$ & $1.888354$ & $(.6180408,.6180895)$\\
$100100011000(1100100)^\f$ & $11100111(0011011)^\f$ & $1.888413$ & $(.6180363,.6180562)$\\
$1(0010001100011)^\f$ & $(1110011100110)^\f$ & $1.888444$ & $(1/q_G,.6180386)$
\end{tabular}

\bigskip

For $0<x<1/q_G$ the situation is simpler. The sequence $0(10)^\f$ covers the interval $(.3897,1/q_G)$; next the sequence $01(0011)^\f$ covers the interval $(.3495,.4301)$, so together these two sequences cover the interval $[1/q_G^2,1/q_G)$, as $1/q_G^2\approx .381966$. Then, for each $n\geq 2$, the sequence $0^n(10)^\f$ covers the interval $[1/q_G^{n+1},1/q_G^n)$ because
\[
\big(0^n(10)^\f\big)_r=\frac{1}{r^{n-1}(r^2-1)}<\frac{1}{q_G^{n+1}}, \qquad n\geq 2.
\]
Hence $q_s(x)<r$ for all $x>0$ with $x\neq 1/q_G$.
\end{proof}


\section{Open Problems} \label{sec:open-problems}

We end the paper with some natural questions that we have been unable to answer. In Section \ref{sec:classification} we classified points in $(0,1)$ into three sets $X_I, X_{II}$ and $X_{III}$ according to our algorithm. We proved that $X_I$ is Lebesgue null and $X_{II}$ has positive Lebesgue measure.

\begin{question}
Does $X_{II}$ have full Lebesgue measure? Equivalently, is $X_{III}$ Lebesgue null? If so, what is its Hausdorff dimension?
\end{question}

A large part of this paper focuses on the level sets $L(q)$ of $q_s$. In Theorem \ref{thm:infinite-level-set-qKL-intro} we showed that $L(q_{KL})$ is infinite, and it contains infinitely many one-sided accumulation points.

\begin{question}
Does $L(q_{KL})$ have any {\em two}-sided accumulation points? If the answer is no, then it follows that $L(q_{KL})$ is countable. More generally, are all level sets of $q_s$ countable?
\end{question}

We do not know much about the topology of the level sets of $q_s$.

\begin{question}
Is $L(q_{KL})$ closed? If so, is $L(q_{KL})$ the closure of the set of points $x_n,x_n'$ and $x_n''$ from Theorem \ref{thm:set-with-qKL-as-min}? More generally, are all level sets of $q_s$ closed?
\end{question}

We have shown in Theorem \ref{thm:finite-level-sets-intro} that $L(q)$ is finite for all $q\in(1,2]\backslash\overline{\ub}$. We have also shown in Theorems \ref{thm:infinite-level-set-qKL-intro} and \ref{thm:other-infinite-level-sets-intro} that $L(q)$ is infinite when $q=q_{KL}$ or $q$ is a de Vries-Komornik number below $1.87064$. This still leaves a gap. Specifically,

\begin{question}
Is $L(q)$ infinite also for de Vries-Komornik numbers between $1.87064$ and $q_{\max}\approx1.88845$?
More strongly, is $L(q)$ infinite for {\em all} $q\in\overline{\ub}\cap(1,q_{\max}]$?
\end{question}

\begin{question}
Do the level sets $L(\hat{q}_c(\sa))$ with $\hat{q}_c(\sa)$ a de Vries-Komornik number below $Q_3$ have infinitely many accumulation points? Observe that the argument from the proof of Proposition \ref{prop:accumulation-points} does not work for these values of $q$.
\end{question}

In Theorem \ref{thm:Komornik-Loreti-cascades} we described the Komornik-Loreti cascades occurring at infinitely many scales in the graph of $q_s$.

\begin{question}
Does the behavior described in Theorem \ref{thm:Komornik-Loreti-cascades} also appear at other levels in the graph? In particular, does the graph of $q_s$ contain ``de Vries-Komornik cascades", which cascade down from the points $(x_n(\sa),\hat{q}_c(\sa)$ of Theorem \ref{thm:deVries-Komornik}? We suspect that the answer is yes, but proving it would likely involve considerable technicalities.
\end{question}

{
We saw in Example \ref{ex:1} that $q_s(x)$ is algebraic over $\mathbb{Q}(x)$ for all $x\in X_{II}$. Clearly this is also the case for $x\in X_I$, and we observed in Remark \ref{rem:type-III-algebraic} that it is also the case for the points $x_n$ with $n\in S$, which are of type III.

\begin{question}
Is $q_s(x)$ algebraic over $\mathbb{Q}(x)$ for every $x>0$?
\end{question}
}

Finally, in this paper we considered the smallest univoque base $q_s(x)$ of a given $x>0$ with alphabet $\set{0,1}$. It would be of interest to extend this research to more general alphabets $\{0,1,\dots,M\}$.

\section*{Acknowledgements}
The authors wish to thank an anonymous referee for a very careful reading of the manuscript, which helped improve the presentation of the paper. D.~Kong was supported by NSFC No.~11971079 and the Fundamental
and Frontier Research Project of Chongqing No.~cstc2019jcyj-msxmX0338 and No.~cx2019067.

%
%

\end{document}